\newcommand{\RED}[1]{{\color{red}#1}} 
 \renewcommand{\RED}[1]{{#1}} 
\newcommand{\BLU}[1]{{\color{blue}#1}} 
 \renewcommand{\BLU}[1]{{#1}} 
\newcommand{\qedJIAM}{} %% for arXiv: QED mark is automatic
\newcommand{\finboxARX}{\finbox} %% for arXiv; endmark of thm without proof
\numberwithin{equation}{section}
\newtheorem{theorem}{Theorem}[section]
\newtheorem{proposition}{Proposition}[section]
\newtheorem{corollary}{Corollary}[section]
\newtheorem{remark}{Remark}[section]
\newtheorem{example}{Example}[section]
\newcommand{\memo}[1]{{\bf \small \RED{[MEMO:}} \BLU{#1} \ {\bf \small \RED{:end]} }}  %%% For work
   \renewcommand{\memo}[1]{}           %%% For FINAL
\newcommand{\OMIT}[1]{{\bf [OMIT:} #1 \ {\bf --- end OMIT] }}  %%% For work
   \renewcommand{\OMIT}[1]{}            %%% For FINAL
\newcommand{\RR}{{\mathbb{R}}}
\newcommand{\ZZ}{{\mathbb{Z}}}
\newcommand{\vecone}{{\bf 1}}
\newcommand{\veczero}{{\bf 0}}
\newcommand{\dom}{{\rm dom\,}}
\newcommand{\unitvec}[1]{\bm{1}\sp{#1}}
\newcommand{\argmin}{\arg \min}
\newcommand{\conv}{\Box\,}
\newcommand{\subgR}{\partial}
\newcommand{\Lnat}{{L$^{\natural}$}}
\newcommand{\Mnat}{{M$^{\natural}$}}
\newcommand{\LLnat}{{L$^{\natural}_{2}$}}
\newcommand{\MMnat}{{M$^{\natural}_{2}$}}
\newcommand{\LL}{{L$_{2}$}}
\newcommand{\MM}{{M$_{2}$}}
\newcommand{\kwd}[1]{{\em #1\/}}  %% italic for index word
\newcommand{\finbox}{\hspace*{\fill}$\rule{0.2cm}{0.2cm}$}
\newcommand{\todaye}{\the\year/\the\month/\the\day}
\begin{document}

\title{
Recent Progress on Integrally Convex Functions
%\thanks{
%%This work was supported by JSPS/MEXT KAKENHI JP20K11697, 
%%JP20H00609, and JP21H04979.
%%JSPS/MEXT KAKENHI (Murota=JP20K11697,Tamura=JP20H00609, JP21H04979) 
}%title

\author{
Kazuo Murota%
\thanks{
The Institute of Statistical Mathematics,
Tokyo 190-8562, Japan; 
%% murota@ism.ac.jp; 
and
Faculty of Economics and Business Administration,
Tokyo Metropolitan University, 
Tokyo 192-0397, Japan,
murota@tmu.ac.jp}
\ and 
Akihisa Tamura%
\thanks{Department of Mathematics, Keio University, 
Yokohama 223-8522, Japan,
aki-tamura@math.keio.ac.jp}
}%%author

%%\date{August 2021 (Version August 29,2021)}
\date{November 2022 / February 2023}
%%\date{November 2022 \ (Version \today)}
%%\date{August 2021}

\maketitle

\begin{abstract}
Integrally convex functions constitute a fundamental function class
in discrete convex analysis,
including M-convex functions, L-convex functions,
and many others.
This paper aims at a rather comprehensive survey of
recent results on integrally convex functions
with some new technical results.
Topics covered in this paper
include
characterizations of integral convex sets and functions,
operations on integral convex sets and functions,
optimality criteria for minimization with a proximity-scaling algorithm,
integral biconjugacy, and the discrete Fenchel duality.
While the theory of M-convex and L-convex functions
has been built upon fundamental results on matroids and submodular functions,
developing the theory of integrally convex functions
requires more general and basic tools such as 
the Fourier--Motzkin elimination.
\end{abstract}

%%\hfill \memo{117 words above; JJIAM: within 150 words}\\

{\bf Keywords}:
Discrete convex analysis,  
Integrally convex function, 
Box-total dual integrality,
Fenchel duality, 
Integral subgradient, 
Minimization.

%%\hfill \memo{JJIAM: 4 to 6 keywords}

%%Fourier-Motzkin elimination
%%\subclass{52A41 \and 90C10}
%%52A41=Convex functions and convex programs
%%90C10=Integer programming
%%90C25 = convex programming
%%\memo{Keywords : 4 -- 6}

\newpage
%\input{ICsurvMemo}
%\setcounter{section}{0}

%\newpage

%% 2022-11-21 / 2023-02-20

\section{Introduction}

Discrete convex analysis \cite{Mdca98,Mdcasiam} has found 
applications and connections to a wide variety of disciplines.
Early applications include those to 
combinatorial optimization, matrix theory, and economics,
as described in the book 
\cite{Mdcasiam} 
and a survey paper \cite{Mbonn09}.
More recently, there have been
active interactions with 
operations research (inventory theory)
\cite{CM21mnat,CL21dca,SCB14},
economics and game theory
\cite{Mdcaeco16,PLem17gs,ST15jorsj,Tam09book},
and algebra (Lorentzian polynomials, in particular) \cite{BH20lor}.
The reader is referred to
\cite{Mdcasiam}
for basic concepts and terminology in discrete convex analysis, and to 
\cite{Fuj05book,FT22twocnv,MM15dcprog,MM19multm,MM19projcnvl,MM22L2poly,Msurvop21,Mopernet21,Shi17L}
for recent theoretical and algorithmic developments.

Integrally convex functions,
due to Favati--Tardella \cite{FT90},
constitute a fundamental function class
in discrete convex analysis.
A function
$f: \ZZ^{n} \to \RR \cup \{ +\infty \}$
is called {\em integrally convex}
if its local convex extension 
$\tilde{f}: \RR^{n} \to \RR \cup \{ +\infty \}$  
is (globally) convex in the ordinary sense, where
$\tilde{f}$
is defined as the collection of convex extensions of $f$ in each 
unit hypercube 
$\{ x \in \RR\sp{n} \mid a_{i} \leq x_{i} \leq a_{i} + 1 \ (i=1,\ldots, n) \}$ 
with $a \in \ZZ^{n}$; see Section~\ref{SCicfndef} for more precise statements.
A subset of $\ZZ^{n}$ is called 
integrally convex if its indicator function
$\delta_{S}: \ZZ\sp{n} \to \{ 0, +\infty \}$
($\delta_{S}(x) =  0$ for $x \in S$ and
$= + \infty$ for $x \notin S$)
is an integrally convex function.
The concept of integral convexity is
used in formulating discrete fixed point theorems
\cite{Iim10,IMT05,IY09,Yan09fixpt}, 
and designing solution algorithms for discrete systems of nonlinear equations
\cite{LTY11nle,Yan08comp}.
In game theory 
the integral concavity of payoff functions 
guarantees the existence of a pure strategy equilibrium 
in finite symmetric games \cite{IW14}.

Integrally convex functions serve as
a common framework for discrete convex functions.
Indeed, separable convex,
{\rm L}-convex, \Lnat-convex, {\rm M}-convex,  
\Mnat-convex,  \LLnat-convex, and 
\MMnat-convex functions are known to be integrally convex \cite{Mdcasiam}.
Multimodular functions \cite{Haj85} 
are also integrally convex, 
as pointed out in \cite{Mdcaprimer07}.
Moreover, BS-convex and UJ-convex functions \cite{Fuj14bisubmdc}
are integrally convex.
Discrete midpoint convex functions 
\cite{MMTT20dmc}
and directed discrete midpoint convex functions 
\cite{TT21ddmc} are also integrally convex.
The relations among those discrete convexity concepts
are investigated in \cite{MM21inclinter,MS01rel}. 
Figure~\ref{FGdcsetclassTDI} 
is an overview of the inclusion relations 
among the most fundamental classes of discrete convex sets.
It is noted that the class of integrally convex sets contains 
the other classes of discrete convex sets.

%%%%% FIGURE %%%%%%%%%%%%%%
\begin{figure}[b]
\centering
\includegraphics[height=50mm]{./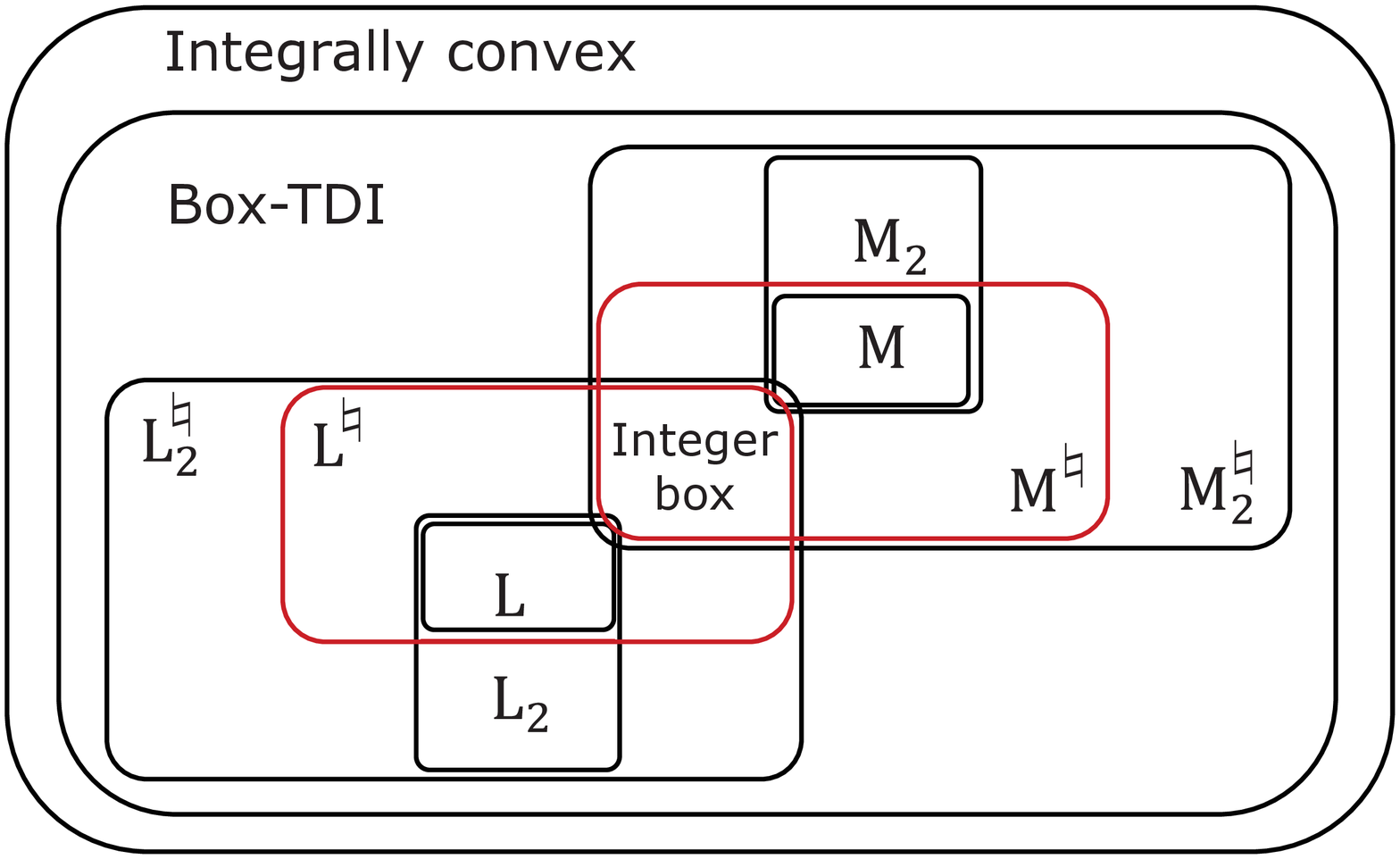}
\\
(\LLnat-convex $\cap$ \MMnat-convex $=$ 
\Lnat-convex $\cap$ \Mnat-convex $=$ Integer box)
\caption{Classes of discrete convex sets}
\label{FGdcsetclassTDI}
\end{figure}
%%%% FIGURE %%%%%%%%%%%%%%

In the last several years,
significant progress has been made 
in the theory of integrally convex functions.
A proximity theorem for integrally convex functions
is established in \cite{MMTT19proxIC} 
together with a proximity-scaling algorithm for minimization.
Fundamental operations for integrally convex functions
such as projection and convolution are investigated in \cite{MM19projcnvl,Msurvop21,Mopernet21}.
It is revealed that integer-valued integrally convex functions
enjoy integral biconjugacy \cite{MT20subgrIC},
and a discrete Fenchel-type min-max formula is established  
for a pair of integer-valued integrally convex and separable convex functions
\cite{MT22ICfenc}. 
The present paper aims at a rather comprehensive survey of
those recent results on integrally convex functions
with some new technical results.
While the theory of M-convex and L-convex functions
has been built upon fundamental results on matroids and submodular functions,
developing the theory of integrally convex functions
requires more general and basic tools such as 
the Fourier--Motzkin elimination.

This paper is organized as follows.
In Section~\ref{SCsetconcept} we review integrally convex sets,
with new observations on their polyhedral properties.
In Section~\ref{SCfnconcept} 
the concept of integrally convex functions is reviewed 
with emphasis on their characterizations.
Section~\ref{SCminiztn} deals with properties
related to minimization and minimizers, including a proximity-scaling algorithm.
Section~\ref{SCsubgrbiconj} is concerned with integral subgradients and biconjugacy,
and Section~\ref{SCfenc} with the discrete Fenchel duality.

%\newpage

%% 2022-11-21 / 2023-02-20

\section{Integrally convex sets}
\label{SCsetconcept}

\subsection{Hole-free property}
\label{SCintroset}

Let $n$ be a positive integer and 
$N = \{ 1,2,\ldots, n  \}$.
For a subset $I$ of $N$, we denote by $\unitvec{I}$ 
the characteristic vector of $I$;
the $i$th component of $\unitvec{I}$ is equal to 1 or 0 according 
to whether $i \in I$ or not.
We use a short-hand notation $\unitvec{i}$ for $\unitvec{ \{ i \} }$,
which is the $i$th unit vector. 
The vector with all components equal to 1 is denoted by $\vecone$,
that is, $\vecone =(1,1,\ldots, 1) = \unitvec{N}$.

For two vectors 
$\alpha \in (\RR \cup \{ -\infty \})\sp{n}$ and 
$\beta \in (\RR \cup \{ +\infty \})\sp{n}$ with $\alpha \leq \beta$,
we define notation
$ [\alpha,\beta]_{\RR} = \{ x \in \RR\sp{n} \mid \alpha \leq x \leq \beta \}$,
which represents the set of real vectors between $\alpha$ and $\beta$. 
An {\em integral box} will mean
a set $B$ of real vectors represented as
$B =  [\alpha,\beta]_{\RR}$
for integer vectors $\alpha \in (\ZZ \cup \{ -\infty \})\sp{n}$ and 
$\beta \in (\ZZ \cup \{ +\infty \})\sp{n}$ with $\alpha \leq \beta$.
The set of integer vectors contained in an integral box
will be called a {\em box of integers}
or an {\em interval of integers}.
We use notation
$[\alpha,\beta]_{\ZZ} : = [\alpha,\beta]_{\RR} \cap \ZZ\sp{n} = 
\{ x \in \ZZ\sp{n} \mid \alpha \leq x \leq \beta \}$
for $\alpha \in (\ZZ \cup \{ -\infty \})\sp{n}$ and 
$\beta \in (\ZZ \cup \{ +\infty \})\sp{n}$ with $\alpha \leq \beta$.

For a subset $S$ of $\RR\sp{n}$, we denote its {\em convex hull} by 
$\overline{S}$, which is, by definition, the smallest convex set containing $S$.
As is well known, $\overline{S}$ coincides with the set of 
all convex combinations of (finitely many) elements of $S$.
We say that a set
$S \subseteq \ZZ\sp{n}$ is
\kwd{hole-free}\index{hole-free} if
\begin{equation} \label{convset1def}
  S = \overline{S} \cap \ZZ\sp{n} .
\end{equation}
Since the inclusion $S \subseteq \overline{S} \cap \ZZ\sp{n}$ is trivially true for any $S$,
the content of this condition lies in
\begin{equation} \label{convset2def}
  S \supseteq \overline{S} \cap \ZZ\sp{n} ,
\end{equation}
stating that the integer points contained in
the convex hull of $S$ all belong to $S$ itself.
A finite set of integer points is hole-free 
if and only if it is the set of integer points
in some integral polytope.

For a set $S \subseteq \RR\sp{n}$ we define its 
\kwd{indicator function}\index{indicator function}
$\delta_{S}: \RR\sp{n} \to \{ 0, +\infty \}$
by
\begin{equation} \label{indicatorZdef}
\delta_{S}(x)  =
   \left\{  \begin{array}{ll}
    0            &   (x \in S) ,     \\
   + \infty      &   (x \not\in S) . \\
                      \end{array}  \right.
\end{equation}

\begin{remark} \rm  \label{RMconvhull}
In a standard textbook \cite[Section A.1.3]{HL01},
the convex hull of 
a subset $S$ of the $n$-dimensional Euclidean space $\RR\sp{n}$
is denoted by $\mathrm{co}\, S$
and the {\em closed convex hull} 
by $\overline{\mathrm{co}}\, S$,
where
the closed convex hull of $S$
is defined to be the intersection of all closed convex set containing $S$.
It is known that 
$\overline{\mathrm{co}}\, S$ coincides with the (topological) closure of 
$\mathrm{co}\, S$, which is expressed as
$\overline{\mathrm{co}}\, S = \mathrm{cl}(\mathrm{co}\, S)$
with the use of notation $\mathrm{cl}$ 
for closure operation. 
Using our notation $\overline{S}$, we have
$\mathrm{co}\, S = \overline{S}$
and 
$\overline{\mathrm{co}}\, S = \mathrm{cl}(\overline{S})$. 
For a finite set $S$, we have
$\mathrm{co}\, S = \overline{\mathrm{co}}\, S$.
To see the difference of 
$\mathrm{co}\, S$ and $\overline{\mathrm{co}}\, S$
for an infinite set $S$,
consider 
$S = \{ (0,1) \} \cup \{ (k,0) \mid k \in \ZZ \}$.
The convex hull is 
$\mathrm{co}\, S = \overline{S} = \{ (0,1) \} \cup \{ (x_{1},x_{2}) \mid 0 \leq x_{2} < 1 \}$
and the closed convex hull is 
$\overline{\mathrm{co}}\, S = \mathrm{cl}(\overline{S}) 
= \{ (x_{1},x_{2}) \mid 0 \leq x_{2} \leq 1 \}$.
We have 
$\overline{S} \cap \ZZ\sp{2} =S$,
which shows that this set $S$ is hole-free,
while
$\mathrm{cl}(\overline{S})  \cap \ZZ\sp{2} \ne S$.
\finbox 
\end{remark}

\subsection{Definition of integrally convex sets}
\label{SCintconvsetdef}

For $x \in \RR^{n}$
the \kwd{integral neighborhood} of $x$
is defined by
\begin{equation} \label{Nxdef}
N(x) = \{ z \in \mathbb{Z}^{n} \mid | x_{i} - z_{i} | < 1 \ (i=1,2,\ldots,n)  \} .
\end{equation}
It is noted that 
strict inequality ``\,$<$\,'' is used in this definition
and 
$N(x)$ admits an alternative expression
\begin{equation}  \label{intneighbordeffloorceil}
N(x) = \{ z \in \ZZ\sp{n} \mid
\lfloor x_{i} \rfloor \leq  z_{i} \leq \lceil x_{i} \rceil  \ \ (i=1,2,\ldots, n) \} ,
\end{equation}
where, for $t \in \RR$ in general, 
$\left\lfloor  t  \right\rfloor$
denotes the largest integer not larger than $t$
(rounding-down to the nearest integer)
and 
$\left\lceil  t   \right\rceil$ 
is the smallest integer not smaller than $t$
(rounding-up to the nearest integer).
That is,
$N(x)$ consists of all integer vectors $z$ 
between
$\left\lfloor x \right\rfloor 
=( \left\lfloor x_{1} \right\rfloor ,\left\lfloor x_{2} \right\rfloor , 
  \ldots, \left\lfloor x_{n} \right\rfloor)$ 
and 
$\left\lceil x \right\rceil 
= ( \left\lceil x_{1} \right\rceil, \left\lceil x_{2} \right\rceil,
   \ldots, \left\lceil x_{n} \right\rceil)$.
See Fig.~\ref{FGneighbor} for $N(x)$ when $n=2$.

%%%  FIGURE %%%%%%%%%%%%%%%%%%
%%\input{./figs/DCAEfg3neighborIC}
\begin{figure}\begin{center}
\includegraphics[width=0.5\textwidth,clip]{./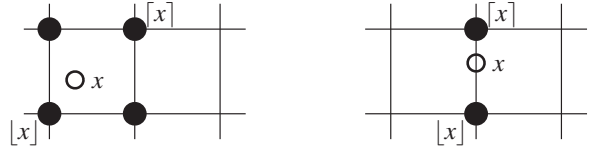}
\caption{Integral neighborhood $N(x)$ of $x$ \ \ 
{\rm (}$\bullet$: point of $N(x)${\rm )}.}
\label{FGneighbor}
\end{center}\end{figure}
%%%  FIGURE %%%%%%%%%%%%%%%%%%

For a set $S \subseteq \ZZ^{n}$
and $x \in \RR^{n}$
we call the convex hull of $S \cap N(x)$ 
the {\em local convex hull} of $S$ around $x$.
A nonempty set
$S \subseteq \ZZ^{n}$ is said to be 
\kwd{integrally convex}\index{integrally convex set}
if
the union of the local convex hulls $\overline{S \cap N(x)}$ over $x \in \RR^{n}$ 
is convex \cite{Mdcasiam}.
In other words, a set $S \subseteq \ZZ^{n}$ is called 
integrally convex if
\begin{equation}  \label{icsetdef0}
 \overline{S} = \bigcup_{x \in \RR\sp{n}} \overline{S \cap N(x)}.
\end{equation}
Since the inclusion 
\ $\overline{S} \supseteq \bigcup_{x} \overline{S \cap N(x)}$ \
is trivially true for any $S$,
the content of the condition \eqref{icsetdef0} lies in 
\begin{equation}  \label{icsetdef0ess}
 \overline{S} \subseteq \bigcup_{x \in \RR\sp{n}} \overline{S \cap N(x)}.
\end{equation}

\begin{example} \rm  \label{EXintcnvsetconcept}
The concept of integrally convex sets is illustrated by simple examples.
The six-point set 
in Fig.~\ref{FGintconvset}(a) is integrally convex.
The removal of the middle point 
(Fig.~\ref{FGintconvset}(b))
breaks integral convexity
(cf., Proposition~\ref{PRicvsetholefree}).
The four-point set $S = \{  \mbox{A, B, C, D} \}$
in Fig.~\ref{FGintconvset}(c) is not integrally convex,
since its convex hull $\overline{S}$, which is the triangle ACD,
does not coincide with the union of the local convex hulls
$\overline{S \cap N(x)}$,
which is the union of the line segment AB and the triangle BCD.
\finbox 
\end{example}

%%%  FIGURE %%%%%%%%%%%%%%%%%%
%%\input{./figs/DCAEfg3intconvsetV3}
\begin{figure}\begin{center}
\includegraphics[width=0.78\textwidth,clip]{./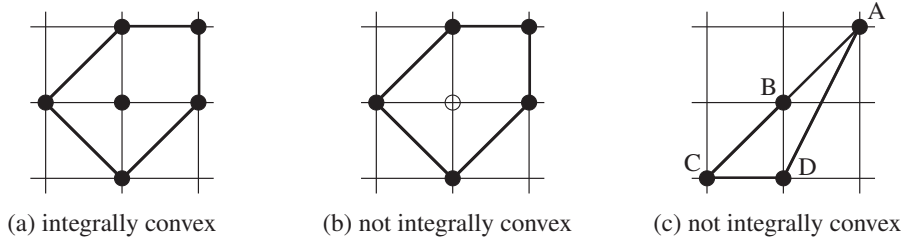}
\caption{Concept of integrally convex sets.}
\label{FGintconvset}
\end{center}\end{figure}
%%%  FIGURE %%%%%%%%%%%%%%%%%%

\begin{example} \rm  \label{EXintcnvsetObvEx}
Obviously,
every subset of $\{ 0, 1\}\sp{n}$ is integrally convex,
and every interval of integers $(\subseteq \ZZ\sp{n})$
is integrally convex.
\finbox 
\end{example}

Integral convexity can be defined by seemingly different conditions.
Here we mention the following two.
\begin{itemize}
\item
Every point $x$
in the convex hull of $S$ is contained in the convex hull of 
$S \cap N(x)$, i.e.,
\begin{equation}  \label{icsetdef1}
 x \in \overline{S} \ \Longrightarrow  x \in  \overline{S \cap N(x)} .
\end{equation}

\item
For each $x \in \RR\sp{n}$,
the intersection of the convex hulls of $S$ and $N(x)$
is equal to the convex hull of the intersection of $S$ and $N(x)$, i.e.,
\begin{equation}\label{icsetdef2}
 \overline{S} \cap  \overline{N(x)} =  \overline{S \cap N(x)}.
\end{equation}
Since the inclusion 
\ $\overline{S} \cap  \overline{N(x)} \supseteq  \overline{S \cap N(x)}$ \ 
is trivially true for any $S$ and $x$,
the content of this condition lies in 
\begin{equation}\label{icsetdef2ess}
 \overline{S} \cap  \overline{N(x)} \subseteq  \overline{S \cap N(x)}.
\end{equation}
\end{itemize}

The following proposition states the equivalence of the 
five conditions \eqref{icsetdef0} to \eqref{icsetdef2ess}.
Thus, any one of these conditions characterizes 
integral convexity of a set of integer points.

\begin{proposition}  \label{PRicvset}
For $S \subseteq \mathbb{Z}\sp{n}$, 
the five conditions \eqref{icsetdef0} to \eqref{icsetdef2ess} are all equivalent.
\end{proposition}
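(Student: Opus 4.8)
My plan is to exploit the two trivial equivalences already recorded in the excerpt, \eqref{icsetdef0} $\Leftrightarrow$ \eqref{icsetdef0ess} and \eqref{icsetdef2} $\Leftrightarrow$ \eqref{icsetdef2ess} (in each case one inclusion is automatic), so that it remains only to prove the equivalence of the three essential conditions \eqref{icsetdef0ess}, \eqref{icsetdef1}, \eqref{icsetdef2ess}. Since \eqref{icsetdef1} $\Rightarrow$ \eqref{icsetdef0ess} is immediate by choosing the localization point to be $x$ itself, I would close the cycle by establishing the easy equivalence \eqref{icsetdef1} $\Leftrightarrow$ \eqref{icsetdef2ess} together with the one substantial implication \eqref{icsetdef0ess} $\Rightarrow$ \eqref{icsetdef1}.

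For the easy equivalence I would first record two elementary facts about the integral neighborhood: that $\overline{N(x)} = [\lfloor x \rfloor, \lceil x \rceil]_{\RR}$, whence $x \in \overline{N(x)}$; and that $p \in \overline{N(x)}$ implies $N(p) \subseteq N(x)$, since $\lfloor x_{i} \rfloor \leq p_{i} \leq \lceil x_{i} \rceil$ forces $[\lfloor p_{i} \rfloor, \lceil p_{i} \rceil] \subseteq [\lfloor x_{i} \rfloor, \lceil x_{i} \rceil]$ in each coordinate. Granting these, \eqref{icsetdef2ess} $\Rightarrow$ \eqref{icsetdef1} is obtained by taking $x \in \overline{S}$ and noting $x \in \overline{S} \cap \overline{N(x)} \subseteq \overline{S \cap N(x)}$, while \eqref{icsetdef1} $\Rightarrow$ \eqref{icsetdef2ess} follows by taking $p \in \overline{S} \cap \overline{N(x)}$, applying \eqref{icsetdef1} at $p$ to get $p \in \overline{S \cap N(p)}$, and then using $N(p) \subseteq N(x)$.

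The crux is \eqref{icsetdef0ess} $\Rightarrow$ \eqref{icsetdef1}, which amounts to ``re-centering'' a localization witnessed at some auxiliary point $y$ onto $x$ itself. Given $x \in \overline{S}$, condition \eqref{icsetdef0ess} provides $y$ with $x \in \overline{S \cap N(y)}$, so I would write $x = \sum_{k} \lambda_{k} z^{(k)}$ with $z^{(k)} \in S \cap N(y)$, $\lambda_{k} > 0$, and $\sum_{k} \lambda_{k} = 1$; it then suffices to prove that every $z^{(k)}$ already lies in $N(x)$. Working coordinate by coordinate, each $z^{(k)}_{i} \in \{ \lfloor y_{i} \rfloor, \lceil y_{i} \rceil \}$, so $x_{i}$ is a convex combination of two consecutive integers and lies in $[\lfloor y_{i} \rfloor, \lceil y_{i} \rceil]$; when $x_{i} \notin \ZZ$ this gives $\lfloor x_{i} \rfloor = \lfloor y_{i} \rfloor$ and $\lceil x_{i} \rceil = \lceil y_{i} \rceil$, so $z^{(k)}_{i} \in [\lfloor x_{i} \rfloor, \lceil x_{i} \rceil]$ automatically.

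The delicate case, which I expect to be the main obstacle, is $x_{i} \in \ZZ$: here $N(x)$ demands the exact equality $z^{(k)}_{i} = x_{i}$, which membership in $N(y)$ alone does not supply. I would resolve it by extremality: if $x_{i} = \lfloor y_{i} \rfloor$ then $x_{i}$ is the smaller of the two admissible integer values, and since $x_{i} = \sum_{k} \lambda_{k} z^{(k)}_{i}$ is a strict convex combination with every $z^{(k)}_{i} \geq x_{i}$, all contributing $z^{(k)}_{i}$ must equal $x_{i}$; the case $x_{i} = \lceil y_{i} \rceil$ is symmetric. This completes the coordinatewise verification, yields $z^{(k)} \in S \cap N(x)$ for all $k$, hence $x \in \overline{S \cap N(x)}$, and thereby closes the cycle and proves the proposition.
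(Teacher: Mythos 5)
Your proof is correct and follows essentially the same route as the paper: the same two trivial equivalences, the same cycle of implications among \eqref{icsetdef0ess}, \eqref{icsetdef1}, \eqref{icsetdef2ess}, and the same key facts ($x \in \overline{N(x)}$ and $p \in \overline{N(x)} \Rightarrow N(p) \subseteq N(x)$). Your extremality argument for the integer-coordinate case of \eqref{icsetdef0ess} $\Rightarrow$ \eqref{icsetdef1} is a correct and welcome elaboration of a step the paper states rather tersely ("we must have $x\sp{(k)} \in N(x)$"), which indeed requires the strict positivity of the coefficients.
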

\begin{proof}
The proof is easy and straightforward, but we include it for completeness.
We already mentioned the equivalences 
[\eqref{icsetdef0}$\Leftrightarrow$\eqref{icsetdef0ess}]
and
[\eqref{icsetdef2}$\Leftrightarrow$\eqref{icsetdef2ess}].

[\eqref{icsetdef0ess}$\Rightarrow$\eqref{icsetdef1}]:
Take any
$x \in \overline{S}$.
By \eqref{icsetdef0ess} we obtain \ 
$x \in  \overline{S \cap N(y)}$ \ 
for some $y$.
This means that 
there exist integer points $x\sp{(1)}, x\sp{(2)}, \ldots, x\sp{(m)} \in S \cap N(y)$
and coefficients  $\lambda_{1}, \lambda_{2}, \ldots, \lambda_{m} > 0$ such that
$x = \sum_{k=1}\sp{m} \lambda_{k} \, x\sp{(k)}$
and $\sum_{k=1}\sp{m} \lambda_{k} = 1$.
Since all the points 
$x$, $x\sp{(1)}, x\sp{(2)}, \ldots, x\sp{(m)}$ 
lie between
$\left\lfloor y \right\rfloor$ and $\left\lceil y \right\rceil$,
we must have
$x\sp{(k)} \in N(x)$ for all $k$.
Therefore, $x \in  \overline{S \cap N(x)}$.

[\eqref{icsetdef1}$\Rightarrow$\eqref{icsetdef0ess}]:
Take any
$y \in \overline{S}$.
By \eqref{icsetdef1} we obtain
$y \in  \overline{S \cap N(y)}$,
whereas 
$\overline{S \cap N(y)} \subseteq \bigcup_{x \in \RR\sp{n}} \overline{S \cap N(x)}$.
Thus \eqref{icsetdef0ess} is shown.

[\eqref{icsetdef1}$\Rightarrow$\eqref{icsetdef2ess}]:
Take any
$y \in \overline{S} \cap  \overline{N(x)}$.
By $y \in \overline{S}$ and \eqref{icsetdef1} we obtain
$y \in  \overline{S \cap N(y)}$,
whereas 
$y \in \overline{N(x)}$
implies
$N(y) \subseteq N(x)$.
Hence
$y \in  \overline{S \cap N(x)}$.

[\eqref{icsetdef2ess}$\Rightarrow$\eqref{icsetdef1}]:
Take any
$x \in \overline{S}$.
Since 
$x \in \overline{N(x)}$, we have
$x \in \overline{S} \cap  \overline{N(x)}$.
Then
$x \in \overline{S \cap N(x)}$
by \eqref{icsetdef2ess}.
\qedJIAM
\end{proof}

As an application of Proposition~\ref{PRicvset} 
we give a formal proof to the statement that
an integrally convex set is hole-free.

\begin{proposition}  \label{PRicvsetholefree}
For an integrally convex set $S$, we have
$S = \overline{S} \cap \ZZ\sp{n}$.
\end{proposition}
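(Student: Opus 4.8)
The plan is to prove only the nontrivial inclusion $\overline{S} \cap \ZZ\sp{n} \subseteq S$, since $S \subseteq \overline{S} \cap \ZZ\sp{n}$ holds for any $S$. So I would take an arbitrary integer point $z \in \overline{S} \cap \ZZ\sp{n}$ and aim to show $z \in S$.

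The main device is to convert membership in $\overline{S}$ into a statement about a \emph{local} convex hull, using integral convexity. By Proposition~\ref{PRicvset}, condition \eqref{icsetdef1} is equivalent to integral convexity of $S$, so from $z \in \overline{S}$ I obtain $z \in \overline{S \cap N(z)}$. This reduces a global convex-hull membership to membership in the convex hull of the finitely many lattice points of $S$ lying in the integral neighborhood $N(z)$.

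The crux of the argument is the simple observation that the integral neighborhood of an \emph{integer} point collapses to a single point: for $z \in \ZZ\sp{n}$, the defining inequalities $|z_{i} - w_{i}| < 1$ in \eqref{Nxdef}, read for integer $w$, force $w_{i} = z_{i}$ for every $i$, whence $N(z) = \{ z \}$. Consequently $S \cap N(z)$ is either $\{ z \}$ or $\emptyset$, according to whether $z \in S$ or not.

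Putting these together finishes the proof, and I do not expect a genuine obstacle here. If $z \notin S$, then $S \cap N(z) = \emptyset$, so $\overline{S \cap N(z)} = \emptyset$, contradicting $z \in \overline{S \cap N(z)}$; hence $z \in S$, as desired. The only point requiring care is invoking the correct equivalent form of integral convexity from Proposition~\ref{PRicvset} and recognizing that the local-neighborhood definition \eqref{Nxdef} degenerates at lattice points, which is what makes the otherwise delicate hole-free property immediate.
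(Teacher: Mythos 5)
Your proof is correct and follows exactly the paper's own argument: reduce to the nontrivial inclusion, invoke the characterization \eqref{icsetdef1} to get $z \in \overline{S \cap N(z)}$, and observe that $N(z) = \{z\}$ for integer $z$, forcing $z \in S$. No gaps; this is the same one-line proof, just written out more explicitly.
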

\begin{proof}  
It suffices to show 
$\overline{S} \cap \ZZ\sp{n} \subseteq S$
in \eqref {convset2def}.
Take any $x \in \overline{S} \cap \ZZ\sp{n}$.
Since
$N(x) = \{ x \}$, 
\eqref{icsetdef1}
shows $x \in  \overline{S \cap  \{ x \}}$, which implies $x \in S$.
\qedJIAM
\end{proof}

The following (new) characterization of an integrally convex set 
is often useful,
which is used indeed in the proof of Theorem~\ref{THicchardmc} in Section~\ref{SCcharICfn}.
While the condition \eqref{icsetdef1} refers to all real vectors $x$,
the condition \eqref{intcnvsetdist234} below restricts $x$ to 
the midpoints of two vectors in $S$.

\begin{theorem} \label{THicSetmidpt}
A nonempty set $S \subseteq \ZZ\sp{n}$
is integrally convex if and only if
\begin{equation}  \label{intcnvsetdist234}
\frac{y + y'}{2} \ \in \  \overline{S \cap N \bigg(\frac{y + y'}{2} \bigg)}
\end{equation}
for every $y, y' \in S$ with $\| y - y' \|_{\infty} \geq 2$.
\end{theorem}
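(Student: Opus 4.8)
The plan is to prove the two implications separately; the forward direction is immediate and the converse carries all the work. For the ``only if'' part, suppose $S$ is integrally convex. Given $y, y' \in S$, their midpoint $m = (y+y')/2$ lies in $\overline{S}$, so condition \eqref{icsetdef1} (equivalent to integral convexity by Proposition~\ref{PRicvset}) yields $m \in \overline{S \cap N(m)}$, which is exactly \eqref{intcnvsetdist234}; note that no restriction on $\|y-y'\|_{\infty}$ is needed here.

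For the ``if'' part I would assume \eqref{intcnvsetdist234} and establish \eqref{icsetdef1}, i.e.\ $x \in \overline{S \cap N(x)}$ for every $x \in \overline{S}$, which gives integral convexity via Proposition~\ref{PRicvset}. Fix $x \in \overline{S}$ together with an initial expression $x = \sum_{k} \lambda_{k}\, y\sp{(k)}$ with $y\sp{(k)} \in S$, $\lambda_{k} > 0$, $\sum_{k} \lambda_{k} = 1$. Let $B_{0}$ be the integer box spanned by these finitely many points and put $F_{0} = S \cap B_{0}$, a finite set. I would then consider all convex representations of $x$ supported on $F_{0}$; these form a nonempty compact polytope $\Lambda$ (a simplex cut by the affine constraint $\sum_{y} \lambda_{y}\, y = x$). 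On $\Lambda$ I would minimize the potential $\Phi(\lambda) = \sum_{y \in F_{0}} \lambda_{y}\, \phi(y)$ with $\phi(y) = \sum_{i} (y_{i} - x_{i})\sp{2}$ separable and strictly convex; since $\Phi$ is linear in $\lambda$ over the compact set $\Lambda$, a minimizer $\lambda\sp{*}$ exists.

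The core step is to show that the support $T\sp{*}$ of $\lambda\sp{*}$ contains no pair $y, y'$ with $\|y - y'\|_{\infty} \geq 2$. If such a pair existed, I would invoke the hypothesis \eqref{intcnvsetdist234} at $m = (y+y')/2$ to write $m = \sum_{j} \mu_{j}\, z\sp{(j)}$ with $z\sp{(j)} \in S \cap N(m)$, and transfer weight $\lambda = \min(\lambda\sp{*}_{y}, \lambda\sp{*}_{y'})$ from $y, y'$ onto the $z\sp{(j)}$ using $\lambda y + \lambda y' = 2\lambda m = \sum_{j} 2\lambda\mu_{j}\, z\sp{(j)}$. Since each $z\sp{(j)} \in N(m)$ lies in the integer box spanned by $y$ and $y'$, the new representation stays in $\Lambda$. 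A coordinatewise computation then shows $\sum_{j} \mu_{j}\, \phi(z\sp{(j)}) \leq \frac{1}{2}(\phi(y) + \phi(y'))$: in each coordinate $i$ the values $z\sp{(j)}_{i}$ range over $\{ \lfloor m_{i}\rfloor, \lceil m_{i}\rceil \}$ with weighted mean $m_{i} = (y_{i}+y'_{i})/2$, forming the innermost integer pair about that midpoint, so strict convexity of $t \mapsto (t-x_{i})\sp{2}$ makes the contribution no larger, and strictly smaller in any coordinate with $|y_{i} - y'_{i}| \geq 2$. Hence $\Phi$ strictly decreases, contradicting minimality. With no far pair, all points of $T\sp{*}$ lie within pairwise $\ell_{\infty}$-distance at most $1$, hence in a common unit cube $[a, a + \vecone]$; then $x \in [a, a+\vecone]$ and a short check gives $y \in N(x)$ for each $y \in T\sp{*}$, so $x \in \overline{T\sp{*}} \subseteq \overline{S \cap N(x)}$, proving \eqref{icsetdef1}.

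I expect the main obstacle to be the coordinatewise strict-convexity estimate: one must handle the parity cases ($m_{i}$ integer versus half-integer) to verify that replacing $\{y_{i}, y'_{i}\}$ by the adjacent integers about $m_{i}$ never increases, and strictly decreases, the separable potential, and to confirm that a coordinate with $|y_{i} - y'_{i}| \geq 2$ always yields the strict gain. A secondary technical point is the compactness setup, namely restricting to the fixed finite support $F_{0}$ (legitimate because the replacement never enlarges the bounding box), so that the minimizer of $\Phi$ is attained and any infinite-descent termination issue is avoided.
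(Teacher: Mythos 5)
Your proof is correct, but the if-part takes a genuinely different route from the paper's. Both proofs share the same core move: given a convex representation of $x \in \overline{S}$ and a far pair $y,y'$ in its support, transfer weight $\lambda=\min(\lambda_y,\lambda_{y'})$ onto the points $z\sp{(j)}\in S\cap N((y+y')/2)$ supplied by \eqref{intcnvsetdist234}. The difference is how termination is argued. The paper processes coordinates one at a time ($j=n,n-1,\ldots,1$), tracking the range $\beta_j-\alpha_j$ of the $j$-th coordinate over the generators and, as a tie-breaker, the number of generators attaining the extremes; it must also verify separately that treating coordinate $j-1$ does not re-expand the already-settled range in coordinate $j$. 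You replace this bookkeeping by a single global variational argument: restrict to the finite set $F_0=S\cap B_0$ (legitimate, since $N((y+y')/2)$ lies in the integer box spanned by $y$ and $y'$, so the replacement never leaves $B_0$), minimize the linear functional $\Phi(\lambda)=\sum_y\lambda_y\|y-x\|_2\sp{2}$ over the compact polytope of representations, and show the weight transfer strictly decreases $\Phi$ whenever a far pair is present. The coordinatewise estimate is right: with $d_i=|y_i-y'_i|/2$, the contribution drops by $d_i\sp{2}$ when $m_i\in\ZZ$ and by $d_i\sp{2}-1/4$ when $m_i\notin\ZZ$ (the weights on $m_i\pm 1/2$ being forced to be $1/2$ each), which is $\geq 0$ always and $>0$ exactly when $|y_i-y'_i|\geq 2$; since $\|y-y'\|_\infty\geq 2$ some coordinate gives a strict gain. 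The final step (pairwise $\ell_\infty$-distance $\leq 1$ on the support forces all generators into $N(x)$, using that a coordinate value $a_i+1$ with positive weight forces $x_i>a_i$) is the same in both proofs. Your version buys a cleaner termination argument at the cost of the compactness setup; the paper's is more explicitly algorithmic. No gaps.
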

\begin{proof}
The only-if-part is obvious from \eqref{icsetdef1}, since
$(y + y')/2 \in \overline{S}$.
To prove the if-part, let
$x \in \overline{S}$.
This implies the existence of $y\sp{(1)},y\sp{(2)},\ldots, y\sp{(m)} \in S$ such that
\begin{equation} \label{xyifxfyi-2}
 x =  \sum_{i=1}\sp{m} \lambda_{i} y\sp{(i)} ,  
\end{equation}
where
$\sum_{i=1}\sp{m}  \lambda_{i} = 1$ and 
$\lambda_{i} > 0 \ (i=1,2,\ldots, m)$.
In the following we modify the generating points 
$y\sp{(1)},y\sp{(2)},\ldots, y\sp{(m)}$
repeatedly and eventually arrive at 
an expression of the form \eqref{xyifxfyi-2}
with the additional condition that
$y\sp{(i)} \in N(x)$
for all $i$,
showing
$x \in \overline{S \cap N(x)}$.
Then the integral convexity of $S$
is established by \eqref{icsetdef1}.

For each $j=n,n-1,\ldots, 1$, we look at the 
$j$-th component of
$y\sp{(1)},y\sp{(2)}, \ldots,  \allowbreak  y\sp{(m)}$.
Let $j=n$ and define
\begin{equation} \label{alphabetaIminImax}
\alpha_{n} := \min_{i} y\sp{(i)}_{n},
\ 
\beta_{n} := \max_{i} y\sp{(i)}_{n},
\
I_{\min} := \{ i \mid y\sp{(i)}_{n} = \alpha_{n} \},
\
I_{\max} := \{ i \mid y\sp{(i)}_{n} = \beta_{n} \} .
\end{equation}
If $\beta_{n} - \alpha_{n} \leq 1$, we are done with $j=n$.
Suppose that 
$\beta_{n} - \alpha_{n} \geq 2$.
By translation and reversal of the $n$-th coordinate,
we may assume $0 \leq  x_{n} \leq  1$,
$\alpha_{n} \leq 0$, and
$\beta_{n} \geq 2$.
By renumbering the generators we may assume
$1 \in  I_{\min}$
and
$2 \in  I_{\max}$,
i.e., $y\sp{(1)}_{n} = \alpha_{n}$ and $y\sp{(2)}_{n} = \beta_{n}$.
We have
$\| y\sp{(1)} - y\sp{(2)} \|_{\infty} \geq 2$. 

By \eqref{intcnvsetdist234} for $(y\sp{(1)}, y\sp{(2)})$ 
we have
\begin{equation} \label{y1y2zk-2}
\frac{y\sp{(1)} + y\sp{(2)}}{2} =  \sum_{k=1}\sp{l}  \mu_{k} z\sp{(k)},
\quad
z\sp{(k)} \in S \cap N \bigg( \frac{y\sp{(1)} + y\sp{(2)}}{2} \bigg)
\quad
(k=1,2,\ldots, l)
\end{equation}
with 
$\mu_{k} > 0$ \  $(k=1,2,\ldots, l)$ and
$\sum_{k=1}\sp{l}  \mu_{k} = 1$.
With notation $\lambda = \min(\lambda_{1}, \lambda_{2})$,
it follows from 
\eqref{xyifxfyi-2}
and
\eqref{y1y2zk-2} that
\[
 x =  (\lambda_{1} - \lambda ) y\sp{(1)} + (\lambda_{2}-\lambda) y\sp{(2)}
 + 2 \lambda \sum_{k=1}\sp{l} \mu_{k} z\sp{(k)}
+  \sum_{i=3}\sp{m} \lambda_{i} y\sp{(i)} ,
\]
which is another representation of the form \eqref{xyifxfyi-2}.

With reference to this new representation
we define
$\hat \alpha_{n}$, $\hat \beta_{n}$,
$\hat{I}_{\min}$, and 
$\hat{I}_{\max}$, 
as in \eqref{alphabetaIminImax}.
Since $\beta_{n} - \alpha_{n} \geq 2$, we have 
\[
 \alpha_{n} + 1 \leq (y\sp{(1)}_{n} + y\sp{(2)}_{n})/2  \leq \beta_{n} - 1  ,
\]
which implies $\alpha_{n} + 1 \leq z\sp{(k)}_{n} \leq \beta_{n} - 1$ for all $k$.
Hence, 
$\alpha_{n} \leq \hat \alpha_{n}$ and
$\hat \beta_{n} \leq \beta_{n}$.
Moreover, if 
$(\hat \alpha_{n}, \hat \beta_{n})=(\alpha_{n},\beta_{n})$,
then 
$|\hat{I}_{\min}| +  |\hat{I}_{\max}|  \leq |I_{\min}| +  |I_{\max}| - 1$.
Therefore, by repeating the above process with $j=n$, 
 we eventually arrive at 
a representation of the form of \eqref{xyifxfyi-2} 
with $\beta_{n} - \alpha_{n} \leq 1$.

We next apply the above procedure for the $(n-1)$-st component.
What is crucial here is that 
the condition $\beta_{n} - \alpha_{n} \leq 1$
is maintained in the modification 
of the generators
via \eqref{y1y2zk-2}
for the $(n-1)$-st component.
Indeed, 
for each $k$, the inequality
$\alpha_{n} \leq z\sp{(k)}_{n} \leq \beta_{n}$ 
follows from 
$\alpha_{n} \leq (y\sp{(1)}_{n} +y\sp{(2)}_{n})/2 \leq \beta_{n}$
and
$z\sp{(k)} \in N( (y\sp{(1)} + y\sp{(2)})/2 )$.
Therefore, we can obtain 
a representation of the form of \eqref{xyifxfyi-2} 
with 
$\beta_{n} - \alpha_{n} \leq 1$ and $\beta_{n-1} - \alpha_{n-1} \leq 1$,
where
$\alpha_{n-1} = \min_{i} y\sp{(i)}_{n-1}$ and $\beta_{n-1} = \max_{i} y\sp{(i)}_{n-1}$.

Then we continue the above process for $j=n-2,n-3, \ldots,1$,
to finally obtain a representation of the form of \eqref{xyifxfyi-2}
with $|y\sp{(i)}_{j} - y\sp{(i')}_{j}| \leq 1$
for all $i, i'$ and $j=1,2,\dots,n$.
This means, in particular, that 
$y\sp{(i)} \in S \cap N(x)$
for all $i$.
\qedJIAM
\end{proof}

\subsection{Polyhedral aspects}
\label{SCpolyh}

A subset of $\RR\sp{n}$ is called a 
\kwd{polyhedron}\index{polyhedron}
if it is described by a finite number of linear inequalities.
A polyhedron is said to be
\kwd{rational}\index{rational polyhedron}\index{polyhedron!rational}
if it is described by a finite number of linear inequalities with
rational coefficients.
A 
polyhedron is called an
\kwd{integer polyhedron}\index{integer polyhedron}%
\index{polyhedron!integer}\index{integrality!polyhedron}
if $P=\overline{P \cap \ZZ\sp{n}}$, i.e., if
it coincides with the convex hull
of the integer points contained in it,
or equivalently, if
$P$ is rational and each face of $P$ contains an integer vector. 
See \cite{Sch86,Sch03} for terminology about polyhedra.
For two vectors $a, x  \in \RR\sp{n}$, we use notation
$\langle a, x \rangle = a\sp{\top} x = \sum_{i=1}\sp{n} a_{i} x_{i}$.

The convex hull of an integrally convex set  
is an integer polyhedron
(see \cite[Sec.~4.1]{MT20subgrIC} for a rigorous proof).
However, not much is known about the 
inequality system $Ax \leq  b$
to describe an integrally convex set.
This is not surprising because
every subset of $\{ 0, 1\}\sp{n}$ is integrally convex 
(as noted in Example~\ref{EXintcnvsetObvEx}),
and most of the NP-hard combinatorial optimization problems 
can be formulated on $\{ 0, 1\}\sp{n}$ polytopes.

When $n = 2$, the following fact is easy to see.

\begin{proposition}[\cite{MMTT19proxIC}]  \label{PRintcnvsetdim2}
A set $S \subseteq \mathbb{Z}^{2}$ is integrally convex
if and only if it can be represented as
$S = \{ (x_{1},x_{2}) \in \mathbb{Z}^{2} \mid 
a^{i} x_{1} +  b^{i} x_{2} \leq c^{i} \ (i=1,2,\ldots,m) \}$ 
for some $a^{i}, b^{i} \in \{ -1,0,1 \}$ and $c^{i} \in \ZZ$
$(i=1,2,\ldots,m)$.
\finboxARX
\end{proposition}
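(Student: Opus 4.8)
The plan is to prove the two implications separately, using the midpoint criterion of Theorem~\ref{THicSetmidpt} as the main engine in both directions.

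For the \emph{if} part, suppose $S$ is the set of integer points satisfying finitely many inequalities of the form $\langle a,x\rangle\le c$ with coefficient vectors $a\in\{-1,0,1\}\sp{2}$ and $c\in\ZZ$, and write $P$ for the real polyhedron they define, so that $S=P\cap\ZZ\sp{2}$. By Theorem~\ref{THicSetmidpt} it suffices to take $y,y'\in S$ with $\|y-y'\|_{\infty}\ge 2$ and to show $m:=(y+y')/2\in\overline{S\cap N(m)}$. The idea is to round $m$ to two lattice points: let $u$ (resp.\ $v$) be obtained from $m$ by rounding each non-integer coordinate toward $y$ (resp.\ toward $y'$). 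Then $u,v\in N(m)$, and since $y_{i}$ and $y_{i}'$ lie on opposite sides of every half-integer coordinate $m_{i}$, we get $m=(u+v)/2$; thus it remains only to verify $u,v\in P$.

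The crux of the if part is a one-line lemma: for a single inequality $\langle a,x\rangle\le c$ with $a\in\{-1,0,1\}\sp{2}$ and $c\in\ZZ$, if $\langle a,y\rangle\le c$ and $\langle a,m\rangle\le c$, then the rounding $u$ of $m$ toward $y$ again satisfies $\langle a,u\rangle\le c$. I would prove this by reflecting coordinates to reduce to $a\in\{0,1\}\sp{2}$; the axis case is immediate because $u$ stays in $[\lfloor m\rfloor,\lceil m\rceil]$ and $c$ is an integer, while for $a=(1,1)$ the only delicate situation is $m_{1}+m_{2}=c$ with both $m_{i}$ strict half-integers, which is settled by noting that $\langle a,y\rangle\le c$ forces at most one coordinate of $y$ to exceed $m$, so at most one coordinate of $u$ rounds up. Applying this lemma to each inequality for $u$ and for $v$ (with $y'$ in place of $y$) gives $u,v\in P=P\cap\ZZ\sp{2}$-membership, hence $m\in\overline{\{u,v\}}\subseteq\overline{S\cap N(m)}$.

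For the \emph{only if} part, let $S$ be integrally convex. Then $S=\overline{S}\cap\ZZ\sp{2}$ by Proposition~\ref{PRicvsetholefree}, and $P:=\overline{S}$ is an integer polyhedron (cf.\ \cite[Sec.~4.1]{MT20subgrIC}). I would describe $P$ by facet inequalities and show every facet normal may be taken in $\{-1,0,1\}\sp{2}$. The key geometric claim is that every edge (bounded or unbounded) of $P$ has a primitive direction $d$ with $\|d\|_{\infty}=1$. Suppose instead some edge has a primitive direction $d$ with $\|d\|_{\infty}\ge 2$; using integrality of $P$, pick lattice points $p,q\in S$ on that edge with $q-p=d$, so $\|p-q\|_{\infty}\ge 2$. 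Their midpoint $m$ lies in the relative interior of the edge, a proper face $F$ of $P$; because $d$ is primitive with $\|d\|_{\infty}\ge 2$, the coordinate where $|d_{i}|\ge 2$ keeps $p,q\notin N(m)$, so no lattice point of $N(m)$ lies on $F$. Since any representation of the face point $m$ as a convex combination of points of $S$ uses only points of $F$, this forces $S\cap N(m)=\emptyset$, contradicting $m\in\overline{S\cap N(m)}$ from Theorem~\ref{THicSetmidpt}. Hence all edge directions, and therefore all facet normals, lie in $\{-1,0,1\}\sp{2}$; as $P$ is integral, each facet value $c=\max\{\langle a,x\rangle\mid x\in P\}$ is attained at an integer vertex and so is an integer, yielding the required representation. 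Degenerate or unbounded $P$ (points, segments, lines, rays, slabs) are handled by the same edge/face argument plus a direct $\{-1,0,1\}$ description of the trivial pieces.

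I expect the main obstacle to be the if direction's inequality-by-inequality check—precisely ruling out that rounding pushes a diagonal constraint $x_{1}+x_{2}\le c$ over its bound at the tight value $m_{1}+m_{2}=c$. This is exactly where the hypothesis $a\in\{-1,0,1\}\sp{2}$ (rather than arbitrary integer coefficients) enters, through the parity/domination lemma above. In the only-if direction the conceptual step is recognizing that a ``too steep'' edge creates a neighborhood with no admissible lattice points; once phrased via the face property of convex sets it becomes routine.
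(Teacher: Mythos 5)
The paper does not prove this proposition: it states it as ``easy to see'' and defers to \cite{MMTT19proxIC}, so there is no in-paper argument to compare against. Your proof is correct and self-contained, and both halves are organized around tools the paper does develop (Theorem~\ref{THicSetmidpt} for the midpoint criterion, Proposition~\ref{PRicvsetholefree} for hole-freeness, and the face property as in Proposition~\ref{PRfaceICset}).

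Two small points worth tightening. In the \emph{if} direction, for $a=(1,1)$ your lemma must also dispose of the case where \emph{both} coordinates of $m$ are strict half-integers and both round up toward $y$: there $u_1+u_2=m_1+m_2+1$, and integrality alone only gives $u_1+u_2\le c+1$. The rescue is that $m_1+m_2$ is then itself an integer, so either $m_1+m_2\le c-1$ (safe) or $m_1+m_2=c$, which is exactly the ``delicate situation'' you treat; your parity observation ($y_i\ge m_i+\tfrac12$ for both $i$ would force $\langle a,y\rangle\ge c+1$) closes it. Stating this case split explicitly would make the lemma airtight. In the \emph{only-if} direction, the step ``all edge directions in $\{-1,0,1\}\sp{2}$, therefore all facet normals in $\{-1,0,1\}\sp{2}$'' is the $90^{\circ}$ rotation $(d_1,d_2)\mapsto(-d_2,d_1)$ and is valid only because $n=2$; it is worth saying so, since this is precisely where the argument (and the proposition) fails to generalize, cf.\ Example~\ref{EXnonboxTDI1}. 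With those clarifications the argument stands.
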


Polyhedral descriptions 
are known for major subclasses of integrally convex sets.
The present knowledge 
for various kinds of discrete convex sets
is summarized in Table~\ref{TBpolydesc},
which shows the possible forms of the vector $a$ 
for an inequality $a\sp{\top} x  \leq b$
to describe the convex hull $\overline{S}$ of a discrete convex set $S$.  
It should be clear that each vector $a$ corresponds (essentially) 
to the normal vector of a face of $\overline{S}$.
Since an \MM-convex (resp., \MMnat-convex) set is,
by definition,
the intersection of two 
M-convex (resp., \Mnat-convex) sets \cite{Mdcasiam},
the polyhedral description of an \MM-convex (resp., \MMnat-convex) set
is obtained immediately as the union of the 
inequality systems for the constituent M-convex (resp., \Mnat-convex) sets.

%%%%%%%%%% table %%%%%%%%%%
\begin{table}
\begin{center}
\caption{Polyhedral descriptions of discrete convex sets \cite{MM22L2poly}.}
\label{TBpolydesc}
\medskip
\renewcommand{\arraystretch}{1.1}%
\begin{tabular}{l|lc}
  & Vector $a$ for $a\sp{\top} x  \leq b$  &   Ref. 
\\ \hline
 Box (interval) & $\pm \unitvec{i}$    & obvious 
\\ 
L-convex set  & $\unitvec{j} - \unitvec{i}$   & \cite[Sec.5.3]{Mdcasiam}
\\ 
\Lnat-convex set  & $\unitvec{j} - \unitvec{i}$, \  \ $\pm \unitvec{i}$  & \cite[Sec.5.5]{Mdcasiam}   
\\ 
\LL-convex set & $\unitvec{J} - \unitvec{I}$  \  ($|I|=|J|$)  & \cite{MM22L2poly}
\\ 
\LLnat-convex set & $\unitvec{J} - \unitvec{I}$  \ ($|I|-|J| \in \{ -1,0,1 \}$) & \cite{MM22L2poly} 
\\ 
M-convex set & $\unitvec{I}$, \ \ $- \unitvec{N} (= -\vecone)$  & \cite[Sec.4.4]{Mdcasiam}  
\\
\Mnat-convex set  & $\pm \unitvec{I}$    &  \cite[Sec.4.7]{Mdcasiam}
\\ 
\MM-convex set & $\unitvec{I}$, \ \ $- \unitvec{N} (= -\vecone)$   & by M-convex
\\
\MMnat-convex set  & $\pm \unitvec{I}$     &  by \Mnat-convex
\\ 
Multimodular set & $\pm \unitvec{I}$ \  ($I$: consecutive)   & \cite{MM21inclinter} 
\\ \hline
\multicolumn{3}{l}{%
$\unitvec{I}$: characteristic vector of $I \subseteq N$; \ 
$\unitvec{i}$: $i$-th unit vector $(= \unitvec{ \{ i \} })$}
\end{tabular}
\renewcommand{\arraystretch}{1.0}%
\end{center}
\end{table}
%%%%%%%%%% table %%%%%%%%%%

In all cases listed in Table~\ref{TBpolydesc},
we have $a \in \{ -1,0,+1 \}\sp{n}$, that is,
every component of $a$ belongs to $\{ -1,0,+1 \}$.
However, this is not the case with a general integrally convex set.

\begin{example}\rm  \label{EXnonboxTDI1}
Let
$S = \{  (1,1,0,0),  \  (0,1,1,0), \  (1,0,1,0), \  (0,0,0,1) \}$,
which is obviously an integrally convex set
since $S \subseteq \{ 0, 1\}\sp{4}$. 
Because all these points lie on the hyperplane
$x_{1} + x_{2} + x_{3} + 2 x_{4} = 2$,
we need a vector $a= \pm (1,1,1,2)$ 
to describe the convex hull $\overline{S}$.
\finbox
\end{example}

The following example illustrates 
a use of the results in Table~\ref{TBpolydesc}.

\begin{example} \rm \label{EXicex4}
Consider 
$S = \{  x \in \ZZ\sp{4} \mid x_{1} + x_{2} = x_{3} + x_{4} \}$.
This set is described by two inequalities 
of the form of $a\sp{\top} x  \leq 0$
with $a = (1,1,-1,-1), (-1,-1,1,1)$.
This shows that $S$ is not \Lnat-convex,
because
we must have
$a=\unitvec{i} - \unitvec{j}$
or $\pm \unitvec{i}$
 for an \Lnat-convex set
(see Table~\ref{TBpolydesc}).
\Mnat-convexity of $S$ is also denied 
because $a=\pm \unitvec{I}$  
for an \Mnat-convex set.
The set $S$ is, in fact, an \LL-convex set
with
$a=\unitvec{J} - \unitvec{I}$ 
for $(I,J)=(\{ 1,2 \},\{ 3,4 \})$ 
and $(I,J)=(\{ 3,4 \},\{ 1,2 \})$. 
\finbox
\end{example}

Each face of the convex hull $\overline{S}$ 
of an integrally convex set $S$ 
induces an integrally convex set.

\begin{proposition}    \label{PRfaceICset}
Let $S \subseteq \ZZ\sp{n}$ be an integrally convex set.
For any face $F$ of $\overline{S}$,
$F \cap \ZZ\sp{n}$ is integrally convex.
\end{proposition}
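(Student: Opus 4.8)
The plan is to verify integral convexity of $T := F \cap \ZZ\sp{n}$ directly through condition \eqref{icsetdef1} of Proposition~\ref{PRicvset}, exploiting the integral convexity of $S$ in the very same form, together with the defining property of a face of $\overline{S}$ as the intersection with a supporting hyperplane. Recall that $\overline{S}$ is an integer polyhedron, so faces and supporting hyperplanes are available, and every nonempty face of $\overline{S}$ contains an integer vector; in particular $T \neq \emptyset$, so the assertion of integral convexity is meaningful. The degenerate case $F = \overline{S}$ is immediate: by the hole-free property (Proposition~\ref{PRicvsetholefree}) one has $T = \overline{S} \cap \ZZ\sp{n} = S$, which is integrally convex by hypothesis. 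For a proper nonempty face I would fix a representation $F = \overline{S} \cap H$ with $H = \{ y \in \RR\sp{n} \mid \langle a, y \rangle = b \}$ a supporting hyperplane, so that $\langle a, y \rangle \le b$ for all $y \in \overline{S}$ and equality holds exactly on $F$.

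The core step is the verification of \eqref{icsetdef1} for $T$. I would take an arbitrary $x \in \overline{T}$; since $T \subseteq F$ and $F$ is convex, $x \in F \subseteq \overline{S}$, so the integral convexity of $S$, applied in the form \eqref{icsetdef1}, yields a representation $x = \sum_{k} \lambda_{k}\, x\sp{(k)}$ with $x\sp{(k)} \in S \cap N(x)$, $\lambda_{k} > 0$, and $\sum_{k} \lambda_{k} = 1$. The decisive observation is that each generator $x\sp{(k)}$ already lies on the face. Indeed, from $x\sp{(k)} \in \overline{S}$ we have $\langle a, x\sp{(k)} \rangle \le b$, while
\[
 b = \langle a, x \rangle = \sum_{k} \lambda_{k} \langle a, x\sp{(k)} \rangle \le \sum_{k} \lambda_{k}\, b = b ,
\]
and, because every $\lambda_{k}$ is strictly positive, the inequality must be tight for each $k$, i.e. $\langle a, x\sp{(k)} \rangle = b$. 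Hence $x\sp{(k)} \in \overline{S} \cap H = F$, and being integral, $x\sp{(k)} \in F \cap \ZZ\sp{n} = T$; since moreover $x\sp{(k)} \in N(x)$, we conclude $x\sp{(k)} \in T \cap N(x)$. Therefore $x = \sum_{k} \lambda_{k}\, x\sp{(k)} \in \overline{T \cap N(x)}$, which is precisely \eqref{icsetdef1} for $T$, and by Proposition~\ref{PRicvset} the set $T = F \cap \ZZ\sp{n}$ is integrally convex.

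I expect the only genuine subtlety to be the passage from ``$x$ lies on the supporting hyperplane $H$'' to ``all of its generators $x\sp{(k)}$ lie on $H$ as well'': this is exactly where the strict positivity of the coefficients $\lambda_{k}$ is essential and where the face structure does its work. Everything else is routine once $S$ is handled by its own characterization---the nonemptiness of $T$ via the integer-polyhedron property, the trivial reduction $\overline{T} \subseteq F$, and the reinterpretation of the resulting convex combination as membership in $\overline{T \cap N(x)}$.

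It is worth noting that this argument uses none of the finer machinery (such as the midpoint criterion of Theorem~\ref{THicSetmidpt}); the single condition \eqref{icsetdef1}, combined with the elementary convexity computation above, suffices and keeps the proof self-contained.
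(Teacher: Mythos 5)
Your proof is correct and follows essentially the same route as the paper: apply the integral convexity of $S$ in the form \eqref{icsetdef1} to write $x$ as a convex combination with strictly positive coefficients of points of $S \cap N(x)$, then use the tightness of the face-defining (in)equalities to force every generator onto $F$. The only cosmetic differences are that the paper works with the full set of inequalities that are tight on $F$ (after normalizing so that $\veczero \in F$) rather than a single supporting hyperplane, and does not split off the case $F = \overline{S}$.
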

\begin{proof}
Consider inequality descriptions of $\overline{S}$ and $F$, say,
$\overline{S} = \{ x \in \RR\sp{n} 
\mid \langle a\sp{(i)}, x \rangle \leq b\sp{(i)} \ (i \in I) \}$ 
and
$F = \{ x \in \RR\sp{n} 
\mid \langle a\sp{(j)}, x \rangle = b\sp{(j)}  \ (j \in J), \ 
 \langle a\sp{(i)}, x \rangle \leq b\sp{(i)}  \ (i \in I \setminus J) \}$ 
with some index sets $I \supseteq J$.
Since $\overline{S}$ is an integer polyhedron, 
we may assume $\veczero \in F$, 
which implies 
$b\sp{(j)} = 0$ for $j \in J$.
Take any $x \in F$.
Since
$x \in \overline{S}$ and $S$ is integrally convex,
there exist $y\sp{(1)}, y\sp{(2)}, \ldots, y\sp{(m)} \in S \cap N(x)$
and coefficients  $\lambda_{1}, \lambda_{2}, \ldots, \lambda_{m} > 0$ such that
$x = \sum_{k=1}\sp{m} \lambda_{k} \, y\sp{(k)}$
and $\sum_{k=1}\sp{m} \lambda_{k} = 1$.
For each  $j \in J$, we have
$0 = \langle a\sp{(j)}, x \rangle 
= \langle a\sp{(j)}, \sum_{k=1}\sp{m} \lambda_{k} \, y\sp{(k)} \rangle 
= \sum_{k=1}\sp{m} \lambda_{k} \langle a\sp{(j)}, y\sp{(k)} \rangle$,
where 
$\langle a\sp{(j)}, y\sp{(k)} \rangle \leq 0$ 
since $y\sp{(k)} \in S$.
Therefore 
$\langle a\sp{(j)}, y\sp{(k)} \rangle = 0$
for all $k$, which implies
$y\sp{(k)} \in F$,   
and hence
$y\sp{(k)} \in (F \cap \ZZ\sp{n}) \cap N(x)$ for all $k$. 
\qedJIAM
\end{proof}

The edges of $\overline{S}$ have a remarkable 
property that the direction of an edge is 
given by a $\{ -1,0,+1 \}$-vector.
This property follows from a basic fact 
that  every edge (line) of $\overline{S}$ contains 
a pair of lattice points in a translated unit hypercube, 
whose difference is a $\{ -1,0,+1 \}$-vector.
In this connection, the following fact is known.

\begin{proposition}[{\cite[Proposition~1]{MT20subgrIC}}]    \label{PRedgeICset}
Let $S \subseteq \ZZ\sp{n}$ be an integrally convex set.
For any face $F$ of $\overline{S}$,
the smallest affine subspace containing $F$ 
is given as 
\ $\{ x + \sum_{k=1}\sp{h} c_{k} d^{(k)} \mid c_{1}, c_{2}, \ldots, c_{h} \in \RR \}$ \ 
for any point $x$ in $F$ and some direction vectors 
$d^{(k)} \in \{ -1,0,+1 \}\sp{n}$ $(k=1,2,\ldots, h)$.
\finboxARX
\end{proposition}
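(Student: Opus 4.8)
The plan is to reduce the statement to a spanning claim about lattice-point differences and then establish that claim by a minimal-counterexample argument in the style of the proof of Theorem~\ref{THicSetmidpt}. First I would record that, since $\overline{S}$ is an integer polyhedron, every face $F$ satisfies $F = \overline{F \cap \ZZ\sp{n}}$, and by Proposition~\ref{PRfaceICset} the set $T := F \cap \ZZ\sp{n}$ is itself integrally convex. Consequently the direction space $L$ of the smallest affine subspace containing $F$ (the object we must describe) coincides with $\mathrm{span}\{ y - y' \mid y, y' \in T \}$, and $h = \dim F = \dim L$. Hence it suffices to exhibit $\{-1,0,+1\}$-vectors spanning $L$: once $L$ is spanned by such vectors, I extract $h$ linearly independent ones to serve as $d\sp{(1)}, \ldots, d\sp{(h)}$, and the affine hull then takes the asserted form for any base point $x \in F$.

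The key claim is that $L = L'$, where $L'$ is the subspace spanned by $D := \{ z - z' \mid z, z' \in T, \ \| z - z' \|_{\infty} \leq 1 \}$; every element of $D$ is a $\{-1,0,+1\}$-vector, so proving the claim finishes the proof. The inclusion $L' \subseteq L$ is clear. For the reverse inclusion I would argue by contradiction: if $L' \neq L$, then some difference $y - y'$ with $y, y' \in T$ fails to lie in $L'$, and among all such ``bad'' pairs I choose one minimizing $\rho := \| y - y' \|_{\infty}$, a nonnegative integer, so the minimum exists. Since single-unit-cube differences already lie in $L'$, minimality forces $\rho \geq 2$.

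To derive the contradiction I would invoke integral convexity of $T$ at the midpoint $m := (y + y')/2 \in \overline{T}$. By \eqref{icsetdef1} there are $z\sp{(1)}, \ldots, z\sp{(l)} \in T \cap N(m)$ and positive weights summing to $1$ with $m = \sum_{k} \mu_{k} z\sp{(k)}$. For each coordinate $i$ we have $|y_{i} - m_{i}| = |y_{i} - y'_{i}|/2 \leq \rho/2$, while $z\sp{(k)}_{i} \in \{ \lfloor m_{i} \rfloor, \lceil m_{i} \rceil \}$ gives $|m_{i} - z\sp{(k)}_{i}| \leq 1/2$; the triangle inequality then yields $|y_{i} - z\sp{(k)}_{i}| \leq (\rho+1)/2$, and since this is an integer it is at most $\lfloor (\rho+1)/2 \rfloor \leq \rho - 1$. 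Thus $\| y - z\sp{(k)} \|_{\infty} \leq \rho - 1 < \rho$ and likewise $\| y' - z\sp{(k)} \|_{\infty} < \rho$. By minimality of $\rho$, the pairs $(y, z\sp{(k)})$ and $(y', z\sp{(k)})$ cannot be bad, so $y - z\sp{(k)} \in L'$ and $y' - z\sp{(k)} \in L'$; subtracting gives $y - y' \in L'$, contradicting the choice of $(y, y')$. This proves $L = L'$.

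The step I expect to be the main obstacle is not any single computation but getting the reduction right so that the argument covers unbounded faces and, in particular, faces whose lineality space has dimension at least two. The ``edge direction'' intuition quoted before the statement is adequate only when $F$ is pointed, in which case edges emanating from a vertex span $L$; but a face such as the convex hull of $\{ x \in \ZZ\sp{3} \mid x_{1} - x_{2} + x_{3} = 0 \}$ is a full plane with no edges at all, while its direction space must still be spanned by $\{-1,0,+1\}$-vectors. The minimal-pair/midpoint argument above sidesteps this case distinction entirely by working simultaneously with all lattice-point differences, which is why I prefer it to an edge-by-edge approach.
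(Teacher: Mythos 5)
Your proof is correct. Note first that the paper does not actually prove this proposition: it cites \cite[Proposition~1]{MT20subgrIC} and offers only the one-sentence heuristic that every edge of $\overline{S}$ contains two lattice points of a common translated unit hypercube. Your argument is therefore a genuinely self-contained alternative, and it checks out. The reduction is sound: $F=\overline{F\cap\ZZ\sp{n}}$ because $\overline{S}$ is an integer polyhedron (the same weight-splitting argument used in the proof of Proposition~\ref{PRfaceICset}), $T=F\cap\ZZ\sp{n}$ is integrally convex by that proposition, and the affine hull of $F$ is $x+L$ with $L=\mathrm{span}\{y-y'\mid y,y'\in T\}$. The crux --- that $L$ is already spanned by differences of points of $T$ at $\ell_\infty$-distance at most $1$ --- is established by a clean descent: for a bad pair minimizing $\rho=\|y-y'\|_\infty\geq 2$, condition \eqref{icsetdef1} applied to $T$ at the midpoint $m$ produces $z\in T\cap N(m)$ with $|y_i-z_i|\leq\lfloor(\rho+1)/2\rfloor\leq\rho-1$ (the integrality of $y_i-z_i$ is exactly what rescues the borderline case $\rho=2$), so $y-z$ and $y'-z$ lie in $L'$ by minimality and hence so does their difference $y-y'$. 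This is the same $\ell_\infty$-shrinking engine that drives the proof of Theorem~\ref{THicSetmidpt}, redeployed for a spanning statement rather than a convex-combination statement. Your closing observation is also well taken: the edge heuristic quoted in the paper only directly handles pointed faces, whereas faces with nontrivial lineality (e.g.\ when $\overline{S}$ is itself an affine subspace, as for the \LL-convex set of Example~\ref{EXicex4}) have no edges, and your midpoint argument covers these uniformly. The only cosmetic remark is that the proposition asks merely for some spanning $\{-1,0,+1\}$-vectors, so extracting a linearly independent subset is optional, though it does match the intended reading of $h$ as $\dim F$.
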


\begin{remark}[\cite{MT20subgrIC}] \rm \label{RMedgedir}
The property mentioned in Proposition~\ref{PRedgeICset} 
does not characterize 
integral convexity of a set.
For example, let
$S = \{  (0,0,0), \allowbreak  (1,0,1),  \allowbreak (1,1,-1), (2,1,0) \}$.
The convex hull $\overline{S}$ is a parallelogram with edge directions
$(1,0,1)$ and $(1,1,-1)$,
and hence is an integer polyhedron
with the property that
the smallest affine subspace containing each face is spanned by $\{ -1,0,+1 \}$-vectors.
However, $S$ is not integrally convex, since
\eqref{icsetdef1} is violated by
$x = [(1,0,1) + (1,1,-1) ]/2 = (1,1/2,0) \in \overline{S}$,  
for which $N(x) = \{ (1,0,0), (1,1,0) \}$ and $S \cap N(x) = \emptyset$.
\finbox
\end{remark}

The following result is concerned with a direction of infinity
in the discrete setting,
to be used in the proof of Proposition~\ref{PRinfdirICfn} 
in Section~\ref{SCtechfn}.

\begin{proposition}    \label{PRinfdirICset}
Let $S \subseteq \ZZ\sp{n}$ be an integrally convex set, 
$y \in S$, and $d \in \ZZ\sp{n}$.
If $y + kd \in S$ 
for all integers $k \geq 1$, 
then for any $z \in S$, 
we have $z + kd \in S$ for all integers $k \geq 1$.
\end{proposition}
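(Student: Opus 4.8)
The plan is to reduce the statement to a recession-cone property of the polyhedron $\overline{S}$ and then transfer the conclusion back from $\overline{S}$ to $S$ via the hole-free property. The guiding intuition is that the hypothesis says the entire ray emanating from $y$ in direction $d$ lies in $\overline{S}$, so $d$ is a direction of recession of $\overline{S}$; a recession direction of a polyhedron works equally well from every one of its points, and integrality then pulls the resulting ray back into $S$ itself.

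Concretely, I would first invoke the fact, recorded earlier in this section, that the convex hull $\overline{S}$ of an integrally convex set is an integer polyhedron, and fix a finite inequality description $\overline{S} = \{ x \in \RR\sp{n} \mid A x \leq b \}$. Since $y \in S$ and $y + k d \in S \subseteq \overline{S}$ for all integers $k \geq 1$, we have $A(y + k d) = A y + k (A d) \leq b$ for every such $k$; dividing by $k$ and letting $k \to \infty$ forces $A d \leq \veczero$, i.e.\ $d$ belongs to the recession cone of $\overline{S}$. Consequently, for an arbitrary $z \in S \subseteq \overline{S}$ we get $A(z + k d) = A z + k (A d) \leq b$ for all $k \geq 1$, so $z + k d \in \overline{S}$. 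Because $z$ and $d$ are integral, $z + k d \in \overline{S} \cap \ZZ\sp{n}$, and the hole-free property of integrally convex sets (Proposition~\ref{PRicvsetholefree}), which gives $\overline{S} \cap \ZZ\sp{n} = S$, then yields $z + k d \in S$, as required.

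The one delicate point is the very first step. For an \emph{infinite} set $S$ the convex hull need not be closed (cf.\ Remark~\ref{RMconvhull}), so the recession-cone reasoning is not automatically legitimate, and here $S$ is genuinely infinite and unbounded in direction $d$. What makes the argument go through is precisely the nontrivial fact that $\overline{S}$ is an honest polyhedron, hence closed and cut out by finitely many inequalities; this is the place where integral convexity is really used, together with the hole-free property. Once polyhedrality is granted, the remaining manipulations are routine linear algebra, and notably no appeal to the local-convex-hull characterization \eqref{icsetdef1} is needed in the main body of the proof.
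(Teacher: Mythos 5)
Your argument is correct, but it takes a genuinely different route from the paper's. You outsource the real work to the statement that the convex hull $\overline{S}$ of an integrally convex set is a closed, finitely described integer polyhedron --- a fact the paper records in Section~\ref{SCpolyh} but only cites to an external reference rather than proving --- and then run the standard recession-cone computation ($Ad \leq \veczero$, hence $z + kd \in \overline{S}$) followed by the hole-free property (Proposition~\ref{PRicvsetholefree}) to land back in $S$. The paper instead gives a short self-contained argument straight from the definition: to show $x+d \in S$ for an arbitrary $x \in S$, it forms the convex combination $u = \frac{k-1}{k}x + \frac{1}{k}(y+kd) = x+d+\frac{1}{k}(y-x) \in \overline{S}$, takes $k$ so large that $\| u-(x+d) \|_2 < 1/\sqrt{n}$, which forces $x+d \in N(u)$ and $u \notin \overline{N(u)\setminus\{x+d\}}$, and then reads off $x+d \in S$ from $u \in \overline{S \cap N(u)}$, i.e.\ from \eqref{icsetdef1}. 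The trade-off is clear: your proof is shorter and conceptually transparent once polyhedrality of $\overline{S}$ is granted, and you correctly identify that as the delicate point --- for infinite $S$ closedness of the convex hull is exactly what can fail in general (Remark~\ref{RMconvhull}) --- whereas the paper's proof avoids importing that nontrivial external theorem altogether and needs nothing beyond the local-convex-hull characterization and a limiting convex combination of two generators. There is no circularity in your appeal to the external fact (it predates this proposition), so your proof stands, but it is the heavier of the two.
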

\begin{proof}
It suffices to prove that, for any $x \in S$, 
we have $x + d \in S$.
For an integer $k \geq 1$
(to be specified later),
consider
$u = \frac{k-1}{k} x + \frac{1}{k}(y + kd) = x+d + \frac{1}{k}(y-x)$,
which is a convex combination of $x$ and $y + kd$. 
By taking  $k$ large enough, we can assume
$ \| u - (x+d) \|_{2} = \| y - x \|_{2}/k < 1 /  \sqrt{n}  $,
which implies that $x+d \in N(u)$ and 
$u \notin \overline{N(u) \setminus \{ x+d  \}}$.
On the other hand, we have
$u \in \overline{S \cap N(u)}$,
since $u \in \overline{S}$ and $S$ is integrally convex.
Therefore, we must have $x+d \in S$.
\qedJIAM
\end{proof}

\subsubsection*{Box-integer and box-TDI polyhedra}

A polyhedron $P \subseteq \RR\sp{n}$ is called 
\kwd{box-integer}\index{box-integer} 
if $P \cap [ l , u ]_{\RR}$
($=P \cap \{ x \in \RR\sp{n} \mid l   \leq x \leq  u \}$)
is an integer polyhedron for each choice of integer vectors $l , u \in \ZZ\sp{n}$
with $l  \leq u$
(\cite[Section~5.15]{Sch03}).
This concept is closely related (or essentially equivalent) to 
that of integrally convex sets, as follows.

\begin{proposition}[\cite{Mopernet21}] \label{PRintpolyIC} 
If a set $S \subseteq \ZZ^{n}$ is integrally convex, then its convex hull
$\overline{S}$ is a box-integer polyhedron.
Conversely, if $P$ is a box-integer polyhedron, then 
$S = P \cap \ZZ\sp{n}$ is an integrally convex set.
\finboxARX
\end{proposition}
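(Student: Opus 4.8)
The plan is to prove both implications by reducing integral convexity, in its pointwise form \eqref{icsetdef1}, to box-integrality, using the single key observation that the integral neighborhood $N(x)$ is exactly the set of integer points of the box $[\lfloor x \rfloor, \lceil x \rceil]_{\RR}$, i.e.\ $N(x) = [\lfloor x\rfloor,\lceil x\rceil]_{\ZZ}$ by \eqref{intneighbordeffloorceil}. This identity is what makes the \emph{local} condition defining integral convexity coincide with the \emph{box-by-box} condition defining box-integrality, and both directions rest on it.

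For the first implication I would start from the already-cited fact that $\overline{S}$ is an integer polyhedron, so in particular it is a rational polyhedron and box-integrality is meaningful for it. Fix integer vectors $l \le u$; since $\overline{S} \cap [l,u]_{\RR}$ is an intersection of rational polyhedra it is again a rational polyhedron, so it remains only to show it equals the convex hull of the integer points it contains. Take any $x \in \overline{S} \cap [l,u]_{\RR}$. As $x \in \overline{S}$ and $S$ is integrally convex, \eqref{icsetdef1} expresses $x$ as a convex combination $x = \sum_{k} \lambda_{k} y^{(k)}$ with $y^{(k)} \in S \cap N(x)$. Each $y^{(k)}$ lies between $\lfloor x \rfloor$ and $\lceil x \rceil$; combined with $l \le x \le u$ and the integrality of $l,u$ (so that $l \le \lfloor x \rfloor$ and $\lceil x \rceil \le u$), this forces $l \le y^{(k)} \le u$, whence each $y^{(k)}$ is an integer point of $\overline{S} \cap [l,u]_{\RR}$. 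Thus $x$ is a convex combination of such integer points, which shows $\overline{S} \cap [l,u]_{\RR}$ is an integer polyhedron.

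For the converse I would verify \eqref{icsetdef1} directly. Take $x \in \overline{S}$. Since $S = P \cap \ZZ^{n} \subseteq P$ and $P$ is convex, $\overline{S} \subseteq P$, so $x \in P$. Put $l = \lfloor x \rfloor$ and $u = \lceil x \rceil$, which are integer vectors with $x \in P \cap [l,u]_{\RR}$. Box-integrality of $P$ makes $P \cap [l,u]_{\RR}$ an integer polyhedron, so $x$ is a convex combination of integer points of $P \cap [l,u]_{\RR}$; each such point lies in $P \cap \ZZ^{n} = S$ and in $[l,u]_{\ZZ} = N(x)$, hence in $S \cap N(x)$. Therefore $x \in \overline{S \cap N(x)}$, which is \eqref{icsetdef1}, and $S$ is integrally convex.

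The argument is short precisely because both directions use the same dictionary $N(x) = [\lfloor x\rfloor,\lceil x\rceil]_{\ZZ}$; the only genuine external input is the cited fact that the convex hull of an integrally convex set is a polyhedron, needed in the first implication merely so that ``box-integer'' applies to $\overline{S}$. I expect no real obstacle beyond bookkeeping the floor/ceiling inequalities: the subtle point to get right is that the \emph{integrality} of the box bounds $l,u$ is exactly what promotes $\lfloor x\rfloor \ge l$ and $\lceil x\rceil \le u$, thereby keeping the generators $y^{(k)}$ inside the box in the first implication, while in the converse the containment $\overline{S}\subseteq P$ is what lets us place $x$ inside a box on which box-integrality can be invoked.
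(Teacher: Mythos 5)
Your proof is correct. The paper itself does not prove this proposition (it is stated with a citation to \cite{Mopernet21} only), but your argument is the natural one and is sound: both directions hinge on the identity $N(x)=[\lfloor x\rfloor,\lceil x\rceil]_{\ZZ}$ together with the observation that integrality of $l,u$ forces $l\leq\lfloor x\rfloor$ and $\lceil x\rceil\leq u$, and the only external input is the cited fact that $\overline{S}$ is an integer polyhedron.
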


It follows from 
Proposition~\ref{PRintpolyIC}
that
a set $S$ of integer points
is integrally convex if and only if 
it is hole-free
and its convex hull $\overline{S}$ is a box-integer polyhedron.

The concept of (box-)total dual integrality
has long played a major role in combinatorial optimization
\cite{Cook83,Cook86,EG77,EG84tdi,Sch86,Sch03}.
A linear inequality system $Ax \leq  b$
is said to be 
\kwd{totally dual integral}\index{totally dual integral} 
(\kwd{TDI}\index{TDI}) 
if the entries of $A$ and $b$ are rational numbers and
the minimum in the linear programming duality equation
\begin{equation*}  
  \max\{w\sp{\top} x \mid  Ax \leq b \} \ 
  = \ \min \{y\sp{\top} b \mid  y\sp{\top} A=w\sp{\top}, \ y\geq 0 \}
\end{equation*}
has an integral optimal solution $y$ for every integral vector $w$ 
such that the minimum is finite.  
A linear inequality system $Ax \leq  b$
is said to be {\em box-totally dual integral}  ({\em box-TDI}) 
if the system $[ Ax \leq  b, d \leq x\leq c ]$ 
is TDI for each choice of rational (finite-valued) vectors $c$ and $d$.  
It is known 
\cite[Theorem~5.35]{Sch03} that
a system $A x \leq  b$ is box-TDI
if the matrix $A$ is totally unimodular.

A polyhedron is called a 
\kwd{box-TDI polyhedron}\index{box-TDI polyhedron}
if it can be described by a box-TDI system.  
It was pointed out in \cite{Cook86}
that every TDI system describing a box-TDI polyhedron 
is a box-TDI system,
which fact indicates that box-TDI is a property of a polyhedron.
In this connection it is worth mentioning that 
every rational polyhedron can be described by a TDI system,
showing that TDI is a property of a system of inequalities
and not of a polyhedron.

An integral box-TDI polyhedron is box-integer \cite[(5.82), p.~83]{Sch03}.
Although the converse is not true (see Example~\ref{EXnonboxTDI2} below),
it is possible to characterize a box-TDI polyhedron 
in terms of box-integrality of its dilations.
For a positive integer $\alpha$, the 
\kwd{$\alpha$-dilation}\index{dilation} 
of a polyhedron 
$P=\{x \mid A x \leq b \}$
means the polyhedron  
$\alpha P :=\{x \mid A x \leq \alpha b \} = \{x  \mid \frac{1}{\alpha}x \in P \}$.

\begin{proposition} 
[{\cite[Theorem~2 \& Prop.~2]{CGR21}}] 
\label{PRdilat} 
An integer polyhedron $P$ is box-TDI if and only if 
the $\alpha$-dilation $\alpha P$ is box-integer for any positive integer $\alpha$.
\finboxARX
\end{proposition}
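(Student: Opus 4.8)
The plan is to fix a convenient description of $P$ and then treat the two implications separately. Since $P$ is an integer polyhedron, I would invoke the quoted fact that every rational polyhedron admits a TDI system and take an \emph{integral} such system $A x \le b$ describing $P$; because $A$ is integral and $A x \le b$ is TDI, integrality of $P$ forces $b$ to be integral as well. For the forward implication I may moreover assume this system is box-TDI: by Cook's result quoted above, every TDI system describing the box-TDI polyhedron $P$ is automatically box-TDI.

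For the direction ``$P$ box-TDI $\Rightarrow$ every $\alpha P$ box-integer'', the plan is to prove the sharper statement that the value $\max\{\langle w, x\rangle : x \in \alpha P \cap [l,u]_{\RR}\}$ is an integer for every integral $w$ and every integral box $[l,u]_{\RR}$; since a bounded rational polyhedron is an integer polyhedron exactly when all such values are integral, this yields box-integrality of $\alpha P$. The key is a scaling identity for the dual. Writing the dual as $\min\{\alpha \langle b, y\rangle + \langle u, s\rangle - \langle l, t\rangle : A^{\top} y + s - t = w,\ y,s,t \ge 0\}$, I would observe that its feasible region depends only on $A$ and $w$, not on $\alpha, b, l, u$, and that its objective is exactly $\alpha$ times $\langle b, y\rangle + \langle u/\alpha, s\rangle - \langle l/\alpha, t\rangle$, which is the dual objective of the box-constrained problem over $P$ with the \emph{rational} box $[l/\alpha, u/\alpha]_{\RR}$. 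Box-TDI of $A x \le b$, applied to this rational box and the integral objective $w$, supplies an integral optimal dual solution $(y^{*},s^{*},t^{*})$; since scaling an objective by the positive number $\alpha$ leaves the minimizer set unchanged, the same integral point is optimal for the dilated dual, and evaluating $\alpha \langle b, y^{*}\rangle + \langle u, s^{*}\rangle - \langle l, t^{*}\rangle$ gives an integer because all data are integral.

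For the converse ``every $\alpha P$ box-integer $\Rightarrow P$ box-TDI'', I would show the fixed integral TDI system $A x \le b$ is box-TDI, i.e.\ that $[A x \le b,\ l \le x \le u]$ is TDI for every rational $l \le u$. Here I would use the Giles--Pulleyblank characterization: a rational system is TDI iff, at every face of the polyhedron it defines, the constraint normals active on that face form a Hilbert basis of the cone they generate. Thus it suffices to show, for each face $G$ of $P \cap [l,u]_{\RR}$, that the active rows of $A$ together with the active unit vectors $\pm \unitvec{j}$ form a Hilbert basis. To bring in the hypothesis, choose a positive integer $\alpha$ clearing the denominators of $l$ and $u$; then $\alpha(P \cap [l,u]_{\RR}) = (\alpha P) \cap [\alpha l, \alpha u]_{\RR}$ is an integer polyhedron by assumption, as is every further integer-multiple dilation, so all integer dilations of the polytope $P \cap [l,u]_{\RR}$ are integral.

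The forward direction is essentially a one-line duality computation once the scaling identity is spotted, so I expect the difficulty to lie entirely in the converse: bridging \emph{primal} integrality of all dilations of $P \cap [l,u]_{\RR}$ to the \emph{dual} Hilbert-basis condition on the active normals. Concretely, the hard part will be a local, lattice-theoretic statement showing that if every integer dilation of the vertex cone at a face is integral, then the active constraint normals generate their cone as a Hilbert basis; this amounts to controlling the nonsingular submatrices $[A_{\sigma};\, \pm I]$ of active rows, and it is precisely here that the equimodularity analysis underlying \cite{CGR21} enters. I would expect this equivalence between box-integrality of dilations and the Hilbert-basis property to be the crux of the argument.
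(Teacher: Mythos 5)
The paper does not actually prove Proposition~\ref{PRdilat}; it is quoted from Chervet--Grappe--Robert \cite{CGR21} and stated without proof, so there is no in-paper argument to compare against and your proposal must stand on its own. Your forward direction does: fixing an integral TDI system $Ax\le b$ for the integer polyhedron $P$ and upgrading it to a box-TDI system via Cook's theorem is legitimate, and the scaling identity is exactly right --- the dual of $\max\{\langle w,x\rangle \mid Ax\le \alpha b,\ l\le x\le u\}$ and the dual of the corresponding problem over $P$ with the rational box $[l/\alpha,u/\alpha]_{\RR}$ share one feasible region and have objectives differing by the positive factor $\alpha$, so the integral optimal dual solution supplied by box-TDIness of $Ax\le b$ is also optimal for the dilated dual and gives an integral optimal value $\alpha\langle b,y^{*}\rangle+\langle u,s^{*}\rangle-\langle l,t^{*}\rangle$; the Edmonds--Giles criterion then yields integrality of $\alpha P\cap[l,u]_{\RR}$. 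This half is complete.

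The converse has a genuine gap, and it sits exactly where you say it does. A small repairable slip first: clearing denominators only gives integrality of the dilations $k\alpha\,(P\cap[l,u]_{\RR})$ for multiples of $\alpha$, not of ``all integer dilations,'' since for other factors $\beta$ the box $[\beta l,\beta u]_{\RR}$ need not be integral and the hypothesis does not apply. The substantive problem is the step from primal integrality of these dilations to the Hilbert-basis property of the active normals at each vertex of $P\cap[l,u]_{\RR}$ (equivalently, via Giles--Pulleyblank, to TDIness of $[Ax\le b,\ l\le x\le u]$). That implication is the entire content of the theorem: in \cite{CGR21} it is obtained by showing that a face whose active matrix fails equimodularity produces a dilation and an integral box whose intersection has a fractional vertex, and you explicitly defer to ``the equimodularity analysis underlying \cite{CGR21}'' rather than carrying it out. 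As written, the converse is therefore a reduction to an unproven lemma of essentially the same depth as the proposition itself, so the proposal proves one implication rigorously and only outlines the other.
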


\begin{example}\rm  \label{EXnonboxTDI2}
Here is an example of 
a $\{ 0,1 \}$-polyhedron that is not box-TDI.
Let
$P$  $(\subseteq \RR\sp{4})$ be the convex hull of 
$S = \{  (1,1,0,0),  \  (0,1,1,0), \  (1,0,1,0),  \allowbreak  (0,0,0,1) \}$
(considered in Example~\ref{EXnonboxTDI1}).
Since $P$ is a $\{ 0,1 \}$-polyhedron, it is obviously box-integer. 
However, 
the 2-dilation $2P$ is not box-integer.
To see this we note that 
\[
(1,1,1,1/2) =  \frac{1}{4} \big( (2,2,0,0) + (0,2,2,0) + (2,0,2,0) + (0,0,0,2) \big) \in 2P ,
\]
whereas
$(1,1,1,1/2) \in  [ l , u ]_{\RR}$
for $l =(1,1,1,0)$ and $u =(1,1,1,1)$,
and $l \notin 2P$ and $u \notin 2P$.
We can easily show that
$(2P) \cap [ l , u ]_{\RR}$
consists of 
$(1,1,1,1/2)$ only.
Thus $2P$ is not box-integer,
which implies, by Proposition~\ref{PRdilat}, that $P$ is not box-TDI.
\finbox
\end{example}

Following \cite{FM21boxTDI}
we call the set of integral elements of an integral box-TDI polyhedron 
a {\em discrete box-TDI set}, or just a {\em box-TDI set}.
A box-TDI set is an integrally convex set,
but the converse is not true
(Example~\ref{EXnonboxTDI2}).
That is, box-TDI sets form a proper subclass of integrally convex sets.
On the other hand,
the major classes of discrete convex sets
considered in discrete convex analysis 
are known to be box-TDI as follows.

\begin{proposition}[\cite{MM22L2poly}] \label{PRL2boxTDI} 
An \LLnat-convex set is a box-TDI set.
\finboxARX
\end{proposition}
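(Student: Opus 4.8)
The plan is to establish box-TDIness through the dilation criterion of Proposition~\ref{PRdilat}, exploiting the equivalence between box-integrality of a polyhedron and integral convexity of its lattice points (Proposition~\ref{PRintpolyIC}). Throughout I use that an \LLnat-convex set is, by definition, a Minkowski sum $S = S_{1} + S_{2}$ of two \Lnat-convex sets $S_{1}, S_{2} \subseteq \ZZ\sp{n}$ (the dual operation to the intersection used for \MMnat-convex sets), and that \LLnat-convex sets are integrally convex, as recorded in the Introduction. The strategy is \emph{not} to exhibit a totally unimodular description of $S$ itself (its natural inequalities $\unitvec{J} - \unitvec{I}$ with $|I|-|J|\in\{-1,0,1\}$ do not form a totally unimodular matrix), but to reduce everything, via dilation, to the \Lnat-convex summands, whose descriptions \emph{are} totally unimodular.

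First I would record the polyhedral bookkeeping. Since the convex hull of a Minkowski sum equals the Minkowski sum of the convex hulls, $P := \overline{S} = \overline{S_{1}} + \overline{S_{2}}$. By Table~\ref{TBpolydesc} each \Lnat-convex polyhedron $\overline{S_{k}}$ is described by a system $A_{k} x \le b_{k}$ whose rows have the form $\unitvec{j} - \unitvec{i}$ or $\pm \unitvec{i}$; such an $A_{k}$ is a network matrix, hence totally unimodular, and $b_{k}$ may be taken integral. Being the convex hull of the integrally convex set $S$, the polyhedron $P$ is an integer polyhedron, so Proposition~\ref{PRdilat} applies: it suffices to prove that the dilation $\alpha P$ is box-integer for every positive integer $\alpha$.

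Next I would analyze the dilation. For a positive integer $\alpha$,
\[
\alpha P = \alpha \overline{S_{1}} + \alpha \overline{S_{2}},
\qquad
\alpha \overline{S_{k}} = \{ x \in \RR\sp{n} \mid A_{k} x \le \alpha b_{k} \} .
\]
Because $A_{k}$ is totally unimodular and $\alpha b_{k}$ is integral, $\alpha \overline{S_{k}}$ is an integer polyhedron, and its defining inequalities are still of \Lnat-type with integral right-hand side; hence the lattice-point set $T_{k} := \alpha \overline{S_{k}} \cap \ZZ\sp{n}$ is an \Lnat-convex set with $\overline{T_{k}} = \alpha \overline{S_{k}}$. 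Using once more that Minkowski sum commutes with convex hull,
\[
\alpha P = \overline{T_{1}} + \overline{T_{2}} = \overline{T_{1} + T_{2}} .
\]
The set $T_{1} + T_{2}$ is a Minkowski sum of two \Lnat-convex sets, hence is \LLnat-convex and therefore integrally convex. By Proposition~\ref{PRintpolyIC} its convex hull $\overline{T_{1} + T_{2}} = \alpha P$ is a box-integer polyhedron.

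Since $\alpha P$ is box-integer for every positive integer $\alpha$, Proposition~\ref{PRdilat} yields that $P = \overline{S}$ is box-TDI; as $P$ is an integer polyhedron, $S = P \cap \ZZ\sp{n}$ is the set of integral elements of an integral box-TDI polyhedron, i.e., a box-TDI set. The one step demanding care — and the main obstacle — is the claim that $T_{1} + T_{2}$ is integrally convex: this is precisely the integral convexity of \LLnat-convex sets applied to the \emph{dilated} summands, and it hinges on $T_{k}$ genuinely being \Lnat-convex, which is guaranteed by the total unimodularity of $A_{k}$ together with the integrality of $\alpha b_{k}$. Everything else is the standard commutation of convex hull with Minkowski sum, combined with the two cited propositions.
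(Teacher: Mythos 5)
Your proof is correct, and it takes a genuinely different route from the one the paper relies on. The survey itself offers no argument for Proposition~\ref{PRL2boxTDI} beyond the citation of \cite{MM22L2poly}, where box-TDIness is obtained by first deriving the explicit polyhedral description of $\overline{S_{1}+S_{2}}$ (the inequality vectors $\unitvec{J}-\unitvec{I}$ with $|I|-|J|\in\{-1,0,1\}$ recorded in Table~\ref{TBpolydesc}) and then establishing total dual integrality of that concrete system directly. You bypass any explicit description: you invoke the dilation criterion of Chervet--Grappe--Robert (Proposition~\ref{PRdilat}), reduce box-integrality of $\alpha P$ to integral convexity of $T_{1}+T_{2}$ via Proposition~\ref{PRintpolyIC}, and feed in two facts the survey already records --- that the discrete Minkowski sum of two \Lnat-convex sets is integrally convex (Remark~\ref{RMmlsetMinkow}) and that \Lnat-convexity survives dilation (your totally unimodular argument for the system with rows $\pm\unitvec{i}$, $\unitvec{j}-\unitvec{i}$ is a correct substantiation of what Remark~\ref{RMscaldilset} asserts without proof, and it is exactly what guarantees $\overline{T_{k}}=\alpha\overline{S_{k}}$ with $T_{k}$ \Lnat-convex). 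The remaining steps --- $\overline{S_{1}+S_{2}}=\overline{S_{1}}+\overline{S_{2}}$, integrality and hole-freeness of $P=\overline{S}$ from integral convexity of $S$ (Proposition~\ref{PRicvsetholefree}), and $\alpha(Q_{1}+Q_{2})=\alpha Q_{1}+\alpha Q_{2}$ --- are all sound; note that the convex hulls here are genuine polyhedra even for unbounded $S_{k}$ precisely because \Lnat-convex and integrally convex sets have polyhedral hulls, so the pathology of Remark~\ref{RMconvhull} does not intervene. The trade-off: the route of \cite{MM22L2poly} additionally yields the facet description that the survey quotes in Table~\ref{TBpolydesc}, whereas your argument is shorter, stays entirely within the survey's toolbox, and correctly diagnoses why a direct TU argument on the $\unitvec{J}-\unitvec{I}$ description of $S$ itself cannot work.
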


\begin{proposition} \label{PRM2boxTDI} 
An \MMnat-convex set is a box-TDI set.
\finboxARX
\end{proposition}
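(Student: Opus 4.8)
The plan is to reduce the statement to the box-total dual integrality of the convex hull $\overline{S}$, and then to recognize that hull as the intersection of two integral generalized polymatroids, for which the box-TDI property is available from g-polymatroid intersection theory. First I would record the easy reductions. By definition $S = S_{1} \cap S_{2}$ with $S_{1}, S_{2}$ being \Mnat-convex sets, and since \Mnat-convex sets are integrally convex, $S$ is integrally convex. Hence, by the discussion of polyhedral aspects in Section~\ref{SCpolyh}, $\overline{S}$ is an integer polyhedron, and by Proposition~\ref{PRicvsetholefree} we have $S = \overline{S} \cap \ZZ\sp{n}$. Thus, to conclude that $S$ is a box-TDI set, it suffices to exhibit a box-TDI system describing $\overline{S}$.

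Next I would translate into g-polymatroid language. Each \Mnat-convex set $S_{i}$ is the set of integer points of an integral g-polymatroid $Q_{i} = \overline{S_{i}}$ determined by a paramodular pair $(\lambda_{i}, \mu_{i})$, with the natural description $\Sigma_{i} : \ \lambda_{i}(I) \leq \langle \unitvec{I}, x \rangle \leq \mu_{i}(I)$ for $I \subseteq N$ (consistent with the entries $a = \pm \unitvec{I}$ of Table~\ref{TBpolydesc}). One first checks $\overline{S} = Q_{1} \cap Q_{2}$, i.e.\ that the convex hull of the intersection equals the intersection of the hulls and is described by the union system $\Sigma = [\Sigma_{1}, \Sigma_{2}]$; this is exactly the integrality content of the g-polymatroid intersection theorem \cite{Fuj05book,Sch03}. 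The core task is then to show that $\Sigma$ is box-TDI, i.e.\ that for every rational box $[d,c]_{\RR}$ the augmented system $[\Sigma_{1}, \Sigma_{2}, \ d \leq x \leq c]$ is TDI. Here I would exploit that g-polymatroids are closed under intersection with a box: $Q_{2} \cap [d,c]_{\RR}$ is again an integral g-polymatroid $Q_{2}'$ with paramodular pair $(\lambda_{2}', \mu_{2}')$ obtained from $(\lambda_{2}, \mu_{2})$ by an \emph{explicit, integrality-preserving} truncation using $c$ and $d$. Since $Q_{1} \cap Q_{2} \cap [d,c]_{\RR} = Q_{1} \cap Q_{2}'$, the \emph{plain} g-polymatroid intersection theorem already guarantees that $[\Sigma_{1}, \Sigma_{2}']$ is TDI; given an integral objective $w$ with finite maximum, it yields integral optimal dual multipliers on $\Sigma_{1}$ and on $\Sigma_{2}'$. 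It then remains to convert the $\Sigma_{2}'$-multipliers back into integral multipliers on the original constraints $\Sigma_{2}$ together with the box rows $d \leq x \leq c$.

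I expect this last pull-back to be the main obstacle: one must verify, via an uncrossing/laminarity argument on the tight sets, that the integral dual solution for the truncated system $[\Sigma_{1}, \Sigma_{2}']$ decomposes into an integral dual solution for $[\Sigma_{1}, \Sigma_{2}, \ d \leq x \leq c]$, with the truncation formula for $(\lambda_{2}', \mu_{2}')$ furnishing precisely the box multipliers and preserving integrality at each step. A cleaner alternative, which I would fall back on if the bookkeeping becomes unwieldy, is to model $Q_{1} \cap Q_{2}$ as the feasible set of a submodular flow problem and invoke the Edmonds--Giles theorem that submodular flow systems are box-TDI; this bypasses the explicit truncation argument at the cost of setting up an auxiliary network. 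Either route delivers a box-TDI description of $\overline{S}$, and combined with the reductions of the first paragraph establishes that $S$ is a box-TDI set. (It is worth noting the structural parallel with Proposition~\ref{PRL2boxTDI} for \LLnat-convex sets, which is the intersection counterpart on the \Lnat-convex side.)
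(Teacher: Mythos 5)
Your proposal matches the paper's approach: the paper offers no detailed proof, saying only that the proposition is a reformulation of the fundamental fact about (generalized) polymatroid intersection in \cite{Sch03}, which is precisely the route you take (your submodular-flow fallback via Edmonds--Giles is the standard way that box-TDI fact is established, and your truncation/pull-back route also works since the dual multiplier on a truncated row $\langle \unitvec{I}, x\rangle \leq \mu_{2}'(I)$ splits directly into a multiplier on the minimizing row of $\Sigma_{2}$ plus box-row multipliers). One caution on your first paragraph: you infer integral convexity of $S = S_{1} \cap S_{2}$ from integral convexity of each $S_{i}$, but intersection does not preserve integral convexity in general (cf.\ Example~\ref{EXicsetinter}); the integral convexity of an \MMnat-convex set is itself a consequence of the g-polymatroid intersection theorem you invoke in the second paragraph, so that step should be drawn from there rather than asserted up front.
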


Proposition~\ref{PRL2boxTDI} for \LLnat-convex sets
is established recently in \cite{MM22L2poly}
and Proposition~\ref{PRM2boxTDI} 
for \MMnat-convex sets is a reformulation
of the fundamental fact about polymatroid intersection \cite{Sch03} 
in the language of discrete convex analysis.
Theses propositions imply, in particular, that 
\LL-, \Lnat-,  L-, \MM-, \Mnat-,  M-convex sets are all box-TDI.

The following are examples of 
a box-TDI set $S$ that is neither \LLnat-convex nor \MMnat-convex. 
The former consists of $\{ 0,1 \}$-vectors and the latter arises from a cone.

\begin{example}\rm  \label{EXnonL2M2boxTDI}
Consider $S = \{  (0,0,0),  \ (1,1,0), \  (1,0,1), \  (0,1,1) \}$.
This set is described by four inequalities 
\[
x_{1} + x_{2} + x_{3} \leq 2,
\ 
x_{1} - x_{2} - x_{3} \leq 0,
\  
- x_{1} + x_{2} - x_{3} \leq 0,
\  
- x_{1} - x_{2} + x_{3} \leq 0.
\]
The first inequality,
of the form of $a\sp{\top} x  \leq b$
with $a =(1,1,1)$,
denies \LLnat-convexity of $S$,
because we must have
$a=\unitvec{J} - \unitvec{I}$  
with $|I|-|J| \in \{ -1,0,1 \}$ for an \LLnat-convex set
(see Table~\ref{TBpolydesc}).
In the second inequality we have $a =(1,-1,-1)$,
which denies \MMnat-convexity of $S$,
because $a=\pm \unitvec{I}$ for an \MMnat-convex set.
The set $S$ is box-TDI, that is, its convex hull $\overline{S}$ 
is a box-TDI polyhedron,
which we can verify on the basis of
Proposition~\ref{PRdilat}.
\finbox
\end{example}

\begin{example}\rm  \label{EXnonL2M2boxTDIcone}
The set $S = \{ x  \in \ZZ\sp{2} \mid 
x_{1} + x_{2}  \leq 0,
x_{1} - x_{2}  \leq 0
 \}$
is neither \LLnat-convex nor \MMnat-convex,
whereas
it is box-TDI since
the convex hull $\overline{S}$ is a box-TDI polyhedron by Proposition~\ref{PRdilat}.
\finbox
\end{example}

We can summarize the above argument as
{\small
\[
 \{ \mbox{\LLnat-convex sets} \} 
  \cup \{ \mbox{\MMnat-convex sets} \} 
\subsetneqq \{ \mbox{box-TDI sets} \} 
\subsetneqq \{ \mbox{integrally convex sets} \} ,
\]
}%small
where 
Examples \ref{EXnonboxTDI2}, \ref{EXnonL2M2boxTDI}, and \ref{EXnonL2M2boxTDIcone}
demonstrate the strict inclusions ($\subsetneqq $).
See Fig.~\ref{FGdcsetclassTDI}.

%\newpage

\subsection{Basic operations}
\label{SCoperset}

In this section we show how integral convexity of a set behaves under basic operations.
Let $S$ be a subset of $\ZZ\sp{n}$, i.e.,  $S \subseteq \ZZ\sp{n}$.

\paragraph{Origin shift:}

For an integer vector $b \in \ZZ\sp{n}$, the
{\em origin shift}
of $S$ by $b$ means a set $T$
defined by
$T  = \{  x - b \mid x \in S \}$.
The origin shift of 
an integrally convex set
is an integrally convex set.

\paragraph{Inversion of coordinates:}

The {\em independent coordinate inversion} of $S$ 
means a set $T$ defined by
\[
T  =
\{ (\tau_{1} x_{1}, \tau_{2} x_{2}, \ldots,  \tau_{n}x_{n}) 
          \mid (x_{1},x_{2}, \ldots, x_{n}) \in S \}
\]
with an arbitrary choice of $\tau_{i} \in \{ +1, -1 \}$ $(i=1,2,\ldots,n)$.
The independent coordinate inversion of an integrally convex set is an integrally convex set.
This is a nice property of integral convexity, not shared by
\Lnat-, \LLnat-, \Mnat, or \MMnat-convexity.

\paragraph{Permutation of coordinates:}

For a permutation $\sigma$ of $(1,2,\ldots,n)$,
the {\em permutation} of $S$ by $\sigma$
means a set $T$ 
defined by
\begin{equation*}
T =   \{ (y_{1},y_{2}, \ldots, y_{n})   
 \mid (y_{\sigma(1)}, y_{\sigma(2)}, \ldots, y_{\sigma(n)}) \in S \}.
\end{equation*}
The permutation of an integrally convex set is an integrally convex set.

\begin{remark} \rm  \label{RMunimodIC}
Integral convexity is not preserved under a transformation
by a (totally) unimodular matrix.
For example, 
$S=\{ (0,0), (1,0), (1,1) \}$
is integrally convex
and 
$A =$
{\small
$\left[ 
\begin{array}{cc}
    1     &  1     \\
    0     &  1 \\
\end{array}  \right]
$}
%small
is totally unimodular.
However,  
$\{ Ax \mid x \in S \} = \{ (0,0), (1,0), (2,1) \}$
is not integrally convex.
\finbox
\end{remark}

\paragraph{Scaling:}
For a positive integer $\alpha$,
the {\em scaling} of $S$ by $\alpha$
means a set $T$ 
defined by
\begin{equation} \label{scalesetdef}
T = \{ (y_{1},y_{2}, \ldots, y_{n}) \in \ZZ\sp{n}
 \mid (\alpha y_{1}, \alpha y_{2}, \ldots, \alpha y_{n}) \in S \}.
\end{equation}
Note that the same scaling factor $\alpha$ is used for all coordinates.
If $\alpha = 2$, for example, this operation amounts to considering the
set of even points contained in $S$.
The scaling of an integrally convex set
is not necessarily integrally convex
(Example~\ref{EXscICsetNG422} below).
However, when $n = 2$, 
integral convexity admits the scaling operation.
That is, if $S \subseteq \ZZ^{2}$
is integrally convex, then 
$T = \{ y \in \ZZ^{2} \mid \alpha y \in S \}$
is integrally convex
(\cite[Proposition~3.1]{MMTT19proxIC}).

%%%  FIGURE %%%%%%%%%%%%%%%%%%
%%\input{fgICsetscale}
\begin{figure}\begin{center}
\includegraphics[height=45mm]{./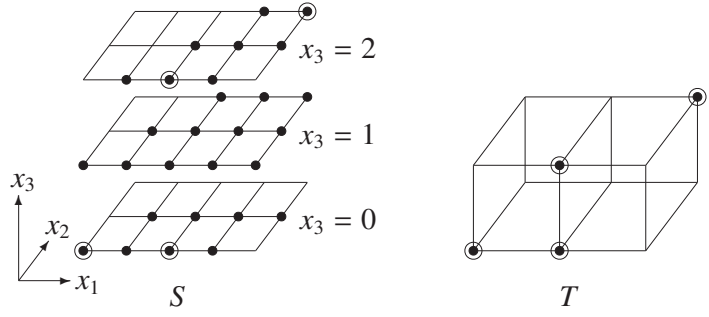}
\caption{An integrally convex set $S$ and its scaled set $T$ 
(Example~\ref{EXscICsetNG422}) \cite{MMTT19proxIC}.}
\label{FGicsetsc}
\end{center}\end{figure}
%%%  FIGURE %%%%%%%%%%%%%%%%%%

\begin{example}[{\cite[Example 3.1]{MMTT19proxIC}}] \rm \label{EXscICsetNG422}
This example shows that integral convexity is not preserved under scaling.
Let $S$ be a subset of $\ZZ^{3}$ defined  by
\begin{align*}
S  = & 
\{ (x_{1},x_{2},0) \mid 
0 \leq x_{2} \leq 1,  \  0 \leq x_{1} - x_{2} \leq 3 
\}  
\\ & \cup
\{ (x_{1},x_{2},1) \mid 
0 \leq x_{2} \leq 2, \  x_{2} \leq x_{1} \leq 4
\}  
\\ & \cup
\{ (x_{1},x_{2},2) \mid 
0 \leq x_{2} \leq 2, \ 1 \leq x_{1} - x_{2} \leq 3, \  x_{1} \leq 4
\}  ,
\end{align*}
which is an integrally convex set (Fig.~\ref{FGicsetsc}, left).
With the scaling factor $\alpha=2$,
however, 
the scaled set
$ T = \{ y \in \ZZ\sp{3} \mid 2 y \in S \} 
= \{ (0,0,0), (1,0,0), (1,0,1),  \allowbreak  (2,1,1) \}$
is not integrally convex  
(Fig.~\ref{FGicsetsc}, right).
\finbox
\end{example}

\paragraph{Dilation:} 
The dilation operation for a polyhedron
(described in Section~\ref{SCpolyh})
is another kind of scaling operation.
An adaptation of this operation 
to a hole-free discrete set $S \subseteq \ZZ\sp{n}$,
we may call the set 
$T' = (\alpha \overline{S}) \cap \ZZ\sp{n}$
the 
\kwd{$\alpha$-dilation}
of $S$, where $\alpha$ is a positive integer.
Note that 
the scaling in \eqref{scalesetdef} can be expressed as 
$T = (\frac{1}{\alpha} \overline{S}) \cap \ZZ\sp{n}$
when $S$ is hole-free.

The dilation operation does not always preserve integral convexity.
Indeed,
Example~\ref{EXnonboxTDI2} shows that
the $2$-dilation of an integrally convex set is not 
necessarily integrally convex.

\begin{remark} \rm  \label{RMscaldilset}
Failure of dilation operation 
is rather exceptional for discrete convex sets.
Indeed, all kinds of discrete convexity
(box, L-, \Lnat-, \LL-, \LLnat-, M-, \Mnat-, \MM-, \MMnat-convexity, and multimodularity)
listed in Table~\ref{TBpolydesc}
are preserved under the dilation operation.
In contrast, the scaling operation in \eqref{scalesetdef} 
preserves L-convexity and its relatives 
(box, L-, \Lnat-, \LL-, \LLnat-convexity, and multimodularity),
and not M-convexity and its relatives 
(M-, \Mnat-, \MM-, \MMnat-convexity).
\finbox
\end{remark}

\paragraph{Restriction:}

For a set $S \subseteq \ZZ\sp{N}$
and a subset $U$ of the index set $N = \{ 1,2,\ldots, n \}$,
the {\em restriction} 
of $S$ to $U$ is a subset $T$ of $\ZZ\sp{U}$ defined by
\begin{equation*}  %%\label{restrsetdef}
 T =  \{ y \in \ZZ\sp{U} \mid  (y,\veczero_{N \setminus U}) \in S \}, 
\end{equation*}
where $\veczero_{N \setminus U}$ denotes the zero vector 
in $\ZZ\sp{N \setminus U}$.
The notation 
$(y,\veczero_{N \setminus U})$ means the vector in $\ZZ\sp{N}$
whose $i$th component is equal to $y_{i}$ for $i \in U$
and to $0$ for $i \in N \setminus U$.
The restriction 
of an integrally convex set is integrally convex 
(if the resulting set is nonempty).

\paragraph{Projection:}

For a set $S \subseteq \ZZ\sp{N}$
and a subset $U$ of the index set $N = \{ 1,2,\ldots, n \}$,
the
{\em projection}
of $S$ to $U$ is a subset $T$ of $\ZZ\sp{U}$ defined by
\begin{equation}   \label{projsetdef}
 T =  \{ y \in \ZZ\sp{U} \mid  (y,z) \in S \mbox{ for some $z \in \ZZ\sp{N \setminus U}$} \}, 
\end{equation}
where the notation $(y,z)$ means the vector in $\ZZ \sp{N}$
whose $i$th component is equal to $y_{i}$ for $i \in U$
and to $z_{i}$ for $i \in N \setminus U$.
The projection of an integrally convex set
is integrally convex
(\cite[Theorem~3.1]{MM19projcnvl}).

\paragraph{Splitting:}

Suppose that we are given a family 
$\{ U_{1},  U_{2}, \dots , U_{n} \}$
of  disjoint nonempty sets
indexed by $N = \{ 1, 2, \dots , n\}$. 
Let $m_{i}= |U_{i}|$ for $i=1,2,\ldots, n$ and define 
$m= \sum_{i=1}\sp{n} m_{i}$, where $m \geq n$.
For each $i \in N$
we define an $m_{i}$-dimensional vector
$y_{[i]} = ( y_{j} \mid  j \in U_{i} )$
and express $y \in \ZZ\sp{m}$ as
$y = (y_{[1]}, y_{[2]}, \dots , y_{[n]})$. 
For a set $S \subseteq \ZZ\sp{n}$, 
the subset of $\ZZ\sp{m}$ defined by 
\begin{equation*} %%\label{splitsetdef}
 T = \{ (y_{[1]}, y_{[2]}, \dots , y_{[n]})  \in  \ZZ\sp{m} \mid
  y_{[i]} \in \ZZ\sp{m_{i}}, \   y_{[i]}(U_{i}) = x_{i}  \ \ (i \in N) , \ x  \in S   \}
\end{equation*}
is called the {\em splitting} of $S$ by
$\{ U_{1},  U_{2}, \dots , U_{n} \}$,
where $y_{[i]}(U_{i}) = \sum\{  y_{j} \mid j \in U_{i} \}$.
For example,
$T = \{ (y_{1}, y_{2}, y_{3}) \in  \ZZ\sp{3} \mid (y_{1}, y_{2}+ y_{3}) \in S  \}$
is a splitting of $S \subseteq \ZZ\sp{2}$
for $U_{1} = \{ 1 \}$ and $U_{2} = \{ 2,3 \}$,
where $n=2$ and $m=3$.
The splitting of an integrally convex set is integrally convex
(\cite[Proposition~3.4]{Mopernet21}).

\paragraph{Aggregation:}

Let 
$\mathcal{P} =  \{ N_{1},  N_{2}, \dots , N_{m} \}$
 be a  partition of $N = \{ 1,2, \ldots, n \}$ into disjoint nonempty subsets:
$N = N_{1} \cup N_{2} \cup \dots \cup N_{m}$
and
$N_{i} \cap N_{j} = \emptyset$ for $i \not= j$.
For a set $S \subseteq \ZZ\sp{N}$ 
the subset of $\ZZ\sp{m}$,
where $m \leq n$,
 defined by 
\begin{equation*}  %%\label{aggrsetdef}
 T = \{ (y_{1}, y_{2}, \dots , y_{m}) \in  \ZZ\sp{m} \mid
     y_{j} = x(N_{j}) \ (j=1,2,\ldots,m), \ x \in S \}
\end{equation*}
is called the {\em aggregation} of $S$ by $\mathcal{P}$.
For example,
$T = \{ (y_{1}, y_{2}) \in  \ZZ\sp{2} \mid 
y_{1} = x_{1},
y_{2} = x_{2} + x_{3}
\mbox{ for some } (x_{1}, x_{2}, x_{3}) \in S  \}$
is an aggregation of $S \subseteq \ZZ\sp{3}$
for $N_{1} = \{ 1 \}$ and $N_{2} = \{ 2,3 \}$,
where $n=3$ and $m=2$.
The aggregation of an integrally convex set
is not necessarily integrally convex.

\begin{example}[{\cite[Example~3.4]{Mopernet21}}] \rm \label{EXicvsetaggr}
Set
$S = \{ (0,0,1,0), (0,0,0,1), (1,1,1,0),   \allowbreak  (1,1,0,1) \}$
is an integrally convex set.
For the partition of $N = \{ 1,2,3,4 \}$
into $N_{1} = \{ 1,3 \}$ and $N_{2} = \{ 2,4 \}$,
the aggregation of $S$
by $\{ N_{1}, N_{2} \}$ 
is given by
$T = \{ (1,0), (0,1), (2,1), (1,2) \}$,
which is not integrally convex.
\finbox
\end{example}

\paragraph{Intersection:}

The intersection $S_{1} \cap S_{2}$ 
of integrally convex sets
$S_{1}$, $S_{2} \subseteq \ZZ\sp{n}$
is not necessarily integrally convex
(Example~\ref{EXicsetinter} below).
However, it is obviously true
(almost from definition) 
that the intersection of an integrally convex set
with a box of integers is integrally convex.

\begin{example}[{\cite[Example 4.4]{MS01rel}}] \rm \label{EXicsetinter}
The intersection of two integrally convex sets
is not necessarily integrally convex. 
Let
$S_{1}  =  \{(0, 0, 0), \allowbreak (0, 1, 1), (1, 1, 0),  \allowbreak (1, 2, 1)\}$
and 
$S_{2}  =  \{(0, 0, 0), \allowbreak (0, 1, 0), (1, 1, 1), \allowbreak (1, 2, 1)\}$,
for which
$S_{1} \cap S_{2}= \{(0, 0, 0), (1, 2, 1)\}$.
The sets $S_{1}$ and $S_{2}$ are integrally convex,
whereas  $S_{1} \cap S_{2}$ is not.
\finbox
\end{example}

\paragraph{Minkowski sum:}

The {\em Minkowski sum}
of two sets $S_{1}$, $S_{2} \subseteq \ZZ\sp{n}$ 
means the subset of $\ZZ\sp{n}$ 
defined by
\begin{equation} \label{minkowsumZdef}
S_{1}+S_{2} = 
\{ x + y \mid x \in S_{1}, \  y \in S_{2} \} .
\end{equation}
The Minkowski sum of integrally convex sets
is not necessarily integrally convex
(Example~\ref{EXicdim2sumhole} below).
However, 
the Minkowski sum of an integrally convex set
with a box of integers is integrally convex
(\cite[Theorem~4.1]{MM19projcnvl}).

%%%%%%%%%%%%%%
\begin{example}[{\cite[Example 3.15]{Mdcasiam}}] \rm \label{EXicdim2sumhole}
The Minkowski sum of
$S_{1} = \{ (0,0), (1,1) \}$
and
$S_{2} = \{ (1,0), (0,1) \}$
is equal to 
$S_{1}+S_{2} = \{ (1,0), (0,1), (2,1), (1,2) \}$,
which has a ``hole'' at $(1,1)$, i.e.,
$(1,1) \in \overline{S_{1}+S_{2}}$ and
$(1,1) \not\in S_{1}+S_{2}$.
\finbox
\end{example}

%%%  FIGURE %%%%%%%%%%%%%%%%%%
%%\input{./figs/DCAEfg3minkowhole}
\begin{figure}\begin{center}
\includegraphics[height=33mm]{./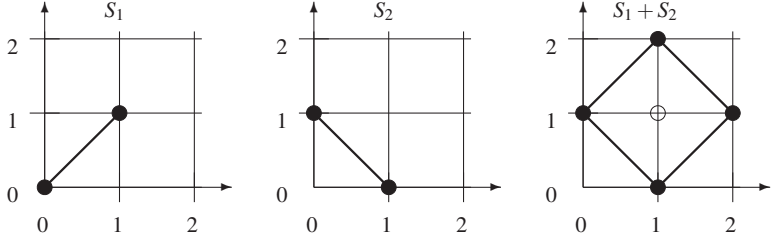}
\caption{Failure of convexity in Minkowski sum \eqref{convminkowsumG}.}
\label{FGminkowhole}
\end{center}\end{figure}
%%%  FIGURE %%%%%%%%%%%%%%%%%%

%%%%%%%%%%%%%%%%%%%%%%%%%%%%%%%%%%%%%
\begin{remark} \rm  \label{RMconvminkowsumG}
The Minkowski sum is often a source of difficulty in a discrete setting,
because 
\begin{equation} \label{convminkowsumG}
   S_{1}+S_{2}  = ( \overline{S_{1}+ S_{2}}) \cap \ZZ\sp{n} 
\end{equation}
is not always true
(Example~\ref{EXicdim2sumhole}).
In other words, the equality \eqref{convminkowsumG}, 
if true, captures a certain essence
of discrete convexity.
The property
\eqref{convminkowsumG} 
is called 
``convexity in Minkowski sum''
in \cite[Section 3.3]{Mdcasiam}.
We sometimes call \eqref{minkowsumZdef} the
{\em discrete (or integral) Minkowski sum}
of $S_{1}$ and $S_{2}$ to emphasize discreteness.
\finbox
\end{remark}

\begin{remark} \rm  \label{RMmlsetMinkow}
The Minkowski sum plays a central role in discrete convex analysis.
The Minkowski sum of two (or more) M$\sp{\natural}$-convex sets
is M$\sp{\natural}$-convex.
The Minkowski sum of two L$\sp{\natural}$-convex sets
is not necessarily L$\sp{\natural}$-convex,
but it is integrally convex.
The Minkowski sum of three \Lnat-convex sets
is no longer integrally convex.
For example (\cite[Example 4.12]{MS01rel}),
$S_{1} = \{(0, 0, 0), (1, 1, 0)\}$, $S_{2} = \{(0, 0, 0), (0, 1, 1)\}$, and
$S_{3} = \{(0, 0, 0), (1, 0, 1)\}$
are \Lnat-convex sets, and their Minkowski sum 
$S = S_{1} + S_{2} + S_{3}$
is given as 
\[
S = \{(0,0,0),(0,1,1),(1,1,0),(1,0,1),(2,1,1),(1,1,2),(1,2,1),(2,2,2)\},
\]
which is not integrally convex, since
$(1,1,1) \in \overline{S}$ and
$(1,1,1) \not\in S$.
\finbox
\end{remark}

The following theorem is a discrete analogue of a well-known
decomposition of a polyhedron into a bounded part and 
a conic part (recession cone or characteristic cone)
\cite[Theorem 8.5]{Sch86}.
An integrally convex set is called {\em conic} if its convex hull is a cone.

\begin{theorem}[\cite{MT22ICdecmkw}]  \label{THicsetDecZ}
Every integrally convex set $S$
can be represented as a (discrete) Minkowski sum of
a bounded integrally convex set $Q$
and a conic integrally convex set $C$,
that is, $S = Q + C$. 
\finboxARX
\end{theorem}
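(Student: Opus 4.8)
The plan is to lift the classical polyhedral decomposition $P = P_{0} + C'$ (\cite[Theorem~8.5]{Sch86}) to the integer world. Write $P := \overline{S}$, which is an integer polyhedron because $S$ is integrally convex. Let $C' := \mathrm{rec}(P)$ be its characteristic cone and let $P_{0} := \mathrm{conv}\{\text{vertices of }P\}$, a rational polytope, so that $P = P_{0} + C'$; since $P$ is an integer polyhedron, $C'$ is a rational cone and its extreme rays are spanned by $\{-1,0,+1\}$-vectors by Proposition~\ref{PRedgeICset} (applied to the unbounded edges of $P$). Set $C := C' \cap \ZZ^{n}$. Using hole-freeness (Proposition~\ref{PRicvsetholefree}) together with Proposition~\ref{PRinfdirICset}, one checks that $C = \{ d \in \ZZ^{n} \mid S + d \subseteq S \}$: indeed $d \in C'$ gives $x + d \in P \cap \ZZ^{n} = S$ for every $x \in S$, while conversely $S + d \subseteq S$ forces $x + k d \in P$ for all $k$, i.e.\ $d \in C'$. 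Hence $C$ is a submonoid of $\ZZ^{n}$ with $\overline{C} = C'$, so $C$ is conic, and the $\{-1,0,+1\}$ generators $g_{1},\dots,g_{r}$ of the extreme rays of $C'$ lie in $C$.

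For the bounded part I would take $Q := S \cap B_{0}$ for a sufficiently large integral box $B_{0}$, chosen to contain $P_{0}$ together with the fundamental region $P_{0} + \{ \sum_{i} \nu_{i} g_{i} \mid 0 \le \nu_{i} < 1 \}$. Then $Q$ is bounded and, being the intersection of the integrally convex set $S$ with a box of integers, is integrally convex. The inclusion $Q + C \subseteq S$ is immediate, since $Q + C \subseteq \overline{S} + C' = \overline{S}$, the sum is integral, and $S$ is hole-free. For the reverse inclusion I would argue by descent: given $x \in S$, write $x = p_{0} + \sum_{i} \mu_{i} g_{i}$ with $p_{0} \in P_{0}$ and $\mu_{i} \ge 0$; if some $\mu_{i} \ge 1$ then $x - g_{i} = p_{0} + \sum_{j} \mu_{j} g_{j} - g_{i} \in P_{0} + C' = P$, whence $x - g_{i} \in S$ by hole-freeness, with strictly smaller recession part, while if every $\mu_{i} < 1$ then $x \in B_{0}$, i.e.\ $x \in Q$. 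Subtracting generators repeatedly (measuring progress by a linear functional positive on $C'$ modulo its lineality) terminates in $Q$, giving $x \in Q + C$, so $S = Q + C$.

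It remains to prove that $C$ is integrally convex, and this is the crux. Since $C = C' \cap \ZZ^{n}$ is automatically hole-free, by Proposition~\ref{PRintpolyIC} this is equivalent to showing that the recession cone $C'$ of the box-integer polyhedron $\overline{S}$ is itself box-integer. I would reduce it to the midpoint criterion (Theorem~\ref{THicSetmidpt}): given $d, d' \in C$ with $\| d - d' \|_{\infty} \ge 2$ and $p := (d+d')/2 \in C'$, fix $y \in S$ and a large integer $M$, and set $s_{0} := y + M(d+d') \in S$ and $m := y + (2M+1) p$, which is the midpoint of $y + (2M+1) d$ and $y+(2M+1)d'$, both in $S$. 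Since $m \in \overline{S}$, integral convexity of $S$ gives $m = \sum_{j} \lambda_{j} s^{(j)}$ with $s^{(j)} \in S \cap N(m)$; because $m = s_{0} + p$ we have $\lfloor m \rfloor = s_{0} + \lfloor p \rfloor$, so that $c^{(j)} := s^{(j)} - s_{0} = \lfloor p \rfloor + \epsilon^{(j)} \in N(p)$ with $\epsilon^{(j)} \in \{0,1\}^{n}$, and $\sum_{j} \lambda_{j} c^{(j)} = m - s_{0} = p$. Thus $p$ is exhibited as a convex combination of points of $N(p)$.

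The hard part is the final step: showing that each $c^{(j)} = s^{(j)} - s_{0}$ is a genuine recession direction, i.e.\ $c^{(j)} \in C$, so that $p \in \overline{C \cap N(p)}$. This is exactly where box-integrality of $\overline{S}$ must be transported ``to infinity'', and where the $\{-1,0,+1\}$ edge-direction structure of Proposition~\ref{PRedgeICset} is essential: a general rational cone with integer extreme rays need \emph{not} be box-integer, as the cone spanned by $(2,1)$ and $(1,2)$ shows, so the argument cannot succeed for arbitrary integer polyhedra. I expect this step to require a Fourier--Motzkin elimination argument on the facet description of $\overline{S}$, ruling out the fractional vertices that a non-box-integer recession cone would produce upon intersection with an integral box. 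Once box-integrality of $C'$ is established, $C$ is integrally convex and conic, $Q$ is integrally convex and bounded, and $S = Q + C$ completes the proof; the non-pointed case is handled by first splitting off the lineality space of $C'$, along which $S$ is fully translation-invariant.
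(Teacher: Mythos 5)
A preliminary remark: the paper itself gives no proof of Theorem~\ref{THicsetDecZ}; it is quoted from \cite{MT22ICdecmkw}, listed as ``in preparation,'' so your attempt can only be judged on its own merits. Your overall architecture is sound and in fact essentially forced by the statement: if $S=Q+C$ with $Q$ bounded and $C$ conic, then $\overline{C}$ must be the recession cone of $\overline{S}$, so $C=\mathrm{rec}(\overline{S})\cap\ZZ\sp{n}$; your identification of $C$, the choice $Q=S\cap B_{0}$, the inclusion $Q+C\subseteq S$ via hole-freeness, and the descent argument for $S\subseteq Q+C$ are all correct (modulo the usual care with the lineality space).

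The genuine gap is exactly where you place it, but it is a gap and not merely a ``hard part'': you never prove that $C$ is integrally convex, and this is the entire mathematical content of the theorem --- everything else is routine polyhedral bookkeeping. Your translate-to-infinity construction correctly exhibits $p=(d+d')/2$ as a convex combination of points $c\sp{(j)}\in N(p)$ with $s_{0}+c\sp{(j)}\in S$, but the inference you need, namely $c\sp{(j)}\in C$, does not follow from $s_{0}+c\sp{(j)}\in S$ for one large $M$, nor even for infinitely many $M$: take $S=\{x\in\ZZ\sp{2}\mid x_{2}\geq 0\}$, $c=(0,-1)$, and $s_{0}$ with large second coordinate; then $s_{0}+c\in S$ for every such $s_{0}$ although $c\notin\mathrm{rec}(\overline{S})$. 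So the convex combination your argument produces may genuinely involve points of $N(p)$ that are not recession directions, and one must either show that a \emph{different} representation supported on $C\cap N(p)$ exists or extract more from the integral convexity of $S$ than mere membership of finitely many translates. Your closing sentence (``I expect this step to require a Fourier--Motzkin elimination argument\dots'') is a conjecture about a proof, not a proof; and your own example of the cone generated by $(2,1)$ and $(1,2)$ shows that no purely polyhedral property of $\overline{S}$ established so far (integer polyhedron, $\{-1,0,+1\}$ edge directions) suffices, so the missing lemma cannot be waved through. Until the integral convexity of $\mathrm{rec}(\overline{S})\cap\ZZ\sp{n}$ is actually established, the proposal does not prove the theorem.
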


%\newpage

%% 2022-11-13 / 2022-11-21 / 2023-02-20

\section{Integrally convex functions}
\label{SCfnconcept}

\subsection{Convex extension}
\label{SCfnconvext}

For a function 
$g: \RR\sp{n} \to \RR \cup \{ -\infty, +\infty \}$ 
in general, 
$\dom g := \{ x \in \RR\sp{n} \mid -\infty < g(x) < +\infty \}$
is called the {\em effective domain} of $g$.
In this section we always assume that
$f: \ZZ^{n} \to \RR \cup \{ +\infty  \}$
and $\dom f \ne \emptyset$, that is,
$f$ is a function defined on $\ZZ^{n}$
taking values in 
$\RR \cup \{ +\infty  \}$ and 
$\dom f = \{ x \in \ZZ\sp{n} \mid  f(x) < +\infty \}$
is nonempty.

We say that $f$ is
\kwd{convex-extensible}
if there exists a convex function 
$g: \RR^{n} \to \RR \cup \{ +\infty  \}$
satisfying 
$g(x) = f(x)$ for all $x \in \ZZ^{n}$.
When $n=1$, 
$f: \ZZ \to \RR \cup \{ +\infty  \}$
is convex-extensible
if and only if 
$\dom f$ is an interval of integers and
$f(k-1) + f(k+1) \geq 2 f(k)$
for all $k \in \ZZ$.
In this case, a convex extension of $f$
is given by
the piecewise-linear function
whose graph consists of line segments connecting
$(k,f(k))$ and $(k+1,f(k+1))$ 
for all $k \in \ZZ$.

We say that a function
$g: \RR^{n} \to \RR \cup \{ +\infty  \}$
\kwd{minorizes} $f$ if
$g(x) \leq f(x)$ for all $x \in \ZZ^{n}$.
In this section we always assume that 
$f$ is minorized by some affine function
$g(x) = \langle p, x \rangle + \alpha$,
where $p \in \RR\sp{n}$, $\alpha \in \RR$, and
$\langle p, x \rangle := \sum_{i=1}\sp{n} p_{i} x_{i}$
denotes the inner product
(or duality pairing, to be more precise)
 of $p=(p_{1}, p_{2}, \ldots, p_{n})$ and 
$x=(x_{1}, x_{2}, \allowbreak \ldots, \allowbreak  x_{n})$.
Note that every convex-extensible function is minorized 
by an affine function.

The {\em convexification}
of $f$,
to be denoted by $\check{f}$,
is defined as
\begin{equation} \label{fnconvenvInf}
 \check{f}(x)  
:=  \inf_{\lambda}\{ \sum_{y} \lambda_{y} f(y) \mid
      \sum_{y} \lambda_{y} y = x,  
  (\lambda_{y})  \in \Lambda \}
\quad (x \in \RR^{n}) ,
\end{equation}	
where $\Lambda$ denotes the set of coefficients 
for convex combinations indexed by $y\in \ZZ\sp{n}$:
\begin{equation*} %%\label{cvcoefLamda2}
  \Lambda = \{ (\lambda_{y} \mid y \in \ZZ\sp{n} ) \mid 
      \sum_{y} \lambda_{y} = 1, 
      \lambda_{y} \geq 0 \ \mbox{for all $y$}, 
      \lambda_{y} > 0 \ \mbox{for finitely many $y$}  
 \} .
\end{equation*} 
It is known \cite[Section B.2.5]{HL01}
that $\check{f}$ is a convex function
and that $\check{f}$ coincides with the pointwise supremum of all convex functions 
minorizing $f$,
that is,
\begin{equation*} %%\label{fnconvenvSup}
 \check{f}(x) = 
  \sup \{ g(x) \mid 
 \mbox{$g$ is convex, $g(y) \leq f(y)$ for all $y \in \ZZ\sp{n}$}
 \}
\quad (x \in \RR^{n}) .
\end{equation*}	
Therefore, $f$ is convex-extensible 
if and only if
$\check{f}(x) = f(x)$ for all $x \in \ZZ^{n}$.

The {\em convex envelope}
 of $f$,
to be denoted by $\overline{f}$,
is defined as
the pointwise supremum of all affine functions 
minorizing $f$, that is,
\begin{equation} \label{fnconvenvClSupAff}
 \overline{f}(x)  :=
 \sup_{p,\alpha}\{ 
\langle p, x \rangle + \alpha \mid 
\langle p, y \rangle + \alpha \leq f(y) \ 
(\forall y \in \ZZ\sp{n}) \}
\quad (x \in \RR\sp{n}).
\end{equation}
This function
$\overline{f}$ is a closed convex function
and coincides with the pointwise supremum of all closed convex functions
minorizing $f$,
that is,
\begin{equation*} %%\label{fnconvenvClSupConv}
 \overline{f}(x) = 
  \sup \{ g(x) \mid 
 \mbox{$g$ is closed convex, $g(y) \leq f(y)$ for all $y \in \ZZ\sp{n}$} \}
\quad (x \in \RR^{n}) .
\end{equation*}	
In this paper we often refer to the condition
\begin{equation} \label{fnconvextcl}
  f(x) =  \overline{f}(x) 
\quad (x \in \ZZ^{n}) 
\qquad (\mbox{i.e., \  $f = \overline{f}\,|_{\ZZ\sp{n}}$})
\end{equation}	
as the convex-extensibility of $f$,
although this condition is slightly stronger than the condition 
$f = \check{f}\,|_{\ZZ\sp{n}}$
mentioned above.
Accordingly,
we often refer to $\overline{f}$ as the {\em convex extension}
of $f$ if \eqref{fnconvextcl} is the case.

\begin{example} \rm \label{EXicquadext}
The quadratic function
$f(x)=x\sp{2}$ defined for $x \in \ZZ$
is convex-extensible,
where 
$g(x)=x\sp{2}$ $(x \in \RR)$
is an obvious convex extension of $f$.
The convex envelope $\overline{f}$ 
in \eqref{fnconvenvClSupAff}
is a piecewise-linear function given by 
\begin{equation*} %% \label{pclinquad}
 \overline{f}(x) = (2k+1) |x| - k (k+1) 
\quad \mbox{with \  $k = \lfloor |x| \rfloor$}
\qquad (x \in \RR).
\end{equation*}
It is noted that 
$\overline{f}(x) = x\sp{2}$ for integers $x$ and
$\overline{f}(x) > x\sp{2}$ for non-integral $x$;
for example, $\overline{f}(1/2) = 1/2 > 1/4$.
The convexification $\check{f}$ in \eqref{fnconvenvInf}
coincides with $\overline{f}$.
\finbox
\end{example}

\begin{remark} \rm  \label{RMconvhullfn}
In a standard textbook \cite[Section B.2.5]{HL01},
\eqref{fnconvenvInf}
is called the {\em convex hull} of $f$
and denoted by $\mathrm{co} f$,
whereas 
\eqref{fnconvenvClSupAff}
is called the {\em closed convex hull} of $f$ and
denoted by $\overline{\mathrm{co}}\, f$ or $\mathrm{cl}(\mathrm{co} f)$.
Using our notation we have
$\mathrm{co} f = \check{f}$ and $\overline{\mathrm{co}}\, f = \overline{f}$.
\finbox 
\end{remark}

\begin{remark} \rm \label{RMcofncoclfn}
We have
$\overline{f}(x) \leq \check{f}(x)$
for all $x \in \RR\sp{n}$,
and the equality may fail in general
 (Example \ref{EXdim2Fext} below).
However,
when $\dom f$ is bounded,
we have 
$\overline{f}(x) = \check{f}(x)$
for all $x \in \RR\sp{n}$.
The proofs are as follows.
In \eqref{fnconvenvClSupAff} we have
$\langle p, y \rangle + \alpha \leq f(y)$
for each $y$.
By using $\lambda \in \Lambda$
satisfying $\sum_{y} \lambda_{y} y = x$,  we obtain
\[
\langle p, x \rangle + \alpha 
= \sum_{y} \lambda_{y} ( \langle p, y \rangle + \alpha )
\leq \sum_{y} \lambda_{y} f(y) ,
\]
from which 
$\overline{f}(x) =  \sup_{(p, \alpha)} \{ \langle p, x \rangle + \alpha \}  
\leq 
  \inf_{\lambda}\{ \sum_{y} \lambda_{y} f(y) \} = \check{f}(x)$.
When $\dom f$ is bounded, $\dom f$ is a finite set.
For each $x \in \RR\sp{n}$, consider a pair of 
(mutually dual) linear programs:
\begin{center}
\begin{tabular}{llll}
(P) &
Maximize &  $\langle p, x \rangle + \alpha$ \\
& subject to & 
 $\langle p, y \rangle + \alpha \leq f(y) \quad (y \in \dom f)$,
\\[3mm]
(D) &
Minimize &
$\displaystyle \sum_{y \in \dom f} \lambda_{y} f(y)$ \\
& subject to &
$\displaystyle \sum_{y \in \dom f} \lambda_{y} y = x$,
 $\displaystyle \sum_{y \in \dom f} \lambda_{y} = 1$, 
 $\lambda_{y} \geq 0 \quad  (y \in \dom f)$,
\end{tabular}
\end{center}
where $(p, \alpha) \in \RR\sp{n} \times \RR$ and $(\lambda_{y} \mid y \in \dom f)$ 
are the variables of (P) and (D), respectively.
The optimal values of (P) and (D) are equal to 
$\overline{f}(x)$ and $\check{f}(x)$, respectively.
Problem (P) is feasible
(e.g., take $p=0$ and a sufficiently small $\alpha$).
By LP duality, 
(P) and (D) have the same (finite or infinite) optimal values,
that is,
$\overline{f}(x) = \check{f}(x)$.
Note that (D) is feasible if and only if 
$x \in \overline{\dom f}$, 
in which case 
the optimal values are finite.
\finbox
\end{remark}

\begin{example} \rm \label{EXdim2Fext}
Let 
$f: \ZZ\sp{2} \to \RR \cup \{ +\infty \}$
be the indicator function $\delta_{S}$
of the set $S$ 
considered in Remark~\ref{RMconvhull}.
For $x= (x_{1},1)$ with $x_{1} \ne 0$,
we have $x \in \mathrm{cl}(\overline{S}) \setminus \overline{S}$.
Hence 
$0= \overline{f}(x)< \check{f}(x) = +\infty$.
\finbox
\end{example}

\begin{remark} \rm \label{RMholeindfn}
For a set $S \subseteq \ZZ\sp{n}$,
the convexification
of the indicator function $\delta_{S}$ 
coincides with the indicator function 
of its convex hull $\overline{S}$,
that is,
$\check{\delta}_{S} = \delta_{\overline{S}}$.
A set $S \subseteq \ZZ\sp{n}$ 
is hole-free if and only if 
the indicator function $\delta_{S}$ is convex-extensible.
\finbox
\end{remark}

\subsection{Definition of integrally convex functions}
\label{SCicfndef}

Recall the notation $N(x)$
for the integral neighborhood of  $x \in \RR^{n}$
(cf., \eqref{Nxdef}, Fig.~\ref{FGneighbor}).
For a function
$f: \mathbb{Z}^{n} \to \mathbb{R} \cup \{ +\infty  \}$,
the \kwd{local convex extension} 
$\tilde{f}: \RR^{n} \to \RR \cup \{ +\infty \}$
of $f$ is defined 
as the union of all 
convex extensions (convexifications)
 of $f$ on $N(x)$.
That is,
\begin{equation} \label{fnconvclosureloc2}
 \tilde f(x) = 
  \min\{ \sum_{y \in N(x)} \lambda_{y} f(y) \mid
      \sum_{y \in N(x)} \lambda_{y} y = x,  
  (\lambda_{y})  \in \Lambda(x) \}
\quad (x \in \RR^{n}) ,
\end{equation} 
where $\Lambda(x)$ denotes the set of coefficients for convex combinations 
indexed by $N(x)$:
\begin{equation} \label{cvcoefLamda}
  \Lambda(x) = \{ (\lambda_{y} \mid y \in N(x) ) \mid 
      \sum_{y \in N(x)} \lambda_{y} = 1, 
      \lambda_{y} \geq 0 \ (y \in N(x))  \} .
\end{equation} 
It follows from this definition that,
for each $x \in \RR\sp{n}$,
the function $\tilde f$ 
restricted to $\overline{N(x)}$
is a convex function.
In general, we have
$\tilde f (x) \geq \check{f}(x)  \geq \overline{f}(x)$ for all $x \in \RR^{n}$,
where $\check{f}$ and $\overline{f}$ are defined by 
\eqref{fnconvenvInf}
and
\eqref{fnconvenvClSupAff},
respectively.

We say that a function $f$ is 
\kwd{integrally convex}\index{integrally convex function}
if its local convex extension $\tilde f$ is (globally) 
convex on the entire space $\RR^{n}$.
In this case, 
$\tilde f$ is a convex function satisfying 
$\tilde f (x) = f(x)$ for all $x \in \ZZ^{n}$,
which means that $f$ is convex-extensible.
Moreover, $\tilde f$ coincides with
$\check{f}$
and $\overline{f}$,
that is,
\begin{equation}
\label{ICfnextR}
 \tilde f (x) = \check{f}(x)  = \overline{f}(x)  \qquad (x \in \RR^{n}).
\end{equation}
In particular, we have
$\dom \tilde f = \overline{\dom f}$.
Since 
$\tilde f (x) = f(x)$ for $x \in \ZZ^{n}$,
\eqref{ICfnextR} implies
\begin{equation}
\tilde f (x) = \check{f}(x)  = \overline{f}(x) = f(x)  \qquad (x \in \ZZ^{n}).
\label{ICfnextZ}
\end{equation}

\begin{proposition}  \label{PRicvsetfun}
\quad

\noindent
{\rm (1)}
The effective domain of an integrally convex function
is integrally convex.

\noindent
{\rm (2)}
A set $S \subseteq \ZZ^{n}$ is
integrally convex if and only if its indicator function $\delta_{S}$ is 
integrally convex.
\finboxARX
\end{proposition}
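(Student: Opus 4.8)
The plan is to reduce both statements to convexity of the local convex extension $\tilde f$ of \eqref{fnconvclosureloc2}, since by definition $f$ is integrally convex exactly when $\tilde f$ is convex on all of $\RR^{n}$. Both parts then come down to tracking which points $x$ lie in $\dom\tilde f$ and recognizing the support of the optimal weights $(\lambda_{y})$.

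For part (1), write $S=\dom f$ and verify the characterization \eqref{icsetdef1} for $S$, which gives integral convexity by Proposition~\ref{PRicvset}. I would take an arbitrary $x\in\overline{S}=\overline{\dom f}$. Since $f$ is integrally convex, the identity $\dom\tilde f=\overline{\dom f}$ recorded after \eqref{ICfnextR} yields $\tilde f(x)<+\infty$, so there is $(\lambda_{y})\in\Lambda(x)$ with $\sum_{y\in N(x)}\lambda_{y}y=x$ and $\sum_{y\in N(x)}\lambda_{y}f(y)<+\infty$. Because each $f(y)\in\RR\cup\{+\infty\}$, finiteness of this sum forces $\lambda_{y}=0$ whenever $f(y)=+\infty$; hence $\lambda_{y}>0$ only for $y\in\dom f\cap N(x)=S\cap N(x)$, and $\sum\lambda_{y}y=x$ exhibits $x$ as a convex combination of points of $S\cap N(x)$. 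Thus $x\in\overline{S\cap N(x)}$, which is exactly \eqref{icsetdef1}.

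For part (2), I would first identify $\tilde{\delta}_{S}$ explicitly. Substituting $f=\delta_{S}$ into \eqref{fnconvclosureloc2}, the objective $\sum_{y\in N(x)}\lambda_{y}\delta_{S}(y)$ equals $0$ when $(\lambda_{y})$ is supported on $S\cap N(x)$ and is $+\infty$ otherwise, so $\tilde{\delta}_{S}(x)=0$ precisely when $x\in\overline{S\cap N(x)}$ and $\tilde{\delta}_{S}(x)=+\infty$ otherwise. In other words $\tilde{\delta}_{S}=\delta_{T}$ with $T=\{\,x\in\RR^{n}\mid x\in\overline{S\cap N(x)}\,\}$, and since a set is convex iff its indicator function is, $\delta_{S}$ is integrally convex exactly when $T$ is convex. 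It then remains to relate convexity of $T$ to integral convexity of $S$ through the chain $S\subseteq T\subseteq\overline{S}$: the right inclusion holds since $\overline{S\cap N(x)}\subseteq\overline{S}$, and the left since for integer $y\in S$ one has $N(y)=\{y\}$, whence $y\in\overline{\{y\}}\subseteq T$. Given this, if $S$ is integrally convex then \eqref{icsetdef1} gives $\overline{S}\subseteq T$, so $T=\overline{S}$ is convex and $\delta_{S}$ is integrally convex; conversely, if $T$ is convex then, containing $S$, it contains $\overline{S}$, so again $T=\overline{S}$, which is \eqref{icsetdef1} and hence integral convexity of $S$ by Proposition~\ref{PRicvset}.

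I expect no serious obstacle, as the proposition is essentially a translation of definitions. The only mild subtlety, common to both parts, is the bookkeeping that a finite value of $\sum_{y}\lambda_{y}f(y)$ confines the support of $(\lambda_{y})$ to $\dom f$; and in part (2) one should be careful that the pointwise-defined set $T$ coincides with $\overline{S}$ under either hypothesis, rather than with the a priori different union $\bigcup_{x}\overline{S\cap N(x)}$ appearing in \eqref{icsetdef0}. The inclusion sandwich $S\subseteq T\subseteq\overline{S}$ is precisely what removes this ambiguity.
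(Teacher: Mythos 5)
Your proof is correct and complete. The paper states Proposition~\ref{PRicvsetfun} without proof (it is marked as a known fact), so there is no argument to contrast with; your reduction to convexity of $\tilde f$, the support-of-$\lambda$ bookkeeping in part (1), and the sandwich $S\subseteq T\subseteq\overline{S}$ in part (2) are exactly the standard route, with the one prerequisite you use ($\dom\tilde f=\overline{\dom f}$, or equivalently that $f$ is minorized by an affine function so the relevant minima are finite and attained) being explicitly available in the paper.
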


The following is an example of a convex-extensible function
that is not integrally convex. 

%%%%%%%%%%%%%%
\begin{example} \rm  \label{EXnonintconvfn}
Let $f: \ZZ\sp{2} \to \RR$ be defined by
$f(x_{1}, x_{2})= | 2 x_{1} - x_{2} |$
for all $(x_{1}, x_{2}) \in \ZZ\sp{2}$.
Obviously, this function is convex-extensible and the convex envelope
 is given by
$\overline{f}(x_{1}, x_{2})= | 2 x_{1} - x_{2} |$ 
for all $(x_{1}, x_{2}) \in \RR\sp{2}$.
For $y=(1/2,1)$ we have
$N(y)=\{ (0,1), (1,1) \}$ 
and the local convex extension $\tilde f$ of $f$ around $y$ is given by 
\[
\tilde f(1/2,1)= (f(0,1)+f(1,1))/2=(1+1)/2= 1.
\]
On the other hand, 
$y = (1/2,1)$
is the midpoint of 
$u=(0,0)$ and $v=(1,2)$
with
$\tilde f(u)= f(0,0) = 0$
and
$\tilde f(v)= f(1,2) = 0$.
This shows that the function $\tilde f$ is not convex,
and  $f$ is not integrally convex.
Also note that $0=\overline{f}(1/2,1) \not = \tilde f(1/2,1)=1$.
\finbox
\end{example}
%%%%%%%%%%%%%%%%%

Integrally convex functions in two variables ($n = 2$) can be defined 
by simple inequality conditions
without referring to the local convex extension $\tilde{f}$
(see Theorem~\ref{THcharICfndim2} in Section~\ref{SCcharICfn}).

\begin{remark} \rm \label{RMintcnvconcept}
The concept of integrally convex functions is introduced in \cite{FT90} 
for functions defined on a box of integers. 
The extension to functions with general integrally convex effective domains
is straightforward, which is found in \cite{Mdcasiam}.
\finbox
\end{remark}

\subsection{Examples}

Three classes of integrally convex functions are given below.

\begin{example} \rm  \label{EXsepIC}
A function
$\Phi: \ZZ^{n} \to \RR \cup \{ +\infty \}$
in $x=(x_{1}, x_{2}, \ldots,x_{n}) \in \ZZ^{n}$
is called  
{\em separable convex}
if it can be represented as
\begin{equation}  \label{sepvexdef}
\Phi(x) = \varphi_{1}(x_{1}) + \varphi_{2}(x_{2}) + \cdots + \varphi_{n}(x_{n})
\end{equation}
with univariate discrete convex functions
$\varphi_{i}: \ZZ \to \RR \cup \{ +\infty \}$, 
which means, by definition, that 
$\dom \varphi_{i}$ is an interval of integers and
\begin{equation}  \label{univarvexdef}
\varphi_{i}(k-1) + \varphi_{i}(k+1) \geq 2 \varphi_{i}(k)
\qquad (k \in \ZZ).
\end{equation}
A separable convex function is integrally convex
(actually, both \Lnat- and \Mnat-convex). 
\finbox
\end{example}

\begin{example} \rm  \label{EXdiagdomIC}
A symmetric matrix $Q =(q_{ij})$ that satisfies the condition
\begin{equation}\label{midptdiagdomdef}
q_{ii} \geq \sum_{j \neq i} |q_{ij}|
\qquad (i=1,2,\ldots,n)
\end{equation}
is called a diagonally dominant matrix (with nonnegative diagonals).
If  $Q$ is diagonally dominant
in the sense of \eqref{midptdiagdomdef},
then $f(x) = x\sp{\top} Q x$ is integrally convex
\cite[Proposition~4.5]{FT90}.
The converse is also true if $n \leq 2$
\cite[Remark~4.3]{FT90}.
Recently it has been shown in \cite[Theorem~9]{TT21ddmc}
that the diagonally dominance \eqref{midptdiagdomdef}
of $Q$ is equivalent 
to the directed discrete midpoint convexity of $f(x) = x\sp{\top} Q x$;
see \cite{TT21ddmc} for details.
\finbox
\end{example}

\begin{example} \rm \label{EXtwosepIC}
A function 
$f: \ZZ^{n} \to \RR \cup \{ +\infty \}$
is called
{\em 2-separable convex}
if it can be expressed as the sum of univariate convex,
diff-convex, and sum-convex functions, i.e., if
\begin{equation*}  %%\label{twosepSDconv}
f(x_1, x_2, \ldots, x_n) = 
\sum_{i=1}\sp{n} \varphi_{i}(x_{i}) 
+ \sum_{i \neq j}  \varphi_{ij}(x_{i} - x_{j}) + \sum_{i \neq j} \psi_{ij}(x_{i}+x_{j})  ,
\end{equation*}
where
$\varphi_{i}, \varphi_{ij}, \psi_{ij}: \mathbb{Z} \to \mathbb{R} \cup \{ +\infty \}$
$(i, j =1,2,\ldots,n; \  i \not = j)$
are univariate convex functions.
A 2-separable convex function is known 
to be integrally convex \cite[Theorem~4]{TT21ddmc},
whereas it is \Lnat-convex if $\psi_{ij} \equiv 0$ for all $(i,j)$ with $i \ne j$.
A quadratic function $f(x) = x\sp{\top} Q x$ 
with $Q$ satisfying \eqref{midptdiagdomdef}
is an example of a 2-separable convex function.
\finbox
\end{example}

In addition to the above, 
almost all kinds of discrete convex functions treated in discrete convex analysis
are integrally convex.
It is known that separable convex,
{\rm L}-convex, \Lnat-convex, {\rm M}-convex,  
\Mnat-convex, \LLnat-convex, and 
\MMnat-convex functions are integrally convex \cite{Mdcasiam}.
Multimodular functions \cite{Haj85} 
are also integrally convex
\cite{Mdcaprimer07}.
Moreover, BS-convex and UJ-convex functions \cite{Fuj14bisubmdc}
are integrally convex.

%%\newpage

\subsection{Characterizations}
\label{SCcharICfn}

In this section we give two characterizations
of integrally convex functions
in terms of an inequality of the form
\begin{equation}  \label{intcnvmidpt}
\tilde{f}\, \bigg(\frac{x + y}{2} \bigg) 
\leq \frac{1}{2} (f(x) + f(y)) ,
\end{equation}
where $\tilde{f}$ denotes the local convex extension of $f$ 
defined by \eqref{fnconvclosureloc2}.
By the definition of $\tilde{f}$, 
the inequality \eqref{intcnvmidpt} above is true
for $(x,y)$ with $\| x - y \|_{\infty} \leq 1$
for any function $f: \ZZ^{n} \to \RR \cup \{ +\infty  \}$.
If $f$ is integrally convex,
the inequality \eqref{intcnvmidpt} holds for any $(x,y)$,
as follows.

\begin{proposition} \label{PRiccharAtoBC}
If $f$ is integrally convex, then
\eqref{intcnvmidpt} 
holds for every $x, y \in \dom f$.
\end{proposition}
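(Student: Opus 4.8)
The plan is to read off the inequality directly from the defining property of integral convexity. By definition, $f$ being integrally convex means that its local convex extension $\tilde{f}$ is convex on the whole of $\RR\sp{n}$, and by \eqref{ICfnextZ} this extension agrees with $f$ at every integer point. The inequality \eqref{intcnvmidpt} is then nothing more than midpoint convexity of $\tilde{f}$ rewritten at integer arguments.

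Concretely, I would first observe that $x, y \in \dom f \subseteq \ZZ\sp{n}$, so that $f(x)$ and $f(y)$ are finite and, by \eqref{ICfnextZ}, satisfy $\tilde{f}(x) = f(x)$ and $\tilde{f}(y) = f(y)$. Applying the (global) convexity of $\tilde{f}$ to the two points $x$ and $y$ gives
\[
\tilde{f}\bigg(\frac{x+y}{2}\bigg) \leq \frac{1}{2}\tilde{f}(x) + \frac{1}{2}\tilde{f}(y),
\]
and substituting the two identities $\tilde{f}(x) = f(x)$ and $\tilde{f}(y) = f(y)$ into the right-hand side produces exactly \eqref{intcnvmidpt}.

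There is essentially no obstacle to overcome here: the entire force of the statement has been absorbed into the definition of integral convexity together with the extension identities \eqref{ICfnextR}--\eqref{ICfnextZ}. The only bookkeeping worth mentioning is that the midpoint $(x+y)/2$ lies in $\dom \tilde{f} = \overline{\dom f}$, since $x$ and $y$ lie in $\dom f \subseteq \overline{\dom f}$ and the latter set is convex; this guarantees that the left-hand side of \eqref{intcnvmidpt} is finite, although the convexity inequality would hold in any case. This proposition is really the easy half of the characterization, the substantive content lying in the converse direction treated by the subsequent theorems.
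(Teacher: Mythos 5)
Your proof is correct and is essentially identical to the paper's: both apply midpoint convexity of the (globally convex) local extension $\tilde{f}$ and then substitute $\tilde{f}(x)=f(x)$, $\tilde{f}(y)=f(y)$ from \eqref{ICfnextZ}. The extra remark about $(x+y)/2$ lying in $\overline{\dom f}$ is harmless and not needed.
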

\begin{proof}
The function $\tilde{f}$ is convex by integral convexity of $f$, and hence
\begin{equation*}
\tilde{f}\, \bigg(\frac{x + y}{2} \bigg) 
\leq \frac{1}{2} (\tilde{f}(x) + \tilde{f}(y))
= \frac{1}{2} (f(x) + f(y)),
\end{equation*}
where the equalities
$\tilde{f}(x)=f(x)$ and $\tilde{f}(y)=f(y)$ 
by \eqref{ICfnextZ} are used.
\qedJIAM
\end{proof}

Integral convexity of a function can be characterized 
by a local condition under the assumption 
that the effective domain is an integrally convex set.

\begin{theorem}
[\cite{FT90,MMTT19proxIC}] 
\label{THfavtarProp33}
Let $f: \mathbb{Z}^{n} \to \mathbb{R} \cup \{ +\infty  \}$
be a function 
with an integrally convex effective domain.
Then the following properties are equivalent.

{\rm (a)}
$f$ is integrally convex.

{\rm (b)}
Inequality \eqref{intcnvmidpt} 
holds for every $x, y \in \dom f$ with $\| x - y \|_{\infty} =2$. 
\end{theorem}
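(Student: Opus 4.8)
The direction (a)$\Rightarrow$(b) is immediate: if $f$ is integrally convex, then by Proposition~\ref{PRiccharAtoBC} the inequality \eqref{intcnvmidpt} holds for \emph{every} $x,y\in\dom f$, in particular for those with $\|x-y\|_\infty=2$. All the work is in the converse (b)$\Rightarrow$(a), and the plan is to show that the local convex extension $\tilde f$ defined by \eqref{fnconvclosureloc2} is convex on the whole of $\RR^n$.

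First I would record the structural facts that make $\tilde f$ amenable to a local analysis. Since $\dom f$ is integrally convex (the standing hypothesis), its convex hull $\overline{\dom f}=\dom\tilde f$ is a polyhedron on which $\tilde f$ is continuous, and by its very definition $\tilde f$ is convex on each integral unit cube $\{x\mid a_i\le x_i\le a_i+1\}$, $a\in\ZZ^n$. The principle I would invoke is local-to-global: a continuous function on a convex set that is convex in a relative neighbourhood of every point is convex there. Thus it suffices to prove that $\tilde f$ is convex in a neighbourhood of each point $p\in\overline{\dom f}$. For $p$ in the interior of a cube there is nothing to do; the only issue is at points lying on the common boundary of two or more cubes.

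The plan is then to translate this local convexity into the hypothesis (b). A relative neighbourhood of a boundary point $p$ meets only finitely many cubes, and convexity of $\tilde f$ on their union is governed by the midpoint inequality $\tilde f(\frac{x+y}{2})\le\frac12(f(x)+f(y))$ for the integer points $x,y$ lying in these surrounding cubes. Since the lattice points of these finitely many cubes lie within $\ell_\infty$-distance $2$ of one another, these are exactly the pairs with $\|x-y\|_\infty\le2$. For $\|x-y\|_\infty\le1$ the inequality is automatic (both points, and their midpoint, lie in one cube), and for $\|x-y\|_\infty=2$ it is precisely hypothesis (b). Hence (b) would furnish convexity of $\tilde f$ in a neighbourhood of every point, global convexity would follow, and by \eqref{ICfnextZ} we recover $\tilde f=f$ on $\ZZ^n$, so $f$ is integrally convex.

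The step I expect to be the main obstacle is the claimed equivalence between local convexity of $\tilde f$ across a cube interface and the finite family of distance-$\le2$ midpoint inequalities, including the reduction from arbitrary real test points to lattice pairs. The difficulty is that on a cube $\tilde f$ is the lower convex envelope of the $2^n$ corner values, hence piecewise linear with respect to a triangulation that \emph{depends on the data} $f$; consequently one cannot simply read off the cross-interface condition. To make the equivalence rigorous I would eliminate the auxiliary convex-combination coefficients $(\lambda_y)$ appearing in \eqref{fnconvclosureloc2} by Fourier--Motzkin elimination, obtain an explicit inequality description of $\tilde f$ near $p$, and then verify that convexity across the interface is controlled \emph{exactly} by the pairs at $\ell_\infty$-distance $2$, with neither more nor fewer inequalities needed. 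An alternative route that circumvents the data-dependent triangulation is to first bootstrap (b) to the inequality \eqref{intcnvmidpt} for \emph{all} integer pairs by induction on $\|x-y\|_\infty$, and then run the generator-merging argument from the proof of Theorem~\ref{THicSetmidpt}, now tracking the values $\sum_i\lambda_i f(y^{(i)})$ and using integral convexity of $\dom f$ (Proposition~\ref{PRicvsetfun}) to keep the merged points in the relevant integral neighbourhoods, so as to conclude that $\tilde f$ agrees with the global convexification $\check f$; the same distance-control issue resurfaces there as the crux of the induction.
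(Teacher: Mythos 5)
Your overall architecture is the same as the paper's. The direction (a)$\Rightarrow$(b) via Proposition~\ref{PRiccharAtoBC} is exactly what the paper does. For (b)$\Rightarrow$(a) the paper also argues local-to-global: it first asserts (citing Lemma~A.1 of \cite{MMTT19proxIC}) that under (b) and the integral convexity of $\dom f$ the local extension $\tilde f$ is convex on every size-two box $B=[a,a+2\vecone]_{\RR}$ intersected with $\overline{\dom f}$, and then patches these together by covering an arbitrary line segment $L\subseteq\overline{\dom f}$ with finitely many such boxes and using that convexity near each interior point of $L$ yields convexity on $L$. Your ``relative neighbourhood of a boundary point meets the $2^{n}$ surrounding unit cubes, whose union is a size-two box with lattice points pairwise at $\ell_\infty$-distance at most $2$'' is the same local object, so the reduction you set up is the right one.

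The problem is that the entire mathematical content of (b)$\Rightarrow$(a) sits in the local lemma that you yourself flag as ``the main obstacle'' and then do not prove: that the midpoint inequalities \eqref{intcnvmidpt} for lattice pairs at $\ell_\infty$-distance exactly $2$ (together with the automatic distance-$\leq 1$ case and the integral convexity of $\dom f$) \emph{suffice} for convexity of $\tilde f$ on the size-two box. This is not a formality. The inequality \eqref{intcnvmidpt} tests convexity of $\tilde f$ only at the midpoints of finitely many lattice segments, whereas convexity of the piecewise-linear function $\tilde f$ --- whose triangulation of each unit cube depends on the data $f$ --- must hold along every segment crossing a cube interface; a priori (b) is only a necessary condition, and establishing sufficiency is exactly the content of the cited Lemma~A.1. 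Neither of your two proposed repairs is carried out: the Fourier--Motzkin route is named but not specified, and the bootstrap route (extend \eqref{intcnvmidpt} to all pairs with $\| x-y\|_{\infty}\geq 2$ by induction, then run a value-tracking version of the generator-merging argument of Theorem~\ref{THicSetmidpt}) has as its induction step precisely the distance-control statement at issue --- note that the paper derives Theorem~\ref{THicchardmc} \emph{from} Theorem~\ref{THfavtarProp33}, not the other way around, so you cannot lean on that result here. Finally, your sketch never shows where the standing hypothesis that $\dom f$ is an integrally convex set is used; it must enter the local lemma (it guarantees that the integral neighbourhood $N(u)$ of a point $u\in B\cap\overline{\dom f}$ meets $\dom f$ richly enough for $\tilde f(u)$ to be finite and for the interface comparison to make sense), and a correct proof has to make that dependence explicit. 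As it stands the proposal reduces the theorem to the right lemma but leaves that lemma unproved.
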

\begin{proof}
\, [(a) $\Rightarrow$ (b)]: 
This is shown in Proposition~\ref{PRiccharAtoBC}.

[(b) $\Rightarrow$ (a)]:
(The proof given in \cite[Appendix A]{MMTT19proxIC} is sketched here.) \ 
For an integer vector ${a} \in \ZZ\sp{n}$,
define a box 
$B \subseteq \RR\sp{n}$ 
of size two by
\begin{equation} \label{size2boxICprf}
B = [ {a}, {a} + 2 \bm{1} ]_{\RR}
= \{ x \in \RR\sp{n} \mid a_{i} \leq x_{i} \leq a_{i} + 2
\ (i=1,2,\ldots,n) \}.
\end{equation}
It can be shown (\cite[Lemma~A.1]{MMTT19proxIC})
that, if $\dom f$ is integrally convex and the condition {\rm (b)} 
is satisfied, then
$\tilde{f}$ is convex on $B \cap \overline{\dom f}$.

Fix arbitrary $x, y \in \overline{\dom f}$,
and denote by $L$ the (closed) line segment connecting $x$ and $y$.
We show that $\tilde{f}$ is convex on $L$.
Consider the boxes $B$ of the form of \eqref{size2boxICprf}
that intersect $L$.
There exists a finite number of such boxes,
say,
$B_{1},B_{2}, \ldots, B_{m}$,
and $L$ is covered by the line segments $L_{j} = L \cap B_{j}$
$(j=1,2,\ldots, m)$.
That is,  $ L = \bigcup_{j=1}\sp{m} L_{j}$.
For each point $z \in L \setminus \{ x, y \}$,
there exists some $L_{j}$ that contains  $z$ 
in its interior, and 
$\tilde{f}$ is convex on $L_{j}$ by 
the above-mentioned fact.
Hence  $\tilde{f}$ is convex on $L$
(cf.~\cite[Lemma 2]{Tuy95}).
This implies the convexity of $\tilde{f}$,
that is, the integral convexity of $f$.
\qedJIAM
\end{proof}

The second characterization 
of integral convexity of a function 
is free from the assumption on the effective domain,
but is not a local condition as it refers to 
all pairs  $(x, y)$ with $\| x - y \|_{\infty} \geq 2$.

\begin{theorem}
[{\cite[Theorem~A.1]{MMTT20dmc}}]
\label{THicchardmc}
Let $f: \mathbb{Z}^{n} \to \mathbb{R} \cup \{ +\infty  \}$
be a function 
with $\dom f \neq \emptyset$.
Then the following properties are equivalent.

{\rm (a)}
$f$ is integrally convex.

{\rm (b)}
Inequality \eqref{intcnvmidpt} 
holds for every $x, y \in \dom f$ with $\| x - y \|_{\infty} \geq 2$. 
\end{theorem}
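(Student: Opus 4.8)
The plan is to derive this theorem from results already in hand, reducing it to Proposition~\ref{PRiccharAtoBC} for the easy implication and to Theorem~\ref{THfavtarProp33} for the hard one. The only genuinely new work is to dispose of the standing hypothesis in Theorem~\ref{THfavtarProp33} that $\dom f$ is integrally convex; everything else is a matter of quoting. For (a) $\Rightarrow$ (b) I would simply invoke Proposition~\ref{PRiccharAtoBC}, which already establishes \eqref{intcnvmidpt} for all $x, y \in \dom f$, in particular for those with $\| x - y \|_{\infty} \geq 2$.

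For (b) $\Rightarrow$ (a) the crucial first step is to show that $S := \dom f$ is an integrally convex \emph{set}. Once this is known, condition (b) in particular holds for all $x, y \in \dom f$ with $\| x - y \|_{\infty} = 2$, which is exactly hypothesis (b) of Theorem~\ref{THfavtarProp33}; that theorem then applies verbatim and yields the integral convexity of $f$. To prove that $S$ is integrally convex I would use the midpoint characterization of Theorem~\ref{THicSetmidpt}: it suffices to verify that $\frac{y + y'}{2} \in \overline{S \cap N(\frac{y + y'}{2})}$ for every $y, y' \in S$ with $\| y - y' \|_{\infty} \geq 2$, which is precisely condition \eqref{intcnvsetdist234}.

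The bridge from the analytic inequality (b) to this combinatorial set condition is the following observation. For $y, y' \in \dom f$ the values $f(y)$ and $f(y')$ are finite, so the right-hand side of \eqref{intcnvmidpt} is finite, and hence (b) forces $\tilde{f}(\frac{y + y'}{2}) < +\infty$. By the defining formula \eqref{fnconvclosureloc2} for the local convex extension, a finite value of $\tilde{f}$ at a point $z$ is attained by a convex combination $\sum_{w} \lambda_{w} w = z$ with $w \in N(z)$ and $\sum_{w} \lambda_{w} f(w)$ finite; finiteness forces $\lambda_{w} > 0$ only when $f(w) < +\infty$, that is, when $w \in \dom f = S$. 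Thus $z = \frac{y + y'}{2}$ lies in $\overline{S \cap N(z)}$, which is exactly \eqref{intcnvsetdist234}, and Theorem~\ref{THicSetmidpt} yields the integral convexity of $\dom f$.

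The main obstacle is precisely this translation step: passing from the inequality (b) about function values to the hole-free/integral-convexity statement about the set $\dom f$. The one point that needs care is that a priori $\tilde{f}(\frac{y+y'}{2})$ could equal $+\infty$ even with $y, y' \in \dom f$, in which case no conclusion about $S$ would follow; the entire force of (b) is to rule this out, and once finiteness is secured the support of the optimal convex combination automatically lies in $\dom f$. After Step~1 the argument is routine, since invoking Theorem~\ref{THfavtarProp33} on the now-admissible effective domain requires nothing beyond observing that the $\| x - y \|_{\infty} = 2$ instances of (b) are a special case of the given hypothesis.
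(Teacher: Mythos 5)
Your proposal is correct and follows essentially the same route as the paper: (a)\,$\Rightarrow$\,(b) via Proposition~\ref{PRiccharAtoBC}, and (b)\,$\Rightarrow$\,(a) by first deducing integral convexity of $\dom f$ from Theorem~\ref{THicSetmidpt} and then invoking Theorem~\ref{THfavtarProp33}. The only difference is that you spell out the finiteness argument linking \eqref{intcnvmidpt} to the set condition \eqref{intcnvsetdist234}, which the paper leaves as a one-line remark; that step is correct as you state it.
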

\begin{proof}
\, [(a) $\Rightarrow$ (b)]: 
This is shown in Proposition~\ref{PRiccharAtoBC}.

[(b) $\Rightarrow$ (a)]:
By Theorem~\ref{THfavtarProp33},
it suffices to show that 
$\dom f$ is an integrally convex set,
which follows from 
Theorem~\ref{THicSetmidpt}
applied to $S=\dom f$.
Note that the condition \eqref{intcnvsetdist234} in 
Theorem~\ref{THicSetmidpt}
 holds  by the assumption (b).
\qedJIAM
\end{proof}

\begin{remark} \rm \label{RMintcnvchar}
Theorem~\ref{THfavtarProp33} originates in \cite[Proposition~3.3]{FT90},
which shows the  equivalence of (a) and (b)
when the effective domain is a box of integers,
while their equivalence 
for a general integrally convex effective domain 
is proved in \cite[Appendix A]{MMTT19proxIC}. 
Theorem~\ref{THicchardmc} is given in 
\cite[Theorem~A.1]{MMTT20dmc}
with a direct proof without using Theorem~\ref{THfavtarProp33},
while here we have given an alternative proof that relies on 
Theorem~\ref{THfavtarProp33} via
Theorem~\ref{THicSetmidpt}.
\finbox
\end{remark}

Integrally convex functions in two variables ($n = 2$) can be characterized 
by simple inequality conditions as follows.
We use notation $f_{z}(x):=f(z+x)$.

\begin{theorem}  \label{THcharICfndim2}
A function 
$f: \mathbb{Z}^{2} \to \mathbb{R} \cup \{ +\infty  \}$
is integrally convex if and only if
its effective domain is an integrally convex set
and the following five inequalities
\begin{align*}
&  g( 0,0 ) +  g( 2,1 )   \geq g( 1,1 ) +  g( 1,0 ),
\\
&  g( 0,0 ) +  g( 2,-1 )  \geq g( 1,-1 ) +  g( 1,0 ),
\\
&  g( 0,0 ) +  g( 2,0 )  \geq 2 g( 1,0 ),
\\
&  g( 0,0 ) +  g( 2,2 )   \geq 2 g( 1,1 ),
\\
&  g( 0,0 ) +  g( 2,-2 )  \geq 2 g( 1,-1 )
\end{align*}
 are satisfied by both
$g(x_{1},x_{2})=f_{z}(x_{1},x_{2})$ 
and $g(x_{1},x_{2})=f_{z}(x_{2},x_{1})$
for any $z \in \dom f$.
\end{theorem}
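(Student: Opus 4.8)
The plan is to deduce the theorem from the local characterization in Theorem~\ref{THfavtarProp33}. Integral convexity of $\dom f$ appears in the hypothesis, and by Proposition~\ref{PRicvsetfun}(1) it is also forced by integral convexity of $f$; so I would keep it as a standing assumption and focus on the midpoint inequality \eqref{intcnvmidpt} for pairs $x,y\in\dom f$ with $\|x-y\|_\infty=2$, which by Theorem~\ref{THfavtarProp33} is equivalent to integral convexity of $f$. Writing $z=y$ and $d=x-y$, translation invariance of the local convex extension ($\tilde f(z+w)=\tilde{f_{z}}(w)$ for $z\in\ZZ\sp{n}$) recasts \eqref{intcnvmidpt} as $\tilde{f_{z}}(d/2)\le\tfrac12\bigl(f_{z}(0)+f_{z}(d)\bigr)$, so everything is governed by the difference vector $d\in\ZZ\sp{2}$ with $\|d\|_\infty=2$ together with the base point $z\in\dom f$.

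First I would reduce the sixteen admissible difference vectors to five. Interchanging $x$ and $y$ replaces $d$ by $-d$ while leaving \eqref{intcnvmidpt} unchanged, so $d$ matters only up to sign, giving eight classes. The coordinate swap $(x_1,x_2)\mapsto(x_2,x_1)$—which is exactly why the statement asks for the inequalities for both $f_{z}(x_1,x_2)$ and $f_{z}(x_2,x_1)$—identifies these eight classes in pairs, with $(2,2)$ and $(2,-2)$ left fixed, leaving the five representatives $(2,0),(2,1),(2,-1),(2,2),(2,-2)$.

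The core step is a direct evaluation of $\tilde{f_{z}}(d/2)$ for each representative. For $d\in\{(2,0),(2,2),(2,-2)\}$ the midpoint $d/2$ is an integer point and $N(d/2)=\{d/2\}$, so $\tilde{f_{z}}(d/2)=f_{z}(d/2)$ and \eqref{intcnvmidpt} becomes the third, fourth, and fifth listed inequality, respectively. For $d=(2,1)$ the midpoint is $(1,\tfrac12)$ with $N((1,\tfrac12))=\{(1,0),(1,1)\}$; the unique convex representation $(1,\tfrac12)=\tfrac12(1,0)+\tfrac12(1,1)$ gives $\tilde{f_{z}}(1,\tfrac12)=\tfrac12\bigl(f_{z}(1,0)+f_{z}(1,1)\bigr)$, and \eqref{intcnvmidpt} turns into the first listed inequality; $d=(2,-1)$ produces the second in the same manner. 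Matching these five computations with both $g=f_{z}(x_1,x_2)$ and $g=f_{z}(x_2,x_1)$, over all $z\in\dom f$, is precisely condition (b) of Theorem~\ref{THfavtarProp33}, which yields both implications at once.

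The delicate point is to certify that each $\tilde{f_{z}}(d/2)$ equals the claimed \emph{finite} average rather than $+\infty$, and this is where integral convexity of $\dom f$ is essential. When $x,y\in\dom f$ the midpoint lies in $\overline{\dom f}$, so by \eqref{icsetdef1} applied to $S=\dom f$ it lies in $\overline{\dom f\cap N(d/2)}$; since representing $d/2$ as a convex combination of points of $N(d/2)$ requires every point of $N(d/2)$ to carry positive weight, all those neighbours belong to $\dom f$, validating the explicit formula. In the converse reading, whenever $f_{z}(0)$ or $f_{z}(d)$ is $+\infty$ the corresponding listed inequality holds vacuously, so no relevant pair $x,y\in\dom f$ escapes the five inequalities. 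The only real care needed is this bookkeeping of effective-domain membership; the arithmetic itself is immediate.
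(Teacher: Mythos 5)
Your proposal is correct and follows essentially the same route as the paper: both reduce to the local characterization of Theorem~\ref{THfavtarProp33} and observe that for $n=2$ and $\| x - y \|_{\infty}=2$ the midpoint inequality \eqref{intcnvmidpt} evaluates explicitly to \eqref{intcnvmidptdim2}, which, after enumerating the difference vectors up to sign and coordinate swap, yields exactly the five listed inequalities for $g=f_{z}(x_{1},x_{2})$ and $g=f_{z}(x_{2},x_{1})$. Your extra bookkeeping about effective-domain membership is harmless but not strictly needed, since when an intermediate neighbour lies outside $\dom f$ both sides of the equivalence fail simultaneously (both $\tilde{f_{z}}(d/2)$ and the right-hand side of the listed inequality become $+\infty$).
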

\begin{proof}
This follows immediately from Theorem~\ref{THfavtarProp33},
since when $n=2$ and $\| x - y \|_{\infty}= 2$,
\eqref{intcnvmidpt} is equivalent to 
\begin{equation} \label{intcnvmidptdim2}
f(x) + f(y) \geq  
 f \left(\left\lceil \frac{x+y}{2} \right\rceil\right) 
 + f \left(\left\lfloor \frac{x+y}{2} \right\rfloor\right) .
\end{equation}
For example, if $x=z$ and $y = z +(2,1)$, then
$\left\lceil (x+y)/2 \right\rceil = z +(1,1)$ and
$\left\lfloor (x+y)/2 \right\rfloor  = z+ (1,0)$,
and \eqref{intcnvmidptdim2} gives the first inequality
for $g(x_{1},x_{2})=f_{z}(x_{1},x_{2})$. 
\qedJIAM
\end{proof}

\begin{remark} \rm \label{RMparaineqdim2}
For a function $g: \mathbb{Z}^{2} \to \mathbb{R} \cup \{ +\infty  \}$
(in general), an inequality of the form
\begin{equation} \label{paraineq2dimGen}
 g(0,0) + g(a+b,a) \geq  g(a,a) + g(b,0) 
\qquad ( a,b \geq 0; a,b \in \ZZ)
\end{equation}
is called the (basic) 
\kwd{parallelogram inequality}
in \cite{MMTT19proxIC}.
It is shown in \cite[Proposition~3.3]{MMTT19proxIC}
that for any integrally convex function $f$ in two variables
and a point $z \in \dom f$, the function $g(x)=f_{z}(x)$
satisfies the inequality \eqref{paraineq2dimGen}.
Note that \eqref{paraineq2dimGen} with $(a,b)=(1,1)$
coincides with the first inequality in 
Theorem~\ref{THcharICfndim2}.
Furthermore, the inequality \eqref{paraineq2dimGen} holds also for
$g(x_{1},x_{2})=f_{z}(x_{2},x_{1}), 
f_{z}(x_{1},-x_{2})$, and
$f_{z}(-x_{2},x_{1})$,
as integral convexity is preserved under 
such coordinate inversions
 (cf., \eqref{indepsigninvfndef},  \eqref{permfndef}).
\finbox
\end{remark}

In this section we have given three theorems 
(Theorems \ref{THfavtarProp33}, \ref{THicchardmc}, and \ref{THcharICfndim2})
to characterize integrally convex functions.
In Section~\ref{SCminzerIC} we give two additional theorems
(Theorems \ref{THfnargmincharExt} and \ref{THfnargmincharBnd}).
Their logical dependence (in our presentation) is illustrated in Fig.~\ref{FGthmchar}.

\begin{figure}
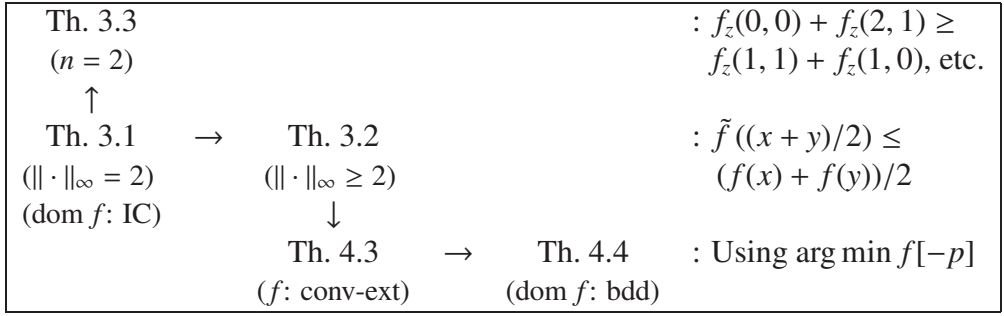

\centering
\begin{tabular}{|cccccl|}
\hline
 Th.~\ref{THcharICfndim2}
 & & & & &
: $f_{z}( 0,0 ) +  f_{z}( 2,1 ) \geq$  
\\
{\small ($n=2$)} & & & & & \ \ 
$f_{z}( 1,1 ) +  f_{z}( 1,0 )$, etc.
\\
 $\uparrow$
& 
& & & &
\\
Th.~\ref{THfavtarProp33} 
& $\rightarrow$ & Th.~\ref{THicchardmc}
& & &
: $\tilde{f}\, ((x + y)/{2} ) \leq$ 
\\
{\small ($ \| \cdot \|_{\infty} = 2$) }
&  
& 
{\small ($\| \cdot \|_{\infty}  \geq 2$) }
& & &
\ \ \  
$ (f(x) + f(y))/2$
\\
{\small ($\dom f$: IC)}
&  &  $\downarrow$  & & &
\\
 & & 
Th.~\ref{THfnargmincharExt}
& $\to$ & 
Th.~\ref{THfnargmincharBnd} 
& 
: Using $\argmin f[-p]$
\\
& & {\small ($f$: conv-ext)} & & {\small ($\dom f$: bdd)}  &
\\ \hline
\end{tabular}
\caption{Characterizations of integrally convex functions}
\label{FGthmchar}
\end{figure}

%%\newpage

\subsection{Simplicial divisions}

As is well known 
(\cite[Section 16.3]{Fuj05book},
\cite[Section 7.7]{Mdcasiam}),
the convex extension of an L$\sp{\natural}$-convex function 
can be constructed in a systematic manner using
a regular simplicial division (the Freudenthal simplicial division) 
of unit hypercubes.
This is a generalization of the Lov{\'a}sz extension 
for a submodular set function. 
In addition, the concepts of BS-convex and UJ-convex functions 
are introduced on the basis of 
other regular simplicial divisions
in \cite{Fuj14bisubmdc}.

By definition, an integrally convex function $f$ is convex-extensible,
and its convex envelope
$\overline{f}$ can be constructed
locally within each unit hypercube, since $\overline{f}$ 
coincides with the local convex extension $\tilde{f}$.
However, general integrally convex functions
are not associated with a regular simplicial division.
Indeed, the following construction shows that, when $n=2$, 
an arbitrary triangulation 
can arise from an integrally convex function.

Consider the rectangular domain 
$[\veczero, a]_{\RR}$,
where $a = (a_{1}, a_{2})$
with positive integers $a_{1}$ and $a_{2}$,
and
assume that we are given an arbitrary triangulation
of each unit square in 
the domain 
$[\veczero, a]_{\RR}$
such as the one in Fig.~\ref{FGicvfndim2}(a).
We can construct an integrally convex function $f$
such that the convex envelope
$\overline{f}$
corresponds to the given triangulation.

%%%  FIGURE %%%%%%%%%%%%%%%%%%
%%\input{./figs/DCAEfg3icvfndim2}
\begin{figure}\begin{center}
\includegraphics[width=0.75\textwidth,clip]{./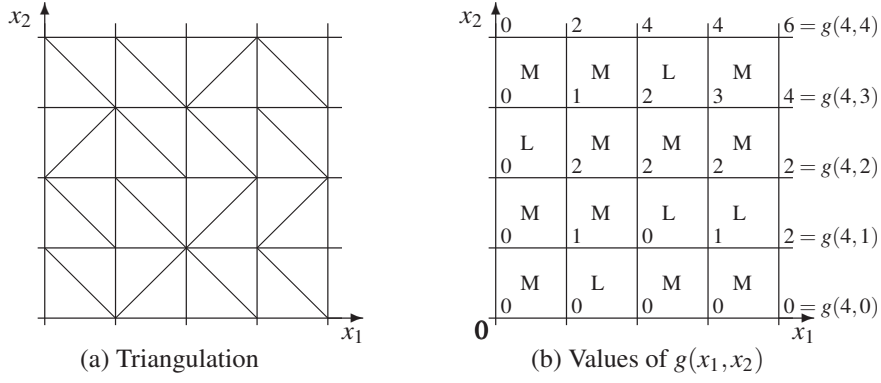}
\caption{A given triangulation and the corresponding function $g$.}
\label{FGicvfndim2}
\end{center}\end{figure}
%%%  FIGURE %%%%%%%%%%%%%%%%%%

According to the given triangulation,
we classify the unit squares
into two types, type M and type L.
We say that a unit square is of type M (resp., type L)
if it has a diagonal line segment on 
$x_{1} + x_{2} = c$ (resp., $x_{1} - x_{2} = c$)
for some $c$;
see Fig.~\ref{FGicvfndim2}(b).
For each $x = (x_{1}, x_{2}) \in [\veczero, a]_{\ZZ}$,
we denote 
the number of unit squares of type M (resp., type L) contained in 
the domain $[\veczero, x]_{\RR}$
by 
$g(x; {\rm M})$
(resp., $g(x; {\rm L})$),
and define 
$g(x) := g(x; {\rm M}) - g(x; {\rm L})$.
For $x=(2,2)$ in Fig.~\ref{FGicvfndim2}(b), for example,
we have
$g(x; {\rm M})=3$ and $g(x; {\rm L}) = 1$, and therefore
$g(2,2) = g(x; {\rm M}) - g(x; {\rm L}) = 3-1 = 2$.
Finally, we define a function 
$f$ 
on $[\veczero, a]_{\ZZ}$
by
\begin{equation}  \label{triangIC}
f(x_{1}, x_{2}) = A ( {x_{1}}\sp{2} + {x_{2}}\sp{2}) + g(x_{1}, x_{2})
\qquad (x \in [\veczero, a]_{\ZZ})
\end{equation}
with a positive constant $A$.
If $A \geq a_{1} + a_{2}$,
this function $f$ is integrally convex 
and 
the associated triangulation of each unit square
coincides with the given one
(proved in Remark~\ref{RMicdim2triang}).
It is noted that, while $f$ is integrally convex,  $g$ itself may not be integrally convex.
For example, 
in Fig.~\ref{FGicvfndim2}(b),
we have
\begin{align*}
 & [f(0,0) + f(2,1)] -  [f(1,0) + f(1,1)] =  2A -1 >0, 
\\
 & [g(0,0) + g(2,1)] -  [g(1,0) + g(1,1)] = -1 < 0 
\end{align*}
(cf., Theorem~\ref{THcharICfndim2}).

\begin{remark} \rm  \label{RMicdim2triang}
First, we prove the integral convexity of $f$ in \eqref{triangIC} 
by showing that
\begin{equation} \label{midptcnvfn=icdim2triang}
 f(x) + f(y) 
  - f \left(\left\lceil \frac{x+y}{2} \right\rceil\right) 
  - f \left(\left\lfloor \frac{x+y}{2} \right\rfloor\right) 
\geq 0
\end{equation}
holds for every $x, y  \in [\veczero, a]_{\ZZ}$
with $\| x - y \|_{\infty} =2$.
By symmetry between $x$ and $y$ and that between coordinate axes, 
we have five cases to consider
(cf., Theorem~\ref{THcharICfndim2}):
(i) $y=x+(2,1)$, 
(ii) $y=x+(2,-1)$, 
(iii) $y=x+(2,0)$,
(iv) $y=x+(2,2)$, and
(v) $y=x+(2,-2)$.
Let $h(x) := {x_{1}}\sp{2} + {x_{2}}\sp{2}$, for which we have
\[
 h(x) + h(y) -
   h \left(\left\lceil \frac{x+y}{2} \right\rceil\right) 
  - h \left(\left\lfloor \frac{x+y}{2} \right\rfloor\right) 
= \begin{cases} 
  2 & \mbox{(case (i), (ii), (iii))}, \\ 
  4  & \mbox{(case (iv), (v))}. 
 \end{cases}
\]
On the other hand, we have
\[
\left|
 g(x) + g(y) -
   g \left(\left\lceil \frac{x+y}{2} \right\rceil\right) 
  - g \left(\left\lfloor \frac{x+y}{2} \right\rfloor\right) 
 \right|
\leq   2( a_{1} + a_{2})
\]
in either case.
Therefore, if $A \geq a_{1} + a_{2}$, the inequality 
in \eqref{midptcnvfn=icdim2triang} holds.

Next, we observe that the function $f = A h + g$ induces a triangulation 
of the specified type within each unit square.
Consider a square $[x, y]_{\RR}$ with $y=x+\vecone$ and $x, y  \in [\veczero, a]_{\ZZ}$.
Let 
$u := (x_{1}+1,x_{2})$ and $ v:= (x_{1},x_{2}+1)$.
Then 
$h(x) + h(y) - h(u) - h(v) = 0$, while
$g(x) + g(y) - g(u) - g(v)$ is equal to $+1$ and $-1$
according to whether the square $[x, y]_{\RR}$ is
of type M or L.
This shows that the triangulation of $[x, y]_{\RR}$ 
induced by $f$ coincides with the given one.
\finbox
\end{remark}

Next we give an example of a simplicial division associated 
with an integrally convex function in three variables.

%%%  FIGURE %%%%%%%%%%%%%%%%%%
%%\input{./figs/DCAEfg4dim3IC}
\begin{figure}\begin{center}
\includegraphics[width=0.40\textwidth,clip]{./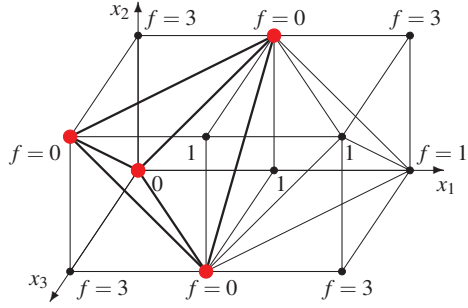}
\caption{Simplicial division associated with an integrally convex function.}
\label{FGconvextICdim3}
\end{center}\end{figure}
%%%  FIGURE %%%%%%%%%%%%%%%%%%

\begin{example} \rm \label{EXicext2}
Consider 
$S = [ (0,0,0), (2,1,1) ]_{\ZZ}
=\{ x \in \ZZ\sp{3} 
\mid  0 \leq x_{1} \leq 2, \ 0 \leq x_{i} \leq 1 \ (i=2,3) \}$
(see Fig.~\ref{FGconvextICdim3}), 
and define
$f : \ZZ\sp{3} \rightarrow \RR \cup \{ +\infty \}$ with $\dom f = S$
by
\[
( f(x_{1},0,0) )_{x_{1}=0,1,2}
= (0,1,1),
\quad
( f(x_{1},1,0)  )_{x_{1}=0,1,2}
= (3,0,3),
\]
and
$f(x_{1},0,0) = f(x_{1},1,1)$,
$f(x_{1},1,0) = f(x_{1},0,1)$
for 
$x_{1}=0,1,2$.
By Theorem~\ref{THfavtarProp33} we can verify that $f$ is integrally convex.
For example, for 
$x= (0,0,0)$ and 
$y= (2,1,1)$, 
we have
\[
\tilde{f} \big(\frac{x + y}{2} \big) 
=\tilde{f}(1,\frac{1}{2},\frac{1}{2}) 
=\frac{1}{2} (f(1,1,0) + f(1,0,1)) = 0
\leq  \frac{1}{2} (f(x) + f(y)) = \frac{1}{2}
\]
in \eqref{intcnvmidpt}.
The simplicial division of 
$\overline{S} = [ (0,0,0), (2,1,1) ]_{\RR}$
for the convex extension of $f$
is symmetric with respect to the plane $x_{1}=1$.
The left cube 
$[ (0,0,0), (1,1,1) ]_{\RR}$
is decomposed into five simplices;
one of them has vertices at $(0,0,0)$, $(1,1,0)$, $(1,0,1)$, $(0,1,1)$
and has volume $1/3$
(drawn in bold line), whereas the other four simplicies 
are congruent to the standard simplex, having volume $1/6$.
The right cube 
$[ (1,0,0), (2,1,1) ]_{\RR}$
is decomposed similarly into five simplices;
one of them has vertices at 
$(2,0,0)$, $(1,1,0)$, $(1,0,1)$, $(2,1,1)$, 
and has volume $1/3$, whereas the other four simplicies 
are congruent to the standard simplex, having volume $1/6$.
Thus the simplices are not uniform in volume,
whereas they have the same volume ($=1/6$) for an \Lnat-convex function.
It is added that this function $f$  
is neither \LLnat-convex nor \MMnat-convex,
since 
$\argmin f = \{  (0,0,0),  \ (1,1,0), \  (1,0,1), \  (0,1,1) \}$
is neither \LLnat-convex nor \MMnat-convex
 (as discussed in Example~\ref{EXnonL2M2boxTDI}).
\finbox
\end{example}

%\clearpage\newpage

\subsection{Basic operations}
\label{SCoperfn}

In this section we show how integral convexity of a function 
behaves under basic operations.
Let $f$ be a function on $\ZZ\sp{n}$, i.e.,  
$f: \ZZ\sp{n} \to \RR \cup \{ +\infty \}$.

\paragraph{Origin shift:}

For an integer vector $b \in \ZZ\sp{n}$, the
{\em origin shift}
of $f$ by $b$
means a function $g$ on $\ZZ\sp{n}$ defined by
$g(y) = f(y-b)$.
The origin shift of 
an integrally convex function
is an integrally convex function.

\paragraph{Inversion of coordinates:}

The {\em independent coordinate inversion} of $f$ 
means a function $g$ on $\ZZ\sp{n}$ defined by
\begin{equation} \label{indepsigninvfndef}
g(y_{1},y_{2}, \ldots, y_{n}) = 
f(\tau_{1} y_{1}, \tau_{2} y_{2}, \ldots,  \tau_{n}y_{n})
\end{equation}
with an arbitrary choice of $\tau_{i} \in \{ +1, -1 \}$ $(i=1,2,\ldots,n)$.
The independent coordinate inversion of an integrally convex function 
is an integrally convex function.
This is a nice property of integral convexity, not shared by
\Lnat-, \LLnat-, \Mnat-, or \MMnat-convexity.

\paragraph{Permutation of coordinates:}

For a permutation $\sigma$ of $(1,2,\ldots,n)$,
the {\em permutation} of $f$ by $\sigma$
means a function $g$ on $\ZZ\sp{n}$ defined by
\begin{equation} \label{permfndef}
 g(y_{1},y_{2}, \ldots, y_{n}) =  f(y_{\sigma(1)}, y_{\sigma(2)}, \ldots, y_{\sigma(n)}) .
\end{equation}
The permutation of an integrally convex function is integrally convex.

\paragraph{Variable-scaling:}

For a positive integer $\alpha$,
the {\em variable-scaling} 
(or {\em scaling} for short) of $f$ by $\alpha$
means a function $g$ on $\ZZ\sp{n}$ defined by
\begin{equation} \label{scalefndef}
g(y_{1},y_{2}, \ldots, y_{n}) = f(\alpha y_{1}, \alpha y_{2}, \ldots, \alpha y_{n}).
\end{equation}
Note that the same scaling factor $\alpha$ is used for all coordinates.
If $\alpha = 2$, for example, this operation amounts to considering the
function values at even points.
The scaling operation is used effectively in
minimization algorithms (see Section~\ref{SCminiztn}).
The scaling of an integrally convex function
is not necessarily integrally convex.
The indicator function 
$f=\delta_{S}$ of the integrally convex set $S \subseteq \ZZ\sp{3}$
in Example~\ref{EXscICsetNG422} is such an example.
Another example of a function 
on the integer box $[(0,0,0), (4,2,2)]_{\ZZ}$
can be found in \cite[Example 3.1]{MMTT19proxIC}.
In the case of $n = 2$, 
integral convexity admits the scaling operation.
That is,
if 
$f: \ZZ^{2} \to \RR \cup \{ +\infty  \}$
is integrally convex, then 
$g: \ZZ^{2} \to \RR \cup \{ +\infty  \}$
is integrally convex
(\cite[Theorem~3.2]{MMTT19proxIC}).

As another kind of scaling, 
the dilation operation can be defined for a function
$f: \ZZ\sp{n} \to \RR \cup \{ +\infty \}$
if it is convex-extensible.
For any positive integer $\alpha$,
the 
\kwd{$\alpha$-dilation}
of $f$ is 
defined as a function
$g$ on $\ZZ\sp{n}$ 
given by 
\begin{equation} \label{dilatfndef}
g(y_{1},y_{2}, \ldots, y_{n}) 
= \overline{f}(y_{1}/\alpha , y_{2}/\alpha , \ldots, y_{n}/\alpha),
\end{equation}
where  $\overline{f}$ denotes the convex envelope of $f$.
The dilation of an integrally convex function
is not necessarily integrally convex.
For example, the indicator function 
$f=\delta_{S}$ of the integrally convex set $S \subseteq \ZZ\sp{4}$
in Example~\ref{EXnonboxTDI2} is an integrally convex function
for which the $2$-dilation is not integrally convex.

\begin{remark} \rm  \label{RMscaldilfn}
Although integral convexity is not compatible with 
the dilation operation,
other kinds of discrete convexity such as
L-, \Lnat-, \LL-, \LLnat-, M-, 
\hbox{\Mnat-}, 
\MM-, \MMnat-convexity, and multimodularity
are preserved under dilation.
In contrast, the scaling operation in \eqref{scalefndef} 
preserves L-convexity and its relatives 
(box, L-, \Lnat-, \LL-, \LLnat-convexity, and multimodularity),
and not M-convexity and its relatives 
(M-, \Mnat-, \MM-, \MMnat-convexity).
\finbox
\end{remark}

\paragraph{Value-scaling:}

For a function
$f: \ZZ\sp{n} \to \RR \cup \{ +\infty \}$ 
and a nonnegative factor $a \geq 0$,
the {\em value-scaling} of $f$ by $a$ 
means  a function $g: \ZZ\sp{n} \to \RR \cup \{ +\infty \}$ defined by
$g(y) = a f(y)$ for $y \in \ZZ\sp{n}$.
We may also introduce an additive constant $b \in \RR$ 
and a linear function $\langle c, y \rangle = \sum_{i=1}\sp{n} c_{i} y_{i}$,
where $c \in \RR\sp{n}$,
to obtain
\begin{equation} \label{lintranfndef}
 g(y) = a f(y) + b + \langle c, y \rangle
  \qquad (y \in \ZZ\sp{n}) .
\end{equation}
The operation \eqref{lintranfndef} 
preserves integral convexity of a function.

\paragraph{Restriction:}

For a function
$f: \ZZ\sp{N} \to \RR \cup \{ +\infty \}$ 
and a subset $U$ of the index set 
$N = \{ 1,2,\ldots, n \}$,
the
{\em restriction}
of $f$ to $U$ is a function $g: \ZZ\sp{U} \to \RR \cup \{ +\infty \}$ defined by
\begin{equation*}  %%\label{restrfndef}
 g(y) = f(y,\veczero_{N \setminus U})
  \qquad (y \in \ZZ\sp{U}) ,
\end{equation*}
where $\veczero_{N \setminus U}$ denotes the zero vector 
in $\ZZ\sp{N \setminus U}$.
The notation 
$(y,\veczero_{N \setminus U})$ means the vector
whose $i$th component is equal to $y_{i}$ for $i \in U$
and to 0 for $i \in N \setminus U$.
The restriction 
of an integrally convex function is integrally convex 
(if the effective domain of the resulting function is nonempty).

\paragraph{Projection:}

For a function
$f: \ZZ\sp{N} \to \RR \cup \{ +\infty \}$ 
and a subset $U$ of the index set $N = \{ 1,2,\ldots, n \}$,
the {\em projection}
of $f$ to $U$ is a function
$g: \ZZ\sp{U} \to \RR \cup \{ -\infty, +\infty \}$
defined by
\begin{equation} \label{projfndef} 
  g(y)  =  \inf \{ f(y,z) \mid z \in \ZZ\sp{N \setminus U} \}
  \qquad (y \in \ZZ\sp{U}) ,
\end{equation}
where the notation $(y,z)$ means the vector
whose $i$th component is equal to $y_{i}$ for $i \in U$
and to $z_{i}$ for $i \in N \setminus U$.
The projection is also called {\em partial minimization}.
The resulting function $g$ is referred to as 
the {\em marginal function} of $f$ in \cite{HL01}. 
The projection of an integrally convex function
is integrally convex
(\cite[Theorem~3.1]{MM19projcnvl})
if $g > -\infty$,
or else we have $g \equiv -\infty$ 
(see Proposition~\ref{PRprojICinfty} in Section~\ref{SCtechfn}).

\paragraph{Splitting:}

Suppose that we are given a family 
$\{ U_{1},  U_{2}, \dots , U_{n} \}$
of  disjoint nonempty sets
indexed by $N = \{ 1, 2, \dots , n\}$. 
Let $m_{i}= |U_{i}|$ for $i=1,2,\ldots, n$ and define 
$m= \sum_{i=1}\sp{n} m_{i}$, where $m \geq n$.
For each $i \in N$
we define an $m_{i}$-dimensional vector
$y_{[i]} = ( y_{j} \mid  j \in U_{i} )$
and express $y \in \ZZ\sp{m}$ as
$y = (y_{[1]}, y_{[2]}, \dots , y_{[n]})$. 
For a function $f: \ZZ \sp{n} \to \RR \cup \{+\infty\}$, 
the {\em splitting} of $f$ 
by $\{ U_{1},  U_{2}, \dots , U_{n} \}$
is defined as a function
$g: \ZZ \sp{m} \to \RR \cup \{+\infty\}$ given by
\begin{equation*} %%\label{splitfndef}
  g(y_{[1]}, y_{[2]}, \dots , y_{[n]}) 
  = f( y_{[1]}(U_{1}), y_{[2]}(U_{2}), \dots , y_{[n]}(U_{n})), 
\end{equation*}
where $y_{[i]}(U_{i}) = \sum\{  y_{j} \mid j \in U_{i} \}$. 
For example,
$g(y_{1}, y_{2}, y_{3}) = f(y_{1}, y_{2}+ y_{3})$
is a splitting of 
$f: \ZZ \sp{2} \to \RR \cup \{+\infty \}$
for $U_{1} = \{ 1 \}$ and $U_{2} = \{ 2,3 \}$,
where $n=2$ and $m=3$.
The splitting of an integrally convex function is integrally convex
(\cite[Proposition~4.4]{Mopernet21}).

\paragraph{Aggregation:}

Let 
$\mathcal{P} =  \{ N_{1},  N_{2}, \dots , N_{m} \}$
 be a  partition of $N = \{ 1,2, \ldots, n \}$ into disjoint nonempty subsets:
$N = N_{1} \cup N_{2} \cup \dots \cup N_{m}$
and
$N_{i} \cap N_{j} = \emptyset$ for $i \not= j$.
We have $m \leq n$.
For a function $f: \ZZ \sp{N} \to \RR \cup \{+\infty \}$, 
the {\em aggregation} of $f$ with respect to $\mathcal{P}$
is defined as a function
$g : \ZZ \sp{m} \to \RR \cup \{+\infty, -\infty\}$ given by 
\begin{equation*}  %%\label{aggrfndef}
g(y_{1}, y_{2}, \dots , y_{m}) 
= \inf \{ f(x)   \mid x(N_{j}) = y_{j} \ (j=1,2,\ldots,m) \} .
\end{equation*}
For example,
$g(y_{1}, y_{2}) = \inf \{ f(x_{1}, x_{2}, x_{3}) 
   \mid x_{1} = y_{1}, \ x_{2} + x_{3} = y_{2} \}$
is an aggregation of
$f: \ZZ \sp{3} \to \RR \cup \{+\infty \}$
for $N_{1} = \{ 1 \}$ and $N_{2} = \{ 2,3 \}$,
where $n=3$ and $m=2$.
The aggregation of an integrally convex function
is not necessarily integrally convex
(Example~\ref{EXicvsetaggr}).

\paragraph{Direct sum:}

The {\em direct sum} of two functions 
$f_{1}: \ZZ\sp{n_{1}} \to \RR \cup \{ +\infty \}$ and
$f_{2}: \ZZ\sp{n_{2}} \to \RR \cup \{ +\infty \}$
is a function 
$f_{1} \oplus f_{2}: \ZZ\sp{n_{1}+n_{2}} \to \RR \cup \{ +\infty \}$
defined as
\begin{equation*} %%\label{fndirsumdef}
(f_{1} \oplus f_{2})(x,y)= f_{1}(x) + f_{2}(y)
\qquad (x \in \ZZ\sp{n_{1}}, y \in \ZZ\sp{n_{2}}) .
\end{equation*}
The direct sum of two integrally convex functions is integrally convex.

\paragraph{Addition:}

The {\em sum} of two functions 
$f_{1}, f_{2}: \ZZ\sp{n} \to \RR \cup \{ +\infty \}$
is defined by
\begin{equation} \label{fnsumdef}
(f_{1} + f_{2})(x)= f_{1}(x) + f_{2}(x)
\qquad (x \in \ZZ\sp{n}) .
\end{equation}
For two sets $S_{1}$, $S_{2} \subseteq \ZZ\sp{n}$, 
the sum of their indicator functions
$\delta_{S_{1}}$ and $\delta_{S_{2}}$
coincides with 
the indicator function of their intersection
$S_{1} \cap S_{2}$,
that is,
$\delta_{S_{1}} +  \delta_{S_{2}} = \delta_{S_{1} \cap S_{2}}$. 
The sum of integrally convex functions
is not necessarily integrally convex
(Example~\ref{EXicsetinter}).
However, the sum of an integrally convex function
with a separable convex function 
\begin{equation} \label{f0sumphi}
 g(x) = f(x) + \sum_{i=1}\sp{n} \varphi_{i}(x_{i})
\qquad (x \in \ZZ\sp{n})
\end{equation}
is integrally convex.

\paragraph{Convolution:}

The (infimal) {\em convolution} of two functions
$f_{1}, f_{2}: \ZZ\sp{n} \to \RR \cup \{ +\infty \}$
is defined by
\begin{equation} \label{f1f2convdef}
(f_{1} \conv f_{2})(x) =
 \inf\{ f_{1}(y) + f_{2}(z) \mid x= y + z, \  y, z \in \ZZ\sp{n}  \}
\quad (x \in \ZZ\sp{n}) ,
\end{equation}
where it is assumed that the infimum is bounded from below 
(i.e., $(f_{1} \conv f_{2})(x) > -\infty$ for every $x \in \ZZ\sp{n}$).
For two sets $S_{1}$, $S_{2} \subseteq \ZZ\sp{n}$, 
the convolution of their indicator functions
$\delta_{S_{1}}$ and $\delta_{S_{2}}$
coincides with 
the indicator function of their Minkowski sum
$S_{1}+S_{2} = \{ y + z \mid y \in S_{1}, z \in S_{2} \}$,
that is,
$\delta_{S_{1}} \conv \delta_{S_{2}} = \delta_{S_{1}+S_{2}}$. 
The convolution of integrally convex functions
is not necessarily integrally convex
(Example~\ref{EXicdim2sumhole}).
The convolution of an integrally convex function and 
a separable convex function is integrally convex
\cite[Theorem~4.2]{MM19projcnvl}
(also \cite[Proposition~4.17]{Msurvop21}).

\begin{remark} \rm  \label{RMmlfnconvol}
The convolution operation
plays a central role in discrete convex analysis.
The convolution of two (or more) M$\sp{\natural}$-convex functions
is M$\sp{\natural}$-convex.
The convolution of two L$\sp{\natural}$-convex functions
is not necessarily L$\sp{\natural}$-convex,
but it is integrally convex.
The convolution of three \Lnat-convex functions
is no longer integrally convex
(Remark~\ref{RMmlsetMinkow}).
\finbox
\end{remark}

%\newpage

\subsection{Technical supplement}
\label{SCtechfn}

This section is a technical supplement concerning the projection 
operation defined in \eqref{projfndef}.
We first consider a direction $d$
in which the function value diverges to $-\infty$.

\begin{proposition}    \label{PRinfdirICfn}
Let $f: \ZZ\sp{n} \to \RR \cup \{ +\infty \}$ be an integrally convex function,
$y \in \dom f$, and $d \in \ZZ\sp{n}$.
If 
$\displaystyle \lim_{k \to \infty} f(y + k d)= -\infty$,
then for any $z \in \dom f$, we have
$\displaystyle \lim_{k \to \infty} f(z + k d)= -\infty$.
\end{proposition}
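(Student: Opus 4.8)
The plan is to split the statement into a \emph{domain part} (the ray $z+kd$ stays in $\dom f$) and a \emph{value part} (the values diverge), and to study the latter through the convex extension $\tilde f=\overline f$. First I would dispose of $d=\veczero$: then $f(y+kd)=f(y)$ is constant, so the hypothesis $\lim_k f(y+kd)=-\infty$ is impossible and there is nothing to prove; hence $d\neq\veczero$ and $\|d\|_{\infty}\ge 1$. Since $f(y+kd)\to-\infty$, the points $y+kd$ lie in $\dom f$ for all large $k$; as $\dom f$ is integrally convex (Proposition~\ref{PRicvsetfun}), Proposition~\ref{PRinfdirICset} applied to $S=\dom f$ (with base point $y+k_{0}d$ for $k_{0}$ large) gives $z+kd\in\dom f$ for every $z\in\dom f$ and every $k\ge 1$. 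Thus all values $f(z+kd)$ are finite, and along the ray $t\mapsto\tilde f(z+td)$ the extension is convex in $t\ge 0$; so it suffices to show this convex function is unbounded below, i.e. $\inf_{k}f(z+kd)=-\infty$.

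Next I would treat the easy case using the recession function. Let $\gamma=\lim_{k\to\infty}\bigl(f(y+(k+1)d)-f(y+kd)\bigr)$, which exists in $[-\infty,0]$ since the increments are non-decreasing (convexity along the ray) and $f(y+kd)\to-\infty$ forces $\gamma\le 0$. This $\gamma$ is the asymptotic slope $\lim_k f(y+kd)/k$, i.e. the value $\tilde f^{\infty}(d)$ of the recession function of $\tilde f$ in the direction $d$. By the standard fact that the recession function of a convex function is independent of the base point, $\lim_k f(z+kd)/k=\tilde f^{\infty}(d)=\gamma$ as well. If $\gamma<0$ this already yields $f(z+kd)\to-\infty$, and we are done.

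The remaining case $\gamma=0$ (sublinear decrease) is the crux. Here ordinary convexity of $\tilde f$ is \emph{not} enough: a convex function can tend to $-\infty$ along one ray while staying bounded on a parallel one (for instance $-2\sqrt{(1-s)t}$ on $s\in[0,1],\ t\ge 0$ is convex, tends to $-\infty$ as $t\to\infty$ for every $s<1$, yet is identically $0$ at $s=1$). To rule this out I would invoke the parallelogram/midpoint inequalities that express integral convexity (Theorem~\ref{THicchardmc}, and in two variables Theorem~\ref{THcharICfndim2}) to couple the increments of $f$ along $d$ on two parallel lattice lines. For a lattice point $w$ with $\|w-z\|_{\infty}\le 1$ lying one step closer to $y$, the goal is an increment‑coupling inequality $f(z+(k+1)d)-f(z+kd)\le f(w+(k+2)d)-f(w+(k+1)d)$, built from the centrally symmetric quadruple $z+kd,\ z+(k+1)d,\ w+(k+1)d,\ w+(k+2)d$, whose two diagonals share the centre $\tfrac{z+w}{2}+(k+1)d$. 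Telescoping over $k=0,\dots,K-1$ would give $f(z+Kd)\le f(z)+\bigl(f(w+(K+1)d)-f(w+d)\bigr)$, so divergence of the $w$‑ray forces that of the $z$‑ray; I would then connect $y$ to $z$ by a chain of unit‑$\ell_\infty$ lattice steps inside $\dom f$ (available because $\dom f$ is integrally convex) and propagate the divergence step by step.

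The hard part is exactly this increment‑coupling inequality. It is where integral convexity must be used beyond mere convexity: the midpoint inequality of Theorem~\ref{THicchardmc} bounds $\tilde f$ at the common centre \emph{from above} by each of the two diagonal averages, whereas the coupling needs the two averages \emph{compared with each other}. Extracting that comparison from the $n$‑dimensional inequality — and for a general integer offset $w-z$, whose span together with $d$ is a non‑coordinate rank‑two sublattice, so that the two‑variable theory does not apply verbatim — together with the bookkeeping required to reach the possibly extreme point $z$, is the main obstacle; I expect the proof to succeed by reducing this quadruple to a configuration inside a size‑two box and applying the local convex‑extension structure there.
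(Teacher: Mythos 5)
You have the right skeleton — reduce to the domain statement via Proposition~\ref{PRinfdirICset}, then couple the increments of $f$ along two parallel lattice rays by an inequality
$f(z+(k+1)d)-f(z+kd)\le f(w+(k+2)d)-f(w+(k+1)d)$
built on a centrally symmetric quadruple, and telescope. This is exactly the engine of the paper's proof. But the coupling inequality is precisely the step you leave as a conjecture (``the main obstacle''), and as stated your plan for it would fail: the midpoint inequality \eqref{intcnvmidpt} only bounds $\tilde f$ at the common centre \emph{from above} by each diagonal average, and for a generic adjacent pair $w,z$ in your chain the integral neighbourhood of the centre $\tfrac{w+z}{2}+(k+1)d$ can contain points of $\dom f$ other than the two endpoints of the short diagonal, so you cannot identify $\tilde f$ at the centre with the average $\tfrac12\bigl(f(z+kd)+f(w+(k+1)d)\bigr)$ and the two averages never get compared. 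So the chain-propagation version of the argument has a genuine gap.

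The paper closes this gap not by chaining but by an extremal contradiction. Let $T$ be the set of $x\in S=\dom f$ whose ray diverges to $-\infty$; assuming $T$ and $S\setminus T$ are both nonempty, pick $y\in T$ and $z\in S\setminus T$ minimizing $C\|y-z\|_{\infty}+\|y-z\|_{1}$ with $C\gg1$. Minimality forces $\|y-z\|_{\infty}=1$ \emph{and} $S\cap N\bigl(\tfrac{y+z}{2}\bigr)=\{y,z\}$ (any third lattice point of that neighbourhood would form a strictly closer mixed pair), and this persists along the ray: $S\cap N\bigl(\tfrac{y^{(k)}+z^{(k)}}{2}\bigr)=\{y^{(k)},z^{(k)}\}$ for $y^{(k)}=y+kd$, $z^{(k)}=z+kd$. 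That is the missing ingredient: with only two generators available, $\tilde f$ at the common centre equals $\tfrac12\bigl(f(y^{(k)})+f(z^{(k)})\bigr)$ \emph{exactly}, while \eqref{intcnvmidpt} applied to the long diagonal $(y^{(k+1)},z^{(k-1)})$ bounds the same value above by $\tfrac12\bigl(f(y^{(k+1)})+f(z^{(k-1)})\bigr)$; this yields $f(y^{(k)})+f(z^{(k)})\le f(y^{(k+1)})+f(z^{(k-1)})$, and telescoping gives $f(z^{(\hat k)})\le f(y^{(\hat k+1)})+f(z)-f(y^{(1)})$, whose right-hand side tends to $-\infty$ — contradicting $z\notin T$. (Your preliminary recession-function dichotomy is correct since $\tilde f=\overline f$ is closed proper convex, but it is not needed: the extremal argument handles $\gamma=0$ and $\gamma<0$ uniformly.)
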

\begin{proof}
Let $S = \dom f$.
For each $x \in S$,
$g_{x}(k) = f(x + k d)$
is a convex function in $k \in \ZZ$,
which follows from the convex-extensibility of $f$.
Let $T$ denote the set of $x \in S$
for which $\displaystyle \lim_{k \to \infty} g_{x}(k) = -\infty$.
We  want to show that $T = \emptyset$ or $T=S$.
To prove this by contradiction, assume that both
$T$ and $S \setminus T$ are nonempty.
Consider $y \in T$ and $z \in S \setminus T$ 
that minimize $C \| y-z \|_{\infty} + \| y-z \|_{1}$,
where $C \gg 1$.
From integral convexity of $S$, we can easily show that
$\| y-z \|_{\infty} =1$ and
$S \cap N \big(\frac{y + z}{2} \big) = \{ y,z \}$.
Moreover, for
$y\sp{(k)} = y + k d$ and
$z\sp{(k)} = z + k d$,
we have
\begin{equation} \label{infdirICfnPrf1}
S \cap N \bigg(\frac{y\sp{(k)} + z\sp{(k)}}{2} \bigg) = \{ y\sp{(k)}, z\sp{(k)} \}
\qquad
(k=0,1,2,\ldots).
\end{equation}
(Proof of \eqref{infdirICfnPrf1}:
Since $g_{y}$ is convex, $y \in S$, and
$\displaystyle \lim_{k \to \infty} g_{y}(k) = -\infty$,
we have $y\sp{(k)} = y + k d \in S$ for all $k$.
This implies, by Proposition~\ref{PRinfdirICset},
that $z\sp{(k)} = z + k d \in S$ for all $k$.
We also have
$C \| y\sp{(k)}-z\sp{(k)} \|_{\infty} + \| y\sp{(k)}-z\sp{(k)} \|_{1}
= C \| y-z \|_{\infty} + \| y-z \|_{1}$.) \
Since $f$ is integrally convex, we have
\[
\tilde{f}\, \bigg(\frac{y\sp{(k+1)} + z\sp{(k-1)}}{2} \bigg) 
\leq \frac{1}{2} (f(y\sp{(k+1)}) + f(z\sp{(k-1)})) 
\]
by Proposition~\ref{PRiccharAtoBC},
where the left-hand side can be expressed as 
\[
\tilde{f}\, \bigg(\frac{y\sp{(k+1)} + z\sp{(k-1)}}{2} \bigg) 
=
\tilde{f}\, \bigg(\frac{y\sp{(k)} + z\sp{(k)}}{2} \bigg) 
=
 \frac{1}{2} (f(y\sp{(k)}) + f(z\sp{(k)})) 
\]
by 
$y\sp{(k+1)} + z\sp{(k-1)} = y\sp{(k)} + z\sp{(k)} $
and \eqref{infdirICfnPrf1}. 
Therefore, we have
\[
f(y\sp{(k)}) + f(z\sp{(k)}) \leq f(y\sp{(k+1)}) + f(z\sp{(k-1)}).
\]
By adding these inequalities for $k=1,2,\ldots, \hat k$, we obtain
\[
 f(z\sp{(\hat k)}) \leq f(y\sp{(\hat k+1)}) + f(z) - f(y\sp{(1)}) .
\]
By letting $\hat k \to \infty$ we obtain a contradiction,
since the right-hand side tends to $-\infty$ while
the left-hand side does not.
\qedJIAM
\end{proof}

Using the above proposition we can show that
the projection 
$g(y)  =  \inf \{ f(y,z) \mid z \in \ZZ\sp{N \setminus U} \}$,
defined in \eqref{projfndef},
is away from the value of $-\infty$
unless it is identically equal to $-\infty$.

\begin{proposition}  \label{PRprojICinfty}
Let $f: \ZZ\sp{n} \to \RR \cup \{ +\infty \}$ be an integrally convex function,
and $g$ be
the projection of $f$ to $U$.
If $g(y\sp{0}) = -\infty$ for some $y\sp{0} \in \ZZ\sp{U}$, 
then 
$g(y) = -\infty$ for all $y \in \ZZ\sp{U}$.
\end{proposition}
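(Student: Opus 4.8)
The plan is to deduce the result from the recession property recorded in Proposition~\ref{PRinfdirICfn}. The key observation is that $g(y^{0}) = -\infty$ forces $f$ to tend to $-\infty$ along a single integral direction confined to the fibre over $y^{0}$, and Proposition~\ref{PRinfdirICfn} then propagates this behaviour to every point of $\dom f$.

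First I would reduce to a slice. Fixing the $U$-coordinates at $y^{0}$ and letting the remaining coordinates vary defines $h(z) = f(y^{0}, z)$ on $\ZZ^{N \setminus U}$; since $h$ is obtained from $f$ by an origin shift together with a restriction (to the coordinates in $N \setminus U$), it is again integrally convex, and by definition of \eqref{projfndef} we have $\inf_{z} h(z) = g(y^{0}) = -\infty$. The goal of this step is to produce a base point $z^{0} \in \dom h$ and a nonzero integer vector $d' \in \ZZ^{N \setminus U}$ with $h(z^{0} + k d') \to -\infty$ as $k \to \infty$, where $z^{0} + k d' \in \dom h$ for every $k \geq 0$.

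The main obstacle is precisely this extraction of an integral direction of descent. Because $\dom h$ is integrally convex, its convex hull $\overline{\dom h}$ is an integer polyhedron (cf.\ Proposition~\ref{PRintpolyIC}), so its recession cone $R$ is rational and generated by integer vectors. A convex-extensible $h$ with $\inf h = -\infty$ must decrease to $-\infty$ along some ray whose direction lies in $R$: the nonempty sublevel sets $\{z : \tilde h(z) \leq c\}$ share the recession cone $R$, and if $R = \{\veczero\}$ they would be nested nonempty compact sets with empty intersection, which is impossible, so $R \neq \{\veczero\}$ and the unbounded descent takes place inside $R$. The delicate point is that the descent may be only sublinear, so one cannot simply invoke $\phi_{\infty}(d') < 0$; instead I would move the descent direction into the relative interior of $R$ and approximate it by a rational, hence (after scaling) integral, direction $d'$ along which $h$ still decreases without bound. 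Since $\overline{\dom h}$ is hole-free, all integer points $z^{0} + k d'$ lie in $\dom h$, and $h(z^{0} + k d') = \tilde h(z^{0} + k d') \to -\infty$.

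Finally I would transfer this to $f$. Put $d = (\veczero_{U}, d') \in \ZZ^{n}$ and note that $(y^{0}, z^{0}) \in \dom f$ satisfies $f\big((y^{0}, z^{0}) + k d\big) = h(z^{0} + k d') \to -\infty$. Proposition~\ref{PRinfdirICfn} then yields $\lim_{k \to \infty} f(x + k d) = -\infty$ for every $x \in \dom f$. Consequently, for any $y \in \ZZ^{U}$ admitting some $w$ with $(y, w) \in \dom f$, taking $x = (y, w)$ gives $g(y) \leq \lim_{k \to \infty} f(y, w + k d') = -\infty$, whence $g(y) = -\infty$. This establishes $g(y) = -\infty$ at every $y$ in the projection of $\dom f$ to $U$, that is, at every $y$ with $g(y) < +\infty$; I would record that points $y$ outside this projection trivially have $g(y) = +\infty$, so that the argument shows $g$ takes no finite value, which is the content of the statement.
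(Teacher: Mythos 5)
Your overall architecture (restrict to the fibre over $y^{0}$, find an integral ray of descent there, then propagate with Proposition~\ref{PRinfdirICfn} and read off $g\equiv-\infty$) is sound in its first and last steps, but the middle step --- the extraction of a single integral direction $d'$ with $h(z^{0}+kd')\to-\infty$ --- is exactly where the proof has a genuine gap, and the argument you sketch for it cannot be completed as stated. The difficulty is not merely that the descent may be sublinear: a closed proper convex function that is unbounded below need not tend to $-\infty$ along \emph{any} ray. For instance, $\phi(x,y)=-x+e^{x^{2}-y}$ is finite and convex on all of $\RR^{2}$ with $\inf\phi=-\infty$ (follow the parabola $y=x^{2}$), yet along every ray $\phi$ is bounded below: if the $x$-component of the direction is nonzero the exponential term dominates, and if it is zero then $\phi\geq -x_{0}$. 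Your sublevel-set argument only shows that the common recession cone of the sublevel sets of $\tilde h$ (which, incidentally, is $\{d:\tilde h0^{+}(d)\leq 0\}$, in general strictly smaller than the recession cone $R$ of $\overline{\dom h}$) is nonzero; a nonzero recession direction of the sublevel sets makes $\tilde h$ non-increasing along that ray, not unbounded below along it, as the example above illustrates. So ``move the direction into the relative interior of $R$ and approximate by a rational direction along which $h$ still decreases without bound'' is precisely the assertion that fails for general convex functions, and you give no argument that uses integral convexity to rule out this behaviour. Whether an integrally convex $h$ with $\inf h=-\infty$ must in fact admit an integral ray of descent is a nontrivial claim that, as far as I can see, requires essentially the content of the proposition you are proving.

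The paper avoids this obstacle entirely by eliminating one coordinate of $N\setminus U$ at a time. When only one coordinate is projected out, the fibre is one-dimensional, and a univariate convex-extensible function unbounded below \emph{must} tend to $-\infty$ as the variable goes to $+\infty$ or $-\infty$; this supplies the direction $d=\pm\unitvec{v}$ needed for Proposition~\ref{PRinfdirICfn} with no recession-cone analysis. The induction then proceeds because, by \cite[Theorem~3.1]{MM19projcnvl}, each intermediate projection $g^{(k)}$ is again integrally convex whenever it is $>-\infty$, and otherwise is identically $-\infty$. If you want to keep your one-shot structure, you would need to either prove the ray-of-descent claim for integrally convex functions directly (genuinely using the discrete structure, e.g.\ the integer-polyhedral recession behaviour of $\dom h$), or fall back on the coordinate-by-coordinate elimination.
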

\begin{proof}
First suppose $|N \setminus U|=1$ 
with $N \setminus U = \{ v \}$,
and
assume $g(y\sp{0}) = -\infty$ for some $y\sp{0} \in \ZZ\sp{U}$.
Then $x\sp{0} := (y\sp{0}, z\sp{0}) \in \dom f$ for some $z\sp{0} \in \ZZ$,
and 
$\displaystyle \lim_{k \to \infty} f(x\sp{0} + k d)= -\infty$
for $d=\unitvec{v}$ or $d=-\unitvec{v}$.
This implies, by Proposition~\ref{PRinfdirICfn},
that 
$\displaystyle \lim_{k \to \infty} f(x + k d)= -\infty$
for all $x \in \dom f$, which shows $g \equiv -\infty$.
Next we consider the case where $|N \setminus U| \geq 2$.
Let $N \setminus U =\{ v_{1}, v_{2}, \ldots, v_{r} \}$
with $2 \leq r < n$,
and
$g\sp{(k)}$ denote the projection of $f$ to 
$U \cup \{ v_{k+1}, v_{k+2}, \ldots, v_{r} \}$
for $k =0,1,\ldots, r$.
Then $g\sp{(0)}=f$,
$g\sp{(r)}=g$, and
$g\sp{(k)}$ is the projection of $g\sp{(k-1)}$
for $k =1,2,\ldots, r$.
By the argument for the case with $|N \setminus U|=1$, we have
$g\sp{(1)} > -\infty$ 
or 
$g\sp{(1)} \equiv -\infty$.
In the latter case we obtain
$g\sp{(k)} \equiv -\infty$ for $k=1,2,\ldots,r$.
In the former case,
$g\sp{(1)}$ is an integrally convex function
by \cite[Theorem~3.1]{MM19projcnvl}, as already mentioned
in Section~\ref{SCoperfn}.
This allows us to apply the same argument 
to $g\sp{(1)}$ to obtain that
$g\sp{(2)} > -\infty$ 
or 
$g\sp{(2)} \equiv -\infty$.
Continuing this way we arrive at the conclusion that
$g > -\infty$ 
or 
$g \equiv -\infty$.
\qedJIAM
\end{proof}

%\newpage

%%\section{Minimization}
%%\label{SCminiztn}
%% 2022-11-13 / 2022-11-20 / 2023-02-20

\section{Minimization and minimizers}
\label{SCminiztn}

\subsection{Optimality conditions}
\label{SCmincrit}

The global minimum of an integrally convex function 
can be characterized by a local condition.

\begin{theorem}[{\cite[Proposition~3.1]{FT90}}; 
  see also {\cite[Theorem~3.21]{Mdcasiam}}] 
  \label{THintcnvlocopt}
Let $f: \mathbb{Z}^{n} \to \mathbb{R} \cup \{ +\infty  \}$
be an integrally convex function and $x^{*} \in \dom f$.
Then $x^{*}$ is a minimizer of $f$
if and only if
$f(x^{*}) \leq f(x^{*} +  d)$ for all 
$d \in  \{ -1, 0, +1 \}^{n}$.
\finboxARX
\end{theorem}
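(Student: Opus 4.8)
The plan is to prove the nontrivial ``if'' direction by contradiction, exploiting the global convexity of the local convex extension $\tilde{f}$ guaranteed by integral convexity. The ``only if'' direction is immediate: a global minimizer $x^{*}$ satisfies $f(x^{*}) \leq f(x^{*}+d)$ for every $d \in \ZZ^{n}$, in particular for $d \in \{ -1,0,+1 \}^{n}$.

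For the converse, suppose the local condition $f(x^{*}) \leq f(x^{*}+d)$ holds for all $d \in \{ -1,0,+1 \}^{n}$. The key step is a lower bound on $\tilde{f}$ in the open unit cube around $x^{*}$. Concretely, I would first show that whenever $w \in \RR^{n}$ satisfies $\| w - x^{*} \|_{\infty} < 1$, the integral neighborhood $N(w)$ is contained in $x^{*} + \{ -1,0,+1 \}^{n}$: for each coordinate $i$ one has $x^{*}_{i} - 1 < w_{i} < x^{*}_{i}+1$, so $\lfloor w_{i} \rfloor$ and $\lceil w_{i} \rceil$ both lie in $\{ x^{*}_{i}-1, x^{*}_{i}, x^{*}_{i}+1 \}$ by \eqref{intneighbordeffloorceil}; the same bounds give $x^{*} \in N(w)$, so $\tilde{f}(w)$ is finite. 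Since by hypothesis $f(z) \geq f(x^{*})$ for every $z \in N(w)$, the defining formula \eqref{fnconvclosureloc2}, expressing $\tilde{f}(w)$ as a convex combination of the values $f(z)$ with $z \in N(w)$, yields $\tilde{f}(w) \geq f(x^{*})$ for all $w$ with $\| w - x^{*} \|_{\infty} < 1$.

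Now assume, for contradiction, that $x^{*}$ is not a global minimizer, so that $f(y) < f(x^{*})$ for some $y \in \dom f$. Consider the segment $w(t) = (1-t)x^{*} + t y$ for $t \in [0,1]$. By integral convexity $\tilde{f}$ is convex, and $\tilde{f}(x^{*}) = f(x^{*})$, $\tilde{f}(y) = f(y)$ by \eqref{ICfnextZ}, hence
\[
 \tilde{f}(w(t)) \leq (1-t) f(x^{*}) + t\, f(y) = f(x^{*}) + t\,(f(y) - f(x^{*})) .
\]
For $t > 0$ the right-hand side is strictly less than $f(x^{*})$. On the other hand, choosing $t$ small enough that $t\, \| y - x^{*} \|_{\infty} < 1$ makes $\| w(t) - x^{*} \|_{\infty} < 1$, so the bound from the previous paragraph forces $\tilde{f}(w(t)) \geq f(x^{*})$, a contradiction. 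Therefore no such $y$ exists and $x^{*}$ minimizes $f$.

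The main obstacle---really the heart of the argument---is the local lower bound $\tilde{f}(w) \geq f(x^{*})$ near $x^{*}$, which is what converts the finitely many inequalities over $\{ -1,0,+1 \}^{n}$ into genuine information about the continuous extension; once this is in place, convexity of $\tilde{f}$ propagates optimality globally along segments. Everything else is bookkeeping with the floor/ceiling description \eqref{intneighbordeffloorceil} of $N(w)$ and the identity $\tilde{f} = f$ on $\ZZ^{n}$ from \eqref{ICfnextZ}.
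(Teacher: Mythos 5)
Your argument is correct and is essentially the paper's own approach: the paper derives this theorem as the special case $p = x^{*}-\bm{1}$, $q = x^{*}+\bm{1}$ of the box-barrier property (Theorem~\ref{THintcnvbox}), whose proof likewise lower-bounds $\tilde{f}$ on a barrier around $x^{*}$ via the definition \eqref{fnconvclosureloc2} and then propagates this along the segment to an arbitrary point using global convexity of $\tilde{f}$. (Your aside that $x^{*}\in N(w)$ makes $\tilde{f}(w)$ finite is not quite right in general, but it is never used, since the lower bound $\tilde{f}(w)\geq f(x^{*})$ holds regardless.)
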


A more general form of this local optimality criterion 
is known as ``box-barrier property'' 
in Theorem~\ref{THintcnvbox} below (see Fig.~\ref{FGboxbarrier}).
A special case of Theorem~\ref{THintcnvbox} 
with $\hat x = x\sp{*}$, $p= x\sp{*} - \bm{1}$, and $q= x\sp{*} + \bm{1}$
coincides with Theorem~\ref{THintcnvlocopt} above.

\begin{theorem}[Box-barrier property {\cite[Theorem~2.6]{MMTT19proxIC}}] \label{THintcnvbox}
Let $f: \mathbb{Z}^{n} \to \mathbb{R} \cup \{ +\infty  \}$
be an integrally convex function, 
and 
$p \in (\mathbb{Z} \cup \{ -\infty  \})^{n}$
and
$q \in (\mathbb{Z} \cup \{ +\infty  \})^{n}$,
where $p \leq q$.
Define% 
\begin{align*}
S &= \{ x \in \mathbb{Z}^{n} \mid p_{i} < x_{i} < q_{i} \ (i=1,2,\ldots,n) \},
\\
W_{i}^{+} &= \{ x \in \mathbb{Z}^{n} \mid 
 x_{i} = q_{i}, \  p_{j} \leq x_{j} \leq q_{j} \ (j \not= i) \}
\quad (i=1,2,\ldots,n),
\\
W_{i}^{-} &= \{ x \in \mathbb{Z}^{n} \mid 
 x_{i} = p_{i}, \  p_{j} \leq x_{j} \leq q_{j} \ (j \not= i) \}
\quad (i=1,2,\ldots,n),
\end{align*}
and $W = \bigcup_{i=1}^{n} (W_{i}^{+} \cup W_{i}^{-})$.
Let $\hat x \in S \cap \dom f$.
If 
$f(\hat x) \leq f(y)$ for all $y \in W$,
then 
$f(\hat x) \leq f(z)$ for all $z \in \ZZ^{n} \setminus S$.
\end{theorem}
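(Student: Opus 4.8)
The plan is to argue through the convex extension $\tilde{f}$, which is convex on all of $\RR\sp{n}$ precisely because $f$ is integrally convex, and which satisfies $\tilde{f}(x)=f(x)$ for $x \in \ZZ\sp{n}$ (see \eqref{ICfnextZ}). Fix any $z \in \ZZ\sp{n} \setminus S$; we may assume $z \in \dom f$, since otherwise $f(z)=+\infty$ and the claimed inequality is trivial. The idea is to travel along the segment from the interior point $\hat x$ to the exterior point $z$ and to examine $\tilde f$ at the point $w\sp{*}$ where this segment first reaches the boundary of the box $[p,q]_{\RR}$: the hypothesis $f(\hat x)\le f(y)$ for $y\in W$ will bound $\tilde f(w\sp{*})$ from below, while convexity of $\tilde f$ bounds it from above, and comparing the two will give $f(\hat x)\le f(z)$.

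Concretely, I would set $x(t)=(1-t)\hat x + t z$ for $t\in[0,1]$ and let $t\sp{*}\in[0,1]$ be the smallest value for which $x(t)$ leaves the open box $\{x\in\RR\sp{n}\mid p_i<x_i<q_i \ (i=1,\dots,n)\}$. Since $\hat x\in S$ lies in this open set, $t\sp{*}>0$; and since $z\notin S$ means $z$ fails some strict inequality, $x(1)=z$ lies outside the open box, so $t\sp{*}\le 1$. For $t<t\sp{*}$ the point $x(t)$ is strictly interior, so passing to the limit the point $w\sp{*}:=x(t\sp{*})$ satisfies $p_i\le w\sp{*}_i\le q_i$ for all $i$, with $w\sp{*}_{i_0}=q_{i_0}$ or $w\sp{*}_{i_0}=p_{i_0}$ for at least one index $i_0$; say $w\sp{*}_{i_0}=q_{i_0}$, the other case being symmetric.

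The crux is to show $N(w\sp{*})\subseteq W$, and this is exactly where the integrality of $p$ and $q$ is used. Because $q_{i_0}$ is an integer, every $y\in N(w\sp{*})$ has $\lfloor w\sp{*}_{i_0}\rfloor=\lceil w\sp{*}_{i_0}\rceil=q_{i_0}$, forcing $y_{i_0}=q_{i_0}$; and for each $j\ne i_0$ the bounds $p_j\le w\sp{*}_j\le q_j$ give $p_j=\lfloor p_j\rfloor\le\lfloor w\sp{*}_j\rfloor\le y_j\le\lceil w\sp{*}_j\rceil\le\lceil q_j\rceil=q_j$, so $y\in W_{i_0}\sp{+}\subseteq W$. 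Using the representation \eqref{fnconvclosureloc2} of $\tilde f(w\sp{*})$ as a convex combination of values $f(y)$ over $y\in N(w\sp{*})$, together with $f(y)\ge f(\hat x)$ for all $y\in W$, I obtain $\tilde f(w\sp{*})\ge f(\hat x)$.

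Finally, convexity of $\tilde f$ along the segment, with $\tilde f(\hat x)=f(\hat x)$ and $\tilde f(z)=f(z)$, gives $\tilde f(w\sp{*})\le (1-t\sp{*})f(\hat x)+t\sp{*}f(z)$. Combining with the lower bound yields $f(\hat x)\le (1-t\sp{*})f(\hat x)+t\sp{*}f(z)$, and dividing by $t\sp{*}>0$ gives $f(\hat x)\le f(z)$, as required. I expect the main obstacle to be the careful verification that $N(w\sp{*})\subseteq W$ at the first boundary-crossing point—in particular, confirming that all coordinates other than $i_0$ remain in the closed range $[p_j,q_j]$, which rests on $t\sp{*}$ being the \emph{first} exit time, so that $w\sp{*}$ is a limit of strictly interior points and hence lies in the closed box.
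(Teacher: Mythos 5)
Your proposal is correct and follows essentially the same route as the paper's proof: both locate the point where the segment from $\hat x$ to $z$ meets the boundary of the box (the paper phrases this as the intersection with $U$, the union of the convex hulls of the $W_i^{\pm}$), show that its integral neighborhood lies in $W$ so that $\tilde f$ there is at least $f(\hat x)$, and then play this against the convexity upper bound $\tilde f(w^{*}) \leq (1-t^{*})f(\hat x) + t^{*}f(z)$. Your explicit verification that $N(w^{*}) \subseteq W_{i_0}^{+}$ via the integrality of $p$ and $q$ is a welcome elaboration of a step the paper leaves implicit.
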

\begin{proof}
(The proof of \cite{MMTT19proxIC} is described here.) \ 
Let $U_{i}^{+}$ and $U_{i}^{-}$
denote the convex hulls of
$W_{i}^{+}$ ans $W_{i}^{-}$, respectively, 
and define
$U = \bigcup_{i=1}^{n} (U_{i}^{+} \cup U_{i}^{-})$.
Then $W = U \cap \mathbb{Z}^{n}$. 
For a point $z \in \ZZ^{n} \setminus S$, 
the line segment connecting $\hat x$ and $z$ 
intersects $U$ at a point, say, $u \in \RR^{n}$.
Then its integral neighborhood $N(u)$ 
is contained in $W$.
Since the local convex extension $\tilde{f}(u)$ 
is a convex combination of 
the $f(y)$'s  with $y \in N(u)$,
and 
$f(y) \geq f(\hat x)$ for every $y \in W$,
we have 
$\tilde{f}(u) \geq f(\hat x)$.
On the other hand, it follows from 
the convexity of $\tilde{f}$ that
$\tilde{f}(u) \leq (1 - \lambda) f(\hat x) + \lambda f(z)$
for some $\lambda$ with $0 < \lambda \leq 1$.
Hence 
$f(\hat x) \leq \tilde{f}(u) \leq (1 - \lambda) f(\hat x) + \lambda f(z)$,
and therefore,
$f(\hat x) \leq f(z)$.
\qedJIAM
\end{proof}

%%%  FIGURE %%%%%%%%%%%%%%%%%%
%%\input{./figs/DCAEfg3boxbarrier} 
\begin{figure}\begin{center}
\includegraphics[height=30mm]{./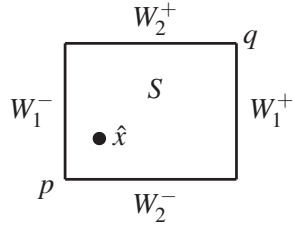}
\caption{Box barrier property}
\label{FGboxbarrier}
\end{center}\end{figure}
%%%  FIGURE %%%%%%%%%%%%%%%%%%

\begin{remark} \rm  \label{RMintconvoptverify}
The optimality criterion in 
Theorem~\ref{THintcnvlocopt}
is certainly local, but not satisfactory
from the computational complexity viewpoint.
We need $O(3\sp{n})$ function evaluations
to verify the local optimality condition.
\finbox
\end{remark}

\subsection{Minimizer sets}
\label{SCminzerIC}

It is (almost) always the case that 
if a function 
is equipped with some kind of discrete convexity,
then the set of its minimizers
is equipped with the discrete convexity of the same kind.
This is indeed the case with an integrally convex function.

\begin{proposition}  \label{PRargminICset}
Let $f: \ZZ\sp{n} \to \RR \cup \{ +\infty \}$ be an integrally convex function.
If it attains a (finite) minimum,
the set of its minimizers 
is integrally convex.
\end{proposition}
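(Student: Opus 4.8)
The plan is to reduce the claim to the midpoint characterization of integrally convex sets in Theorem~\ref{THicSetmidpt}, applied to $S = \argmin f$. Write $\mu := \min_{x} f(x)$ for the (finite) minimum value, so that $S = \{ x \in \ZZ^{n} \mid f(x) = \mu \}$ is nonempty. The first observation I would record is that the local convex extension $\tilde{f}$ is bounded below by $\mu$ on all of $\RR^{n}$: by the definition \eqref{fnconvclosureloc2}, the value $\tilde{f}(x)$ is a convex combination of the numbers $f(z)$ with $z \in N(x)$, each of which is at least $\mu$, so $\tilde{f}(x) \geq \mu$ (the inequality being trivial when $\tilde{f}(x) = +\infty$).

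Next I would verify the hypothesis \eqref{intcnvsetdist234} of Theorem~\ref{THicSetmidpt}. Fix $y, y' \in S$ with $\| y - y' \|_{\infty} \geq 2$ and set $m := (y + y')/2$. Since $f$ is integrally convex and $y, y' \in \dom f$, Proposition~\ref{PRiccharAtoBC} gives
\begin{equation*}
\tilde{f}(m) \leq \tfrac{1}{2}(f(y) + f(y')) = \mu .
\end{equation*}
Combined with the lower bound $\tilde{f}(m) \geq \mu$ from the previous step, this forces $\tilde{f}(m) = \mu$; in particular $\tilde{f}(m)$ is finite.

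The heart of the argument is then to read off which integer points carry the optimal local convex combination. Because $N(m)$ is finite, the minimum in the definition \eqref{fnconvclosureloc2} of $\tilde{f}(m)$ is attained: there are coefficients $(\lambda_{z}) \in \Lambda(m)$ with $\sum_{z \in N(m)} \lambda_{z} z = m$ and $\sum_{z \in N(m)} \lambda_{z} f(z) = \mu$. Since every $f(z) \geq \mu$ and their $\lambda$-weighted average equals $\mu$, each $z$ with $\lambda_{z} > 0$ must satisfy $f(z) = \mu$, i.e. $z \in S \cap N(m)$. Hence $m$ is a convex combination of points of $S \cap N(m)$, which is exactly the condition $m \in \overline{S \cap N(m)}$ required by \eqref{intcnvsetdist234}. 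Theorem~\ref{THicSetmidpt} then yields the integral convexity of $S$.

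I do not anticipate a serious obstacle here; the only point needing care is the ``all weights sit on minimizers'' step, which rests on the elementary fact that a convex combination of reals each $\geq \mu$ equals $\mu$ only if every active term equals $\mu$, together with the attainment of the minimum in \eqref{fnconvclosureloc2} (guaranteed by finiteness of $N(m)$ and $\tilde{f}(m) < +\infty$). It is also worth noting why Theorem~\ref{THicSetmidpt} is the right tool: it lets us test integral convexity only on midpoints of pairs in $S$, for which the midpoint inequality of Proposition~\ref{PRiccharAtoBC} applies directly, avoiding any need to analyze arbitrary points of $\overline{S}$.
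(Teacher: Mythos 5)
Your proof is correct, but it follows a genuinely different route from the paper's. The paper verifies condition \eqref{icsetdef1} directly: it takes an arbitrary $x \in \overline{S}$ with $S = \argmin f$, observes that $\alpha = \overline{f}(x) = \tilde{f}(x)$ (using the identity \eqref{ICfnextR} between the local convex extension and the convex envelope, together with the fact that $\overline{f}$ is identically the minimum value $\alpha$ on $\overline{S}$), and then reads off from the definition \eqref{fnconvclosureloc2} that all the weight must sit on minimizers, giving $x \in \overline{S \cap N(x)}$. You instead test only midpoints of pairs of minimizers, invoking Theorem~\ref{THicSetmidpt} as the integral-convexity criterion and Proposition~\ref{PRiccharAtoBC} to get the upper bound $\tilde{f}(m) \leq \mu$. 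The ``all active weights are on minimizers'' kernel is the same in both arguments. What your route buys is that it avoids appealing to the global identity $\tilde{f} = \overline{f}$ and to the evaluation of $\overline{f}$ on $\overline{S}$, replacing them with the elementary lower bound $\tilde{f} \geq \mu$ and the midpoint inequality; the price is that it leans on Theorem~\ref{THicSetmidpt}, whose proof is itself nontrivial, whereas the paper's one-line verification of \eqref{icsetdef1} needs no such machinery. Both are complete and correct.
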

\begin{proof}
Let $\alpha$ denote the minimum value of $f$,
and $S :=\argmin f$.
Take any $x \in \overline{S}$.
We have
$\alpha = \overline{f}(x) = \tilde{f}(x)$.
By the definition
of $\tilde{f}$
in \eqref{fnconvclosureloc2},
this implies $x \in  \overline{S \cap N(x)}$.
Thus \eqref{icsetdef1} holds, showing the integral convexity of $S$.
\qedJIAM
\end{proof}

In the following we discuss how integral convexity of a function
can be characterized in terms of the integral convexity of the minimizer sets.
For a function 
$f: \ZZ\sp{n} \to \RR \cup \{ +\infty \}$ 
and a vector $p \in \RR\sp{n}$, 
$f[-p]$ will denote the function defined by
\begin{equation}\label{notatfp}
f[-p](x) 
= f(x) - \langle p , x \rangle  
= f(x) - \sum_{i=1}\sp{n} p_{i} x_{i}
\qquad (x \in \ZZ\sp{n}).
\end{equation}
We use notation
\begin{equation}\label{notatargminfp}
\argmin f[-p] = \{ x \in \ZZ\sp{n} \mid 
   f[-p](x) \leq f[-p](y) \mbox{ for all $y \in \ZZ\sp{n}$} \}
\end{equation}
for the set of the minimizers of $f[-p]$.

\begin{theorem}  \label{THfnargmincharExt}
Let $f: \ZZ\sp{n} \to \RR \cup \{ +\infty \}$ and
assume that the convex envelope $\overline{f}$
is a polyhedral convex function
and 
\begin{equation} \label{fnextAssume}
 \overline{f}(x) = f(x)  \qquad (x \in \ZZ^{n}).
\end{equation}
Then 
$f$ is integrally convex if and only if
$\argmin f[-p]$ is an integrally convex set
for each $p \in \RR\sp{n}$
for which $f[-p]$ attains a (finite) minimum.
\end{theorem}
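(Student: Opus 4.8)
The plan is to prove the two implications separately, treating the forward direction as routine and concentrating on the converse. For the only-if part, suppose $f$ is integrally convex and fix $p \in \RR^{n}$ for which $f[-p]$ attains a finite minimum. Since adding the linear term $-\langle p, x \rangle$ preserves integral convexity (the operation \eqref{lintranfndef}), the function $f[-p]$ is again integrally convex, and Proposition~\ref{PRargminICset} shows that its minimizer set $\argmin f[-p]$ is integrally convex. This disposes of the implication from integral convexity of $f$ to integral convexity of the minimizer sets.

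For the converse, I would aim to show that the local convex extension $\tilde f$ coincides with the convex envelope $\overline f$ everywhere; since $\tilde f \geq \overline f$ holds in general and $\overline f$ is convex, this equality makes $\tilde f$ convex, which is exactly integral convexity of $f$. The first step is a dictionary relating minimizer sets to faces: because the affine shift commutes with taking the convex envelope, $\overline{f}[-p]$ is the convex envelope of $f[-p]$, whence $\min_{\RR^{n}} \overline f[-p] = \min_{\ZZ^{n}} f[-p]$ and therefore $\argmin f[-p] = (\argmin_{\RR^{n}} \overline f[-p]) \cap \ZZ^{n}$ whenever the minimum is finite. Thus the hypothesis says precisely that $F_{p} \cap \ZZ^{n}$ is integrally convex for every exposed face $F_{p} := \argmin_{\RR^{n}} \overline f[-p]$ of the polyhedral function $\overline f$.

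Next, I would fix an arbitrary $x \in \overline{\dom f}$ and choose $p$ in the relative interior of the subdifferential $\partial \overline f(x)$. For polyhedral $\overline f$ this makes $F_{p}$ the smallest face containing $x$, on which $\overline f$ is affine, say $\overline f(z) = \langle p, z \rangle - c$ for $z \in F_{p}$, and it guarantees $x \in F_{p}$. I would then argue that $F_{p}$ is an integer polyhedron, so that $F_{p} = \overline{F_{p} \cap \ZZ^{n}}$ and in particular $x \in \overline{F_{p} \cap \ZZ^{n}}$: by \eqref{fnextAssume} the epigraph of $\overline f$ is the closed convex hull of the lattice graph $\{ (y, f(y)) \mid y \in \dom f \}$, so every vertex of $F_{p}$ is one of the integer points $y$, and its recession directions are lattice directions. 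Applying the hypothesis that $F_{p} \cap \ZZ^{n}$ is integrally convex, condition \eqref{icsetdef1} (via Proposition~\ref{PRicvset}) yields $x = \sum_{y} \lambda_{y} y$ with $y \in (F_{p} \cap \ZZ^{n}) \cap N(x)$, $\lambda_{y} > 0$, and $\sum_{y} \lambda_{y} = 1$. Since each such $y$ lies in $F_{p}$, we have $f(y) = \langle p, y \rangle - c$, so that by the definition \eqref{fnconvclosureloc2} of $\tilde f$,
\[
\tilde f(x) \leq \sum_{y} \lambda_{y} f(y) = \langle p, x \rangle - c = \overline f(x) .
\]
Combined with $\tilde f(x) \geq \overline f(x)$, this gives $\tilde f(x) = \overline f(x)$; as $x$ was arbitrary, $\tilde f = \overline f$ is convex and $f$ is integrally convex.

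The main obstacle I anticipate is the integrality of the minimizer face $F_{p}$, that is, establishing $F_{p} = \overline{F_{p} \cap \ZZ^{n}}$ so that $x$ genuinely lies in the convex hull of the \emph{lattice} points of $F_{p}$ rather than merely in $F_{p}$. This is exactly where the assumptions that $\overline f$ is polyhedral and $\overline f|_{\ZZ^{n}} = f$ are indispensable, since they identify $\mathrm{epi}\,\overline f$ with the closed convex hull of the lattice graph and force the vertices and extreme rays of its exposed faces to be integral. The accompanying delicate point is the choice of $p$ in the relative interior of $\partial \overline f(x)$, which must be made carefully so that $F_{p}$ is precisely the affine piece carrying $x$ and so that $f[-p]$ attains a finite minimum (the latter following once $F_{p}$ is known to contain an integer point).
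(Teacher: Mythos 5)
Your proof is correct and is driven by the same engine as the paper's: pick a price vector $p$ supporting $\overline{f}$ at the point of interest, use the hypothesis that $\argmin f[-p]$ is integrally convex to express that point as a convex combination of integer minimizers lying in its integral neighborhood, and exploit the fact that $f$ agrees with an affine function on $\argmin f[-p]$ to bound $\tilde f$. The packaging differs, though. The paper does not aim at $\tilde f = \overline f$ globally; it only verifies the midpoint inequality $\tilde f((x+y)/2) \leq \frac12(f(x)+f(y))$ for integer pairs with $\| x - y\|_{\infty}\geq 2$ and then invokes Theorem~\ref{THicchardmc}, so the cancellation of the linear terms $\langle p,\cdot\rangle$ happens against $\frac12(f[-p](x)+f[-p](y))$ rather than against $\overline f(x)$. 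Your route avoids Theorem~\ref{THicchardmc} and is arguably more transparent, but it obliges you to treat \emph{every} $x\in\overline{\dom f}$ (not just midpoints) and to add the small bookkeeping step that $\tilde f = \overline f$ on $\overline{\dom f}$ together with $\tilde f \equiv +\infty$ off $\overline{\dom f}$ (note $\dom\tilde f\subseteq\overline{\dom f}$, while $\dom\overline f$ may be strictly larger when $\overline{\dom f}$ is not closed) still yields convexity of $\tilde f$; this works since $\tilde f=\overline f+\delta_{\overline{\dom f}}$. The one delicate point common to both arguments is the identity $\argmin_{\RR^{n}}\overline f[-p]=\overline{\argmin f[-p]}$, equivalently the integrality of the unbounded minimizer face $F_{p}$: the paper asserts it in one line, whereas you correctly single it out as the crux and sketch the epigraph argument (extreme points of the closed convex hull of the lattice graph are lattice points, recession directions must be handled separately). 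Your choice of $p$ in the relative interior of $\partial\overline f(x)$ is not needed; any $p\in\partial\overline f(x)$ gives $x\in F_{p}$ and $\overline f[-p]$ constant on $F_{p}$, which is all the argument uses.
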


\begin{proof}
The only-if-part is immediate from Proposition~\ref{PRargminICset},
since if $f$ is integrally convex, so is $f[-p]$ for any $p$.
We prove the if-part by using Theorem~\ref{THicchardmc}.
Take any $x,y \in \dom f$ with $\| x - y \|_{\infty} \geq 2$,
and let $u:=(x+y)/2$.
Our goal is to show 
\begin{equation}  \label{prfICcharExt1}
\tilde{f} (u) 
\leq \frac{1}{2} (f(x) + f(y))
\end{equation}
in \eqref{intcnvmidpt},
where $\tilde{f}$ is the local convex extension of $f$.
The midpoint $u$ 
belongs to the convex hull
$\overline{\dom f}$ 
of $\dom f$, 
which implies that 
$\overline{f}(u)$ is finite and
$u \in \argmin \overline{f}[-p]$ 
for some $p$.
Let $S_{p} := \argmin f[-p]$
for this $p$, where $S_{p} \subseteq \ZZ\sp{n}$.
Since
$\argmin \overline{f}[-p] = \overline{\argmin f[-p]} = \overline{S_{p}}$,
we have
$u \in \overline{S_{p}}$.
By the assumed integral convexity of $S_{p}$, 
this implies  $u \in  \overline{S_{p} \cap N(u)}$
(see \eqref{icsetdef1}).
Therefore, there exist 
$z\sp{(1)}, z\sp{(2)}, \ldots, z\sp{(m)} \in S_{p} \cap N(u)$
as well as 
positive numbers
$\lambda_{1}, \lambda_{2}, \ldots, \lambda_{m}$
with
$\sum_{i=1}\sp{m} \lambda_{i} = 1$
such that
\begin{equation} \label{prfICcharExt2}
u = \sum_{i=1}\sp{m} \lambda_{i} z\sp{(i)},
\qquad
 \tilde f(u) =  
\sum_{i=1}\sp{m} \lambda_{i} f(z\sp{(i)}).
\end{equation}
Since each $z\sp{(i)}$ $(\in S_{p})$
is a minimizer of $f[-p]$,
we have
\[
\sum_{i=1}\sp{m} \lambda_{i} f[-p](z\sp{(i)})
\leq \frac{1}{2} (f[-p](x) +  f[-p](y)),
\]
that is,
\begin{equation} \label{prfICcharExt3}
\sum_{i=1}\sp{m} \lambda_{i} f(z\sp{(i)})
-  \sum_{i=1}\sp{m} \lambda_{i} \langle p, z\sp{(i)} \rangle 
\leq \frac{1}{2} (f(x) +  f(y))
 - \frac{1}{2} (\langle p , x \rangle + \langle p , y \rangle ).
\end{equation}
For the linear parts in this expression we have
\begin{align*}
& \sum_{i=1}\sp{m} \lambda_{i} \langle p, z\sp{(i)} \rangle 
= \langle p, \sum_{i=1}\sp{m} \lambda_{i}  z\sp{(i)} \rangle 
 = \langle p , u \rangle,
\\ &
 \frac{1}{2} (\langle p , x \rangle + \langle p , y \rangle )
=  \langle p , \frac{1}{2}( x + y ) \rangle 
 = \langle p , u \rangle,
\end{align*}
and therefore, \eqref{prfICcharExt3} is equivalent to
\[
\sum_{i=1}\sp{m} \lambda_{i} f(z\sp{(i)})
\leq \frac{1}{2} (f(x) +  f(y)).
\]
Combining this with the expression of $\tilde f(u)$
in \eqref{prfICcharExt2}, we obtain \eqref{prfICcharExt1}.
This completes the proof of Theorem~\ref{THfnargmincharExt}.
\qedJIAM
\end{proof}

The next theorem gives a similar characterization of 
integrally convex functions under a different assumption.

\begin{theorem}[{\cite[Theorem~3.29]{Mdcasiam}}]
  \label{THfnargmincharBnd}
Let $f: \ZZ\sp{n} \to \RR \cup \{ +\infty \}$ be a function
with a bounded nonempty effective domain.
Then 
$f$ is integrally convex if and only if
$\argmin f[-p]$ is an integrally convex set
for each $p \in \RR\sp{n}$.
\end{theorem}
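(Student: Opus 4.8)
The plan is to derive Theorem~\ref{THfnargmincharBnd} from Theorem~\ref{THfnargmincharExt}; the work lies in verifying the two standing hypotheses of the latter (polyhedrality of $\overline{f}$ and convex-extensibility $\overline{f}\,|_{\ZZ\sp{n}} = f$) from boundedness of $\dom f$ together with the assumed integral convexity of the minimizer sets. The only-if-part is immediate: if $f$ is integrally convex then so is each $f[-p]$, since $f[-p]$ arises from $f$ by the linear transformation \eqref{lintranfndef}; as $\dom f$ is bounded and nonempty, $f[-p]$ attains a finite minimum, and Proposition~\ref{PRargminICset} shows $\argmin f[-p]$ is integrally convex.

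For the if-part, first observe two consequences of boundedness. Since $\dom f$ is a finite set, the epigraph of $\check{f}$ is the convex hull of the finitely many points $(y, f(y))$ ($y \in \dom f$) together with the upward vertical ray, so $\overline{f} = \check{f}$ (Remark~\ref{RMcofncoclfn}) is a polyhedral convex function with $\dom \overline{f} = \overline{\dom f}$. Also, for every $p \in \RR\sp{n}$ the function $f[-p]$ attains a finite minimum on the finite set $\dom f$, so our hypothesis supplies integral convexity of $\argmin f[-p]$ for exactly the collection of $p$ required by Theorem~\ref{THfnargmincharExt}.

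The substantive step is convex-extensibility, which I would prove by contradiction. Suppose $\overline{f}(x\sp{0}) < f(x\sp{0})$ for some $x\sp{0} \in \ZZ\sp{n}$; then $\overline{f}(x\sp{0}) < +\infty$, so $x\sp{0} \in \dom \overline{f} = \overline{\dom f}$, and, $\overline{f}$ being polyhedral, it admits a subgradient $p\sp{*}$ at $x\sp{0}$. With $c\sp{*} := \overline{f}(x\sp{0}) - \langle p\sp{*}, x\sp{0} \rangle$, the affine function $\ell(z) := \langle p\sp{*}, z \rangle + c\sp{*}$ minorizes $f$ (indeed $f \geq \overline{f} \geq \ell$) and satisfies $\overline{f}(x\sp{0}) = \ell(x\sp{0})$. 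Writing $x\sp{0} = \sum_{i} \lambda_{i} y\sp{(i)}$ with $y\sp{(i)} \in \dom f$ realizing the attained infimum $\overline{f}(x\sp{0}) = \check{f}(x\sp{0}) = \sum_{i} \lambda_{i} f(y\sp{(i)})$, the chain $\ell(x\sp{0}) = \sum_{i} \lambda_{i} f(y\sp{(i)}) \geq \sum_{i} \lambda_{i} \ell(y\sp{(i)}) = \ell(x\sp{0})$ forces $f(y\sp{(i)}) = \ell(y\sp{(i)})$ for every $i$; since $f[-p\sp{*}] \geq c\sp{*}$ everywhere, this means $y\sp{(i)} \in \argmin f[-p\sp{*}]$. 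Hence $x\sp{0} \in \overline{\argmin f[-p\sp{*}]}$, whereas $f(x\sp{0}) > \overline{f}(x\sp{0}) = \ell(x\sp{0})$ gives $x\sp{0} \notin \argmin f[-p\sp{*}]$. Thus $\argmin f[-p\sp{*}]$ is not hole-free, contradicting its integral convexity via Proposition~\ref{PRicvsetholefree}. Therefore $\overline{f}\,|_{\ZZ\sp{n}} = f$.

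With $\overline{f}$ polyhedral convex, $\overline{f}\,|_{\ZZ\sp{n}} = f$, and $\argmin f[-p]$ integrally convex for every $p$ (hence for every $p$ at which $f[-p]$ has a finite minimum), Theorem~\ref{THfnargmincharExt} applies and yields that $f$ is integrally convex. I expect the convex-extensibility step to be the main obstacle, because the hypothesis constrains only the minimizer sets: the point is to convert a failure of extensibility at $x\sp{0}$ into a genuine hole in some $\argmin f[-p\sp{*}]$, for which the existence of the subgradient $p\sp{*}$ (guaranteed by polyhedrality) and the tightness argument above are the crucial ingredients.
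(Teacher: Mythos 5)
Your proposal is correct and follows essentially the same route as the paper: both reduce to Theorem~\ref{THfnargmincharExt} by noting that boundedness of $\dom f$ gives polyhedrality of $\overline{f}$ and nonemptiness of every $\argmin f[-p]$, and that hole-freeness of these minimizer sets yields $\overline{f}\,|_{\ZZ\sp{n}} = f$. The only difference is that you spell out (correctly, via a supporting affine minorant $\ell$ at a hypothetical point $x\sp{0}$ with $\overline{f}(x\sp{0}) < f(x\sp{0})$ and the resulting hole in $\argmin f[-p\sp{*}]$) the implication that the paper states in one line without proof.
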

\begin{proof}
The only-if-part is immediate from Proposition~\ref{PRargminICset},
since if $f$ is integrally convex, so is $f[-p]$ for any $p$.
To prove the if-part by Theorem~\ref{THfnargmincharExt},
define $S_{p} := \argmin f[-p]$,
which is integrally convex by assumption.
Note that $S_{p}$ is
nonempty for every $p$
by the assumed boundedness of $\dom f$.
The boundedness of $\dom f$
also implies that
the convex envelope $\overline{f}$
is a polyhedral convex function.
The integral convexity of $S_{p}$ implies
that
$S_{p}$ is hole-free ($S_{p} = \overline{S_{p}} \cap \ZZ\sp{n}$),
from which we obtain
$\overline{f}(x) = f(x)$ $(x \in \ZZ^{n})$
in \eqref{fnextAssume}.
Then the integral convexity of $f$ follows from Theorem~\ref{THfnargmincharExt}.
\qedJIAM
\end{proof}

\begin{remark} \rm \label{RMicfnArgminp}
The characterization of integral convexity of $f$ by $\argmin f[-p]$
originates in \cite[Theorem~3.29]{Mdcasiam},
which is stated as Theorem~\ref{THfnargmincharBnd} above.
In this paper, we have given an alternative proof to this theorem
by first establishing Theorem~\ref{THfnargmincharExt}
that employs the assumption of convex-extensibility.
See also Fig.~\ref{FGthmchar} in Section~\ref{SCcharICfn}.
\finbox
\end{remark}

\begin{remark} \rm  \label{RMargmincharbdddom}
Theorems \ref{THfnargmincharExt} and \ref{THfnargmincharBnd} impose the assumption of
convex-extensibility
of $f$ or boundedness of $\dom f$. 
Such an assumption seems inevitable.
Consider a function $f: \ZZ \to \RR$ defined by
$ 
f(x)=
   \left\{  \begin{array}{ll}
    0 & ( x = 0 ) , \\
    1 & ( x \not= 0 ) .
            \end{array}  \right.
$
Then $\argmin f[-p]$ is equal to $\{ 0 \}$ or the empty set
for each $p \in \RR$.
However, this function is not integrally convex.
Note that $f$ is not convex-extensible nor $\dom f$ is bounded. 
\finbox
\end{remark}

\subsection{Proximity theorems}
\label{SCprox}

The proximity-scaling approach is a fundamental technique
in designing efficient algorithms for discrete or combinatorial optimization.
For a function
$f: \mathbb{Z}^{n} \to \mathbb{R} \cup \{ +\infty  \}$
in integer variables and a positive integer 
$\alpha \in \mathbb{Z}_{++}$, called 
a scaling unit, the $\alpha$-scaling of $f$ means the function 
$f^{\alpha}$
defined by $f^{\alpha}(x) = f(\alpha x) $ $(x \in \mathbb{Z}^{n})$
(cf., \eqref{scalefndef}).
A proximity theorem is a result guaranteeing that a (local) minimum 
of the scaled function $f^{\alpha}$
is (geometrically) close to a minimizer 
of the original function $f$. 
More precisely,
we say that $x^{\alpha} \in \dom f$ is an {\em $\alpha$-local minimizer} of $f$
(or {\em $\alpha$-local minimal} for $f$)
if $f(x^{\alpha}) \leq f(x^{\alpha}+ \alpha d)$
for all $d \in  \{ -1,0, +1 \}^{n}$,
and a proximity theorem gives
a bound
$B(n,\alpha)$ such that
for any $\alpha$-local minimizer $x^{\alpha}$ of $f$,
there exists a minimizer $x^{*}$ of $f$ satisfying 
$ \| x^{\alpha} - x^{*}\|_{\infty} \leq B(n,\alpha)$.
The scaled function $f^{\alpha}$ is expected to be simpler
and hence easier to minimize,
whereas the quality of the obtained minimizer of $f^{\alpha}$ 
as an approximation to the minimizer of $f$
is guaranteed by a proximity theorem.
The proximity-scaling approach consists in applying this idea
for a decreasing sequence of $\alpha$, often by halving the scale unit $\alpha$.

In discrete convex analysis
the following proximity theorems are known for 
\Lnat-convex and \Mnat-convex functions.

\begin{theorem} [\protect{\cite{IS02}}; {\cite[Theorem~7.18]{Mdcasiam}}]  
\label{THlfnproximity} 
Suppose that $f$ is an \Lnat-convex function,
$\alpha \in \mathbb{Z}_{++}$, 
and $x^{\alpha} \in \dom f$.
If $f(x^{\alpha}) \leq f(x^{\alpha} +  \alpha d)$ for all 
$d \in  \{ 0, 1 \}^{n} \cup \{ 0, -1 \}^{n}$,
then there exists a minimizer 
$x^{*}$ of $f$ satisfying  $\| x^{\alpha} - x^{*}  \|_{\infty} \leq n (\alpha -1)$.
\finboxARX
\end{theorem}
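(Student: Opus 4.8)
The plan is to use the translation-submodularity characterization of $L^\natural$-convexity together with a nearest-minimizer argument. Recall that an $L^\natural$-convex $f$ satisfies, for all $p,q \in \ZZ^n$ and every integer $\mu \geq 0$,
\[
  f(p) + f(q) \ \geq\ f\bigl((p-\mu\vecone)\vee q\bigr) + f\bigl(p\wedge(q+\mu\vecone)\bigr),
\]
and is in particular integrally convex and convex-extensible. First I would fix the given $\alpha$-local minimizer $x^{\alpha}$ and, among all minimizers of $f$, choose one, $x^{*}$, that minimizes $\|x^{*}-x^{\alpha}\|_{1}$. Assume, for contradiction, that $\|x^{*}-x^{\alpha}\|_{\infty} \geq n(\alpha-1)+1$, and write $d = x^{*}-x^{\alpha}$.

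The core step is to convert the $\alpha$-local minimality hypothesis, which only constrains moves $x^{\alpha}+\alpha e$ with $e \in \{0,1\}^{n}\cup\{0,-1\}^{n}$, into information about $x^{*}$ via submodularity. Applying the displayed inequality with $p=x^{*}$, $q=x^{\alpha}$, $\mu=\alpha$ yields the two points
\[
  (x^{*}-\alpha\vecone)\vee x^{\alpha} = x^{\alpha}+(d-\alpha\vecone)^{+},
  \qquad
  x^{*}\wedge(x^{\alpha}+\alpha\vecone) = x^{*}-(d-\alpha\vecone)^{+}.
\]
The second point is a minimizer weakly closer to $x^{\alpha}$, so by minimality of $\|x^{*}-x^{\alpha}\|_{1}$ it must coincide with $x^{*}$; the first point is then a descent candidate from $x^{\alpha}$ controlled by the local minimality. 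Handling positive and negative coordinates symmetrically (using $\{0,1\}^{n}$ and $\{0,-1\}^{n}$ respectively) bounds each $|d_i|$. The factor $n(\alpha-1)$, rather than a naive coordinatewise $\alpha-1$, arises because the $\vee$/$\wedge$ operations couple the coordinates through the common shift $\mu\vecone$: the slack of size up to $\alpha-1$ surviving in each of the $n$ coordinates can accumulate, and carrying out the estimate carefully — equivalently, inducting on the dimension $n$ with each new coordinate contributing at most $\alpha-1$ — produces the claimed $\ell_{\infty}$ bound. Combining the three ingredients (submodularity producing a nearer candidate minimizer, minimality of $\|x^{*}-x^{\alpha}\|_{1}$ forbidding such a minimizer, and $\alpha$-local minimality forbidding descent from $x^{\alpha}$) then contradicts the assumption $\|x^{*}-x^{\alpha}\|_{\infty}\geq n(\alpha-1)+1$.

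The main obstacle is precisely the mismatch of shapes: translation submodularity moves $x^{\alpha}$ and $x^{*}$ toward each other along the full direction $\vecone$, whereas the hypothesis controls only axis-restricted $\{0,1\}$-moves of size exactly $\alpha$. Indeed, the raw move $(d-\alpha\vecone)^{+}$ is not of the admissible form $\alpha e$, and a single application of submodularity is visibly too weak (for $n=1$ it gives only $\|d\|_{\infty}\leq\alpha$, leaving no contradiction). Bridging this gap is the crux and requires decomposing $(d-\alpha\vecone)^{+}$ into admissible $\alpha$-steps — e.g.\ by iterating submodularity over a descending chain of level sets of $d$ and invoking convexity of $\tilde{f}$ along each coordinate segment to relate intermediate values to the permitted moves $x^{\alpha}+\alpha e$ — so that the per-coordinate excesses telescope into the bound $n(\alpha-1)$.
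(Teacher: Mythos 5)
The paper itself gives no proof of this theorem: it is quoted from Iwata--Shigeno \cite{IS02} and from Theorem~7.18 of \cite{Mdcasiam} (the \finboxARX marks it as cited without proof), so your proposal must be judged against the standard argument in those sources. Your outline assembles the right ingredients---translation submodularity, a minimizer $x^{*}$ chosen nearest to $x^{\alpha}$, and a decomposition of $d=x^{*}-x^{\alpha}$ into level sets---but the step that actually produces the bound $n(\alpha-1)$ is missing, and you concede as much when you label it ``the crux.'' Concretely: a single application of $f(p)+f(q)\geq f((p-\mu\vecone)\vee q)+f(p\wedge(q+\mu\vecone))$ with $\mu=\alpha$ produces the displacement $(d-\alpha\vecone)^{+}$, which is not an admissible move $\alpha e$ with $e\in\{0,1\}^{n}$, so the $\alpha$-local minimality hypothesis says nothing about the point $x^{\alpha}+(d-\alpha\vecone)^{+}$. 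Consequently your inference that ``the second point is a minimizer \ldots\ so it must coincide with $x^{*}$'' is unjustified at that stage, and the follow-up claim that the first point is ``controlled by the local minimality'' is circular. Your proposed repair (``iterating submodularity over a descending chain of level sets \ldots\ so that the per-coordinate excesses telescope'') points in the right direction but is not an argument: nothing in the sketch explains why the accumulated slack is $n(\alpha-1)$ rather than, say, $n\alpha$ or something worse.

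The missing idea is combinatorial. After reducing to the one-sided case $x^{*}\geq x^{\alpha}$ (the other sign is handled symmetrically via the $\{0,-1\}^{n}$ directions, and this reduction itself needs a small argument), write $x^{*}-x^{\alpha}=\sum_{k=1}^{m}\unitvec{A_{k}}$ with the descending chain of level sets $A_{1}\supseteq A_{2}\supseteq\cdots\supseteq A_{m}\neq\emptyset$, where $m=\|x^{*}-x^{\alpha}\|_{\infty}$. If $m\geq n(\alpha-1)+1$, then, since a descending chain of nonempty subsets of $N$ takes at most $n$ distinct values, the pigeonhole principle yields a subset $A$ with $A_{k}=A_{k+1}=\cdots=A_{k+\alpha-1}=A$ for some $k$. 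The move $\alpha\unitvec{A}$ \emph{is} admissible, so $f(x^{\alpha}+\alpha\unitvec{A})\geq f(x^{\alpha})$, and the exchange inequality for \Lnat-convex functions applied along the chain gives $f(x^{*})+f(x^{\alpha})\geq f(x^{*}-\alpha\unitvec{A})+f(x^{\alpha}+\alpha\unitvec{A})$; hence $x^{*}-\alpha\unitvec{A}$ is a minimizer strictly closer to $x^{\alpha}$, contradicting the choice of $x^{*}$. This pigeonhole step is exactly where the constant $n(\alpha-1)$ comes from, and it is absent from your proposal; supplying it would turn your sketch into a proof.
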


\begin{theorem}[\protect{\cite{MMS02}}; \protect{\cite[Theorem~6.37]{Mdcasiam}}]\label{THmfnproximity}
Suppose that $f$ is an \Mnat-convex function,
$\alpha \in \mathbb{Z}_{++}$, 
and $x^{\alpha} \in \dom f$.
If $f(x^{\alpha}) \leq f(x^{\alpha} + \alpha d)$ for all 
$d \in \{ \unitvec{i}, -\unitvec{i}  \  (1 \leq i \leq n) \}
   \cup \{ \unitvec{i} - \unitvec{j} \  (i \not= j) \} $,
then there exists a minimizer 
$x^{*}$ of $f$ satisfying $\| x^{\alpha} - x^{*}  \|_{\infty} \leq n (\alpha -1)$.
\finboxARX
\end{theorem}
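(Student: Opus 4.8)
The engine of the proof is the exchange property characterizing \Mnat-convexity: for any $x, y \in \dom f$ and any $i \in \suppp(x-y) = \{ i \mid x_{i} > y_{i} \}$, there exists $j \in \suppm(x-y) \cup \{ 0 \}$, where $\suppm(x-y) = \{ i \mid x_{i} < y_{i} \}$ and $\unitvec{0} := \veczero$, such that
\[
 f(x) + f(y) \geq f(x - \unitvec{i} + \unitvec{j}) + f(y + \unitvec{i} - \unitvec{j}) .
\]
The plan is to argue by contradiction. Among all minimizers of $f$ I would select $x^{*}$ minimizing $\| x^{*} - x^{\alpha} \|_{1}$, and suppose $\| x^{*} - x^{\alpha} \|_{\infty} \geq n(\alpha - 1) + 1$. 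Fix a coordinate $k$ attaining this maximum. The case $x^{*}_{k} - x^{\alpha}_{k} \leq -(n(\alpha-1)+1)$ is handled symmetrically by interchanging the roles of $x^{*}$ and $x^{\alpha}$ in the exchange property, which then produces moves in the direction $\unitvec{j} - \unitvec{k}$, again among the hypothesis directions (note that coordinate inversion is \emph{not} available here, since \Mnat-convexity is not preserved under it). Hence I may assume $x^{*}_{k} - x^{\alpha}_{k} \geq n(\alpha - 1) + 1$.

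Next I would feed $x^{*}$ back toward $x^{\alpha}$ by unit exchange steps. Applying the exchange property to $(x^{*}, x^{\alpha})$ with $i = k$ yields some $j$ with $f(x^{*}) + f(x^{\alpha}) \geq f(x^{*} - \unitvec{k} + \unitvec{j}) + f(x^{\alpha} + \unitvec{k} - \unitvec{j})$; since $x^{*}$ is a global minimizer, $f(x^{*} - \unitvec{k} + \unitvec{j}) \geq f(x^{*})$, so the single step $x^{\alpha} \mapsto x^{\alpha} + \unitvec{k} - \unitvec{j}$ does not increase $f$. Iterating from $x^{\alpha}$ generates a path toward $x^{*}$ along which $f$ is nonincreasing and whose $k$-th coordinate strictly increases at every step. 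Because $x^{*}_{k} - x^{\alpha}_{k} \geq n(\alpha-1)+1$, the path admits at least $n(\alpha - 1) + 1$ such increments, each paired with a decrement in one of the $n$ slots $j \in \{ 0 \} \cup (\{ 1, \dots , n \} \setminus \{ k \})$. By the pigeonhole principle some fixed slot $j^{*}$ is used at least $\lceil (n(\alpha-1)+1)/n \rceil = \alpha$ times; this count is exactly where the bound $n(\alpha-1)$ is forced.

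Having singled out the direction $d = \unitvec{k} - \unitvec{j^{*}}$ (read as $d = \unitvec{k}$ when $j^{*} = 0$), I would batch $\alpha$ of its uses into one scaled move and compare with the scaled local optimality. The hypothesis gives $f(x^{\alpha}) \leq f(x^{\alpha} + \alpha d)$, whereas the batched exchange estimate should give the reverse inequality $f(x^{\alpha} + \alpha d) \leq f(x^{\alpha})$ (equivalently, transported to the other endpoint, $f(x^{*} - \alpha d) \leq f(x^{*})$). Since moving $x^{*}$ by $-\alpha d$ strictly decreases $\| x^{*} - x^{\alpha} \|_{1}$ while keeping the value at the global minimum, this produces a minimizer strictly closer to $x^{\alpha}$, contradicting the choice of $x^{*}$ and completing the argument.

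The hard part will be the batching step. The exchange property delivers only unit moves with an \emph{uncontrolled} partner index $j$, so the $\alpha$ copies of $d$ extracted by pigeonhole are interleaved with other steps along the path, and recombining them into a clean scale-$\alpha$ move $\alpha d$ is not automatic. Making this rigorous requires either a reordering argument justified by convexity of $f$ along lines in the direction $d$ — a property that must itself be teased out of the exchange property, and which is delicate because the ``$j = 0$'' branch of the exchange does not immediately yield one-dimensional convexity — or a direct induction that preserves the scaled local optimality hypothesis along the constructed path. Treating the $j^{*} = 0$ case and the sign-reversed case symmetrically, and verifying the boundary bookkeeping in the pigeonhole count, are the remaining routine points.
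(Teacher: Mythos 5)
The paper itself does not prove this theorem; it is quoted from \cite{MMS02} and \cite[Theorem~6.37]{Mdcasiam}, so your proposal has to be measured against those proofs. Your skeleton --- contradiction against a minimizer $x^{*}$ nearest to $x^{\alpha}$ in $\ell_{1}$, a descent path from $x^{\alpha}$ toward $x^{*}$ built from unit exchange steps $+\unitvec{k}-\unitvec{j}$ obtained by pairing the exchange property with global minimality of $x^{*}$, and a pigeonhole count isolating a direction $d=\unitvec{k}-\unitvec{j^{*}}$ used at least $\alpha$ times --- is the right strategy and matches the known arguments in outline. The arithmetic $\lceil(n(\alpha-1)+1)/n\rceil=\alpha$ and the check that $x^{*}-\alpha d$ would be strictly $\ell_{1}$-closer to $x^{\alpha}$ both go through.

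The genuine gap is exactly the one you flag and then do not close: converting the $\alpha$ interleaved unit exchanges into a single long-step inequality of the form $f(x^{*}-\alpha d)+f(x^{\alpha}+\alpha d)\leq f(x^{*})+f(x^{\alpha})$ (or its transported equivalent), which is what is needed to contradict either the scaled local optimality of $x^{\alpha}$ or the choice of $x^{*}$. The chain of unit exchanges only bounds $f$ at the endpoint of the entire path, a point that differs from $x^{\alpha}+\alpha d$ by all the other steps, so no information about $f(x^{\alpha}+\alpha d)$ follows from it; and an \Mnat-convex function is not convex along the line $\{x^{\alpha}+td\}$ in any way that lets you reorder or telescope for free (the $j^{*}=0$ branch is particularly resistant, as you note). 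This long-step exchange inequality is where essentially all of the difficulty in \cite{MMS02} and \cite[Theorem~6.37]{Mdcasiam} is concentrated: it is obtained there by a careful induction in which the exchange indices are chosen in a controlled (not merely counted) manner and the intermediate function values are tracked. Until that lemma is stated and proved, your argument identifies a candidate direction but produces no inequality, so it remains a plan rather than a proof.
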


The proximity bounds in Theorems 
\ref{THlfnproximity} and \ref{THmfnproximity}
are known to be tight 
\cite[Examples 4.2 and 4.3]{MMTT19proxIC}.
It is noteworthy that
we have the same proximity bound
$B(n,\alpha) = n (\alpha -1)$
for \Lnat-convex and \Mnat-convex functions,
and that $B(n,\alpha)$ is linear in $n$.
For integrally convex functions with $n \geq 3$, 
this bound $n (\alpha -1)$ is no longer valid,
which is demonstrated by 
Examples 4.4 and 4.5 in \cite{MMTT19proxIC}.
More specifically, the latter example shows 
a quadratic lower bound 
$B(n,\alpha) \geq (n-2)^{2}(\alpha -1)/4$
for an integrally convex function
arising from bipartite graphs.

The following is a proximity theorem for integrally convex functions.

\begin{theorem}
[{\cite[Theorem~5.1]{MMTT19proxIC}}]
  \label{THproxintcnv}
Let $f: \mathbb{Z}^{n} \to \mathbb{R} \cup \{ +\infty  \}$
be an integrally convex function,
$\alpha \in \mathbb{Z}_{++}$, 
and $x^{\alpha} \in \dom f$.
If $f(x^{\alpha}) \leq f(x^{\alpha}+ \alpha d)$ for all $d \in  \{ -1,0, +1 \}^{n}$,
then $\argmin f \not= \emptyset$ and
there exists $x^{*} \in \argmin f$ with
\begin{equation}\label{icproximity0}
 \| x^{\alpha} - x^{*}\|_{\infty} \leq \beta_{n} (\alpha - 1) ,
\end{equation}
where $\beta_{n}$ is defined by 
\begin{equation}\label{eqMrec1}
\beta_{1}=1, \quad \beta_{2}=2; \qquad  
 \beta_{n} =  \frac{n+1}{2} \beta_{n-1}  + 1
\quad (n=3,4,\ldots).
\end{equation}
The proximity bound $\beta_{n}$ satisfies
\begin{equation}\label{eqMrec2est}
 \beta_{n} \leq \frac{(n+1)!}{2^{n-1}} 
\qquad (n=3,4,\ldots).
\end{equation}
\finboxARX
\end{theorem}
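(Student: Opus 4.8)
The plan is to prove the distance bound by induction on the dimension $n$, reading off the recursion \eqref{eqMrec1} from a dimension-reduction step, and to obtain the nonemptiness of $\argmin f$ simultaneously. By an origin shift (which preserves integral convexity) I may assume $x^{\alpha}=\veczero$, so the hypothesis reads $f(\veczero)\le f(\alpha d)$ for all $d\in\{-1,0,+1\}^{n}$, and the goal is a global minimizer $x^{*}$ with $\|x^{*}\|_{\infty}\le\beta_{n}(\alpha-1)$. The natural first move is to confine the whole discussion to the bounded box $B=\{x\in\ZZ^{n}\mid\|x\|_{\infty}\le\beta_{n}(\alpha-1)\}$: once one shows that no integer point outside $B$ beats the best point of $B$, the minimum of $f$ over $B\cap\dom f$ (attained, as the minimum of a convex function over a bounded polytope met with $\ZZ^{n}$) is a global minimum, so existence and the bound are delivered together. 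The escape from $B$ is blocked by the box-barrier property (Theorem~\ref{THintcnvbox}), whose hypothesis on the walls of $B$ is exactly what the inductive bound will certify.

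For the base cases I would argue directly. When $n=1$, $f$ is an ordinary convex-extensible function on $\ZZ$, and $f(0)\le f(\alpha)$, $f(0)\le f(-\alpha)$ force a minimizer into $[-(\alpha-1),\alpha-1]$, giving $\beta_{1}=1$. When $n=2$ I would exploit the feature, special to two variables and quoted in Section~\ref{SCoperfn}, that the scaling $f^{\alpha}(x)=f(\alpha x)$ preserves integral convexity. Then $\veczero$ is a local minimizer of the integrally convex function $f^{\alpha}$ in the sense of Theorem~\ref{THintcnvlocopt}, hence a global minimizer of $f^{\alpha}$; this pins a coarse minimizer on the sublattice $\alpha\ZZ^{2}$, and a short refinement from $\alpha$-spacing back to unit spacing, controlled by the parallelogram inequalities of Theorem~\ref{THcharICfndim2}, produces a true minimizer within $\ell_{\infty}$-distance $2(\alpha-1)$, i.e.\ $\beta_{2}=2$.

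For the inductive step ($n\ge3$) I would take a minimizer $x^{*}$ of $f$ nearest to $\veczero$ in the $\ell_{\infty}$-norm (ties broken by $\ell_{1}$) and, after a coordinate permutation and sign inversions, assume its largest coordinate is $x^{*}_{n}=\rho=\|x^{*}\|_{\infty}$. The reduction to dimension $n-1$ comes from freezing this coordinate: the restriction of $f$ to each hyperplane $\{x_{n}=c\}$ is an $(n-1)$-variable integrally convex function, so the induction hypothesis $\beta_{n-1}$ bounds how far a slice-minimizer can lie in the remaining coordinates, while the one-dimensional convexity of $f$ in the $x_{n}$-direction, constrained by $f(\veczero)\le f(\alpha\unitvec{n})$, limits the travel in $x_{n}$. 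The midpoint inequality of Proposition~\ref{PRiccharAtoBC}, applied to pairs of slice-minimizers two scaling-steps apart and expanded through the local convex extension $\tilde f$ over the relevant integral neighborhood, is the mechanism that transports information between consecutive slices; assembling the at most $n+1$ vertex contributions of such neighborhood decompositions against the slice bound is what I expect to yield a recursion of the shape $\rho\le \mathrm{const}_{n}\,\beta_{n-1}(\alpha-1)+(\alpha-1)$, contradicting $\rho>\beta_{n}(\alpha-1)$ once $\mathrm{const}_{n}=(n+1)/2$.

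The genuine obstacle is precisely this recombination. Unlike the \Lnat- and \Mnat-convex settings (Theorems~\ref{THlfnproximity}, \ref{THmfnproximity}), scaling destroys integral convexity for $n\ge3$ — as the examples in Sections~\ref{SCoperset}--\ref{SCoperfn} show — so one cannot simply promote a scaled local minimizer to a global one, and the displacement must be controlled coordinate-by-coordinate and then reassembled. Making this reassembly tight enough to give the clean linear recursion $\beta_{n}=\frac{n+1}{2}\beta_{n-1}+1$, rather than an exponentially worse constant, is the crux, and it is where the nearest-minimizer choice with lexicographic tie-breaking, the midpoint characterization (Theorem~\ref{THicchardmc}), and the box-barrier property do the real work; integral convexity of the minimizer set (Proposition~\ref{PRargminICset}) keeps the slice-minimizers well behaved. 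Finally, the explicit estimate \eqref{eqMrec2est} is a routine induction carrying a little slack: assuming $\beta_{n-1}\le \frac{n!}{2^{n-2}}-\frac12$ gives $\beta_{n}\le\frac{n+1}{2}\bigl(\frac{n!}{2^{n-2}}-\frac12\bigr)+1=\frac{(n+1)!}{2^{n-1}}-\frac{n+1}{4}+1\le\frac{(n+1)!}{2^{n-1}}-\frac12$ for $n\ge3$ (checking $n=3$ by hand), so $\beta_{n}\le(n+1)!/2^{n-1}$ throughout.
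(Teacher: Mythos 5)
This theorem is stated in the survey without proof: it is quoted from \cite[Theorem~5.1]{MMTT19proxIC}, and the text explicitly remarks that the proof for general $n$ is ``quite long'' and only sketches an alternative route for the special case $n=2$ (box-barrier property plus the parallelogram inequality). So there is no in-paper proof to match your argument against line by line; the comparison has to be with the cited source, whose overall architecture (induction on the dimension, a minimizer of $f$ chosen nearest to the $\alpha$-local minimizer, confinement via the box-barrier property) your outline does resemble.

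The difficulty is that your proposal does not actually prove the theorem: the entire content of the result is the recursion $\beta_{n}=\frac{n+1}{2}\beta_{n-1}+1$, and at exactly that point you write that assembling the neighborhood decompositions against the slice bound ``is what I expect to yield a recursion of the shape $\rho\le \mathrm{const}_{n}\,\beta_{n-1}(\alpha-1)+(\alpha-1)$'' with $\mathrm{const}_{n}=(n+1)/2$. You then correctly identify this recombination as ``the genuine obstacle'' and ``the crux'' --- and leave it unexecuted. Why the constant is $(n+1)/2$ (and not something exponentially worse), how the midpoint inequality transports information between slices without losing control of the other $n-1$ coordinates, and how the nearest-minimizer choice interacts with the modification of generators are precisely the delicate points that make the original proof long; naming the tools (Proposition~\ref{PRiccharAtoBC}, Theorem~\ref{THicchardmc}, Proposition~\ref{PRargminICset}) is not a substitute for the estimate. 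The $n=2$ base case is likewise only gestured at: the passage from a global minimizer of the scaled function $f^{\alpha}$ on $\alpha\ZZ^{2}$ back to a minimizer of $f$ within $\ell_{\infty}$-distance $2(\alpha-1)$ is itself a nontrivial argument (it is \cite[Theorem~4.1]{MMTT19proxIC}). Finally, a small but real arithmetic slip in your verification of \eqref{eqMrec2est}: with the strengthened hypothesis $\beta_{n-1}\le \frac{n!}{2^{n-2}}-\frac12$ you get $\beta_{n}\le\frac{(n+1)!}{2^{n-1}}-\frac{n+1}{4}+1$, and $-\frac{n+1}{4}+1\le-\frac12$ requires $n\ge5$, so you must check $n=3$ \emph{and} $n=4$ by hand (both do hold: $\beta_{3}=5\le 5.5$ and $\beta_{4}=13.5\le14.5$), not just $n=3$. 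That part is repairable; the missing inductive step is not, as it stands.
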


The bound
$\beta_{n} (\alpha - 1)$ in \eqref{icproximity0}
is superexponential in $n$, as seen from \eqref{eqMrec2est}.
The numerical values of $\beta_{n}$ are as follows:
\begin{equation}\label{betatable}
\begin{array}{l|cccccc}
\hline
\mbox{Dimension $n$}
  &   2 & 3 & 4 & 5 & 6 & 7
\\ \hline
\mbox{Value \eqref{eqMrec1} of $\beta_{n}$} 
  & \phantom{0}2\phantom{0} & \phantom{0}5\phantom{0}  & 13.5 & 41.5 & 146.25 & 586
\\
 \mbox{Bound \eqref{eqMrec2est} on $\beta_{n}$} 
  & - & 6 & 15\phantom{.0} & 45\phantom{.0}  &  157.5\phantom{0}    & 630
\\ \hline
\end{array}
\end{equation}

While the proof of
Theorem~\ref{THproxintcnv}
for general $n$ is quite long
\cite{MMTT19proxIC},
its special case for $n=2$ 
admits an alternative method
based on the 
box-barrier property
(Theorem~\ref{THintcnvbox})
and the parallelogram inequality 
(Remark~\ref{RMparaineqdim2}).
See the proof of \cite[Theorem~4.1]{MMTT19proxIC}
for the detail.

Finally we mention that proximity theorems 
are also available for 
\LLnat-convex and \MMnat-convex functions \cite{MT04prox}.
The proximity bound for \LLnat-convex functions is linear in $n$
\cite[Theorem~6]{MT04prox}
and that for \MMnat-convex functions is quadratic in $n$
\cite[Theorem~10]{MT04prox}.

\subsection{Scaling algorithm}
\label{SCalgo}

In spite of the fact that 
integral convexity is not preserved under variable-scaling,
it is possible to design a scaling algorithm
for minimizing an integrally convex function
with a bounded effective domain.

In the following we briefly describe the algorithm of \cite{MMTT19proxIC}.
The algorithm is justified by
the proximity bound in Theorem~\ref{THproxintcnv}
and the optimality criterion in Theorem~\ref{THintcnvlocopt}.
Let $K_{\infty}$ denote the $\ell_{\infty}$-size of the effective domain of $f$, 
i.e.,
\[
K_{\infty} := \max\{ \| x -y \|_{\infty} \mid x, y \in \dom f \}.
\] 
Initially, the scaling unit $\alpha$ is set to 
$2\sp{\lceil \log_{2} K_{\infty} \rceil} \approx K_{\infty}$.
In Step~S1 of the algorithm,
the function $\hat f(y)$ is 
the restriction of $f(x + \alpha y)$ to the set
\[
 Y:=\{ y \in \mathbb{Z}\sp{n} \mid 
 \| \alpha y \|_{\infty} \leq \beta_{n} (2\alpha - 1) \},
\]
which is a box of integers.
Then a local minimizer $y\sp{*}$ of $\hat f(y)$ is found to update $x$ to $x+ \alpha y\sp{*}$. 
A local minimizer of $\hat f(y)$ can be found, 
e.g., by any descent method (the steepest descent method, in particular),
although $\hat f(y)$ is not necessarily integrally convex.

\begin{tabbing}     
\= {\bf Scaling algorithm for integrally convex functions}%
\\
\> \quad  S0: 
   \= Find an initial vector $x$ with $f(x) < +\infty$, and set 
   $\alpha := 2\sp{\lceil \log_{2} K_{\infty} \rceil}$.
\\
\> \quad  S1:
   \>  Find an integer vector $y\sp{*}$  that locally minimizes  
\\ \> \> \quad 
      $\hat f(y) = 
   \left\{ \begin{array}{ll}  %% cases
  f(x + \alpha y) 
    & (\| \alpha y \|_{\infty} \leq \beta_{n} (2\alpha - 1) ) , \\
   +\infty & (\mbox{otherwise}) , \\
 \end{array}\right.
$
\\ \> \>
    in the sense of   
$\hat f(y\sp{*}) \leq \hat f(y\sp{*} +  d)$ ($\forall d \in  \{ -1, 0, +1 \}^{n}$)
\\ \> \>
   (e.g., by the steepest descent method),  
   and set $x:= x+ \alpha y\sp{*}$.  \\
\> \quad  S2: 
    \> If $\alpha = 1$, then stop \
       ($x$ is a minimizer of $f$).             
\\
\> \quad  S3: 
  \> Set  $\alpha:=\alpha/2$, and go to S1.  
\end{tabbing}
%%%%%%%%%%%%%

\begin{tabbing}     
\= {\bf The steepest descent method to locally minimize $\hat f(y)$}%
\\
\> \quad  D0: 
   \= Set $z := \bm{0}$.
\\
\> \quad  D1:
   \>  Find $d \in  \{ -1, 0, +1 \}^{n}$ that minimizes $\hat f(z +  d)$.
\\
\> \quad  D2: 
    \> If $\hat f(z) \leq \hat f(z +  d)$, then set $y\sp{*}:=z$ and stop
\\ 
\> \> 
  ($y\sp{*}$ is a local minimizer of $\hat f$).             
\\
\> \quad  D3: 
  \> Set  $z:= z+d$, and go to D1.  
\end{tabbing}
%%%%%%%%%%%%%

In the final phase with $\alpha = 1$, $\hat f$
is an integrally convex function,
and hence, by Theorem~\ref{THintcnvlocopt},
the local minimizer in Step~S1
is a global minimizer of $\hat f$.
Furthermore, 
it can be shown, with the use of Theorem~\ref{THproxintcnv},
that this point is a global minimizer of $f$.

The complexity of the algorithm is as follows.
The number of iterations in the descent method
is bounded by 
$|Y| \leq (4\beta_{n})\sp{n}$.
For each $z$, the $3\sp{n}$ neighboring points are examined
to find a descent direction or verify its local minimality.
Thus Step~S1
can be done with at most $(12\beta_{n})\sp{n}$ function evaluations.
The number of scaling phases is $\log_{2} K_{\infty}$.
Therefore, 
the total number of function evaluations in the algorithm
is bounded by
$(12\beta_{n})\sp{n}\log_{2} K_{\infty}$.
For a fixed $n$, this gives a polynomial bound 
$O(\log_{2} K_{\infty})$ in the problem size.
It is emphasized in \cite[Remark~6.2]{MMTT19proxIC} that
the linear dependence of
$B(n,\alpha)= \beta_{n} (\alpha - 1)$
on $\alpha$ is critical for the 
complexity $O(\log_{2} K_{\infty})$.

Finally, we mention that no algorithm 
can minimize every integrally convex function
in time polynomial in $n$, since any function  
on the unit cube $\{ 0,1 \}\sp{n}$ is integrally convex.

%\newpage

%%\section{Subgradient and biconjugacy} 
%%\label{SCsubgrbiconj}
%% 2022-06-25 /2022-11-20 / 2023-02-20

\section{Subgradient and biconjugacy} 
\label{SCsubgrbiconj}

\subsection{Subgradient} 
\label{SCsubgr}

In convex analysis \cite{BL06,HL01,Roc70},
the {\em subdifferential} of 
a convex  function
$g: \RR\sp{n} \to \RR \cup \{ +\infty \}$
at $x \in \dom g$
is defined by
\begin{equation} \label{subgRRdef}
 \subgR g(x)
= \{ p \in  \RR\sp{n} \mid    
  g(y) - g(x)  \geq  \langle p, y - x \rangle   \ \ \mbox{\rm for all }  y \in \RR\sp{n} \},
\end{equation}
and an element $p$ of $\subgR g(x)$ is called a 
{\em subgradient}
of $g$ at $x$.
Analogously,
for a function
$f: \ZZ\sp{n} \to \RR \cup \{ +\infty \}$,
the subdifferential of 
$f$ at $x \in \dom f$
is defined as 
\begin{equation} \label{subgZRdef}
 \subgR f(x)
= \{ p \in  \RR\sp{n} \mid    
  f(y) - f(x)  \geq  \langle p, y - x \rangle   \ \ \mbox{\rm for all }  y \in \ZZ\sp{n} \},
\end{equation}
and an element $p$ of $\subgR f(x)$ is called a subgradient of $f$ at $x$.
An alternative expression 
\begin{equation} \label{subgZRargmin}
 \subgR f(x)
= \{ p \in  \RR\sp{n} \mid  x \in \argmin f[-p]  \}
\end{equation}
is often convenient, where
$f[-p](x) = f(x) - \langle p , x \rangle$  
defined in \eqref{notatfp}.
If $f$ is convex-extensible in the sense of
$f = \overline{f}\,|_{\ZZ\sp{n}}$ in \eqref{fnconvextcl},
where
$\overline{f}$ is the convex envelope of $f$ defined in \eqref{fnconvenvClSupAff},
then $\subgR f(x) = \subgR \overline{f}(x)$
for each $x \in \ZZ^{n}$.

When
$f: \ZZ\sp{n} \to \RR \cup \{ +\infty \}$
is integrally convex, $\subgR f(x)$ is nonempty for $x \in \dom f$,
since 
$f = \overline{f}\,|_{\ZZ\sp{n}}$ 
by \eqref{ICfnextZ}.
Furthermore, we can rewrite \eqref{subgZRdef}
by making use of Theorem~\ref{THintcnvlocopt}
for the minimality of an integrally convex function.
Namely, by Theorem~\ref{THintcnvlocopt} applied to $f[-p]$,
we may restrict $y$ in \eqref{subgZRdef}
to the form of $y=x + d$ with $d \in \{-1,0,+1\}^n$,
to obtain
\begin{equation} \label{subgIC}
 \subgR f(x)
= \{ p \in  \RR\sp{n} \mid    
  \sum_{j=1}\sp{n} d_{j} p_{j} \leq f(x+d) - f(x) 
 \ \ \mbox{for all} \ \  d \in \{ -1,0,+1 \}\sp{n} \} .
\end{equation}
This expression shows, in particular, that
$\subgR f(x)$ is a polyhedron described by inequalities with
coefficients taken from $\{ -1, 0, +1 \}$.

To discuss an integrality property of $\subgR f(x)$
in Section \ref{SCsubgrint},
it is useful 
to investigate the projections of $\subgR f(x)$ along coordinate axes.
Let $P := \subgR f(x)$ for notational simplicity,
and for each $l = 1,2,\ldots,n$,
let $[P]_{l}$ denote
the projection of $P$
to the space of $(p_{l},p_{l+1},\ldots,p_n)$.
Inequality systems to describe the projections
$[P]_{l}$ for $l = 1,2,\ldots,n$
can be obtained 
by applying the Fourier--Motzkin elimination procedure \cite{MS22kolin2,Sch86} to 
the system of inequalities 
in \eqref{subgIC},
where
the variable $p_{1}$ is eliminated first, and then $p_{2}, p_{3}, \ldots$,
to finally obtain an inequality in $p_{n}$ only.
By virtue of the integral convexity of $f$, a drastic simplification occurs
in this elimination process. 
The inequalities that are generated  
in the elimination process
are actually redundant and need not be added
to the current system of inequalities,
which is a crucial observation made in \cite{MT20subgrIC}.
Thus we obtain the following theorem.

\begin{theorem}[\cite{MT20subgrIC}] \label{THelimICproj}
Let 
$f: \ZZ\sp{n} \to \RR \cup \{ +\infty \}$
be an integrally convex function and $x \in \dom f$.
Then 
$\subgR f(x)$ is a nonempty polyhedron,
and for each $l = 1,2,\ldots,n$,
the projection 
$[\subgR f(x)]_{l}$
of $\subgR f(x)$
to the space of $(p_{l},p_{l+1},\ldots,p_n)$
is given by
\begin{align} 
[\subgR f(x)]_{l}  = 
& \  \{ (p_{l},p_{l+1},\ldots,p_n) 
\mid   
  \sum_{j=l}\sp{n} d_{j} p_{j} \leq f(x+d) - f(x) 
\nonumber \\ &
 \ \ \mbox{\rm for all} \ \  d \in \{ -1,0,+1 \}\sp{n}
 \ \ \mbox{\rm with} \ \  d_{j} = 0 \ (1 \leq j \leq l -1)
 \} .
\label{subgICproj}
\end{align}
\finboxARX
\end{theorem}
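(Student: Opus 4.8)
The plan is to read off $\subgR f(x)$ from the inequality description \eqref{subgIC} and then compute the projections $[\subgR f(x)]_{l}$ by eliminating the variables $p_{1}, p_{2}, \ldots$ one at a time via the Fourier--Motzkin procedure. Nonemptiness of $\subgR f(x)$ and the description \eqref{subgIC} are already in hand from the discussion preceding the theorem, so the whole content is the projection formula \eqref{subgICproj}. I would prove it by induction on $l$, the inductive hypothesis being that $[\subgR f(x)]_{l}$ equals the set $Q_{l}$ defined by the right-hand side of \eqref{subgICproj}. The base case $l=1$ is trivial: projecting onto all coordinates returns $\subgR f(x)$ itself, and the side condition ``$d_{j}=0$ for $1 \le j \le 0$'' is vacuous.

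For the inductive step, assume $[\subgR f(x)]_{l}=Q_{l}$, so that $Q_{l}$ is cut out by the inequalities $\sum_{j \ge l} d_{j} p_{j} \le f(x+d)-f(x)$ over $d$ with $d_{1}=\cdots=d_{l-1}=0$. Eliminating $p_{l}$ keeps the inequalities with $d_{l}=0$ (these are exactly the defining inequalities of $Q_{l+1}$) and adjoins, for every pair $(d,d')$ with $d_{l}=1$, $d'_{l}=-1$ and $d_{1}=\cdots=d_{l-1}=d'_{1}=\cdots=d'_{l-1}=0$, the summed inequality
\[
\sum_{j \ge l+1} (d_{j}+d'_{j})\, p_{j} \le f(x+d)+f(x+d')-2f(x).
\]
Since $[\subgR f(x)]_{l+1} \subseteq Q_{l+1}$ is automatic, the induction reduces to showing that every such adjoined inequality is redundant, i.e.\ implied by the inequalities of $Q_{l+1}$.

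This redundancy is the crux and the main obstacle. I would establish it with the midpoint inequality (Proposition~\ref{PRiccharAtoBC}) applied to $y=x+d$, $y'=x+d'$, together with a description of the integral neighborhood of $m := (y+y')/2 = x+(d+d')/2$. Since $d_{j}+d'_{j}=0$ for every $j \le l$, the first $l$ coordinates of $m$ coincide with those of $x$ and are integral, so each point of $N(m)$ has the form $x+e$ with $e \in \{-1,0,+1\}^{n}$ and $e_{j}=0$ for $j \le l$; thus each such $e$ indexes a defining inequality of $Q_{l+1}$. Writing the local convex extension $\tilde{f}(m) = \sum_{k} \lambda_{k} f(x+e^{(k)})$ with $\sum_{k} \lambda_{k} e^{(k)} = (d+d')/2$ and taking the corresponding convex combination of the inequalities $\sum_{j \ge l+1} e^{(k)}_{j} p_{j} \le f(x+e^{(k)})-f(x)$ (valid for $(p_{l},\ldots,p_{n}) \in Q_{l+1}$) gives $\sum_{j \ge l+1} \frac{d_{j}+d'_{j}}{2} p_{j} \le \tilde{f}(m)-f(x)$; multiplying by $2$ and using $\tilde{f}(m) \le \frac{1}{2}(f(x+d)+f(x+d'))$ from Proposition~\ref{PRiccharAtoBC} reproduces the adjoined inequality exactly. (If $f(x+d)$ or $f(x+d')$ equals $+\infty$, the adjoined inequality is vacuous and needs no argument.) Hence $[\subgR f(x)]_{l+1}=Q_{l+1}$, closing the induction. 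The essential point is that integral convexity forces $N(m)$ to be supported on the coordinates $j>l$, so the Fourier--Motzkin by-products collapse onto inequalities already present --- the ``drastic simplification'' mentioned before the theorem.
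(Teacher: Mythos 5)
Your proposal is correct and follows essentially the same route the paper indicates: Fourier--Motzkin elimination of $p_{1},p_{2},\ldots$ from \eqref{subgIC}, with the crucial point being that each inequality generated by summing a $d_{l}=+1$ row and a $d'_{l}=-1$ row is redundant, which you justify exactly as intended — via the midpoint inequality of Proposition~\ref{PRiccharAtoBC} and the observation that $N\bigl(x+(d+d')/2\bigr)$ is supported on $x+e$ with $e_{j}=0$ for $j\le l$. The survey only sketches this argument and defers to \cite{MT20subgrIC}; your write-up supplies the missing redundancy computation correctly.
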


While the Fourier--Motzkin elimination 
for the proof of Theorem~\ref{THelimICproj} 
depends on the linear ordering of
$N = \{ 1,2,\ldots, n  \}$,
it is possible to formulate
the obtained identity 
\eqref{subgICproj}
without referring to the ordering of $N$.
This is stated below as a corollary.

\begin{corollary} \label{COsubrICprojJ}
Let $J$ be any nonempty subset of $N = \{ 1,2,\ldots, n  \}$.
Under the same assumption as in Theorem~\ref{THelimICproj},
the projection 
of $\subgR f(x)$
to the space of $(p_{j} \mid j \in J)$
is given by
$
\{ (p_{j} \mid j \in J) 
\mid   
  \sum_{j \in J} d_{j} p_{j} \leq f(x+d) - f(x) 
  \ \mbox{\rm for all}  \  d \in \{ -1,0,+1 \}\sp{n}
  \ \mbox{\rm with} \   d_{j} = 0 \ (j \in N \setminus J) \}$.
\finboxARX
\end{corollary}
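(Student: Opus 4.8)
The plan is to deduce the corollary from Theorem~\ref{THelimICproj} by exploiting the fact that integral convexity, and hence the whole elimination argument, is insensitive to the labelling of the coordinates. Theorem~\ref{THelimICproj} describes the projection onto the \emph{tail} block $\{l,l+1,\ldots,n\}$ relative to the ordering $1,2,\ldots,n$ that is used in the Fourier--Motzkin elimination. Since the description of the projection onto $(p_{j}\mid j\in J)$ asserted in the corollary refers neither to any ordering of $N$ nor to any ordering within $J$ (the sum $\sum_{j\in J}d_{j}p_{j}$, the support condition $d_{j}=0\ (j\in N\setminus J)$, the range $d\in\{-1,0,+1\}\sp{n}$, and the increment $f(x+d)-f(x)$ are all order-free), it suffices to show that the tail-projection formula \eqref{subgICproj} remains valid when the roles of the coordinates are permuted, so that an arbitrary subset $J$ plays the role of a tail block.

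First I would fix a nonempty $J\subseteq N$, put $m:=|J|$ and $l:=n-m+1$, and choose a permutation $\sigma$ of $N$ that maps the tail block $\{l,l+1,\ldots,n\}$ bijectively onto $J$, and hence $\{1,2,\ldots,l-1\}$ onto $N\setminus J$. Following \eqref{permfndef}, define the permuted function $g(y_{1},\ldots,y_{n})=f(y_{\sigma(1)},\ldots,y_{\sigma(n)})$, which is integrally convex, and let $x'$ be the correspondingly permuted evaluation point so that $g$ at $x'$ matches $f$ at $x$. The elementary step is then to record how subgradients transform: starting from the defining inequalities \eqref{subgZRdef} and substituting $d\mapsto(d_{\sigma(1)},\ldots,d_{\sigma(n)})$ merely permutes both the pairing $\langle p,d\rangle$ and the increment $f(x+d)-f(x)$, while leaving the admissible set $\{-1,0,+1\}\sp{n}$ invariant. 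Consequently $p\in\subgR f(x)$ if and only if the reordered vector $(p_{\sigma(1)},\ldots,p_{\sigma(n)})$ lies in $\subgR g(x')$, and projecting $\subgR f(x)$ onto the coordinates indexed by $J$ coincides, up to this relabelling, with projecting $\subgR g(x')$ onto its last $m$ coordinates.

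Next I would apply Theorem~\ref{THelimICproj} to the integrally convex function $g$ at the point $x'$ with the value $l=n-m+1$, obtaining that $[\subgR g(x')]_{l}$ is described by the inequalities $\sum_{j=l}\sp{n}d_{j}p_{j}\le g(x'+d)-g(x')$ over all $d\in\{-1,0,+1\}\sp{n}$ with $d_{j}=0$ for $1\le j\le l-1$. Transporting this system back through $\sigma$ finishes the argument: the support constraint $d_{j}=0\ (1\le j\le l-1)$ becomes $d_{j}=0\ (j\in N\setminus J)$, the summation $\sum_{j\ge l}d_{j}p_{j}$ becomes $\sum_{j\in J}d_{j}p_{j}$, and $g(x'+d)-g(x')$ becomes $f(x+d')-f(x)$ for the reordered increment $d'$, which again ranges over all of $\{-1,0,+1\}\sp{n}$ with $d'_{j}=0\ (j\in N\setminus J)$. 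This yields precisely the claimed description of the projection onto $(p_{j}\mid j\in J)$.

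I do not expect a genuine obstacle here, since no analytic content beyond Theorem~\ref{THelimICproj} is involved; the entire difficulty is bookkeeping. The one point that must be handled with care is keeping the permutation and its inverse straight, so that the support set $N\setminus J$, the index set of the summation $J$, and the evaluation point $x'$ all correspond correctly under $\sigma$ and $\sigma\sp{-1}$; once the transformation rule for $\subgR$ under \eqref{permfndef} is stated cleanly, the translation of the inequality system is immediate.
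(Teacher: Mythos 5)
Your proposal is correct and is essentially the argument the paper intends: the corollary is obtained from Theorem~\ref{THelimICproj} by relabelling the coordinates via a permutation that sends $J$ to the tail block $\{l,\ldots,n\}$, using that integral convexity is preserved under permutation of coordinates and that the subdifferential and the inequality data $f(x+d)-f(x)$ transform equivariantly under this relabelling. The bookkeeping you describe (tracking $\sigma$ and $\sigma^{-1}$ through the support condition, the summation, and the evaluation point) is exactly what is needed, and there is no gap.
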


\subsection{Integral subgradient} 
\label{SCsubgrint}

For an integer-valued function
$f: \ZZ\sp{n} \to \ZZ \cup \{ +\infty \}$,
we are naturally interested in integral vectors in $\subgR f(x)$. 
An integer vector $p$ belonging to $\subgR f(x)$ 
is called an {\em integral subgradient}
and the condition 
\begin{equation} \label{ICsubgrZ}
\subgR f(x) \cap \ZZ\sp{n} \neq \emptyset
\end{equation}
is sometimes referred to as
the {\em integral subdifferentiability}  of $f$ at $x$.

Integral subdifferentiability
of integer-valued integrally convex functions
is established recently by
the present authors \cite{MT20subgrIC}.

\begin{theorem}[{\cite[Theorem~3]{MT20subgrIC}}]  \label{THsubgrIC}
Let $f: \ZZ\sp{n} \to \ZZ \cup \{ +\infty \}$
be an integer-valued integrally convex function.
For every $x \in \dom f$,
we have
$\subgR f(x) \cap \ZZ\sp{n} \neq \emptyset$.
\end{theorem}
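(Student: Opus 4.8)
The plan is to build an integral subgradient coordinate by coordinate, reading off the feasible range for each coordinate from the explicit inequality description of the projections supplied by Theorem~\ref{THelimICproj}. Writing $P := \subgR f(x)$, I would construct $p^{*} = (p_{1}^{*}, \ldots, p_{n}^{*}) \in \ZZ\sp{n}$ for $l = n, n-1, \ldots, 1$ while maintaining the invariant that the already-chosen tail $(p_{l}^{*}, \ldots, p_{n}^{*})$ is integral and lies in the projection $[P]_{l}$. The whole point is that every inequality appearing in \eqref{subgICproj} has left-hand-side coefficients $d_{j} \in \{-1,0,+1\}$ and integer right-hand side $f(x+d) - f(x)$, since $f$ is integer-valued.

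First I would treat the base case $l = n$. By \eqref{subgICproj} the set $[P]_{n}$ is cut out by the inequalities $d_{n} p_{n} \leq f(x+d) - f(x)$ with $d_{n} \in \{-1,0,+1\}$, so it is a closed interval whose two endpoints are integers; since $\subgR f(x)$ is nonempty by Theorem~\ref{THelimICproj}, this interval is nonempty and therefore contains an integer $p_{n}^{*}$. For the inductive step, suppose integral $p_{l+1}^{*}, \ldots, p_{n}^{*}$ have been chosen with $(p_{l+1}^{*}, \ldots, p_{n}^{*}) \in [P]_{l+1}$. Substituting these values into the description \eqref{subgICproj} of $[P]_{l}$, each constraint becomes $d_{l} p_{l} \leq f(x+d) - f(x) - \sum_{j=l+1}\sp{n} d_{j} p_{j}^{*}$, whose right-hand side is an integer because $f$, the $d_{j}$, and the $p_{j}^{*}$ are all integral; as $d_{l} \in \{-1,0,+1\}$, the admissible values of $p_{l}$ form an interval with integer endpoints. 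Crucially, $[P]_{l+1}$ is exactly the projection of $[P]_{l}$ obtained by eliminating $p_{l}$, so the membership $(p_{l+1}^{*}, \ldots, p_{n}^{*}) \in [P]_{l+1}$ guarantees this interval is nonempty, and hence it contains an integer $p_{l}^{*}$. Iterating down to $l = 1$ yields an integral $p^{*} \in [P]_{1} = \subgR f(x)$, which is the desired integral subgradient.

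The genuine difficulty in this argument has already been absorbed into Theorem~\ref{THelimICproj}: the Fourier--Motzkin elimination never introduces a new inequality with a coefficient outside $\{-1,0,+1\}$. This is precisely what keeps each single-variable feasibility interval integral-ended at every stage; without it, eliminating a variable could create fractional coefficients and the coordinate-wise rounding would fail. Consequently the only substantive ingredient is the structural description of the projections, and what remains here is the routine integer-rounding induction sketched above, together with the elementary observation that a nonempty interval with integer endpoints contains an integer.
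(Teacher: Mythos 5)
Your proposal is correct and follows essentially the same route as the paper: both arguments use the explicit inequality description of the projections $[\subgR f(x)]_{l}$ from Theorem~\ref{THelimICproj} and then run a backward induction $l=n, n-1, \ldots, 1$, at each stage observing that the feasible set for $p_{l}$ given the already-fixed integral tail is a nonempty interval with integer endpoints (nonempty because the tail lies in the projection, integer-ended because the coefficients are in $\{-1,0,+1\}$ and $f$ is integer-valued). Your remark that the entire difficulty is absorbed into the Fourier--Motzkin analysis behind Theorem~\ref{THelimICproj} matches the paper's own assessment.
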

\begin{proof}
The proof is based on 
Theorem~\ref{THelimICproj} concerning projections of $\subgR f(x)$.
While the reader is referred to 
\cite{MT20subgrIC} for the formal proof,
we indicate the basic idea here.
By \eqref{subgICproj} for $l=n$, we have
\[
 \{ p_{n} \mid p \in \subgR f(x) \}  
= \{ p_{n} \mid 
   p_{n} \leq  f(x+ \unitvec{n} ) - f(x), \
   -  p_{n} \leq  f(x- \unitvec{n} ) - f(x)   
 \},
\]
which is an interval 
$[ \alpha_{n}, \beta_{n} ]_{\RR}$
with 
$\alpha_{n} = f(x) - f(x- \unitvec{n} )$
and
$\beta_{n}=  f(x+ \unitvec{n} ) - f(x)$.
We have $\alpha_{n} \leq \beta_{n}$ since $\subgR f(x) \ne \emptyset$,
while $\alpha_{n}, \beta_{n} \in \ZZ$ since $f$ is integer-valued.
Therefore, 
the interval
$[ \alpha_{n}, \beta_{n} ]_{\RR}$
contains an integer, say, $p_{n}\sp{*}$.
Next,
by \eqref{subgICproj} for $l=n-1$, the set
$ \{ p_{n-1} \mid p \in \subgR f(x), p_{n}= p_{n}\sp{*} \}$  
is described by six inequalities
\[
\sigma p_{n-1} + \tau p_{n}\sp{*} \leq  f(x + \sigma \unitvec{n-1} + \tau \unitvec{n} ) - f(x)
\quad (\sigma \in \{ +1, -1 \}, \ \tau \in \{ +1, -1, 0 \}).
\]
This implies that
$\{ p_{n-1} \mid p \in \subgR f(x), p_{n}= p_{n}\sp{*} \}$
is a nonempty interval, say,
$[ \alpha_{n-1}, \beta_{n-1} ]_{\RR}$
with $\alpha_{n-1}, \beta_{n-1} \in \ZZ$,
which contains an integer, say, $p_{n-1}\sp{*}$. 
This means that 
there exists $p \in \subgR f(x)$ 
such that
$p_{n}= p_{n}\sp{*} \in \ZZ$ and
$p_{n-1}= p_{n-1}\sp{*} \in \ZZ$. 
Continuing in this way
(with $l=n-2, n-3, \ldots, 2,1$), 
we can construct $p\sp{*} \in \subgR f(x) \cap \ZZ\sp{n}$. 
\qedJIAM
\end{proof}

Some supplementary facts concerning Theorem~\ref{THsubgrIC} are shown below.

\begin{itemize}
\item
Theorem~\ref{THsubgrIC} states that 
$\subgR f(x) \cap \ZZ\sp{n} \neq \emptyset$,
but it does not claim a stronger statement that $\subgR f(x)$ is an integer polyhedron.
Indeed, $\subgR f(x)$ is not necessarily an integer polyhedron.
For example, let 
$f: \mathbb{Z}^3 \to \mathbb{Z} \cup \{+\infty\}$ be defined by
$f(0,0,0)=0$ and $f(1,1,0)=f(0,1,1)=f(1,0,1)=1$
with $\dom f = \{ (0,0,0), (1,1,0), (0,1,1), (1,0,1) \}$.
This $f$ is integrally convex and 
$\subgR f(\veczero) = \{ p \in \RR\sp{3} \mid 
 p_{1} + p_{2} \leq 1,  
 p_{2} + p_{3} \leq 1,  
 p_{1} + p_{3} \leq 1   \}$
is not an integer polyhedron, having a non-integral vertex at $p=(1/2, 1/2, 1/2)$.
See \cite[Remark~4]{MT20subgrIC} for details.
In the special cases where
$f$ is 
\Lnat-convex, 
\Mnat-convex,
\LLnat-convex, or
\MMnat-convex,
the subdifferential
$\subgR f(x)$ is known \cite{Mdcasiam} to be an integer polyhedron.

\item
If $\subgR f(x)$ is bounded, 
$\subgR f(x)$ has an integral vertex,
although not every vertex of $\subgR f(x)$ is integral.
For example, let 
$ D = \{ x \in \{ -1,0,+1 \}\sp{3}  \mid  |x_{1}| + |x_{2}| + |x_{3}| \leq 2 \}$
and define $f$ with $\dom f = D$ by
$f(\veczero) = 0$ and
$f(x) = 1$   $(x \in D \setminus \{ \veczero \})$.
This $f$ is an integer-valued integrally convex function
and $\subgR f(\veczero)$ is a bounded polyhedron 
that has eight non-integral vertices
$(\pm 1/2, \pm 1/2, \pm 1/2)$
(with arbitrary combinations of double-signs)
and six integral vertices
$(\pm 1, 0, 0)$,
$(0, \pm 1, 0)$, and
$(0,0, \pm 1)$.
See \cite[Remark~7]{MT20subgrIC} 
for details.

\item
Integral subdifferentiability is not guaranteed
without the assumption of integral convexity.
Consider
$D = \{ (0,0,0), \pm (1,1,0), \pm (0,1,1), \pm (1,0,1) \}$
and define $f: \mathbb{Z}^3 \to \mathbb{Z} \cup \{+\infty\}$  
with $\dom f = D$ by
$f(x_{1},x_{2},x_{3}) =  (x_{1}+x_{2}+x_{3})/2 $.
This function is not integrally convex,
$\subgR f(\veczero) = \{ (1/2, 1/2, 1/2) \}$,
and 
$\subgR f(\veczero) \cap \ZZ\sp{3} = \emptyset$.
See \cite[Example 1.1]{Mdca98} or 
\cite[Example 1]{MT20subgrIC} 
for details.
\end{itemize}

The integral subdifferentiability 
formulated in Theorem~\ref{THsubgrIC}
can be strengthened with an additional box condition.
This stronger form plays the key role in 
the proof of the Fenchel-type min-max duality theorem 
(Theorem~\ref{THfencICsep}) discussed in Section \ref{SCfenc}.

Recall that an integral box means a set $B$ of real vectors represented as
$B = \{ p \in \RR\sp{n} \mid \alpha \leq p \leq \beta \}$
for integer vectors 
$\alpha \in (\ZZ \cup \{ -\infty \})\sp{n}$ and 
$\beta \in (\ZZ \cup \{ +\infty \})\sp{n}$ satisfying $\alpha \leq \beta$.
The following theorem states that
\begin{equation} \label{ICsubgrboxZ00}
\subgR f(x) \cap B \neq \emptyset
\ \Longrightarrow \ 
\subgR f(x) \cap B \cap \ZZ\sp{n} \neq \emptyset ,
\end{equation}
which may be referred to as 
{\em box-integral subdifferentiability}.

\begin{theorem}[{\cite[Theorem~1.2]{MT22ICfenc}}] \label{THsubgrICbox}
Let $f: \ZZ\sp{n} \to \ZZ \cup \{ +\infty \}$
be an integer-valued integrally convex function, $x \in \dom f$,
and $B$ be an integral box.
If $\subgR f(x) \cap B$ is nonempty,
then $\subgR f(x) \cap B$ 
is a polyhedron containing an integer vector.
If, in addition, $\subgR f(x) \cap B$ is bounded,
then $\subgR f(x) \cap B$ has an integral vertex.
\finboxARX
\end{theorem}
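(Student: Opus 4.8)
The plan is to \emph{absorb the box $B$ into the function} and then invoke ordinary integral subdifferentiability (Theorem~\ref{THsubgrIC}), thereby avoiding any box-augmented Fourier--Motzkin argument. Write $B=\{p\mid \alpha\le p\le\beta\}$ with integral $\alpha,\beta$ (entries in $\ZZ\cup\{\pm\infty\}$, $\alpha\le\beta$), and introduce the separable convex function $g(v)=\sum_{i=1}^{n}\phi_{i}(v_{i})$, where $\phi_{i}\colon\ZZ\to\ZZ\cup\{+\infty\}$ is the univariate convex function with $\phi_{i}(0)=0$, right slope $\beta_{i}$ and left slope $\alpha_{i}$; explicitly $\phi_{i}(t)=\beta_{i}t$ for $t\ge0$ and $\phi_{i}(t)=\alpha_{i}t$ for $t\le0$, with the convention that an infinite slope forces $\phi_{i}=+\infty$ on the corresponding half-line. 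Each $\phi_{i}$ is piecewise-linear convex with $\subgR\phi_{i}(0)=[\alpha_{i},\beta_{i}]$, so $g$ is separable convex and $\subgR g(\veczero)=B$. I would then set $\hat f:=f\conv g$, the infimal convolution \eqref{f1f2convdef}.

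Next I would confirm that $\hat f$ is a proper, integer-valued, integrally convex function, so that Theorem~\ref{THsubgrIC} applies. Integral convexity is immediate, since the convolution of an integrally convex function with a separable convex function is integrally convex (recalled in Section~\ref{SCoperfn}). For properness, fix any $p\in\subgR f(x)\cap B$ (nonempty by hypothesis). From $f(u)\ge f(x)+\langle p,u-x\rangle$ and $g(v)\ge\langle p,v\rangle$ one obtains $f(u)+g(v)\ge f(x)+\langle p,(u+v)-x\rangle$ whenever $u+v=w$, so $\hat f(w)>-\infty$, while $\hat f(x)=f(x)$ is attained at $u=x,\ v=\veczero$. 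As $f,g$ are integer-valued and bounded below on each fibre $u+v=w$, every finite value $\hat f(w)$ is an attained integer. Hence $\hat f\colon\ZZ^{n}\to\ZZ\cup\{+\infty\}$ is an integer-valued integrally convex function with $x\in\dom\hat f$.

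The crux is the identity $\subgR\hat f(x)=\subgR f(x)\cap B$, which I would verify directly from \eqref{subgZRdef}. The inclusion ``$\supseteq$'' is exactly the computation above. For ``$\subseteq$'', let $p\in\subgR\hat f(x)$. Testing the subgradient inequality at an arbitrary $u$ through the split $v=\veczero$ gives $f(u)\ge\hat f(u)\ge f(x)+\langle p,u-x\rangle$, so $p\in\subgR f(x)$; testing it at $w=x+t\unitvec{i}$ through the split $u=x,\ v=t\unitvec{i}$ gives $\phi_{i}(t)\ge t\,p_{i}$ for all $t\in\ZZ$, whence $\alpha_{i}\le p_{i}\le\beta_{i}$ and $p\in B$. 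Granting this identity, Theorem~\ref{THsubgrIC} applied to $\hat f$ produces an integer vector in $\subgR\hat f(x)=\subgR f(x)\cap B$, establishing the first assertion (the set being a polyhedron as the intersection of a polyhedron with a box). When in addition $\subgR f(x)\cap B$ is bounded, the bounded-case refinement of \cite{MT20subgrIC} recorded after Theorem~\ref{THsubgrIC}---that an integer-valued integrally convex function has an integral vertex in its subdifferential whenever that subdifferential is bounded---applied to $\hat f$ yields an integral vertex.

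The step I expect to require the most care is \emph{not} the subdifferential identity, which is elementary, but ensuring that $\hat f$ genuinely meets the hypotheses of Theorem~\ref{THsubgrIC}: one must rule out the value $-\infty$ (handled above via an element of $\subgR f(x)\cap B$) and confirm that convolving $f$ with an \emph{extended}-real-valued separable convex $g$ still yields an integer-valued integrally convex function. This reduction is what lets us sidestep a direct Fourier--Motzkin elimination of \eqref{subgIC} in the presence of the box inequalities, where one would otherwise have to track the box-derived inequalities and argue that they preserve the $\{-1,0,+1\}$-coefficient, integer-right-hand-side structure exploited in Theorem~\ref{THelimICproj}.
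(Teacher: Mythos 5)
Your proof is correct, but it takes a genuinely different route from the paper's. The paper (following \cite{MT22ICfenc}) re-runs the Fourier--Motzkin elimination underlying Theorem~\ref{THelimICproj} on the system \eqref{subgIC} augmented by the box inequalities $\alpha \le p \le \beta$, obtains a hierarchical inequality description of the projections $[\subgR f(x)\cap B]_{l}$, and then extracts an integral point coordinate by coordinate exactly as in the proof of Theorem~\ref{THsubgrIC}. You instead absorb $B$ into the function: since $\subgR g(\veczero)=B$ for your separable convex $g$, and the identity $\subgR(f\conv g)(x)=\subgR f(x)\cap B$ follows directly from \eqref{subgZRdef} by testing the splits $v=\veczero$ and $(u,v)=(x,t\unitvec{i})$, the box-free Theorem~\ref{THsubgrIC} applied to $\hat f=f\conv g$ finishes the argument. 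The reduction is legitimate: $\hat f$ is integrally convex by the convolution theorem \cite[Theorem~4.2]{MM19projcnvl} quoted in Section~\ref{SCoperfn}; the condition $\hat f>-\infty$ required in \eqref{f1f2convdef} is exactly what your chosen $p\in\subgR f(x)\cap B$ certifies; and integer-valuedness holds because on each fibre $u+v=w$ the finite values form a set of integers bounded below. The bounded case reduces in the same way to the integral-vertex refinement recorded after Theorem~\ref{THsubgrIC}, with no circularity, since that refinement is established in \cite{MT20subgrIC} independently of the box version. What each approach buys: the paper's elimination argument is self-contained and yields as a by-product the explicit description of the projections $[\subgR f(x)\cap B]_{l}$ (Theorem~4.3 of \cite{MT22ICfenc}), which has independent interest; your reduction is shorter and more conceptual, but outsources the substantive work to the convolution theorem and the bounded-case vertex statement, both nontrivial imports.
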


We briefly describe how Theorem~\ref{THsubgrICbox} has been proved in \cite{MT22ICfenc}.
Recall that Theorem~\ref{THsubgrIC} for integral subdifferentiability
(without a box)
is proved from a hierarchical system of inequalities
(Theorem~\ref{THelimICproj})
to describe 
the projection $[\subgR f(x)]_{l}$  
of $\subgR f(x)$ 
to the space of $(p_{l},p_{l+1},\ldots,p_n)$
for $l = 1,2,\ldots,n$,
where Theorem~\ref{THelimICproj} itself
is proved by means of the Fourier--Motzkin elimination.
This approach is extended 
in \cite{MT22ICfenc}
to prove Theorem~\ref{THsubgrICbox}.
Namely, 
Theorem~4.3 of \cite{MT22ICfenc} gives 
a hierarchical system of inequalities 
to describe
the projection 
$[\subgR f(x) \cap B]_{l}$
of $\subgR f(x) \cap B$
to the space of $(p_{l},p_{l+1},\ldots,p_n)$
for $l = 1,2,\ldots,n$.
The proof of this theorem is  based on 
the Fourier--Motzkin elimination.
Once inequalities for the projections 
$[\subgR f(x) \cap B]_{l}$ are obtained,
the derivation of box-integral subdifferentiability
(Theorem~\ref{THsubgrICbox})
is almost the same as that of integral subdifferentiability 
(Theorem~\ref{THsubgrIC})
from Theorem~\ref{THelimICproj}.
Finally we mention that alternative proofs of 
Theorems \ref{THsubgrIC} and \ref{THsubgrICbox}
can be found in
\cite{Fuj19rairo} and \cite{Fuj21ic}, respectively.

\subsection{Biconjugacy} 
\label{SCbiconj}

For an integer-valued function 
$f: \ZZ\sp{n} \to \ZZ \cup \{ +\infty \}$
with $\dom f \ne \emptyset$,
we define
$f\sp{\bullet}: \ZZ\sp{n} \to \ZZ \cup \{ +\infty \}$ by
\begin{equation}
f\sp{\bullet}(p)  = \sup\{  \langle p, x \rangle - f(x)   \mid x \in \ZZ\sp{n} \}
\qquad ( p \in \ZZ\sp{n}),
 \label{conjvexZpZ} 
\end{equation}
called the {\em integral conjugate} of $f$.
For any $x, p \in \ZZ\sp{n}$ we have
\begin{equation} \label{conjYoung}
f(x) + f\sp{\bullet}(p) \geq \langle p, x \rangle  ,
\end{equation}
which is a discrete analogue of Fenchel's inequality \cite[(1.1.3), p.~211]{HL01} or
the Fenchel--Young inequality \cite[Proposition~3.3.4]{BL06}.
When  $x \in \dom f$, we have
\begin{equation} \label{conjYoungEq}
f(x) + f\sp{\bullet}(p) = \langle p, x \rangle  
\iff 
p \in  \subgR f(x) \cap \ZZ\sp{n}.
\end{equation}
Note the asymmetric roles of $f$ and $f\sp{\bullet}$ in \eqref{conjYoungEq}.

The integral conjugate $f\sp{\bullet}$ of an integer-valued function $f$
is also an integer-valued function defined on $\ZZ\sp{n}$.
So we can apply the transformation \eqref{conjvexZpZ} 
to $f\sp{\bullet}$  
to obtain
$f\sp{\bullet\bullet} = (f\sp{\bullet})\sp{\bullet}$,
which is called the {\em integral biconjugate} of $f$.
It follows from 
\eqref{conjYoung} and \eqref{conjYoungEq} 
that,
for each $x \in \dom f$ we have
\begin{equation} \label{subgbiconj}
f\sp{\bullet\bullet}(x) = f(x)  
\iff 
\subgR f(x) \cap \ZZ\sp{n} \ne \emptyset.
\end{equation}
See \cite[Lemma 4.1]{Mdca98} or \cite[Lemma 1]{MT20subgrIC}
for the proof.
We say that $f$ enjoys
{\em integral biconjugacy} if
\begin{equation} \label{biconjdef}
f\sp{\bullet\bullet}(x) = f(x) \quad \mbox{for all $x \in \ZZ\sp{n}$}.    
\end{equation}

\begin{example} \rm  \label{EXsubdiffZempty}
Let $D = \{ (0,0,0), \pm (1,1,0), \pm (0,1,1), \pm (1,0,1) \}$
and consider the function
$f(x_{1},x_{2},x_{3}) =  (x_{1}+x_{2}+x_{3})/2 $
on $\dom f = D$.
(This is the function used in Section \ref{SCsubgrint}
as an example of an integer-valued function
lacking in integral subdifferentiability.) \ 
According to the definition 
\eqref{conjvexZpZ}, the integral conjugate 
$f\sp{\bullet}$ is given by 
\begin{equation} \label{fconjsubdiffZempty}
 f\sp{\bullet}(p) = 
\max \{  |p_{1} + p_{2} -1|,  |p_{2} + p_{3} -1|, |p_{1} + p_{3} -1| \}
\qquad
(p \in \ZZ\sp{3}) .
\end{equation}
For $x=\veczero$ we have $f(x)= 0$,
while
\eqref{fconjsubdiffZempty} shows
$f\sp{\bullet}(p) \geq 1$
for every integer vector $p \in \ZZ\sp{3}$.
Therefore we have strict inequality 
$f(x) + f\sp{\bullet}(p) > \langle p, x \rangle$
for $x=\veczero$ and every $p \in \ZZ\sp{3}$.
This is consistent with
\eqref{conjYoungEq}  
since 
$\subgR f(\veczero) = \{ (1/2, 1/2, 1/2) \}$
and hence 
$\subgR f(\veczero) \cap \ZZ\sp{3} = \emptyset$.
For the integral biconjugate 
$f^{\bullet\bullet}(x) 
= \sup \{ \langle p, x \rangle - f^{\bullet}(p) \mid p \in \ZZ^3 \}$
we have 
\[
f^{\bullet\bullet}(\veczero) 
= - \inf_{p \in \ZZ^3} \max \{ |p_{1}+p_{2}-1|, |p_{2}+p_{3}-1|, |p_{3}+p_{1}-1| \} 
= -1.
\]
Therefore we have $f^{\bullet\bullet}(\veczero) \neq f(\veczero)$.
This is consistent with \eqref{subgbiconj}
since $\subgR f(\veczero) \cap \ZZ\sp{3} = \emptyset$.
\finbox 
\end{example}

We now assume that 
$f: \ZZ\sp{n} \to \ZZ \cup \{ +\infty \}$
is an integer-valued integrally convex function.
The integral conjugate $f\sp{\bullet}$ 
is not necessarily integrally convex
(\cite[Example 4.15]{MS01rel}, \cite[Remark~2.3]{MT20subgrIC}).
Nevertheless,  
the integral biconjugate $f\sp{\bullet\bullet}$
coincides with $f$ itself, 
that is,
integral biconjugacy holds for an integer-valued integrally convex function.
This theorem is established by 
the present authors \cite{MT20subgrIC}
based on the integral subdifferentiability
$\subgR f(x) \cap \ZZ\sp{n} \ne \emptyset$
given in Theorem~\ref{THsubgrIC};
see Remark~\ref{RMbiconjdomcond} below for some technical aspects.

\begin{theorem}[{\cite[Theorem~4]{MT20subgrIC}}]  \label{THbiconjIC}
For any integer-valued integrally convex function 
$f: \ZZ^{n} \to \ZZ \cup \{ +\infty \}$
with $\dom f \ne \emptyset$,
we have
$f\sp{\bullet\bullet}(x) =f(x)$ for all $x \in \ZZ\sp{n}$.
\finboxARX
\end{theorem}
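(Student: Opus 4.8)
The plan is to reduce the equality $f\sp{\bullet\bullet}=f$ to the integral subdifferentiability result, Theorem~\ref{THsubgrIC}, through the pointwise equivalence \eqref{subgbiconj}, and then to deal separately with the integer points lying outside the effective domain. Accordingly I would split the statement into the cases $x \in \dom f$ and $x \in \ZZ\sp{n}\setminus\dom f$. The first case is immediate: since $f$ is integer-valued and integrally convex, Theorem~\ref{THsubgrIC} gives $\subgR f(x)\cap\ZZ\sp{n}\neq\emptyset$ for every $x\in\dom f$, and then the right-hand side of \eqref{subgbiconj} holds, so $f\sp{\bullet\bullet}(x)=f(x)$ directly.

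The substantive case is $x\in\ZZ\sp{n}\setminus\dom f$, where $f(x)=+\infty$ and I must show $f\sp{\bullet\bullet}(x)=+\infty$ as well. First I would use that $\dom f$ is integrally convex (Proposition~\ref{PRicvsetfun}(1)) and hence hole-free (Proposition~\ref{PRicvsetholefree}), so that $\dom f=\overline{\dom f}\cap\ZZ\sp{n}$; since $x$ is an integer point not in $\dom f$, this forces $x\notin\overline{\dom f}$. Because $\overline{\dom f}$ is an integer polyhedron, I can separate $x$ from it by an \emph{integer} functional: there exist $q\in\ZZ\sp{n}$ and a finite bound $M$ with $\langle q,y\rangle\le M<\langle q,x\rangle$ for all $y\in\dom f$. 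Setting $\delta:=\langle q,x\rangle-M>0$ gives $\langle q,x-y\rangle\ge\delta$ for every $y\in\dom f$.

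Next I would anchor the argument with an integral subgradient. Fix any $x\sp{0}\in\dom f$ (nonempty by hypothesis) and, again by Theorem~\ref{THsubgrIC}, an integer vector $p\sp{0}\in\subgR f(x\sp{0})\cap\ZZ\sp{n}$, so that $f(y)-\langle p\sp{0},y\rangle\ge f(x\sp{0})-\langle p\sp{0},x\sp{0}\rangle=:c$ for all $y\in\dom f$. I then test the supremum defining $f\sp{\bullet\bullet}(x)$ in \eqref{conjvexZpZ} along the integer ray $p=p\sp{0}+kq$, $k=0,1,2,\ldots$. Unfolding the two conjugates and combining the two bounds above yields, for each $k\ge0$,
\[
 \langle p\sp{0}+kq,\,x\rangle-f\sp{\bullet}(p\sp{0}+kq)
 =\langle p\sp{0},x\rangle+\inf_{y\in\dom f}\bigl(f(y)-\langle p\sp{0},y\rangle+k\langle q,\,x-y\rangle\bigr)
 \ge\langle p\sp{0},x\rangle+c+k\delta .
\]
Letting $k\to\infty$ drives the right-hand side to $+\infty$, so $f\sp{\bullet\bullet}(x)=+\infty=f(x)$, which finishes this case and hence the theorem.

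I expect essentially all the difficulty to sit in the second case, the first being a one-line appeal to \eqref{subgbiconj}. The delicate points are that hole-freeness is exactly what rules out integer points in $\overline{\dom f}\setminus\dom f$, and that one must produce a separating vector $q$ that is \emph{integral} and keeps $\langle q,\cdot\rangle$ bounded above on a possibly unbounded $\dom f$; both are available precisely because $\overline{\dom f}$ is a rational (indeed integer) polyhedron. The role of the integral subgradient $p\sp{0}$ is to keep $f\sp{\bullet}(p\sp{0}+kq)$ controlled so that the affine terms along the ray win, and it is here that Theorem~\ref{THsubgrIC} is genuinely used a second time.
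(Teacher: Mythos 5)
Your proposal is correct and follows the same route that the paper indicates for this theorem: the paper does not reproduce the proof but states explicitly that the result is ``established \ldots based on the integral subdifferentiability $\subgR f(x) \cap \ZZ\sp{n} \ne \emptyset$ given in Theorem~\ref{THsubgrIC}'', with the passage from subdifferentiability on $\dom f$ to biconjugacy on all of $\ZZ\sp{n}$ delegated to the technical conditions of Remark~\ref{RMbiconjdomcond} (that $\dom f$ equals the integer points of its closed convex hull and that this hull is rationally polyhedral). Your case $x \in \dom f$ is exactly the paper's argument via \eqref{subgbiconj}. For $x \notin \dom f$ you have, in effect, unpacked the cited Lemma~4.2 of \cite{Mdca98} into a self-contained argument: hole-freeness of the integrally convex set $\dom f$ (Propositions~\ref{PRicvsetfun} and \ref{PRicvsetholefree}) puts $x$ outside the integer polyhedron $\overline{\dom f}$, an integral separating functional $q$ exists because that polyhedron is rational, and the integral subgradient $p\sp{0}$ at an anchor point keeps $f\sp{\bullet}(p\sp{0}+kq)$ under control so that $\langle p\sp{0}+kq, x\rangle - f\sp{\bullet}(p\sp{0}+kq) \to +\infty$. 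This computation is sound (in particular it shows $f\sp{\bullet}(p\sp{0}+kq)$ is finite along the ray, so these terms genuinely enter the supremum defining $f\sp{\bullet\bullet}(x)$), and the only hypotheses you invoke beyond Theorem~\ref{THsubgrIC} are facts the paper records: that the convex hull of an integrally convex set is an integer (hence closed, rational) polyhedron, which is why the closure appearing in Remark~\ref{RMbiconjdomcond} causes no discrepancy with your use of $\overline{\dom f}$ itself.
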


As special cases of Theorem~\ref{THbiconjIC}
we obtain integral biconjugacy 
for 
{\rm L}-convex, \Lnat-convex,
{\rm M}-convex,  \Mnat-convex,
\LLnat-convex, and \MMnat-convex functions
given in \cite[Theorems 8.12, 8.36, 8.46]{Mdcasiam},
and that for BS-convex and UJ-convex functions 
given in \cite[Corollary~2]{MT20subgrIC}.

\begin{remark} \rm  \label{RMbiconjdomcond}
There is a subtle gap between 
integral subdifferentiability
in \eqref{subgbiconj}
and integral biconjugacy in \eqref{biconjdef}
for a general integer-valued function 
$f: \ZZ\sp{n} \to \ZZ \cup \{ +\infty \}$
(which is not necessarily integrally convex).
While the latter imposes the condition 
$f\sp{\bullet\bullet}(x) = f(x)$ on all $x \in \ZZ\sp{n}$,
the former refers to $x$ in $\dom f$ only.
This means that integral subdifferentiability
may possibly be weaker than integral biconjugacy,
and this is indeed the case in general 
(see \cite[Remark~4.1]{Mdca98} or \cite[Remark~6]{MT20subgrIC}).
However, it is known \cite[Lemma 4.2]{Mdca98} that 
integral subdifferentiability does imply
integral biconjugacy
under the technical conditions that
$\dom f = \mathrm{cl}(\overline{\dom f}) \cap \mathbb{Z}^{n}$
and
$\mathrm{cl}(\overline{\dom f})$ is rationally-polyhedral,
where $\mathrm{cl}(\overline{\dom f})$ denotes 
the closure of the convex hull (or closed convex hull)
of $\dom f$; see Remark~\ref{RMconvhull} for this notation.
\finbox
\end{remark}

\begin{remark} \rm  \label{RMrealbiconj}
In convex analysis \cite{BL06,HL01,Roc70},
the {\em conjugate function} of 
$g: \RR\sp{n} \to \RR \cup \{ +\infty \}$ 
with $\dom g \ne \emptyset$
is defined to be a function
$g\sp{\bullet \RR}: \RR\sp{n} \to \RR \cup \{ +\infty \}$ 
given by
\begin{equation}
g\sp{\bullet \RR}(p)  := 
\sup\{  \langle p, x \rangle - g(x)   \mid x \in \RR\sp{n} \}
\qquad ( p \in \RR\sp{n}),
 \label{conjvexRpR0} 
\end{equation}
where a (non-standard) notation 
$g\sp{\bullet \RR}$ 
is introduced for discussion here.
The {\em biconjugate} of $g$ is defined as 
$(g\sp{\bullet \RR})\sp{\bullet \RR}$
by using the transformation \eqref{conjvexRpR0} twice.
We have biconjugacy 
$(g\sp{\bullet \RR})\sp{\bullet \RR} = g$
for closed convex functions $g$.
For a real-valued function 
$f: \ZZ\sp{n} \to \RR \cup \{ +\infty \}$
in discrete variables, we may also define 
\begin{equation}
f\sp{\bullet \RR}(p)  := \sup\{  \langle p, x \rangle - f(x)   \mid x \in \ZZ\sp{n} \}
\qquad ( p \in \RR\sp{n}).
 \label{conjvexZpR0} 
\end{equation}
Then the convex envelope $\overline{f}$ coincides with
$(f\sp{\bullet \RR})\sp{\bullet \RR}$,
which denotes the function 
obtained by applying
\eqref{conjvexZpR0} 
to $f$ and then \eqref{conjvexRpR0}
to $g = f\sp{\bullet \RR}$.
Therefore, if $f$ is convex-extensible in the sense of
$f = \overline{f}\,|_{\ZZ\sp{n}}$ in \eqref{fnconvextcl},
we have
$(f\sp{\bullet \RR})\sp{\bullet \RR} \, |_{\ZZ^{n}} = f$,
which is a kind of biconjugacy.
If $f$ is integer-valued,  we can naturally consider 
$(f\sp{\bullet})\sp{\bullet}$
using \eqref{conjvexZpZ} twice
as well as $(f\sp{\bullet \RR})\sp{\bullet \RR}$
using \eqref{conjvexZpR0} and then \eqref{conjvexRpR0}. 
It is most important to recognize 
that for any $f$ we have
$(f\sp{\bullet})\sp{\bullet}(x)\leq (f\sp{\bullet \RR})\sp{\bullet \RR}(x)$
for $x \in \ZZ^{n}$ and that the equality may fail even when 
$f = \overline{f}\, |_{\ZZ^{n}}$.
As an example, consider the function
$f(x_{1},x_{2},x_{3}) =  (x_{1}+x_{2}+x_{3})/2 $
in Example \ref{EXsubdiffZempty}.
The convex envelope $\overline{f}$ is
given by
$\overline{f}(x_{1},x_{2},x_{3}) =  (x_{1}+x_{2}+x_{3})/2 $
on the convex hull of $D$,
and therefore
$f = \overline{f}\, |_{\ZZ^{n}}$
holds.
Similarly to \eqref{fconjsubdiffZempty} we have
\begin{equation*} %%\label{fconjsubdiffZemptyR}
 f\sp{\bullet \RR}(p) = 
\max \{  |p_{1} + p_{2} -1|,  |p_{2} + p_{3} -1|, |p_{1} + p_{3} -1| \}
\quad
(p \in \RR\sp{3}) 
\end{equation*}
and hence
\[
(f\sp{\bullet \RR})\sp{\bullet \RR}(\veczero) 
= - \inf_{p \in \RR^3} \max \{ |p_{1}+p_{2}-1|, |p_{2}+p_{3}-1|, |p_{3}+p_{1}-1| \} 
= 0,
\]
where the infimum over $p \in \RR^3$ is attained by 
$p = (1/2, 1/2, 1/2)$.
Therefore
$(f\sp{\bullet \RR})\sp{\bullet \RR}(\veczero) =  f(\veczero)$,
whereas 
$
(f\sp{\bullet})\sp{\bullet}(\veczero) < f(\veczero)$
as we have seen in Example \ref{EXsubdiffZempty}.
Thus, integral biconjugacy 
$f = (f\sp{\bullet})\sp{\bullet}$
in \eqref{biconjdef}
is much more intricate than the equality 
$f = (f\sp{\bullet \RR})\sp{\bullet \RR}\, |_{\ZZ^{n}}$.
\finbox
\end{remark}

%\newpage

\subsection{Discrete DC programming}
\label{SCdcprog}

A discrete analogue of 
the theory of DC functions (difference of two convex functions), 
or discrete DC programming, 
has been proposed in \cite{MM15dcprog}
using \Lnat-convex and \Mnat-convex functions.
As already noted in \cite[Remark~4.7]{MM15dcprog},
such  theory of discrete DC functions can be developed for functions
that satisfy integral biconjugacy and integral subdifferentiability.
It is pointed out in \cite{MT20subgrIC} that 
Theorems \ref{THsubgrIC} and \ref{THbiconjIC}
for integrally convex functions
enable us to extend the theory of discrete DC functions 
to integrally convex functions.
In particular,
an analogue of 
the Toland--Singer duality~\cite{Sin79,Tol79}
can be established for integrally convex functions
as follows.

\begin{theorem}
[{\cite[Theorem~5]{MT20subgrIC}}] \label{THtolandsinger}
Let 
$g, h: \mathbb{Z}^{n} \to \mathbb{Z} \cup \{+\infty\}$
be integer-valued integrally convex functions.
Then
\begin{equation} \label{tolandsingerduality}
\inf \{ g(x) - h(x) \mid x \in \mathbb{Z}^{n} \} 
= \inf\{ h^{\bullet}(p) - g^{\bullet}(p) \mid p \in \mathbb{Z}^{n}  \} . 
\end{equation}
\finboxARX
\end{theorem}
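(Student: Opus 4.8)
The plan is to transcribe the classical Toland--Singer argument into the integral setting, the single nontrivial ingredient being integral biconjugacy of $h$ (Theorem~\ref{THbiconjIC}). I would introduce the bridging quantity
\[
V := \inf\{ g(x) - \langle p, x\rangle + h\sp{\bullet}(p) \mid x, p \in \ZZ\sp{n} \}
\]
and prove that both sides of \eqref{tolandsingerduality} equal $V$. The advantage of $V$ is that its summand never takes the value $-\infty$: since $g$ and $h$ are integer-valued with nonempty effective domains, we have $g(x) \in \ZZ \cup \{ +\infty \}$ and $h\sp{\bullet}(p) = \sup_{x}(\langle p,x\rangle - h(x)) \in \ZZ \cup \{ +\infty \}$, while $\langle p, x\rangle \in \ZZ$, so the whole expression lies in $\ZZ \cup \{ +\infty \}$. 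Hence $V$ is unambiguously defined, and the interchange of the two infima (an ``infimum Fubini'') is automatic.

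For the primal side I would fix $x$ and minimize over $p$ first, writing $\inf_{p} (g(x) - \langle p,x\rangle + h\sp{\bullet}(p)) = g(x) - \sup_{p} (\langle p, x\rangle - h\sp{\bullet}(p)) = g(x) - h\sp{\bullet\bullet}(x)$ and invoking integral biconjugacy $h\sp{\bullet\bullet} = h$ from Theorem~\ref{THbiconjIC}; this equals $g(x) - h(x)$, where at points with both values $+\infty$ the standard convention $(+\infty) - (+\infty) = +\infty$ is exactly what the inner infimum over $p$ returns. Taking the infimum over $x$ gives $\inf_{x} (g(x) - h(x)) = V$. For the dual side I would instead fix $p$ and minimize over $x$ first: by the definition \eqref{conjvexZpZ} of the integral conjugate, $\inf_{x} (g(x) - \langle p, x\rangle) = -\sup_{x} (\langle p,x\rangle - g(x)) = -g\sp{\bullet}(p)$, so $\inf_{x} (g(x) - \langle p,x\rangle + h\sp{\bullet}(p)) = h\sp{\bullet}(p) - g\sp{\bullet}(p)$; taking the infimum over $p$ gives $\inf_{p} (h\sp{\bullet}(p) - g\sp{\bullet}(p)) = V$. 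Chaining the two identities yields \eqref{tolandsingerduality}.

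The step that carries all the weight is the use of $h = h\sp{\bullet\bullet}$; everything else is formal. Two points deserve emphasis. First, the conjugation runs over $\ZZ\sp{n}$, not $\RR\sp{n}$, so it is genuinely \emph{integral} biconjugacy that is required: as Remark~\ref{RMrealbiconj} shows, the real biconjugate can strictly exceed $(f\sp{\bullet})\sp{\bullet}$, and substituting it would break the identity. This is precisely where the integer-valued integral convexity of $h$ enters, through Theorem~\ref{THbiconjIC} and ultimately through integral subdifferentiability (Theorem~\ref{THsubgrIC}). Second, only biconjugacy of $h$ is actually used; the hypothesis on $g$ could in principle be relaxed to an arbitrary integer-valued function with nonempty domain, its integral convexity serving merely to keep $g\sp{\bullet}$ proper. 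If one prefers to split the equality into two inequalities, the direction $\inf_{x} (g - h) \le \inf_{p} (h\sp{\bullet} - g\sp{\bullet})$ is the easy ``weak duality'' flowing from the Fenchel--Young inequality \eqref{conjYoung} (equivalently from the trivial bound $h\sp{\bullet\bullet} \le h$), whereas the reverse inequality is exactly the one that fails without biconjugacy.
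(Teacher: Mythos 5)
Your proof is correct and is essentially the same argument as in the cited source \cite[Theorem~5]{MT20subgrIC} (the paper itself only states the result with that reference): the classical Toland--Singer interchange of infima, with integral biconjugacy $h = h\sp{\bullet\bullet}$ from Theorem~\ref{THbiconjIC} carrying all the weight. Your closing observation that only the integral convexity of $h$ is actually used also matches the remark the paper makes immediately after the theorem.
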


As mentioned already in \cite{MT20subgrIC},
the assumption of integral convexity of $g$ is not needed
for \eqref{tolandsingerduality} to be true.
That is, \eqref{tolandsingerduality} holds for 
any  $g: \mathbb{Z}^{n} \to \mathbb{Z} \cup \{+\infty\}$
as long as $h: \mathbb{Z}^{n} \to \mathbb{Z} \cup \{+\infty\}$ is integrally convex.

%\newpage

%%\section{Discrete Fenchel duality}
%%\label{SCfenc}
%% 2022-04-28 / 2022-06-26 / 2022-10-08 / 2022-11-13 / 2022-11-20 / 
%% 2023-02-20

\section{Discrete Fenchel duality}
\label{SCfenc}

\subsection{General framework of Fenchel duality}
\label{SCfencheldual}

The Fenchel duality is one of the expressions of the duality principle 
in the form of a min-max relation 
between a pair of convex and concave functions
$(f,g)$
and their conjugate functions.
As is well known, the existence of such min-max formula
guarantees the existence of a certificate of optimality
for the problem of minimizing 
$f - g$ over $\RR\sp{n}$ or $\ZZ\sp{n}$.

First we recall the framework for functions in continuous variables.
For
$f: \RR\sp{n} \to \RR \cup \{ +\infty \}$
with $\dom f \ne \emptyset$,
the function
$f\sp{\bullet}: \RR\sp{n} \to \RR \cup \{ +\infty \}$
defined by
\begin{equation}
f\sp{\bullet}(p)  := 
\sup\{  \langle p, x \rangle - f(x)   \mid x \in \RR\sp{n} \}
\qquad ( p \in \RR\sp{n})
 \label{conjvexRpR} 
\end{equation}
is called the 
{\em conjugate} 
(or {\em convex conjugate})
of $f$.
For 
$g: \RR\sp{n} \to \RR \cup \{ -\infty \}$
with $\dom g \ne \emptyset$,
the function
$g\sp{\circ}: \RR\sp{n} \to \RR \cup \{ -\infty \}$ 
defined by
\begin{equation}
g\sp{\circ}(p)  := \inf\{  \langle p, x \rangle - g(x)   \mid x \in \RR\sp{n} \}
\qquad ( p \in \RR\sp{n})
 \label{conjcavRpR} 
\end{equation}
is called the 
{\em concave conjugate} of $g$.
We have $g\sp{\circ}(p) = -f\sp{\bullet}(-p)$ if $g(x) = -f(x)$.
It follows from the definitions that
\begin{equation} \label{weakdualGenRR} 
  f(x) - g(x) \geq  g\sp{\circ}(p) - f\sp{\bullet}(p)   
\end{equation}
for any $x \in \RR\sp{n}$ and $p \in \RR\sp{n}$.
The relation 
\eqref{weakdualGenRR} is called {\em weak duality}.

The Fenchel duality theorem
says that
a min-max formula
\begin{equation}
  \inf\{ f(x) - g(x) \mid x \in \RR\sp{n}  \}
 = 
  \sup\{ g\sp{\circ}(p) - f\sp{\bullet}(p)
  \mid   p \in \RR\sp{n} \}
 \label{minmaxGenRR} 
\end{equation}
holds for convex and concave functions
$f: \RR\sp{n} \to \RR \cup \{ +\infty \}$
and 
$g: \RR\sp{n} \to \RR \cup \{ -\infty \}$
satisfying certain regularity conditions.
The relation \eqref{minmaxGenRR}
is called {\em strong duality} in contrast to weak duality.
See 
Bauschke--Combettes \cite[Section~15.2]{BC11},
Borwein--Lewis \cite[Theorem~3.3.5]{BL06},
Hiriart-Urruty--Lemar{\'e}chal \cite[(2.3.2), p.~228]{HL01},
Rockafellar \cite[Theorem~31.1]{Roc70}, 
Stoer--Witzgall \cite[Corollary 5.1.4]{SW70}
for precise statements.

\

We now turn to functions in discrete variables.
For any functions
$f: \ZZ\sp{n} \to \RR \cup \{ +\infty \}$
and  
$g: \ZZ\sp{n} \to \RR \cup \{ -\infty \}$
we define
\begin{align}
f\sp{\bullet}(p)  &= \sup\{  \langle p, x \rangle - f(x)   \mid x \in \ZZ\sp{n} \}
\qquad ( p \in \RR\sp{n}),
 \label{conjvexZpR} 
\\
g\sp{\circ}(p)  &= \inf\{  \langle p, x \rangle - g(x)   \mid x \in \ZZ\sp{n} \}
\qquad ( p \in \RR\sp{n}),
 \label{conjcavZpR} 
\end{align}
where $\dom f \ne \emptyset$ and $\dom g \ne \emptyset$ are assumed.
In this case, the generic form of the Fenchel duality reads: 
\begin{equation} 
 \inf \{ f(x) - g(x) \mid  x \in \ZZ\sp{n}  \} 
= \sup \{ g\sp{\circ}(p) - f\sp{\bullet}(p) \mid  p \in \RR\sp{n} \} ,
\label{minmaxGenZR3} 
\end{equation} 
which is expected to be true when $f$ and $g$
are equipped with certain discrete convexity and concavity, respectively.
Moreover, when $f$ and $g$ are integer-valued
($f: \ZZ\sp{n} \to \ZZ \cup \{ +\infty \}$
and  
$g: \ZZ\sp{n} \to \ZZ \cup \{ -\infty \}$),
we are particularly interested in the dual problem with an integer vector, that is,
\begin{equation} 
 \inf \{ f(x) - g(x) \mid  x \in \ZZ\sp{n}  \} 
= \sup \{ g\sp{\circ}(p) - f\sp{\bullet}(p) \mid  p \in \ZZ\sp{n} \} .
\label{minmaxGenZZ3} 
\end{equation}

\

To relate the discrete case to the continuous case,
it is convenient to consider the convex envelope $\overline{f}$ of $f$
and the concave envelope 
$\overline{g}$ of $g$,
where $\overline{g} := -\overline{(-g)}$,
that is,
$\overline{g}$ is defined to be the negative of 
the convex envelope of $-g$.
By the definitions of
convex and concave envelopes
and conjugate functions we have
\begin{align*} 
& \overline{f}(x) \leq f(x), 
\quad \overline{g}(x) \geq g(x)
\qquad (x \in \ZZ\sp{n}),
\\ &
\overline{f}\sp{\bullet}(p)=f\sp{\bullet}(p),
\quad \overline{g}\sp{\circ}(p)=g\sp{\circ}(p)
\qquad (p \in \RR\sp{n})
\end{align*} 
as well as weak dualities
\begin{align} 
 f(x) - g(x) 
 \geq g\sp{\circ}(p) - f\sp{\bullet}(p)   
\qquad (x \in \ZZ\sp{n}, p \in \RR\sp{n}),
\label{weakdualGenZR} 
\\
 \overline{f}(x) - \overline{g}(x) 
 \geq \overline{g}\sp{\circ}(p) - \overline{f}\sp{\bullet}(p)   
\qquad (x \in \RR\sp{n}, p \in \RR\sp{n}).
\label{weakdualEnvRR} 
\end{align} 
Thus we have the following chain of inequalities:
\begin{equation}
\mbox{${\mathrm{P}}(\ZZ)$} 
\geq 
\mbox{$\overline{\mathrm{P}}(\RR)$}
\geq
\mbox{$\overline{\mathrm{D}}(\RR)$}
= 
\mbox{$\mathrm{D}(\RR)$}
\geq 
\mbox{$\mathrm{D}(\ZZ)$}
 \label{fencweakZR2} 
\end{equation}
with notations
\begin{align*}  \label{}
& \mathrm{P}(\ZZ) :=  \inf\{ f(x) - g(x) \mid x \in \ZZ\sp{n}  \} ,
\\ & 
\overline{\mathrm{P}}(\RR) :=
\inf\{ \overline{f}(x) - \overline{g}(x) \mid x \in \RR\sp{n}  \},
\\ &
\overline{\mathrm{D}}(\RR) := 
  \sup\{ \overline{g}\sp{\circ}(p)  - \overline{f}\sp{\bullet}(p)
        \mid  p \in \RR\sp{n} \}, 
\\ &
\mathrm{D}(\RR) := 
 \sup\{ g\sp{\circ}(p) - f\sp{\bullet}(p) \mid  p \in \RR\sp{n} \}  ,
\\ &
\mathrm{D}(\ZZ) :=  \sup\{ g\sp{\circ}(p) - f\sp{\bullet}(p) \mid  p \in \ZZ\sp{n} \}  
\end{align*}
for the optimal values of the problems,
where $\mathrm{P}(\cdot)$ stands for ``Primal problem'' 
and $\mathrm{D}(\cdot)$ for ``Dual problem''.

The desired min-max relations
\eqref{minmaxGenZR3} and \eqref{minmaxGenZZ3} 
can be written as 
$\mathrm{P}(\ZZ) = \mathrm{D}(\RR)$
and
$\mathrm{P}(\ZZ) = \mathrm{D}(\ZZ)$, respectively.
The inequality 
between $\overline{\mathrm{P}}(\RR)$
and $\overline{\mathrm{D}}(\RR)$
in the middle of \eqref{fencweakZR2}
becomes an equality if the Fenchel duality 
\eqref{minmaxGenRR} holds for 
$(\overline{f}, \overline{g})$.
The first and the last inequality in \eqref{fencweakZR2}
 express possible discrepancy 
between discrete and continuous cases,
and we are naturally concerned with 
when these inequalities turn into equalities.
The concept of subdifferential plays the essential role here.

For the subdifferential $\subgR f(x)$
defined in \eqref{subgZRdef}
we observe that
\begin{equation} \label{conjsubgZRvex}
p \in \subgR f(x) \iff f(x) + f\sp{\bullet}(p)  = \langle p, x \rangle
\end{equation}
holds for any $p \in \RR\sp{n}$ and $x \in \dom f$.
Similarly, we have
\begin{equation} \label{conjsubgZRcav}
p \in \subgR' g(x) \iff g(x) + g\sp{\circ}(p)  = \langle p, x \rangle
\end{equation}
for any $p \in \RR\sp{n}$ and $x \in \dom g$,
where 
$\subgR' g(x)$ means the concave version of the subdifferential defined
by
\begin{equation} \label{subgcavZRdef}
\subgR' g(x) = -(\subgR (-g))(x)
= \{ p \in  \RR\sp{n} \mid    
  g(y) - g(x)  \leq  \langle p, y - x \rangle   \ \ \mbox{\rm for all }  y \in \ZZ\sp{n} \}.
\end{equation}
Suppose that the infimum $\mathrm{P}(\ZZ)$ is attained 
by some $x\sp{*} \in \ZZ\sp{n}$.
It follows from
\eqref{weakdualGenZR}, \eqref{conjsubgZRvex}, and \eqref{conjsubgZRcav}
that 
$\mathrm{P}(\ZZ) = \mathrm{D}(\RR)$ and
the supremum $\mathrm{D}(\RR)$ is attained by $p\sp{*} \in \RR\sp{n}$ 
if and only if 
$p\sp{*} \in \subgR f(x\sp{*}) \cap \subgR' g(x\sp{*})$.
Therefore, 
$\mathrm{P}(\ZZ) = \mathrm{D}(\RR)$ if 
\begin{equation} \label{comsubgrR}
\subgR f(x\sp{*}) \cap \subgR' g(x\sp{*})  \ne \emptyset .
\end{equation}
Furthermore, if 
\begin{equation} \label{comsubgrZ}
\subgR f(x\sp{*}) \cap \subgR' g(x\sp{*}) \cap \ZZ\sp{n} \ne \emptyset,
\end{equation}
then we have
$\mathrm{P}(\ZZ) = \mathrm{D}(\ZZ)$.
Thus \eqref{comsubgrR} and \eqref{comsubgrZ}, respectively, 
imply the Fenchel-type min-max formulas
\eqref{minmaxGenZR3} for real-valued functions and 
\eqref{minmaxGenZZ3} for integer-valued functions. 
It is noted that this implication does not presuppose 
the Fenchel duality $\overline{\mathrm{P}}(\RR) = \overline{\mathrm{D}}(\RR)$
for $(\overline{f}, \overline{g})$.

When $\overline{\mathrm{P}}(\RR) = \overline{\mathrm{D}}(\RR)$ is known to hold,
the min-max relation
$\mathrm{P}(\ZZ) = \mathrm{D}(\RR)$
for real-valued functions
follows, by \eqref{fencweakZR2}, from
$\mathrm{P}(\ZZ) = \overline{\mathrm{P}}(\RR)$,
where the latter condition  $\mathrm{P}(\ZZ) = \overline{\mathrm{P}}(\RR)$
holds if
\begin{equation} \label{evfgevfevg}
\overline{f - g} = \overline{f}- \overline{g} .
\end{equation}
It is emphasized that \eqref{evfgevfevg}
does not follow from the individual convex or concave-extensibility 
of $f$ and $g$.
See Example~\ref{EXnorealfenc} in Section~\ref{SCfencIC}.

\subsection{Fenchel duality for integrally convex functions}
\label{SCfencIC}

The following two examples show that 
the min-max formula 
\eqref{minmaxGenZR3} or \eqref{minmaxGenZZ3}
is not necessarily true 
when $f$ and $g$ are integrally convex and concave functions.
(Naturally, function $g$ is called {\em integrally concave} 
if $-g$ is integrally convex.)

\begin{example}[{\cite[Example 5.6]{Mbonn09}}]  \rm \label{EXnorealfenc}
Let $f, g: \ZZ\sp{2} \to \ZZ$ be defined as
\[
f(x_{1},x_{2}) = |x_{1}+x_{2}-1|,
\qquad
g(x_{1},x_{2}) = 1- |x_{1}-x_{2}|.
\]
The function $f$ is integrally convex (actually \Mnat-convex)
 and $g$ is integrally concave (actually \Lnat-concave).
We have
\begin{equation*}
  \begin{array}{ccccccccc}
 \displaystyle 
\min_{\ZZ}\{ f - g \}
 & \! > \!  & 
 \displaystyle 
\min_{\RR}\{ \overline{f} - \overline{g} \}
 & \! = \!  &   
 \displaystyle 
\max_{\RR}\{ \overline{g}\sp{\circ}
         - \overline{f}\sp{\bullet}  \}
 & \! = \!  &  
 \displaystyle 
 \max_{\RR}\{ {g}\sp{\circ}
         - {f}\sp{\bullet}  \}
 & \! = \! & 
 \displaystyle 
 \max_{\ZZ}\{ g\sp{\circ} - f\sp{\bullet} \}  .
\\
 (0) &  & (-1)  & & (-1)  && (-1) && (-1) 
  \end{array} 
%%\label{minminFailEx}
\end{equation*}
Thus the min-max identity 
\eqref{minmaxGenZZ3} 
as well as
\eqref{minmaxGenZR3}
fails because of the primal integrality gap
$\mathrm{P}(\ZZ) > \overline{\mathrm{P}}(\RR)$.
Indeed, the condition 
$\overline{f - g} = \overline{f}- \overline{g}$
in \eqref{evfgevfevg} 
fails as follows.
Let $h := f - g$.
Since
$h(0,0) = h(1,0) = h(0,1) = h(1,1) = 0$, 
we have
$\overline{h}(1/2,1/2) = 0$,
whereas 
$\overline{f}(1/2,1/2) - \overline{g}(1/2,1/2) = 0 -1 = -1$.
Thus 
$\overline{f - g} \ne \overline{f}- \overline{g}$.
\finbox
\end{example}

\begin{example}[{\cite[Example 5.7]{Mbonn09}}]  \rm \label{EXnointfenc}
Let $f, g: \ZZ\sp{2} \to \ZZ$ be defined as
\[
f(x_{1},x_{2}) = \max(0,x_{1}+x_{2}),
\qquad
g(x_{1},x_{2}) = \min(x_{1},x_{2}).
\]
The function $f$ is integrally convex (actually \Mnat-convex)
 and $g$ is integrally concave (actually \Lnat-concave).
We have
\begin{equation*}
  \begin{array}{ccccccccc}
  \displaystyle \min_{\ZZ}\{ f - g \}
 & \! = \! & 
 \displaystyle \min_{\RR}\{ \overline{f} - \overline{g} \}
 & \! =\!  &  \displaystyle  \max_{\RR}\{ \overline{g}\sp{\circ}
         - \overline{f}\sp{\bullet}  \}
 & \! =\!  &  \displaystyle  \max_{\RR}\{ {g}\sp{\circ}
         - {f}\sp{\bullet}  \}
 & \! > \! &   \displaystyle \max_{\ZZ}\{ g\sp{\circ} - f\sp{\bullet} \} .
\\
 (0) &  &  (0) & & (0) && (0) && (-\infty) 
  \end{array} 
%%\label{maxmaxFailEx}
\end{equation*}
Although the min-max identity 
\eqref{minmaxGenZR3}
with real-valued $p$ holds,
the formula
\eqref{minmaxGenZZ3} 
with integer-valued $p$ 
fails because of the dual integrality gap
$\mathrm{D}(\RR) > \mathrm{D}(\ZZ)$.
The optimal value 
$\min_{\ZZ}\{ f - g \} = 0$
is attained by $x\sp{*} = \veczero$,
at which the subdifferentials are given by
$\subgR f(\veczero) = \{ (p_{1}, p_{2}) \mid 0 \leq p_{1}=p_{2} \leq 1 \}$
and
$\subgR' g(\veczero) = \{ (p_{1}, p_{2}) \mid  p_{1}+p_{2}= 1 , 0 \leq p_{1} \leq 1 \}$.
We have
$\subgR f(\veczero) \cap \subgR' g(\veczero) = \{ (1/2, 1/2)\}$
and 
$\subgR f(\veczero) \cap \subgR' g(\veczero) \cap \ZZ\sp{2} = \emptyset$,
which shows the failure of \eqref{comsubgrZ}.
\finbox
\end{example}
%%%%%%%%%%%%%%

As the min-max formula 
\eqref{minmaxGenZR3} is not true
(in general)
when $f$ and $-g$ are integrally convex,
we are motivated to restrict $g$
to a subclass of integrally concave functions,
while allowing $f$ to be a general integrally convex function.
However, 
the possibility of 
$g$ being \Mnat-concave or \Lnat-concave
is denied by the above examples.
That is, we cannot hope for the combination of 
(integrally convex, \Mnat-convex)
nor
(integrally convex, \Lnat-convex)
for $(f,-g)$.
Furthermore, since a function in two variables
is \Mnat-convex if and only if it is multimodular \cite[Remark~2.2]{MM19multm},
the possibility of the combination of
(integrally convex, multimodular)
for $(f,-g)$
is also denied.
Thus we are motivated to consider the combination of
(integrally convex, separable convex).

In the following we address the
Fenchel-type min-max formula
for a pair of an integrally convex function  
and a separable concave function.
A function
$\Psi: \ZZ^{n} \to \RR \cup \{ -\infty \}$
in $x=(x_{1}, x_{2}, \ldots,x_{n}) \in \ZZ^{n}$
is called  
{\em separable concave}
if it can be represented as
\begin{equation}  \label{sepcavdef}
\Psi(x) = \psi_{1}(x_{1}) + \psi_{2}(x_{2}) + \cdots + \psi_{n}(x_{n})
\end{equation}
with univariate discrete concave functions
$\psi_{i}: \ZZ \to \RR \cup \{ -\infty \}$,
which means, by definition, that 
$\dom \psi_{i}$ is an interval of integers and
\begin{equation}  \label{univarcavedef}
\psi_{i}(k-1) + \psi_{i}(k+1) \leq 2 \psi_{i}(k)
\qquad (k \in \ZZ).
\end{equation}
The concave conjugate of $\Psi$ is denoted by $\Psi\sp{\circ}$, that is,
\begin{align}
\Psi\sp{\circ}(p) & = 
\inf \{ \langle p, x \rangle - \Psi(x) \mid x \in \ZZ\sp{n} \}
\qquad ( p\in \RR\sp{n}).
 \label{conjcavPsiZpR} 
\end{align}
This is a separable concave function represented as
\begin{equation} \label{Psiconj00}
\Psi\sp{\circ}(p)= 
 \psi_{1}\sp{\circ}(p_{1}) + \psi_{2}\sp{\circ}(p_{2}) + \cdots + \psi_{n}\sp{\circ}(p_{n}) ,
\end{equation}
where
\begin{equation} \label{psiconjdef00}
\psi_{i}\sp{\circ}(l)  = 
 \inf \{ k l  -  \psi_{i}(k)  \mid  k \in \ZZ \}
\qquad
(l \in \RR).
\end{equation}
It is often possible 
to obtain an explicit form of the (integral) conjugate function 
of an (integer-valued) separable convex (or concave) function;
see \cite{FM19partII,FM21boxTDI}.

The following is the Fenchel-type min-max formula
for a pair of an integrally convex function  
and a separable concave function.
The case of integer-valued functions, which is more interesting,
is given in \cite[Theorem~1.1]{MT22ICfenc},
while we include here the case of real-valued functions for completeness.

\begin{theorem} \label{THfencICsep}
Let $f: \ZZ\sp{n} \to \RR \cup \{ +\infty \}$
be an integrally convex function
and $\Psi: \ZZ\sp{n} \to \RR \cup \{ -\infty \}$
a separable concave function.
Assume that
$\dom f \cap \dom \Psi \neq \emptyset$ and 
$\inf \{ f(x) - \Psi(x) \mid  x \in \ZZ\sp{n}  \}$
is attained by some $x\sp{*}$.
Then 
\begin{equation} 
 \inf \{ f(x) - \Psi(x) \mid  x \in \ZZ\sp{n}  \} 
= \sup \{ \Psi\sp{\circ}(p) - f\sp{\bullet}(p) \mid  p \in \RR\sp{n} \}
\label{minmaxICsepR} 
\end{equation} 
and the supremum in \eqref{minmaxICsepR} is attained 
by some $p\sp{*} \in \RR\sp{n}$.
If, in addition, $f$ and $\Psi$ are integer-valued, then
\begin{equation} 
 \inf \{ f(x) - \Psi(x) \mid  x \in \ZZ\sp{n}  \} 
= \sup \{ \Psi\sp{\circ}(p) - f\sp{\bullet}(p) \mid  p \in \ZZ\sp{n} \}
\label{minmaxICsepZ} 
\end{equation}
and the supremum in \eqref{minmaxICsepZ} is attained 
by some $p\sp{*} \in \ZZ\sp{n}$.
\end{theorem}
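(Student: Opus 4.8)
The plan is to deduce both min--max identities from the common-subgradient criteria \eqref{comsubgrR} and \eqref{comsubgrZ} established in Section~\ref{SCfencheldual}. With $g=\Psi$, the weak-duality chain \eqref{fencweakZR2} together with \eqref{conjsubgZRvex}--\eqref{conjsubgZRcav} reduces \eqref{minmaxICsepR} (with the supremum attained by some $p^{*}\in\RR^{n}$) to the nonemptiness $\subgR f(x^{*})\cap\subgR'\Psi(x^{*})\neq\emptyset$, and \eqref{minmaxICsepZ} (with attainment by some $p^{*}\in\ZZ^{n}$) to \eqref{comsubgrZ}. The decisive structural fact is that, because $\Psi$ is separable, its concave subdifferential factorizes as a box: for the integer point $x^{*}$ one has $p\in\subgR'\Psi(x^{*})$ if and only if $p_{i}\in\subgR'\psi_{i}(x^{*}_{i})=[\alpha_{i},\beta_{i}]_{\RR}$ for every $i$, where $\alpha_{i}=\psi_{i}(x^{*}_{i}+1)-\psi_{i}(x^{*}_{i})$ and $\beta_{i}=\psi_{i}(x^{*}_{i})-\psi_{i}(x^{*}_{i}-1)$, with the usual $\pm\infty$ conventions at the ends of $\dom\psi_{i}$. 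Thus $B:=\subgR'\Psi(x^{*})$ is an integral box when $\Psi$ is integer-valued, and \eqref{comsubgrR} and \eqref{comsubgrZ} read simply as $\subgR f(x^{*})\cap B\neq\emptyset$ and $\subgR f(x^{*})\cap B\cap\ZZ^{n}\neq\emptyset$.

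To establish the real-valued case \eqref{comsubgrR}, first I would observe that $h:=f-\Psi$ is integrally convex, being the sum of the integrally convex $f$ and the separable convex $-\Psi$ (cf.~\eqref{f0sumphi}). Hence $h$ is convex-extensible with convex envelope equal to its local convex extension, and a cube-by-cube computation gives $\overline{h}=\overline{f}-\overline{\Psi}$: on each unit hypercube the interpolation of the separable part $-\Psi$ is affine, so adding it merely shifts the local convex hull of $f$ by an affine function (this is exactly \eqref{evfgevfevg} in the present setting). Since $x^{*}$ minimizes $h$ over $\ZZ^{n}$ and $h$ is integrally convex, $x^{*}$ also minimizes the convex function $\overline{h}=\overline{f}-\overline{\Psi}$ over $\RR^{n}$, so that $0\in\subgR(\overline{f}-\overline{\Psi})(x^{*})$. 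Applying the subdifferential sum rule then yields $p^{*}\in\subgR\overline{f}(x^{*})$ with $p^{*}\in\subgR'\overline{\Psi}(x^{*})$; and since $x^{*}\in\ZZ^{n}$, these subdifferentials coincide with $\subgR f(x^{*})$ and $\subgR'\Psi(x^{*})$, respectively. This is precisely $\subgR f(x^{*})\cap B\neq\emptyset$, i.e.\ \eqref{comsubgrR}, whereupon the framework of Section~\ref{SCfencheldual} delivers \eqref{minmaxICsepR} together with the attainment of the supremum at $p^{*}$.

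For the integer-valued refinement \eqref{comsubgrZ}, I would invoke box-integral subdifferentiability. The previous step shows $\subgR f(x^{*})\cap B$ is nonempty, and when $f$ and $\Psi$ are integer-valued the box $B$ is integral; Theorem~\ref{THsubgrICbox} therefore guarantees an integer vector $p^{*}\in\subgR f(x^{*})\cap B\cap\ZZ^{n}$, which is exactly \eqref{comsubgrZ}. By the same reduction this gives \eqref{minmaxICsepZ} with the supremum attained at the integral $p^{*}$. (One also recovers \eqref{minmaxICsepR} here, since $\mathrm{D}(\ZZ)\le\mathrm{D}(\RR)\le\mathrm{P}(\ZZ)=\mathrm{D}(\ZZ)$.)

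I expect the main obstacle to lie in the real-valued step, specifically in justifying the identity $\overline{f-\Psi}=\overline{f}-\overline{\Psi}$ and the constraint qualification for the sum rule. The first is delicate only through the interplay of the effective domains, but it rests essentially on the cube-wise affinity of the separable interpolation, which keeps the local convex extension of the sum equal to the sum of the local convex extensions. The second is manageable precisely because separability makes $\overline{\Psi}$ polyhedral, so the mixed convex--polyhedral sum rule applies under the mere hypothesis $\dom f\cap\dom\Psi\neq\emptyset$. Once \eqref{comsubgrR} is secured, the integrality upgrade is carried entirely by Theorem~\ref{THsubgrICbox}, whose role here is exactly to convert a real common subgradient confined to an integral box into an integral one.
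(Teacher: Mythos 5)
Your overall route is the paper's: reduce both identities to the common-subgradient conditions \eqref{comsubgrR} and \eqref{comsubgrZ} at the minimizer $x^{*}$, observe that $\subgR'\Psi(x^{*})$ is a box (integral when $\Psi$ is integer-valued), and let Theorem~\ref{THsubgrICbox} carry the integrality upgrade. The identity $\overline{f-\Psi}=\overline{f}-\overline{\Psi}$ and the deduction that $x^{*}$ also minimizes $\overline{f}-\overline{\Psi}$ over $\RR^{n}$ are likewise exactly the paper's first steps in Section~\ref{SCfencRproof}.

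The one place you genuinely diverge is also the one place with a gap: the splitting of $0\in\subgR(\overline{f}-\overline{\Psi})(x^{*})$ into a common element $p^{*}\in\subgR \overline{f}(x^{*})\cap\subgR'\overline{\Psi}(x^{*})$. You invoke the mixed convex--polyhedral subdifferential sum rule ``under the mere hypothesis $\dom f\cap\dom\Psi\neq\emptyset$,'' but that is not the correct qualification: when only one summand is polyhedral, the sum rule (see \cite{Roc70}, Theorem~23.8) requires $\ri(\dom\overline{f})\cap\dom\overline{\Psi}\neq\emptyset$, and $\overline{f}$ itself need not be polyhedral when $\dom f$ is unbounded. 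That interiority condition can fail under the theorem's hypotheses, e.g.\ when $\dom\Psi$ meets $\dom f$ only on the relative boundary of $\overline{\dom f}$. The conclusion is nevertheless true, and the paper proves it without any constraint qualification: Proposition~\ref{PRcomsubgrIC} writes $\subgR f(x^{*})$ and $\subgR'\Psi(x^{*})$ via the finite local descriptions \eqref{subgIC} (inequalities indexed by $d\in\{-1,0,+1\}^{n}$) and verifies nonemptiness of their intersection by the Farkas lemma, using only $\overline{f_{1}}\geq\overline{\Psi}$ after normalization and $f_{1}(x^{*})=\Psi(x^{*})$. Your step can be repaired in the same spirit --- for instance by restricting all functions to $x^{*}+\{-1,0,+1\}^{n}$, where by \eqref{subgIC} the relevant subdifferentials are unchanged and all envelopes become polyhedral, so the all-polyhedral sum rule applies with no interiority assumption --- but as written the appeal to the sum rule does not go through.
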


\begin{proof}
The proof of the real-valued case \eqref{minmaxICsepR} 
consists in showing 
\eqref{comsubgrR}
for $(f,g)= (f,\Psi)$,
which is proved in Section \ref{SCfencRproof}.
The integral case \eqref{minmaxICsepZ} has been proved 
in \cite[Theorem~1.1]{MT22ICfenc}
by showing \eqref{comsubgrZ} for $(f,g)= (f,\Psi)$
as a consequence of box-integral subdifferentiability
described in Theorem~\ref{THsubgrICbox}.
\qedJIAM
\end{proof}

\begin{remark} \rm \label{RMminmaxAssmp1}
In the integer-valued case, 
the existence of $x\sp{*}$
attaining the infimum in
\eqref{minmaxICsepZ}
is guaranteed
if (and only if)
the set $\{ f(x) - \Psi(x) \mid  x \in \ZZ\sp{n}  \}$
of function values is bounded from below.
It is known \cite[Lemma~3.2]{MT22ICfenc} that 
if the supremum on the right-hand side 
of \eqref{minmaxICsepZ} is finite, then 
the infimum on the left-hand side is also finite.
\finbox
\end{remark}

Theorem~\ref{THfencICsep} implies
a min-max theorem for separable convex minimization
on a box-integer polyhedron. 
The case of integer-valued functions, which is more interesting,
is stated in \cite[Theorem~3.1]{MT22ICfenc}.
We define notation
$\mu_{P}(p) = \inf \{ \langle p, x \rangle \mid  x\in P \}$
for a polyhedron $P$.

\begin{theorem}\label{THboxint}
Let $P \ (\subseteq \RR\sp{n})$
be a nonempty box-integer polyhedron,
and $\Phi: \ZZ\sp{n} \to \RR \cup \{ +\infty \}$
a separable convex function.
Assume that
$\inf \{ \Phi (x) \mid  x \in P \cap \ZZ\sp{n} \}$ 
is (finite and) attained by some $x\sp{*}$.
Then 
\begin{equation} 
 \inf \{ \Phi (x) \mid  x \in P \cap \ZZ\sp{n} \} 
= \sup \{ \mu_{P}(p) - \Phi\sp{\bullet}(p) \mid  p\in \RR\sp{n}\} 
\label{minmaxBoxintSepR} 
\end{equation} 
and the supremum is attained by some $p\sp{*} \in \RR\sp{n}$.
If, in addition, 
$\Phi$ is integer-valued, then
\begin{equation} 
 \inf \{ \Phi (x) \mid  x \in P \cap \ZZ\sp{n} \} 
= \sup \{ \mu_{P}(p) - \Phi\sp{\bullet}(p) \mid  p\in \ZZ\sp{n}\} 
\label{minmaxBoxintSepZ} 
%%\label{minmaxZZintcnvA} 
\end{equation} 
and the supremum is attained by some $p\sp{*} \in \ZZ\sp{n}$.
\end{theorem}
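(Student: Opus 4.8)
The plan is to derive this theorem directly from the Fenchel-type duality of Theorem~\ref{THfencICsep}, encoding the constraint $x \in P$ into the indicator function of $P \cap \ZZ\sp{n}$. I set $S := P \cap \ZZ\sp{n}$ and $f := \delta_{S}$. Since $P$ is a box-integer polyhedron, Proposition~\ref{PRintpolyIC} guarantees that $S$ is an integrally convex set, so $f$ is an integrally convex function (and integer-valued in the sense of $\ZZ \cup \{ +\infty \}$, regardless of $\Phi$). Taking $\Psi := -\Phi$, which is separable concave, the constrained problem becomes the unconstrained difference $\inf\{ f(x) - \Psi(x) \mid x \in \ZZ\sp{n} \} = \inf\{ \Phi(x) \mid x \in S \}$. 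The minimizer $x\sp{*}$ lies in $S \cap \dom\Phi = \dom f \cap \dom \Psi$, so the hypotheses of Theorem~\ref{THfencICsep} are satisfied, and I would simply invoke that theorem for the pair $(f,\Psi)$, in both its real-valued and (when $\Phi$ is integer-valued) integer-valued forms.

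It then remains to identify the dual side. From the definitions I would record $f\sp{\bullet}(p) = \sup_{x \in S} \langle p, x \rangle$ and $\Psi\sp{\circ}(p) = -\Phi\sp{\bullet}(-p)$. After the substitution $p \mapsto -p$, the dual objective $\Psi\sp{\circ}(p) - f\sp{\bullet}(p)$ of Theorem~\ref{THfencICsep} turns, with $q = -p$, into $\inf_{x \in S}\langle q, x \rangle - \Phi\sp{\bullet}(q)$, and the same substitution carries the attained optimizer $p\sp{*}$ (integral in the integer-valued case) to $q\sp{*} = -p\sp{*}$. Thus both min-max formulas \eqref{minmaxBoxintSepR} and \eqref{minmaxBoxintSepZ}, together with attainment over $\RR\sp{n}$ and over $\ZZ\sp{n}$ respectively, will follow once I establish the identity
\begin{equation*}
 \inf_{x \in P \cap \ZZ\sp{n}} \langle q, x \rangle = \mu_{P}(q) \qquad (q \in \RR\sp{n}).
\end{equation*}

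The crux, and the step I expect to be the real obstacle, is precisely this identity, which is where box-integrality enters; the remaining conjugate computations and sign bookkeeping are routine. The inequality $\inf_{x \in S}\langle q, x\rangle \geq \mu_{P}(q)$ is immediate from $S \subseteq P$, so the work is in the reverse direction, for which I would restrict to an integer box. If $\mu_{P}(q)$ is finite it is attained at some $x_{0} \in P$; choosing an integer box $[l,u]_{\RR} \ni x_{0}$, the polyhedron $P \cap [l,u]_{\RR}$ is an integer polyhedron by box-integrality, its minimum of $\langle q, \cdot \rangle$ equals $\mu_{P}(q)$ because $x_{0}$ attains it, and being attained on a face of an integer polyhedron it is attained at an integer point of $S$. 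If $\mu_{P}(q) = -\infty$, then for every $M$ some $x_{M} \in P$ has $\langle q, x_{M}\rangle < -M$; restricting to an integer box containing $x_{M}$ again yields an integer point of $S$ with $q$-value below $-M$, whence $\inf_{x\in S}\langle q, x\rangle = -\infty$ as well. This confirms the identity in all cases and closes the reduction to Theorem~\ref{THfencICsep}.
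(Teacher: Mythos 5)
Your proposal is correct and follows the same route as the paper: take $f=\delta_{P\cap\ZZ\sp{n}}$ (integrally convex by Proposition~\ref{PRintpolyIC}) and $\Psi=-\Phi$, then invoke Theorem~\ref{THfencICsep}. The only difference is that you explicitly verify the identity $\inf_{x\in P\cap\ZZ\sp{n}}\langle q,x\rangle=\mu_{P}(q)$ via box-integrality on a suitable integer box, a step the paper's two-line proof leaves implicit; your argument for it is sound.
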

\begin{proof}
Denote the indicator function of $P \cap \ZZ\sp{n}$ by $\delta$,
which is an integer-valued integrally convex function
because $P$ is a box-integer polyhedron.
Then the statements follow from Theorem~\ref{THfencICsep} for $f=\delta$ and $\Psi = -\Phi$.
\qedJIAM
\end{proof}

This theorem generalizes a recent result of Frank--Murota \cite[Theorem~3.4]{FM21boxTDI},
which asserts the min-max formula \eqref{minmaxBoxintSepZ}
for integer-valued $\Phi$
when $P$ is an integral box-TDI polyhedron.

\subsection{Proof of \eqref{minmaxICsepR} for real-valued functions (Theorem \ref{THfencICsep})}
\label{SCfencRproof}

In this section we prove the min-max formula \eqref{minmaxICsepR} 
for real-valued functions in Theorem~\ref{THfencICsep}.
Let $x\sp{*}$ denote an element of 
$\dom f \cap \dom \Psi \ (\subseteq \ZZ\sp{n})$ 
that attains the infimum in \eqref{minmaxICsepR}.
According to the general framework described in Section \ref{SCfencheldual},
it suffices to show
$\subgR f(x\sp{*}) \cap \subgR' \Psi(x\sp{*})  \ne \emptyset$.

Since
$\overline{f - \Psi} = \overline{f} - \overline{\Psi}$
by Proposition~2.1 of \cite{MT22ICfenc},
we have
\[
\inf_{ x \in \ZZ\sp{n} } \{ f(x) - \Psi(x)  \} 
=\inf_{ x \in \RR\sp{n} } \{ (\overline{f - \Psi})(x)  \} 
=\inf_{ x \in \RR\sp{n} } \{ \overline{f}(x) - \overline{\Psi}(x)  \} ,
\]
which implies that 
$x\sp{*}$ is also a minimizer of
$\overline{f} - \overline{\Psi}$
over $\RR\sp{n}$.
Define
$\alpha :=  \inf \{ f(x) - \Psi(x) \mid  x \in \ZZ\sp{n}  \} = f(x\sp{*}) - \Psi(x\sp{*})$
and $f_{1}(x) := f(x) - \alpha$ for $x \in \ZZ\sp{n}$.
Then $f_{1}$ is an integrally convex function
satisfying
$\overline{f_{1}} \geq \overline{\Psi}$
and 
$f_{1}(x\sp{*}) = \Psi(x\sp{*})$.
The desired nonemptiness 
$\subgR f(x\sp{*}) \cap \subgR' \Psi(x\sp{*})  \ne \emptyset$
follows from Proposition~\ref{PRcomsubgrIC} below
applied to $(f,g)=(f_{1},\Psi)$.

\begin{proposition}  \label{PRcomsubgrIC}
Let
$f: \ZZ\sp{n} \to \RR \cup \{ +\infty \}$
and 
$g: \ZZ\sp{n} \to \RR \cup \{ -\infty \}$
be integrally convex and concave functions, respectively,
and
$x\sp{*} \in  \dom f \cap \dom g$. 
If 
$\overline{f} \geq \overline{g}$
on $\RR\sp{n}$ 
and 
$f(x\sp{*}) = g(x\sp{*})$,
then 
$\subgR f(x\sp{*}) \cap \subgR' g(x\sp{*})  \ne \emptyset$.
\end{proposition}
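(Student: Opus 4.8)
The plan is to reduce the discrete statement to a purely continuous fact about the convex envelope $\overline{f}$ and the concave envelope $\overline{g}$, and then to produce a common subgradient by a separation (sandwich) argument.

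First I would pass to the envelopes. Since $f$ is integrally convex it is convex-extensible, so $f = \overline{f}\,|_{\ZZ\sp{n}}$ by \eqref{ICfnextZ}, and as recorded in Section~\ref{SCsubgr} this gives $\subgR f(x\sp{*}) = \subgR \overline{f}(x\sp{*})$; the analogous identity $\subgR' g(x\sp{*}) = \subgR' \overline{g}(x\sp{*})$ holds because $-g$ is integrally convex and $\subgR' g = -\subgR(-g)$. Moreover each of these sets is nonempty for $x\sp{*} \in \dom f \cap \dom g$. Hence it suffices to exhibit a single $p \in \RR\sp{n}$ lying in both $\subgR \overline{f}(x\sp{*})$ and $\subgR' \overline{g}(x\sp{*})$, i.e.\ a common subgradient of the continuous convex function $\overline{f}$ and the concave function $\overline{g}$ at $x\sp{*}$.

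The hypotheses translate cleanly to the envelopes: $f(x\sp{*}) = g(x\sp{*})$ gives $\overline{f}(x\sp{*}) = \overline{g}(x\sp{*})$, while $\overline{f} \geq \overline{g}$ on $\RR\sp{n}$ shows that $x\sp{*}$ is a global minimizer, with value $0$, of the convex function $\overline{f} - \overline{g}$. What I want is an affine function $\ell(y) = \overline{f}(x\sp{*}) + \langle p, y - x\sp{*} \rangle$ that is sandwiched, $\overline{g} \leq \ell \leq \overline{f}$ on $\RR\sp{n}$; its slope $p$ then satisfies $\overline{f}(y) - \overline{f}(x\sp{*}) \geq \langle p, y - x\sp{*}\rangle$ and $\overline{g}(y) - \overline{g}(x\sp{*}) \leq \langle p, y - x\sp{*}\rangle$ for all $y$, which is exactly $p \in \subgR \overline{f}(x\sp{*}) \cap \subgR' \overline{g}(x\sp{*})$. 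Such an $\ell$ I would obtain either by separating the epigraph $\epi \overline{f}$ from the hypograph $\{(y,s) \mid s \leq \overline{g}(y)\}$ in $\RR\sp{n+1}$ --- two convex sets with disjoint interiors that meet at $(x\sp{*}, \overline{f}(x\sp{*}))$ --- by a hyperplane through that common point, or, equivalently, by applying the Hahn--Banach dominated-extension (sandwich) theorem to directional derivatives: the sublinear map $v \mapsto \overline{f}\,'(x\sp{*};v)$ dominates the superlinear map $v \mapsto \overline{g}\,'(x\sp{*};v)$ because $\overline{f} \geq \overline{g}$ with equality at $x\sp{*}$, so a linear functional $\langle p, \cdot \rangle$ can be inserted between them.

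The main obstacle is to guarantee that the separator is non-vertical, equivalently that the inserted functional is a genuine finite $p \in \RR\sp{n}$ rather than a vertical hyperplane in $\RR\sp{n+1}$. Since $\overline{f}$ and $\overline{g}$ need not be polyhedral (for instance $\overline{f}$ may have infinitely many affine pieces when $\dom f$ is unbounded), the usual relative-interior constraint qualification on $\dom \overline{f} \cap \dom \overline{g}$ is not automatic, and $x\sp{*}$ may sit on the boundary of either domain. Here I would exploit the nonemptiness of the one-sided subdifferentials $\subgR \overline{f}(x\sp{*})$ and $\subgR' \overline{g}(x\sp{*})$ (already guaranteed by integral convexity and concavity): these provide affine minorants and majorants that are exact at $x\sp{*}$, whence $\overline{f}\,'(x\sp{*};\cdot) > -\infty$ and $\overline{g}\,'(x\sp{*};\cdot) < +\infty$ everywhere. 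With both directional derivatives finitely bounded on the relevant side, the dominated-extension step yields a finite slope $p$, and transferring back through $\subgR f(x\sp{*}) = \subgR \overline{f}(x\sp{*})$ and $\subgR' g(x\sp{*}) = \subgR' \overline{g}(x\sp{*})$ completes the proof.
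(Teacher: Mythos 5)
Your reduction to the envelopes and the identities $\subgR f(x\sp{*}) = \subgR \overline{f}(x\sp{*})$, $\subgR' g(x\sp{*}) = \subgR' \overline{g}(x\sp{*})$ are fine, but the decisive step---inserting a linear functional between $\overline{g}\,'(x\sp{*};\cdot)$ and $\overline{f}\,'(x\sp{*};\cdot)$---is not justified by the hypotheses you invoke, and in fact the purely continuous statement you are reducing to is false. Nonemptiness of the two one-sided subdifferentials only gives a linear minorant of the sublinear function and a linear majorant of the superlinear function; it does not exclude that every separating hyperplane is vertical. Concretely, in $\RR\sp{2}$ with $x\sp{*}=\veczero$, let $F$ be the support function of the closed convex set $P=\{p \in \RR\sp{2} \mid p_{2}\geq e\sp{p_{1}}\}$ and let $G$ be $0$ on the ray $R=\{(0,v_{2}) \mid v_{2}\leq 0\}$ and $-\infty$ elsewhere. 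Then $F$ is convex, $G$ is concave, $F(\veczero)=G(\veczero)=0$, and $F\geq G$ everywhere (for $v\in R$ one has $F(0,v_{2})=v_{2}\inf\{p_{2}\mid p\in P\}=0$). Both subdifferentials are nonempty: $\subgR F(\veczero)=P$ and $\subgR' G(\veczero)=\{q \mid q_{2}\leq 0\}$. Yet $P\cap\{q \mid q_{2}\leq 0\}=\emptyset$, so no common subgradient exists. Since $F$ and $G$ are positively homogeneous, they coincide with their own directional derivatives at $\veczero$, so this is exactly the configuration your sandwich argument claims to handle. Hence some input from integral convexity beyond convex-extensibility and nonemptiness of subdifferentials is indispensable, and your proof never supplies it.

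The missing ingredient is polyhedrality of the two subdifferentials, which is where integral convexity enters the paper's argument. After normalizing to $x\sp{*}=\veczero$ and $f(\veczero)=g(\veczero)=0$, the paper uses the local optimality criterion (Theorem~\ref{THintcnvlocopt}) to write $\subgR f(\veczero)$ and $\subgR' g(\veczero)$ as \emph{finite} systems of inequalities indexed by $d\in\{-1,0,+1\}\sp{n}\cap\dom f$ and $d\in\{-1,0,+1\}\sp{n}\cap\dom g$ as in \eqref{subgIC}; it then applies the Farkas lemma to the combined finite system, and verifies the resulting nonnegativity condition on conic combinations by Jensen's inequality for $\overline{f}$ and $\overline{g}$ together with $\overline{f}\geq\overline{g}$. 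Because the systems are finite, no constraint qualification, closure, or attainment issue can arise---precisely the issues that sink the general sandwich argument above. If you wish to retain your separation viewpoint, you must first establish the explicit $\{-1,0,+1\}$-inequality descriptions of both subdifferentials and then separate the two polyhedra; at that point the argument becomes essentially the paper's Farkas computation.
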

\begin{proof}
We may assume that
$x\sp{*} = \veczero$
and
$f(\veczero) = g(\veczero) = 0$.
By \eqref{subgIC} we have
\begin{align} 
 \subgR f(\veczero)
& = \{ p \in  \RR\sp{n} \mid    
  \langle d\sp{(i)}, p \rangle \leq f(d\sp{(i)}) 
 \ \ \mbox{for all $i \in I$} \} ,
\label{comsubgICprf1}
\\
 \subgR' g(\veczero)
& = \{ p \in  \RR\sp{n} \mid    
  \langle -\hat d\sp{(j)}, p \rangle \leq -g(\hat d\sp{(j)}) 
 \ \ \mbox{for all $j \in J$} \} 
\label{comsubgICprf2}
\end{align}
where
$\{ -1,0,+1 \}\sp{n} \cap \dom f$ is represented as
$\{ d\sp{(i)} \mid i \in I \}$
and
$\{ -1,0,+1 \}\sp{n} \cap \dom g$ as
$\{ \hat d\sp{(j)} \mid j \in J \}$. 
By the Farkas lemma (or linear programming duality)
\cite{Sch86},
there exists $p \in \subgR f(\veczero) \cap \subgR' g(\veczero)$
if and only if
\begin{equation} \label{comsubgICprf3}
\sum_{i \in I} u_{i} f(d\sp{(i)}) - \sum_{j \in J} v_{j} g(\hat d\sp{(j)}) \geq 0
\end{equation}
for any $u_{i} \geq 0$ and $v_{j} \geq 0$
satisfying 
$\sum_{i \in I} u_{i} d\sp{(i)} - \sum_{j \in J} v_{j} \hat d\sp{(j)} = 0$.
Let 
$\hat x := \sum_{i \in I} u_{i} d\sp{(i)} = \sum_{j \in J} v_{j} \hat d\sp{(j)}$,
$U := \sum_{i \in I} u_{i}$, and $V := \sum_{j \in J} v_{j}$.
By homogeneity we may assume $U + V \leq 1$.
Then $\hat x \in \dom \overline{f} \cap \dom \overline{g}$.
If $U > 0$, it follows from the convexity of $\overline{f}$
as well as  $\overline{f}(\veczero)=0$ and $U \leq 1$ that
\[
 \sum_{i \in I} u_{i} f(d\sp{(i)}) 
= U \sum_{i \in I} \frac{u_{i}}{U} \overline{f}(d\sp{(i)}) 
\geq 
 U  \overline{f}( \sum_{i \in I} \frac{u_{i}}{U} d\sp{(i)}) 
= U \overline{f}(\frac{1}{U}\hat x)
 \geq \overline{f}(\hat x).
\]
The resulting inequality
$ \sum_{i \in I} u_{i} f(d\sp{(i)})  \geq \overline{f}(\hat x)$
is also true when $U=0$.
Similarly, we obtain
$ \sum_{j \in J} v_{j} g(\hat d\sp{(j)})  \leq \overline{g}(\hat x)$,
whereas
$\overline{f}(\hat x) -  \overline{g}(\hat x) \geq 0$
by the assumption 
$\overline{f} \geq \overline{g}$.
Therefore, \eqref{comsubgICprf3} holds.
\qedJIAM
\end{proof}

%%\newpage

\subsection{Connection to min-max theorems on bisubmodular functions}
\label{SCbisubbox}

Let $N = \{ 1,2,\ldots, n  \}$ and 
denote by $3\sp{N}$ the set of all pairs $(X,Y)$ of disjoint subsets $X, Y$ of $N$,
that is,
$3\sp{N} = \{ (X,Y)  \mid  X, Y \subseteq N, \  X \cap Y = \emptyset \}$.
A function $f: 3\sp{N} \to \RR$ is called {\em bisubmodular} if
\begin{align*} 
& f(X_{1}, Y_{1}) + f(X_{2}, Y_{2}) 
\\ & \geq
 f(X_{1} \cap X_{2}, Y_{1} \cap Y_{2}) +
 f((X_{1} \cup X_{2}) \setminus (Y_{1} \cup Y_{2}), 
  (Y_{1} \cup Y_{2}) \setminus  (X_{1} \cup X_{2}) )
\end{align*} 
holds for all 
$(X_{1}, Y_{1}), (X_{2}, Y_{2})  \in 3\sp{N}$. 
In the following we assume 
$f(\emptyset,\emptyset) =0$.
The associated {\em bisubmodular polyhedron} is defined by
\[
P(f) = \{  z \in \RR\sp{n} \mid  
  z(X) - z(Y) \leq f(X,Y) \ \ \mbox{for all\ } (X,Y) \in 3\sp{N} \},
\]
which, in turn, determines $f$ by
\begin{equation}
f(X,Y) = \max \{ z(X) - z(Y)  \mid z \in P(f) \} 
\qquad ((X,Y) \in 3\sp{N}).
\label{bisubfnfromXY}
\end{equation}
If $f$ is integer-valued, $P(f)$ is an integral polyhedron.	
The reader is referred to \cite[Section 3.5(b)]{Fuj05book} and \cite{Fuj14bisubmdc}
for bisubmodular functions and polyhedra.

In a study of $b$-matching degree-sequence polyhedra,
Cunningham--Green-Kr{\'o}tki \cite{CG91degseq}
obtained a min-max formula for the maximum component sum
$z(N) = \sum_{i \in N} z_{i}$ of $z \in P(f)$ 
upper-bounded by a given vector $w$.

\begin{theorem}[{\cite[Theorem~4.6]{CG91degseq}}] \label{THcungreen}
Let 
$f: 3\sp{N} \to \RR$ 
be a bisubmodular function 
with $f(\emptyset,\emptyset) =0$, and $w \in \RR\sp{n}$.
If there exists $z \in P(f)$ with $z \leq w$, 
then
\begin{align} 
&\max\{ z(N) \mid  z \in P(f), \  z \leq w  \}
\nonumber \\ & 
= \min\{ f(X,Y) + w(N \setminus X) + w(Y) 
  \mid  (X, Y) \in 3\sp{N} \}.
\label{minmaxCG}
\end{align} 
Moreover, if $f$ and $w$ are integer-valued, then there exists an integral vector $z$
that attains the maximum on the left-hand side of \eqref{minmaxCG}.
\finboxARX
\end{theorem}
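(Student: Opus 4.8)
The plan is to derive this classical min--max formula as an application of the Fenchel-type duality for integrally convex functions from Section~\ref{SCfenc}, thereby realizing the connection promised by the section title. The first thing I would record is that the bisubmodular polyhedron $P(f)$ is a box-integer polytope. It is bounded because each coordinate obeys $-f(\emptyset,\{i\}) \leq z_{i} \leq f(\{i\},\emptyset)$ (take $(X,Y)=(\emptyset,\{i\})$ and $(\{i\},\emptyset)$ in its definition), and the defining system $z(X)-z(Y) \leq f(X,Y)$, which has $\{-1,0,+1\}$-coefficients, is box-TDI for bisubmodular $f$ (its linear program being solved by the greedy algorithm), so $P(f)$ is box-integer and, when $f$ is integer-valued, integral. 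Consequently, by Proposition~\ref{PRintpolyIC} the indicator function $\delta$ of $P(f)\cap\ZZ\sp{n}$ is an integer-valued integrally convex function, which is the object to feed into the duality machinery.

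Next I would encode the problem. Putting $\Phi(z)=\sum_{i}\varphi_{i}(z_{i})$ with $\varphi_{i}(k)=-k$ for $k\leq w_{i}$ and $\varphi_{i}(k)=+\infty$ otherwise---a separable convex function---gives $\inf\{\Phi(z)\mid z\in P(f)\cap\ZZ\sp{n}\}=-\max\{z(N)\mid z\in P(f)\cap\ZZ\sp{n},\ z\leq w\}$, and (since $P(f)$ is an integral polytope and $z(N)$ is linear) this discrete maximum equals the maximum over real $z$ in \eqref{minmaxCG}, which also settles the integral attainment asserted in the ``moreover'' part. The hypothesis that some $z\in P(f)$ satisfies $z\leq w$ guarantees that the infimum is finite and attained, so Theorem~\ref{THboxint} applies to $P=P(f)$ and $\Phi$. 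Computing the two conjugates $\mu_{P(f)}(p)=\inf\{\langle p,z\rangle\mid z\in P(f)\}$ and $\Phi\sp{\bullet}(p)=\langle p+\vecone,w\rangle$ (finite only for $p\geq-\vecone$), the dual side collapses, after the substitution $q=-p$, to
\[
 \max\{ z(N) \mid z \in P(f),\ z \leq w \}
 = \min\{\, \sigma(q) - \langle q, w \rangle + w(N) \mid q \in \ZZ\sp{n},\ q \leq \vecone \,\},
\]
where $\sigma(q):=\sup\{\langle q,z\rangle\mid z\in P(f)\}$ is the support function of $P(f)$. Evaluating the objective at a sign vector $q=\unitvec{X}-\unitvec{Y}$ with $(X,Y)\in 3\sp{N}$ and invoking \eqref{bisubfnfromXY}, namely $\sigma(\unitvec{X}-\unitvec{Y})=f(X,Y)$, turns the summand into exactly $f(X,Y)+w(N\setminus X)+w(Y)$, matching the right-hand side of \eqref{minmaxCG}.

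It then remains to show that the integral minimizer of $\sigma(q)-\langle q,w\rangle+w(N)$ over $\{q\in\ZZ\sp{n}\mid q\leq\vecone\}$ may be taken to be a sign vector $\unitvec{X}-\unitvec{Y}$; this is the step I expect to be the main obstacle, and it is where genuine bisubmodularity---rather than the generic integral convexity supplied by Theorem~\ref{THboxint}---must enter. The intended route is to use the greedy (Fujishige) description of the support function, $\sigma(q)=\int_{0}\sp{\infty} f(\{i\mid q_{i}\geq t\},\{i\mid q_{i}\leq -t\})\,dt$, which expresses $\sigma$ as an integral over the level sets of $q$ and thereby decouples the minimization across levels $t$: the constraint $q\leq\vecone$ confines the positive levels to $t\in(0,1]$, and the bisubmodular exchange inequalities rule out any gain from coordinates with $|q_{i}|\geq 2$, forcing an optimal $q\in\{-1,0,+1\}\sp{n}$. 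Equivalently, one can argue through the $\{-1,0,+1\}$-coefficient description \eqref{subgIC} of $\subgR\delta(z\sp{*})$ together with the pinning $q\sp{*}_{i}=1$ on coordinates with $z\sp{*}_{i}<w_{i}$ coming from the separable supergradient condition \eqref{subgcavZRdef}, reducing the optimal dual vector to a single bisubmodular facet normal by an exchange argument. Finally, the real-valued statement is the classical linear-programming duality for $P(f)$ and follows from the continuous Fenchel duality applied to the convex and concave envelopes, or directly from the integral case via the integrality of $P(f)$.
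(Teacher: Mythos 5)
Your overall strategy --- deriving the Cunningham--Green-Kr\'otki formula from the integrally convex Fenchel duality of this section --- is the same as the paper's, but you set it up on the conjugate side: you take the integrally convex object to be the indicator function of $P(f)\cap\ZZ\sp{n}$ and invoke Theorem~\ref{THboxint}, whereas the paper works on the primal side with Qi's convex extension $\hat f$ of the bisubmodular function (shown to be integrally convex on $\ZZ\sp{n}$ via its positive homogeneity, Qi's Lemma~11, and Theorem~\ref{THfavtarProp33}) and applies Theorem~\ref{THfencICsep} directly, obtaining the more general box-constrained formula of Theorem~\ref{THfujpat}, of which Theorem~\ref{THcungreen} is the special case $A=N$, $B=\emptyset$, $\beta=w$, $\alpha_{i}=-f(\emptyset,\{i\})$. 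The paper's choice is not cosmetic: since $\hat f\sp{\bullet}=\delta_{P(f)}$, the two set-ups are conjugate to each other, but only the primal one works for real-valued $f$, because Theorem~\ref{THboxint} requires $P(f)$ to be a box-integer polyhedron and this fails when $f$ is not integer-valued. Your fallback for the real case (``classical LP duality'') does not suffice either: LP duality produces a dual optimum that is a nonnegative combination of the inequalities $z(X)-z(Y)\leq f(X,Y)$, and collapsing it to a \emph{single} pair $(X,Y)$ with unit weight is precisely the content of the theorem, not a consequence of duality alone. You also take the box-integrality (box-TDIness) of $P(f)$ for integer-valued $f$ as an input; that is a substantive fact about bisubmodular polyhedra that would itself need proof or citation.

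The more serious gap is the one you flag yourself: after the conjugacy computation the dual side is $\inf\{\sigma(q)-\langle q,w\rangle+w(N) \mid q\leq\vecone\}$, and everything hinges on showing the infimum is attained at a sign vector $q=\unitvec{X}-\unitvec{Y}$. Your level-set sketch does not close this: the constraint $q\leq\vecone$ bounds only the positive components, and for $q$ with some $q_{i}\leq -2$ the integrand $f(U_{t},V_{t})-w(U_{t})+w(V_{t})$ is integrated over a range of length greater than one, so the pointwise bound by $\min_{(X,Y)}\{f(X,Y)-w(X)+w(Y)\}$ (which is $\leq 0$) does not yield the desired inequality without an additional argument exploiting feasibility. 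The paper settles the analogous step (equation \eqref{min01=minZ}) by a concrete contradiction argument: take a minimizer $\hat x$ of $\hat f-\Psi$ of minimum $\ell_{\infty}$-norm, use the linearity of each $\psi_{i}$ on $(-\infty,-1]$ and $[1,+\infty)$ together with the positive homogeneity of $\hat f$ to show that $\hat x\pm d$ (with $d$ the sign pattern of the extreme components) changes the objective by $\pm(f(U,V)-\langle q,d\rangle)$, conclude this quantity is zero, and thereby shrink $\|\hat x\|_{\infty}$. Some such argument is indispensable; without it your proof of the min-max identity is incomplete.
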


The min-max formula \eqref{minmaxCG}
can be extended to a box constraint (with both upper and lower bounds on $z$).
This extension is given in \eqref{FPminmax} below.
Although this formula is not explicit in 
Fujishige--Patkar \cite{FP94},
it can be derived without difficulty from the results of \cite{FP94};
see Remark~\ref{RMminmaxFPder}.

\begin{theorem}[\cite{FP94}] \label{THfujpat}
Let 
$f: 3\sp{N} \to \RR$
be a bisubmodular function 
with $f(\emptyset,\emptyset) \allowbreak  =0$,
and $\alpha$ and  $\beta$ be real vectors with $\alpha \leq \beta$.
If there exists $z \in P(f)$ with $\alpha \leq z \leq \beta$,
then, for each $(A,B) \in 3\sp{N}$, we have
\begin{align}
&\max \{ z(A) - z(B)  \mid z \in P(f) , \alpha \leq z \leq \beta \} 
\nonumber \\ & 
= \!
\min \{ f(X,Y) \! + \! \beta(A \setminus X) \! + \! \beta(Y \setminus B)
    \! - \! \alpha(B \setminus Y) 
    \! - \! \alpha(X \setminus A) 
     \mid (X,Y) \in 3\sp{N} \}.
\label{FPminmax}
\end{align}
Moreover, if $f$, $\alpha$, and $\beta$ are integer-valued, 
then there exists an integral vector $z$
that attains the maximum on the left-hand side of \eqref{FPminmax}.
\finboxARX
\end{theorem}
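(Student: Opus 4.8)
The plan is to combine an elementary weak-duality estimate with the structural results of Fujishige--Patkar~\cite{FP94} on the intersection of a bisubmodular polyhedron with a box. First I would prove the easy inequality $\mathrm{(LHS)}\le\mathrm{(RHS)}$ in \eqref{FPminmax}. Fix a feasible $z$ (so $z\in P(f)$ and $\alpha\le z\le\beta$) and an arbitrary $(X,Y)\in 3^N$. Using $z(A\setminus X)-z(X\setminus A)=z(A)-z(X)$ and $z(Y\setminus B)-z(B\setminus Y)=z(Y)-z(B)$, one has $z(A)-z(B)=[z(X)-z(Y)]+z(A\setminus X)-z(X\setminus A)+z(Y\setminus B)-z(B\setminus Y)$. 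Bounding $z(X)-z(Y)\le f(X,Y)$ and applying $z\le\beta$ on $A\setminus X,\,Y\setminus B$ and $z\ge\alpha$ on $X\setminus A,\,B\setminus Y$ gives
\begin{equation*}
z(A)-z(B)\le f(X,Y)+\beta(A\setminus X)+\beta(Y\setminus B)-\alpha(B\setminus Y)-\alpha(X\setminus A),
\end{equation*}
so taking the maximum over $z$ and the minimum over $(X,Y)$ yields one direction of the asserted equality.

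The substance is the reverse inequality and the integrality of the optimal $z$, which I would obtain from \cite{FP94}. The decisive fact there is that the intersection $P(f)\cap\{z\mid \alpha\le z\le\beta\}$ of the bisubmodular polyhedron $P(f)$ with the (finite) box is again a bisubmodular polyhedron $P(g)$, whose defining bisubmodular function is given by the reduction (Dilworth-truncation) formula $g(A,B)=\min_{(X,Y)\in 3^N}\{f(X,Y)+\beta(A\setminus X)+\beta(Y\setminus B)-\alpha(B\setminus Y)-\alpha(X\setminus A)\}$; that is, $g(A,B)$ is precisely the right-hand side of \eqref{FPminmax}. Granting this, identity \eqref{bisubfnfromXY} applied to $g$ gives $\max\{z(A)-z(B)\mid z\in P(g)\}=g(A,B)$, which is exactly the claimed equality. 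The hypothesis that some element of $P(f)$ lies in the box guarantees $P(g)\ne\emptyset$, and since $\alpha,\beta$ are finite the feasible region is bounded, so the maximum is attained.

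For the integral statement, when $f,\alpha,\beta$ are integer-valued the function $g$ takes integer values (being a minimum of integer combinations), so $P(g)$ is an integral polyhedron by the integrality fact noted just after the definition of $P(f)$; a bounded integral polyhedron attains the maximum of the linear functional $z\mapsto z(A)-z(B)$ at an integral vertex, providing the required integral $z$.

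The main obstacle will be the verification of the rank formula for $g$: establishing that the box-intersection of a bisubmodular polyhedron is bisubmodular and that its rank function coincides with the four-term box expression on the right of \eqref{FPminmax}. The weak-duality computation already shows $g(A,B)\le\mathrm{(RHS)}$ once $g$ is identified as the rank function of $P(g)$, so the real work is the matching lower bound, which rests on the uncrossing argument of \cite{FP94}; the bookkeeping needed to align their formulation (reflections encoding the lower bound $\alpha$, and the pair $(A,B)$ selecting the linear objective) with the symmetric box terms is the delicate point. An alternative I would keep in reserve is to reduce \eqref{FPminmax} to the upper-bound-only, full-component-sum formula of Cunningham--Green-Kr{\'o}tki (Theorem~\ref{THcungreen}) via a coordinate reflection $\tau_i=+1$ on $A$ and $\tau_i=-1$ on $B$, after folding the lower bound $\alpha$ into a modified bisubmodular function.
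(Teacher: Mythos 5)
Your argument is correct, but it follows a different route from the one the paper presents in detail. Your weak-duality computation (splitting $z(A)-z(B)$ as $[z(X)-z(Y)]+z(A\setminus X)-z(X\setminus A)+z(Y\setminus B)-z(B\setminus Y)$ and bounding each piece) is sound, and the strong direction you delegate to the Fujishige--Patkar machinery is exactly the content of their Theorems~3.2 and 3.3: the box convolution $f\circ w$ with $w(X,Y)=\beta(X)-\alpha(Y)$ is bisubmodular, equals the right-hand side of \eqref{FPminmax}, and satisfies $P(f\circ w)=P(f)\cap[\alpha,\beta]_{\RR}$, after which \eqref{bisubfnfromXY} and integrality of $P(g)$ for integer-valued $g$ finish the job. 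This is precisely the derivation the paper relegates to Remark~\ref{RMminmaxFPder} as the ``classical'' route. The proof the paper actually features is different: it takes Qi's positively homogeneous convex extension $\hat f$ of the bisubmodular function, verifies via Theorem~\ref{THfavtarProp33} that its restriction to $\ZZ\sp{n}$ is integrally convex, encodes the box $[\alpha,\beta]$ and the objective pair $(A,B)$ in a separable concave function $\Psi$, and invokes the new Fenchel-type duality (Theorem~\ref{THfencICsep}); the point of that derivation is to show the integrally convex Fenchel duality subsumes the bisubmodular min-max theorems, whereas your route buys independence from the new machinery at the cost of leaning entirely on the uncrossing results of \cite{FP94}. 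Your reserve plan of reflecting coordinates to reduce to Theorem~\ref{THcungreen} would still need the lower bound $\alpha$ folded in by a convolution of the same kind, so it does not avoid that dependence.
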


Theorem~\ref{THfujpat} can be derived
from Theorem~\ref{THfencICsep}
as follows.
Let $\hat f$ denote the convex extension of 
the given bisubmodular function 
$f: 3\sp{N} \to \RR$,
as defined by Qi \cite[Eqn (5)]{Qi88}
as a generalization of the Lov{\'a}sz extension of a submodular function.
This function 
$\hat f: \RR\sp{n} \to \RR$ 
is a positively homogeneous convex function and 
it is an extension of $f$ in the sense that 
$\hat f (\unitvec{X}-\unitvec{Y}) = f(X,Y)$
for all $(X,Y) \in 3\sp{N}$.
It follows from the positive homogeneity of $\hat f$ 
and Lemma~11 of \cite{Qi88} that
\[
 \frac{1}{2}\left(\hat f(x) + \hat f(y)\right) \geq 
 \hat f\left(\frac{x+y}{2}\right) 
\qquad (x, y \in \ZZ\sp{n}).
\]
This implies, by Theorem~\ref{THfavtarProp33},
that the function $\hat f$ restricted to $\ZZ\sp{n}$
is an integrally convex function.
In the following we denote
the restriction of $\hat f$ to $\ZZ\sp{n}$
also by $\hat f$.

Fix $(A,B) \in 3\sp{N}$ and let
$C = N \setminus (A \cup B)$.
We define a separable concave function 
$\Psi(x) = \sum_{i \in N} \psi_{i}(x_{i})$ 
with $\psi_{i}: \ZZ \to \RR$ given as follows:
For $i \in A$,
\begin{equation}\label{sepconcaveA}
 \psi_{i}(k) =
\begin{cases} 
 \alpha_{i}(k-1)  &  (k \geq 1),  
\\ 
 \beta_{i}(k-1) & (k \leq 1);
\end{cases}
\end{equation}
For $i \in B$,
\begin{equation}\label{sepconcaveB}
 \psi_{i}(k) =
\begin{cases} 
 \alpha_{i}(k+1)  &  (k \geq -1),  
\\ 
 \beta_{i}(k+1) & (k \leq -1);
\end{cases}
\end{equation}
For $i \in C$,
\begin{equation}\label{sepconcaveC}
 \psi_{i}(k) =
\begin{cases} 
 \alpha_{i} k  &  (k \geq 0),  
\\ 
 \beta_{i} k & (k \leq 0) .
\end{cases} 
\end{equation}

We apply
Theorem~\ref{THfencICsep}
to the integrally convex function $\hat f$
and the separable concave function $\Psi$.
For these functions the min-max formula 
\eqref{minmaxICsepR} reads
\begin{align} 
 \min \{ \hat f(x) - \Psi(x) \mid  x \in \ZZ\sp{n}  \} 
= \max \{ \Psi\sp{\circ}(p) - \hat f\sp{\bullet}(p) \mid  p \in \RR\sp{n} \} , 
\label{minmaxBisub} 
\end{align}
where \eqref{minmaxICsepZ} gives integrality of $p$ in the integer-valued case.
In the following we show 
\begin{align}
&
\mbox{$\min$ in \eqref{minmaxBisub}} = \mbox{$\min$ in \eqref{FPminmax}} ,
\label{FPmin}
\\ & 
\mbox{$\max$ in \eqref{minmaxBisub}} = \mbox{$\max$ in \eqref{FPminmax}} 
\label{FPmax}
\end{align}
to obtain ``$\max = \min$'' in \eqref{FPminmax}.

The proof of \eqref{FPmin} consists of showing two equations 
\begin{align}
& \min \{ \hat f(x) - \Psi(x) \mid x \in \ZZ\sp{n}\}
=  \min \{ \hat f(x) - \Psi(x) \mid x \in \{-1, 0, +1 \}\sp{n}\} ,
\label{min01=minZ}
\\ &
 \min \{ \hat f(x) - \Psi(x) \mid x \in \{-1, 0, +1 \}\sp{n} \}
= \mbox{$\min$ in \eqref{FPminmax}} .
\label{RHS-FPminmax-fPsi}
\end{align}
We first show \eqref{RHS-FPminmax-fPsi}
while postponing the proof of \eqref{min01=minZ}.
On identifying a vector $x \in \{-1, 0, +1 \}\sp{n}$
with $(X,Y) \in 3\sp{N}$ by $x = \unitvec{X}-\unitvec{Y}$,
we have
$\hat f(x) = \hat f (\unitvec{X}-\unitvec{Y}) = f(X,Y)$
and
\[
 -\Psi(x) = \beta(A \setminus X) + \beta(Y \setminus B) - \alpha(B \setminus Y) 
    - \alpha(X \setminus A) ,
\]
which is easily verified from \eqref{sepconcaveA}--\eqref{sepconcaveC}.
Hence follows \eqref{RHS-FPminmax-fPsi}.

Next we turn to \eqref{FPmax} for the maximum
in \eqref{minmaxBisub}. 
The conjugate function 
$\hat f\sp{\bullet}$ is equal to 
the indicator function of $P(f)$.
That is,
$\hat f\sp{\bullet}(p)$
is equal to $0$ if $p \in P(f)$, and $+\infty$ otherwise.
The concave conjugate
$\Psi\sp{\circ}$ 
is given by
\[
 \Psi\sp{\circ}(p) =
\begin{cases} 
 p(A)-p(B)  &  (\alpha \leq p \leq \beta),  
\\ 
 -\infty & (\mbox{\rm otherwise})
\end{cases} 
\]
for $p \in \RR\sp{n}$.
Indeed, 
$\dom \psi\sp{\circ}_{i} = [\alpha_{i}, \beta_{i}]_{\RR}$
for all $i \in N$
and, 
for $l \in [\alpha_{i}, \beta_{i}]_{\RR}$,
we have
\[
 \psi\sp{\circ}_{i}(l) =
\begin{cases} 
 l  &  (i \in A),  
\\ 
 -l  &  (i \in B),  
\\ 
 0  &  (i \in C)  
\end{cases}
\]
from \eqref{sepconcaveA},  \eqref{sepconcaveB}, and \eqref{sepconcaveC}.
Therefore, 
we have
\begin{align}
 &
\max \{ \Psi\sp{\circ}(p) - \hat f\sp{\bullet}(p) \mid p \in \RR\sp{n} \} 
\nonumber \\
 &= \max\{ p(A) - p(B) \mid p \in P(f), \alpha \leq p \leq \beta\} 
 = \mbox{$\max$ in \eqref{FPminmax}} ,
\label{BsubminmaxICfnSpfn} 
\end{align}
where the variable $p$ corresponds to $z$ in \eqref{FPminmax}.

It remains to show \eqref{min01=minZ}.
The function 
$\hat f - \overline{\Psi}$
is a polyhedral convex function
and is bounded from below
since the value of \eqref{BsubminmaxICfnSpfn} is finite.
Therefore, $\hat f - \Psi$ has a minimizer.
Let $\hat x \in \ZZ\sp{n}$ be a minimizer of $\hat f - \Psi$
with $\| \hat x \|_{\infty}$ minimum.
To prove by contradiction, assume  
$\| \hat x \|_{\infty} \geq 2$.
Define
$ U = \{ i \in N \mid \hat x_{i} = \| \hat x \|_{\infty}\}$
and 
$V = \{ i \in N \mid \hat x_{i} = -\| \hat x \|_{\infty}\}$,
and let $d = \unitvec{U} - \unitvec{V}$.
By \eqref{sepconcaveA}--\eqref{sepconcaveC},
each $\psi_{i}$ is a linear (affine) function 
on each of the intervals $(-\infty,-1]$ and $[+1,+\infty)$.
Combining this with the fundamental property of 
the extension $\hat f$,
we see that there exists a vector 
$q \in \RR\sp{n}$ 
for which
\[
 (\hat f - \Psi)(\hat x \pm d) 
=  (\hat f - \Psi)(\hat x) \pm (f(U,V) - \langle q, d \rangle) 
\] 
holds, where the double-sign corresponds.
Since 
$\hat x$ is a minimizer of $\hat f - \Psi$,
we must have 
$f(U,V) - \langle q, d \rangle = 0$.
This implies, however, that
$\hat x - d$ is also a minimizer of $\hat f - \Psi$,
whereas we have $\| \hat x - d\|_{\infty} < \|\hat x\|_{\infty}$, a contradiction.
We have thus completed the derivation of 
Theorem~\ref{THfujpat} from Theorem~\ref{THfencICsep}.

\begin{remark} \rm \label{RMminmaxFPder}
The min-max formula \eqref{FPminmax}
can be derived from the results of \cite{FP94} as follows.
Given 
$\alpha, \beta \in \RR\sp{n}$ 
with $\alpha \leq \beta$,
we can consider a bisubmodular function
$w_{\alpha \beta}$ defined by
$w_{\alpha \beta}(X,Y) = \beta(X) - \alpha(Y)$
for disjoint subsets $X$ and $Y$.
The convolution of $f$ with $w = w_{\alpha \beta}$ is defined (and denoted) as 
\begin{align}
&
(f \circ w)(A,B) 
\nonumber \\ & = \!
\min \{ f(X,Y) + w(A \setminus X, B \setminus Y) + w(Y \setminus B,X \setminus A) 
      \mid (X,Y) \in 3\sp{N} \}
\nonumber \\ & = \!
\min \{ f(X,Y) \! + \! \beta(A \setminus X) \! + \! \beta(Y \setminus B) 
  \! - \! \alpha(B \setminus Y) \! - \! \alpha(X \setminus A) 
     \mid (X,Y) \in 3\sp{N} \}.
\label{fwdef}
\end{align}
This function is bisubmodular \cite[Theorem~3.2]{FP94}.
By \eqref{bisubfnfromXY} applied to $f \circ w$,
we obtain 
\begin{equation}
(f \circ w)(A,B) = \max \{ z(A) - z(B)  \mid z \in P(f \circ w) \} 
\qquad ((A,B) \in 3\sp{N}).
\label{convfw}
\end{equation}
On the other hand, Theorem~3.3 of \cite{FP94} shows
\begin{equation}
P(f \circ w) = P(f) \cap P(w) 
= \{  z \mid z \in P(f), \alpha \leq z \leq \beta \} .
\label{PfwPfPw}
\end{equation}
By substituting this expression 
into $P(f \circ w)$ on the right-hand side of \eqref{convfw}
we obtain
\begin{align}
(f \circ w)(A,B) 
&= 
\max \{ z(A) - z(B)  \mid z \in P(f \circ w) \} 
\nonumber \\
&=\max \{ z(A) - z(B)  \mid z \in P(f), \alpha \leq z \leq \beta \}.
\label{fwXYmax}
\end{align}
The combination of \eqref{fwdef} and \eqref{fwXYmax}
gives the desired equality \eqref{FPminmax}.

Finally we mention that the paper \cite{FP94}
considers a more general setting where 
$\alpha \in (\RR \cup \{ -\infty \})\sp{n}$,
$\beta \in (\RR \cup \{ +\infty \})\sp{n}$, and 
$f$ is a bisubmodular function 
defined on a subset $\mathcal{F}$ of $3\sp{N}$ such that 
\begin{align*} 
& (X_{1}, Y_{1}), (X_{2}, Y_{2})  \in \mathcal{F} 
\ \Longrightarrow \ 
 (X_{1} \cap X_{2}, Y_{1} \cap Y_{2}) \in \mathcal{F},
\\ &
 (X_{1}, Y_{1}), (X_{2}, Y_{2})  \in \mathcal{F} 
\ \Longrightarrow \ 
 ((X_{1} \cup X_{2}) \setminus (Y_{1} \cup Y_{2}), 
  (Y_{1} \cup Y_{2}) \setminus (X_{1} \cup X_{2}) )
\in \mathcal{F}.
\end{align*} 
The min-max formula \eqref{FPminmax} remains true in this general case.
\finbox 
\end{remark}

%\newpage

%%\section{Concluding remarks}
%%\label{SCconcl}
%%\input{ICsurvConcl}

%%\hfill 	\memo{Do we need ``Concluding remarks"?}
%%\section{Appendix: Basic definitions: M-convex and L-convex etc.}
%%\input{ICsurvApp}

\noindent {\bf Acknowledgement}. 
The authors are thankful to Satoko Moriguchi and Fabio Tardella
for recent joint work on integrally convex functions,
and to Satoru Fujishige for a suggestive comment 
that led to Section \ref{SCbisubbox}.
%%the connection to box convolution of bisubmodular functions 
%%Akiyoshi Shioura for a helpful comment,
This work was supported by JSPS/MEXT KAKENHI JP20K11697, 
JP20H00609, and JP21H04979.
%%JSPS/MEXT KAKENHI (Murota=JP20K11697,Tamura=20H00609, 21H04979) 

%\newpage

%%\appendix
%%\section{Supplement to projection operation}
%%\label{SCprojinf}
%%\input{ICsurvProjApp}

%\newpage

%% 2022-04-28 / 2022-11-20  / 2023-02-20

\newpage

%%\input{ICsurvApp}

%\newpage
%\section{--------------------------------------------------}

% xxx \input{ICsurvMainbody}
% xxx %%\input{ICsurvFromDCAE}
% xxx \input{ICsurvMemo}

\newpage
\tableofcontents

%\newpage
%\listoffigures

\end{document}